\theoremstyle{plain}
\newtheorem{Thm}{Theorem}[section]
\newtheorem{Lem}[Thm]{Lemma}
\newtheorem{Cor}[Thm]{Corollary}
\newtheorem{Prop}[Thm]{Proposition}
\theoremstyle{definition}
\newtheorem{Def}[Thm]{Definition}
\newtheorem{Def-Lem}[Thm]{Definition-Lemma}
\newtheorem{Cond}[Thm]{Condition}
\newtheorem{Rem}[Thm]{Remark}
\newtheorem*{Ack}{Acknowledgments}
\newtheorem{Ex}[Thm]{Example}
\newcommand{\prt}{\partial}
\newcommand{\Sing}{\operatorname{Sing}}
\newcommand{\rank}{\operatorname{rank}}
\newcommand{\Pic}{\operatorname{Pic}}
\newcommand{\Bs}{\operatorname{Bs}}
\newcommand{\Exc}{\operatorname{Exc}}
\newcommand{\mult}{\operatorname{mult}}
\newcommand{\wt}{\operatorname{wt}}
\newcommand{\Wbl}{\operatorname{Wbl}}
\newcommand{\NQsm}{\operatorname{NQsm}}
\newcommand{\mbA}{\mathbb{A}}
\newcommand{\mbC}{\mathbb{C}}
\newcommand{\mbP}{\mathbb{P}}
\newcommand{\mbQ}{\mathbb{Q}}
\newcommand{\mbZ}{\mathbb{Z}}
\newcommand{\mcG}{\mathcal{G}}
\newcommand{\mcH}{\mathcal{H}}
\newcommand{\mcI}{\mathcal{I}}
\newcommand{\mcL}{\mathcal{L}}
\newcommand{\mcM}{\mathcal{M}}
\newcommand{\mcO}{\mathcal{O}}
\newcommand{\msp}{\mathsf{p}}
\newcommand{\msq}{\mathsf{q}}
\newcommand{\ratmap}{\dashrightarrow}
\def\imod#1{\allowbreak\mkern10mu({\operator@font mod}\,\,#1)}
\title[$\mathbb{Q}$-Fano weighted complete intersections, III]{Birational Mori fiber structures of  $\mathbb{Q}$-Fano 3-fold weighted complete intersections, III}
\author[Takuzo Okada]{Takuzo Okada}
\address{Department of Mathematics, Faculty of Science and Engineering\endgraf
Saga University, Saga 840-8502 Japan}
\email{okada@cc.saga-u.ac.jp}
\subjclass[2000]{14J10 \and 14J40 \and 14J45}
\date{}
\begin{document}

\begin{abstract}
This is a continuation of a series of papers studying the birational Mori fiber structures of anticanonically embedded $\mbQ$-Fano $3$-fold weighted complete intersections of codimension $2$.
We have proved that $19$ families consists of birationally rigid varieties and $14$ families consists of birationally birigid varieties.
The aim of this paper is to continue the work systematically and prove that, among the remaining $52$ families, $21$ families consist of birationally birigid varieties.
\end{abstract}

\maketitle


\section{Introduction} \label{sec:intro}

This is a continuation of a series of papers studying birational Mori fiber structures of anticanonically embedded $\mathbb{Q}$-Fano $3$-fold weighted complete intersections of codimension $2$ ($\mbQ$-Fano WCIs of codimension $2$, for short).
Here, a Mori fiber space that is birational to a given algebraic variety is called a {\it birational Mori fiber structure} of the variety.
There are $85$ families of $\mbQ$-Fano $3$-fold WCIs of codimension $2$ (see \cite{IF}).
In \cite{Okada1}, we divide $85$ families into the disjoint union of $3$ pieaces $I := \{1,2,\dots,85\} = I_{br} \cup I_{dP} \cup I_F$, where $|I_{br}| = 19$, $|I_{dP}| = 6$ and $|I_F| = 60$, and studied their birational (non-)rigidity.
We proved birational rigidity of general members of family No.~$i$ with $i \in I_{br}$, and among other things, we also proved that a member of family No.~$i$ with $i \in I_{dP}$ and $i \in I_F$ is birational to a del Pezzo fiber space over $\mbP^1$ and to a $\mbQ$-Fano $3$-fold, respectively.
Recently, Ahmadinezhad and Zucconi \cite{AZ} succeeded in removing generality conditions for family No.~$i$ with $i \in I_{br}$ and proved birational rigidity of every quasismooth member.
From now on, we focus on families No.~$i$ with $i \in I_F$ and recall the following result for such families.

\begin{Thm}[{\cite[Theorem 1.3]{Okada1}}] \label{thmG}
Let $X$ be a general member of family No.~$i$ with $i \in I_F$.
Then there exists a $\mbQ$-Fano $3$-fold weighted hypersurface $X'$ that is birational but not isomorphic to $X$.
Moreover, each maximal singularity on $X$ is untwisted by a Sarkisov link that is either a birational involution or a link to $X'$.
\end{Thm}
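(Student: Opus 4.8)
\emph{Proof strategy.} Write $X=X_{d_1,d_2}\subset\mbP(a_0,\dots,a_5)$ for the quasismooth anticanonical model, so $-K_X=\mcO_X(1)$, $\rho(X)=1$, and for general $X$ the only singularities are terminal cyclic quotient points $P_1,\dots,P_k$ of types $\tfrac1{r_j}(1,b_j,r_j-b_j)$. By the Noether--Fano--Iskovskikh criterion, if $X$ is not the unique Mori fibre space in its birational class then some movable linear system $\mcM\subset|-nK_X|$ has a maximal centre, i.e.\ $(X,\tfrac1n\mcM)$ is not canonical along some subvariety. The plan is to list every possible maximal centre and, for each one, either to exclude it for general $X$ or to produce a Sarkisov link untwisting it, of one of the two prescribed types.

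\textbf{Step 1: reducing the centres to the singular points.} If the maximal centre is a curve $\Gamma$, then blowing it up and requiring the strict transform of $\mcM$ to remain movable yields $(-K_X)\cdot\Gamma<(-K_X)^3$, corrected by the quotient-singularity contributions whenever $\Gamma$ passes through some $P_j$. As $(-K_X)^3$ is small for every family in $I_F$, this confines $\Gamma$ to a short explicit list of low-degree curves (lines, conics, components of $\Bs|-K_X|$, curves through the $P_j$), and a chain of hypertangent linear systems, together with the generality of $X$ --- e.g.\ that no line passes through two of the $P_j$ --- rules each of them out. A nonsingular point is excluded outright by the Corti--Pukhlikov $4n^2$-inequality: intersecting $\mcM^2$ with a general surface in $|-K_X|$ through the point and with hypertangent divisors gives a bound whose right-hand side is again too small. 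Hence every maximal centre of a general $X$ is one of the points $P_j$.

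\textbf{Step 2: untwisting the points, and the hypersurface $X'$.} For each $P_j$, take the Kawamata (weighted) blow-up $\varphi_j\colon Y_j\to X$ with exceptional divisor $E_j$ ($K_{Y_j}=\varphi_j^*K_X+\tfrac1{r_j}E_j$), and run the $2$-ray game on $Y_j$: following the extremal ray other than the one contracted by $\varphi_j$ produces a (possibly empty) chain of flops and flips ending in a divisorial contraction $\psi_j\colon Y_j'\to Z_j$. There is one distinguished index, corresponding to the point $P$ from which projection of $X$ --- the elimination of the coordinate dual to $P$ using one of the two defining equations --- realises $X$ birationally as a $\mbQ$-Fano $3$-fold weighted hypersurface; for that index $Z=X'$, with $-K_{X'}$ ample, $\rho(X')=1$ and $X'\not\cong X$ (distinguished e.g.\ by the basket of singularities or by the Hilbert series), so $X$ is not birationally rigid and the link $\psi\circ\varphi^{-1}$ untwists any maximal singularity at $P$. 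For every other $P_j$ the $2$-ray game returns to $X$ (a contraction onto $P_j$ or onto another $P_l$), so $\varphi_j$ and $\psi_j$ compose to a birational involution $\tau_j\in\Bir(X)$ (an elementary link of type II from $X$ to itself), and a Noether--Fano degree count shows $\tau_j$ strictly lowers $n$ whenever $\mcM$ has a maximal centre at $P_j$.

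\textbf{Conclusion and main difficulty.} Combining Steps 1 and 2, every maximal singularity of a general $X$ sits at some $P_j$ and is untwisted either by the link to $X'$ (the distinguished index) or by an involution $\tau_j$ (the others), which is Theorem~\ref{thmG}. The technical core is Step 1 --- excluding nonsingular points and all low-degree curves, carried out family by family over the $60$ families in $I_F$ --- since it rests on building hypertangent systems sharp enough to overcome the small value of $(-K_X)^3$ and on a delicate use of the generality hypothesis; by contrast, the $2$-ray games of Step 2 (verifying the existence and type of each flip/flop and identifying the final contraction) are lengthy but essentially mechanical weighted/toric computations once the blow-up data are recorded.
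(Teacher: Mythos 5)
This theorem is not proved in the present paper: it is quoted from \cite{Okada1}, so the comparison is with the proof given there, whose architecture is recalled throughout this paper. Your overall plan --- exclude curves and nonsingular points by degree bounds, the $4n^2$-inequality and hypertangent/test-class arguments, then untwist the remaining centres at quotient points, with one distinguished point giving a projection link to a weighted hypersurface $X'$ --- is indeed the strategy used, and your description of how $X'$ arises (eliminating the top-weight coordinate using one of the two defining equations, i.e. the construction recalled in Lemma~\ref{stdefeqX'cA}) is accurate.

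The genuine gap is in your Step 2. You assert that for every non-distinguished singular point $P_j$ the $2$-ray game on the Kawamata blowup $Y_j\to X$ returns to $X$ and yields a birational involution $\tau_j$. That is false for many of these points: typically $-K_{Y_j}$ fails to be nef, or $(-K_{Y_j})^3\le 0$ and the game does not terminate in a Mori fibre space, so no Sarkisov link of any kind starts from $\varphi_j$. For such $P_j$ the correct conclusion is that $P_j$ is simply not a maximal centre, and this has to be proved by the same kind of exclusion arguments you reserve for curves and smooth points (test classes $N=bB+eE$ with $N$ nef and $(N\cdot B^2)\le 0$, negative-definiteness of intersection matrices of the components of $\tilde S\cap\tilde T$, etc.); compare the entries marked ``none'' and the quotient points with empty third column in the big table of Section~\ref{sec:bigtable}, and Sections~\ref{sec:excltqpt} and~\ref{sec:exclcA}, which carry out exactly this kind of exclusion for the counterpart $X'$. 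As written, your Step 2 would leave you attempting to construct involutions that do not exist. The theorem as stated survives because it only concerns actual maximal singularities, but your argument needs a third alternative --- exclusion at singular points --- before it is complete; once that is added, it coincides with the proof in \cite{Okada1}.
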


This is an important step toward the determination of the birational Mori fiber structures of $X$.
In fact, by continuing the work, we proved in \cite{Okada2} birational birigidity of members of family No.~$i$ with $i \in I_{cA/x} \cup I_{cAx/4}$ for suitable subsets $I_{cAx/2}$ and $I_{cAx/4}$ of $I_F$.
Here, {\it birational birigidity} means that there exist exactly two Mori fiber spaces in the birational equivalence class.

Note that, in Theorem \ref{thmG}, the weighted hypersurface $X'$ is uniquely determined by $X$ and we call it the {\it birational counterpart} of $X$.
It has a unique non-quotient terminal singular point together with some other terminal quotient singular points.
We define subsets $I_{cA/n}$ and $I_{cD/3}$ of $I_F$ by the following rule: we have $i \in I_{cA/n}$ (resp.\ $i \in I_{cD/3}$) if and only if the non-quotient terminal singular point of the birational counterpart of a general member $X$ of family No.~$i$ is of type $cA/n$ (resp. $cD/3$).
Here, we regard $cA/1$ as $cA$.
Specifically, those subsets are give as
\[
\begin{split}
I_{cA/n} &= \{3,6,7,9,10,16,18,21,22,26,28,33,36,38,44,48,52,57,63\}, \\
I_{cD/3} &= \{61,62\}.
\end{split}
\]
Family No.~$3$ has been studied in this direction: Corti and Mella \cite{CM} proved birational birigidity of a general $\mbQ$-Fano WCI $X = X_{3,4} \subset \mbP (1,1,1,1,2,2)$ of cubic and quartic.
Note that $3 \in I_{cA/n} \subset I_F$.
The aim of this paper is to complete the determination of Mori fiber structures of families No.~$i$ with $i \in I^*_{cA/n} \cup I_{cD/3}$, where $I^*_{cA/n} := I_{cA/n} \setminus \{3\}$.
We state the main theorem and its direct consequence.

\begin{Thm} \label{mainthm}
Let $X$ be a general member of family No.~$i$ with $i \in I_{cA/n} \cup I_{cD/3}$.
Then $X$ is birationally birigid.
More precisely, $X$ is birational to a $\mbQ$-Fano $3$-fold weighted hypersurface $X'$ and is not birational to any other Mori fiber space.
\end{Thm}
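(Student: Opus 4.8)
The plan is to establish birational birigidity by combining the structural result of Theorem~\ref{thmG} with a case-by-case analysis of maximal singularities and a careful study of the birational counterpart $X'$. By Theorem~\ref{thmG}, for a general member $X$ of family No.~$i$ with $i \in I_{cA/n} \cup I_{cD/3}$, there is a $\mbQ$-Fano weighted hypersurface $X'$ birational but not isomorphic to $X$, and every maximal singularity on $X$ is untwisted by a Sarkisov link that is either a birational involution (hence self-link, leaving the Mori fiber structure unchanged) or a link to $X'$. Thus, following the Corti--Mella strategy for family No.~$3$, to prove that $X$ is birationally birigid it suffices to show: (1) $X'$ has no Mori fiber structure other than itself, i.e.\ $X'$ is birationally rigid in the appropriate sense, or more precisely that every maximal singularity on $X'$ is untwisted by a link back to $X$ or by a self-link; and (2) $X$ and $X'$ are the only two Mori fiber spaces in the common birational class, which follows once (1) is established together with the Sarkisov program and the fact that the link $X \ratmap X'$ and its inverse account for all ``exits'' from the two-variety chamber.

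The first main step is to identify, for each family in $I^*_{cA/n} \cup I_{cD/3}$, the birational counterpart $X'$ explicitly: it is a $\mbQ$-Fano $3$-fold weighted hypersurface with a single non-quotient terminal singular point of type $cA/n$ (resp.\ $cD/3$) plus terminal quotient singularities. I would tabulate these, reading off the weighted projective space and degree from the construction underlying Theorem~\ref{thmG} (the link is typically a Kawamata-type blowup of a quotient singularity on $X$ followed by a flop/flip sequence and a divisorial contraction realizing $X'$). The second step is the maximal-singularity analysis on $X'$: run through the terminal quotient singular points and any non-quotient point, and show each either is not a maximal center (via the usual exclusion techniques: comparing multiplicities, the $4n^2$-inequality, isolating class group computations, and constructing suitable nef divisors on the Kawamata blowup), or, if it is a maximal center, the associated Sarkisov link returns to $X$ (untwisting) or is a self-link. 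For the $cA/n$ and $cD/3$ points one uses the explicit local analytic structure: a $cA/n$ point admits an isolated toric-type blowup whose exceptional divisor discrepancy and the geometry of the proper transform of $|-K_{X'}|$ control whether a maximal link exists, and similarly for $cD/3$ using the classification of divisorial extractions over such points.

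The third step handles curves and the possibility of maximal centers supported on curves on $X'$ (and on $X$): one shows, via the standard $1/n(K_X + \text{mult})$ estimates and the fact that $(-K_{X'})^3$ is small, that no curve can be a maximal center, so that all maximal singularities are at points. Assembling these, one concludes that the Sarkisov link graph of the birational class of $X$ consists of exactly the two vertices $X$ and $X'$ joined by the link of Theorem~\ref{thmG} (and possibly self-links at each), hence there are exactly two Mori fiber spaces, proving birational birigidity. The direct consequence stated after the theorem then follows immediately. The main obstacle, I expect, is step two for the handful of families where $X'$ has a $cD/3$ point (families $61, 62$) or a $cA/n$ point with large $n$: excluding or analyzing maximal centers at a non-quotient terminal point is substantially harder than at a quotient point, since one must work with a more delicate divisorial extraction and the local intersection theory is less uniform; controlling the proper transform of the anticanonical system through the relevant (anti)flips and verifying the numerical inequalities will be the technical heart of the argument, and likely requires a family-by-family treatment rather than a single uniform estimate.
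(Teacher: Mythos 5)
Your proposal follows essentially the same route as the paper: invoke the earlier result that every maximal singularity on $X$ is untwisted either by an involution or by the link to $X'$, then run the full maximal-singularity analysis on $X'$ (excluding nonsingular points, curves, and certain quotient points; untwisting the remaining extractions by involutions or by links back to $X$), and assemble via the Sarkisov program. One minor miscalibration: you expect the $cD/3$ points and high-index $cA/n$ points to be the technical heart, whereas in fact the $cD/3$ case has a unique divisorial extraction already realizing the link to $X$, and the bulk of the delicate work is the exclusion of low-degree curves passing through the $cA/n$ point.
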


\begin{Cor}
A general member of family No.~$i$ is not rational for $i \in I_{cA/n} \cup I_{cD/3}$.
\end{Cor}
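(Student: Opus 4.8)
The plan is to deduce the Corollary directly from Theorem \ref{mainthm}, with essentially no additional work. First I would recall that a rational $3$-fold is birational to $\mbP^3$, and that $\mbP^3$, regarded via its structure morphism to $\Spec \K$, is a Mori fiber space. Hence, if a general member $X$ of family No.~$i$ with $i \in I_{cA/n} \cup I_{cD/3}$ were rational, then $\mbP^3$ would be one of the Mori fiber spaces lying in the birational equivalence class of $X$. I would then invoke Theorem \ref{mainthm}, according to which the only Mori fiber spaces birational to $X$ are $X$ itself and its birational counterpart $X'$.

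It thus remains to check that $\mbP^3$ is isomorphic to neither $X$ nor $X'$, and I would argue this on the level of singularities. Being a quasismooth non-Gorenstein $\mbQ$-Fano weighted complete intersection of codimension $2$, $X$ carries at least one terminal cyclic quotient singularity of index $\ge 2$ (as recorded in \cite{IF}), so $X$ is singular; and $X'$ carries a non-quotient terminal singular point by its very construction, so $X'$ is singular as well. Since $\mbP^3$ is smooth, it cannot be isomorphic to either, and therefore $X$ is not rational.

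I do not expect any genuine obstacle here, since all the substance is packed into Theorem \ref{mainthm}. The only point deserving a word of care is that the phrase ``not birational to any other Mori fiber space'' must be understood up to isomorphism of total spaces, so that the comparison with $\mbP^3$ is legitimate; equivalently, one can phrase the whole argument in terms of the pliability set, noting that Theorem \ref{mainthm} yields $|\Pli(X)| = 2$ with representatives $X$ and $X'$, and that $\mbP^3$ represents neither class.
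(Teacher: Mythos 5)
Your argument is correct and is exactly the (implicit) reasoning the paper intends: the Corollary is stated as a direct consequence of Theorem \ref{mainthm}, since rationality would force $\mbP^3$ to appear among the Mori fiber spaces in the birational class, while birational birigidity leaves only $X$ and $X'$, neither of which is smooth. Your extra care about ``up to isomorphism'' and the singularity check is a sensible way to make the deduction fully explicit, but it does not depart from the paper's approach.
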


\begin{Ack}
The author is partially supported by JSPS KAKENHI Grant Number 26800019.
\end{Ack}

\section{Preliminaries} \label{sec:prelim}

\subsection{Notation and convention}

Throughout the paper, we work over the field $\mbC$ of complex numbers.
A normal projective variety $X$ is said to be a $\mbQ$-{\it Fano variety} if $-K_X$ is ample, it is $\mbQ$-factorial, has only terminal singularities and its Picard number is $1$.
We say that an algebraic fiber space $X \to S$ is a {\it Mori fiber space} if $X$ is a normal projective $\mbQ$-factorial variety with at most terminal singularities, $\dim S < X$, $-K_X$ is relatively ample over $S$ and the relative Picard number is $1$.

Let $X$ be a normal projective $\mbQ$-factorial variety, $\mcH$ a linear system on $X$, $D \subset X$ a Weil divisor and $C \subset X$ a curve.
We say that $\mcH$ is $\mbQ$-{\it linearly equivalent} to $D$, denoted by $\mcH \sim_{\mbQ} D$, if a member of $\mcH$ is $\mbQ$-linearly equivalent to $D$.
We define $(\mcH \cdot C) := (H \cdot C)$ for $H \in \mcH$.
In this paper, a {\it divisorial extraction} (or {\it divisorial contraction}) is always an extremal divisorial extraction (or contraction) in the Mori category.

A weighted projective space (WPS) $\mbP (a,b,c,d,e)$ with homogeneous coordinates $x,y,z,s,t$ is sometimes denoted by $\mbP (a_x,b_y,c_z,d_s,e_t)$.
A closed subscheme $Z$ in $\mbP (a_0,\dots,a_n)$ is {\it quasismooth} (resp.\ {\it quasismooth outside} a point $\msp$) if the affine cone $C_Z \subset \mbA^{n+1}$ is smooth outside the origin (resp. outside the closure of the inverse image of $\msp$ via the morphism $\mbA^{n+1} \setminus \{o\} \to \mbP (a_0,\dots,a_n)$).
For $i = 0,1,\dots,n$, we denote by $\msp_i$ the vertex $(0 \!:\! \cdots \!:\! 1 \!:\! \cdots \!:\! 0)$ of $\mbP (a_0,\dots,a_n)$, where the $1$ is in the $(i+1)$-th position.  

The main object of this paper is a weighted hypersurface $X'$ birational to a member $X$ of family No.~$i$ with $i \in I^*_{cA/n} \cup I_{cD/3}$.
Let $\mbP := \mbP (a_0,\dots,a_4)$ be the ambient WPS of $X'$.
We write $x_0,\dots,x_3,w$ (resp.\ $x,y,z,t, \dots,w$) for the homogeneous coordinates when we treat several families at a time (resp.\ a specific family).
For example, we write $x_0,x_1,y,z,w$ (resp.\ $x,y,z,t,w$) for the coordinates of $\mbP (1,1,2,3,2)$ (resp.\ $\mbP (1,2,3,5,4)$).
The coordinate $w$ is distinguished so that the vertex at which only the coordinate $w$ is non-zero is the unique $cA/n$ or $cD/3$ point of $X'$.
For homogeneous polynomials $f_1,\dots,f_m$ in the variables $x_0,\dots,x_3,w$ or $x,y,z,\dots,w$, we denote by $(f_1 = \cdots = f_m = 0)$ the closed subscheme of $\mbP$ defined by the homogeneous ideal $(f_1,\dots,f_m)$ and denote by $(f_1 = \cdots = f_m = 0)_{X'}$ the scheme-theoretic intersection $(f_1 = \cdots = f_m = 0) \cap X'$.
For a polynomial $f = f (x_0,\dots,x_3,w)$ and a monomial $x_0^{x_0} \cdots x_3^{c_3} w^{d}$, we write $x_0^{c_0} \cdots x_3^{c_3} w^c \in f$ (resp.\ $x_0^{c_0} \cdots x_3^{c_3} w^c \notin f$) if the coefficient of the monomial in $f$ is non-zero (resp. zero).

A {\it weighted complete intersection curve} ({\it WCI curve}) of type $(c_1,c_2,c_3)$ (resp.\ of type $(c_1,c_2,c_3,c_4)$) in $\mbP (a_0,\dots,a_4)$ (resp.\ $\mbP (a_0,\dots,a_5)$) is an irreducible and reducible curve defined by three (resp.\ four) homogeneous polynomials of degree $c_1,c_2$ and $c_3$ (resp.\ $c_1,c_2,c_3$ and $c_4$).

\subsection{Weighted blowup}

We fix notation on weighted blowups of cyclic quotients of affine spaces.

Let $\mbA := \mbA^n$ be the affine $n$-space with affine coordinates $x_1,\dots,x_n$ and let $b_1,\dots,b_b$ be positive integers.
Let $\Phi \colon \mbA \ratmap \mbP := \mbP (b_1,\dots,b_n)$ the rational map defined by $(\alpha_1,\dots,\alpha_n) \mapsto (\alpha_1 \!:\! \cdots \!:\! \alpha_n)$ and $\Wbl (\mbA) \subset \mbA \times \mbP$ the graph of $\Phi$.
Suppose that we are given a $\mbZ_r$-action on $\mbA$ of type $\frac{1}{r} (a_1,\dots,a_n)$. 
We assume that $b_i \equiv a_i \imod{r}$ for every $i$.
Let $V$ be the quotient of $\mbA$ by the $\mbZ_r$-action and $X$ a subvariety of $V$ through the origin.
The rational map $\Phi$ descends to a rational map $\Phi_V \colon V \ratmap \mbP$.
We define $\Wbl (V)$ (resp.\ $\Wbl (X)$) to be the graph of $\Phi_V$ (resp.\ $\Phi_V|_X$) and call the projection $\varphi_V \colon \Wbl (V) \to V$ (resp.\ $\varphi_X \colon \Wbl (X) \to X$) the {\it weighted blowup} of $V$ (resp.\ $X$) at the origin with $\wt (x_1,\dots,x_n) = \frac{1}{r} (b_1,\dots,b_n)$.
This weight is referred to as the $\varphi_V$-{\it weight} or the $\varphi_X$-{\it weight}. 
We consider the $\mbZ_r$-action on $\mbA \times \mbP$ which acts on $\mbA$ as above and on $\mbP$ trivially.
We have a natural morphism $\Wbl (\mbA) \to \Wbl (V)$ and from this we can see $\Wbl (V)$ as the quotient of $\Wbl (\mbA)$ by the $\mbZ_r$-action.

We explain orbifold charts of $\Wbl (V)$ and $\Wbl (X)$.
Let $X_1,\dots,X_n$ be the homogeneous coordinates of $\mbP$.
For each $i$, we define $\Wbl_i (\mbA)$ (resp.\ $\Wbl_i (V)$) to be the open subset of $\Wbl (\mbA)$ (resp.\ $\Wbl (V)$) which is the intersection of $\Wbl (\mbA)$ (resp.\ $\Wbl (V)$) and the open subset $\mbA \times (X_i \ne 0)$ (resp.\ $V \times (X_i \ne 0)$).
Let $U_i (\mbA)$ be an affine $n$-space with affine coordinates $\tilde{x}_1,\dots,\tilde{x}_{i-1}, x'_i, \tilde{x}_{i+1}, \dots,\tilde{x}_n$ and define a morphism $U_i (\mbA) \to \mbA$ by the identification $x_i = {x_i'}^{b_i}$ and $x_j = \tilde{x}_j {x_i'}^{b_j}$ for $j \ne i$.
We consider the $\mbZ_{b_i}$-action on $U_i (\mbA)$ of type $\frac{1}{b_i} (b_1,\dots,b_{i-1},-1,b_{i+1},\dots,b_n)$.
We see that the quotient $U_i (\mbA)/\mbZ_{b_i}$ is naturally isomorphic to $\Wbl_i (\mbA)$ and the section $x'_i$ cuts out the open subset $(X_i \ne 0)$ of the exceptional divisor $\mbP$.
Note that $\Wbl_i (V)$ is the quotient of $\Wbl_i (\mbA)$ by the $\mbZ_r$-action.
The $\mbZ_r$-action on $\Wbl_i (\mbA)$ is given by $x'_i \mapsto \zeta_r x'_i$ and $\tilde{x}_j \mapsto \tilde{x}_j$ for $j \ne i$.
Let $U_i (V)$ be the affine $n$-space with affine coordinates $\tilde{x}_1,\dots,\tilde{x}_n$.
The identification $\tilde{x}_i = {x_i'}^r$ defines a morphism $U_i (\mbA) \to U_i (V)$ and we see that $\Wbl_i (V)$ is the quotient of $U_i (V)$ by the $\mbZ_{b_1}$-action on $U_i (V)$ of type $\frac{1}{b_i} (b_1,\dots,b_{i-1}, -r,b_{i+1},\dots,b_n)$.
We call $U_i (V)$ the {\it orbifold chart} of $\Wbl_i (V)$ and call $\tilde{x}_1,\dots,\tilde{x}_n$ {\it orbifold coordinates} of $\Wbl_i (V)$.

We keep the above setting and let $X$ be a closed subvariety of $V$ which is a complete intersection defined by $\mbZ_r$-semi-invariant polynomials $f_1,\dots,f_k \in \mbC [x_1,\dots,x_n]$.
Let $m_i/r$ be the order of $f_i$ with respect to the $\varphi_X$-weight and we define $g_i$ (resp.\ $h_i$) to be the weight $= m_i/r$ part (resp.\ the weight $= m_i/r + 1$ part) of $f_i$.
We define
\[
f_j^{(i)} := f (\tilde{x}_1,\dots, \tilde{x}_i^{1/r},\dots,\tilde{x}_n)/\tilde{x}_i^{m_i} \in \mbC [\tilde{x}_i,\dots,\tilde{x}_n]
\]
and
\[
U_i (X) := \left( f_1^{(i)} = \cdots = f_k^{(i)} = 0 \right) \subset U_i (V).
\]
Then, the quotient $\Wbl_i (X) := U_i (X)/\mbZ_{b_i}$ is an open subset of $\Wbl (X)$.
We call $U_i (X)$ the {\it orbifold chart} of $\Wbl (X)$. 
Assume that $(g_1 = \cdots = g_k = 0) \subset \mbA^n$ is a local complete intersection at the origin.
Then, we have an isomorphism
\[
E \cong (g_1 = g_2 = \cdots = g_k = 0) \subset \mbP (b_1,b_2,\dots,b_n),
\]
where $E$ is the exceptional divisor of the weighted blowup and, by a slight abuse of notation, $\mbP (b_1,\dots,b_n)$ is a weighted projective space with coordinates $x_1,\dots,x_n$.
We denote by $J_{C_E}$ the Jacobian matrix of the affine cone $C_E$ of $E$.
Note that 
\[
J_{C_E} =
\left( \frac{\prt g_i}{\prt x_j} \right)_{1 \le i \le k, 1 \le j \le n}
\] 
is a $k \times n$ matrix.
We define the matrix $J_{\varphi}$ to be the $k \times (n+1)$ matrix 
\[
J_{\varphi} := \left( J_{C_E} \ h \right),
\]
where $h := {}^t \! \left( h_1 \ h_2 \ \cdots \ h_k \right)$.

\begin{Lem} \label{lem:singwbl}
We keep the above setting. 
If $J_{\psi}$ is of rank $k$ at every point of $E$, then $\Wbl (X)$ has at most cyclic quotient singular points and 
\[
\Sing (\Wbl (X)) \cap E = \Sing (E) \cap \Sing (\mbP (b_1,\dots,b_n)).
\]
\end{Lem}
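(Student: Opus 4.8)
The plan is to work in the orbifold charts of $\varphi_X$. Recall that $\Wbl(X)$ is covered by the opens $\Wbl_i(X) = U_i(X)/\mbZ_{b_i}$, that the orbifold chart $U_i(X)$ is a closed subvariety of $U_i(V) \cong \mbA^n$ (with coordinates $\tilde{x}_1,\dots,\tilde{x}_n$) carrying the $\mbZ_{b_i}$-action of type $\tfrac{1}{b_i}(b_1,\dots,b_{i-1},-r,b_{i+1},\dots,b_n)$, and that the exceptional divisor $E$ meets $\Wbl_i(X)$ in the image of $(\tilde{x}_i = 0)_{U_i(X)}$. Since each $\Wbl_i(X)$ is a cyclic quotient of the affine variety $U_i(X)$, it suffices to prove two statements: (a) $U_i(X)$ is smooth along $(\tilde{x}_i = 0)_{U_i(X)}$ for every $i$; and (b) for a point $\bar{q} \in U_i(X)$ lying on $(\tilde{x}_i = 0)$ with image $p$, the point $p$ is singular on $\Wbl(X)$ if and only if it is singular on both $E$ and $\mbP(b_1,\dots,b_n)$. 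Then (a), together with the fact that $\varphi_X$ restricts to an isomorphism over $X \setminus \{o\}$, gives the first assertion of the lemma, and (b) gives the second.

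For (a) I would simply unwind the definitions. Writing $f_j = g_j + h_j + (\text{higher }\varphi_X\text{-weight part})$ and passing to the chart, one finds that the restriction of $f_j^{(i)}$ to $(\tilde{x}_i = 0)$ is the dehomogenisation of $g_j$ at $x_i = 1$ --- in particular $(\tilde{x}_i = 0)_{U_i(X)}$ is the chart $(x_i \ne 0)$ of $E \cong (g_1 = \cdots = g_k = 0) \subset \mbP(b_1,\dots,b_n)$ --- whereas $\prt f_j^{(i)}/\prt\tilde{x}_i$ restricted to $(\tilde{x}_i = 0)$ is the dehomogenisation of $h_j$. Hence, at a point of $(\tilde{x}_i = 0)_{U_i(X)}$, the Jacobian matrix of $U_i(X)$ is, up to a reordering of columns, assembled from the dehomogenisations of the $\prt g_j/\prt x_l$ with $l \ne i$ and of the $h_j$; by the Euler relation $\sum_l b_l x_l\,(\prt g_j/\prt x_l) = m_j g_j$, which vanishes on $C_E$, the missing column $\prt g_j/\prt x_i$ lies in the span of the remaining ones over $E$, so this Jacobian has the same rank along $E$ as $J_\varphi$. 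The hypothesis that $J_\varphi$ has rank $k$ at every point of $E$ therefore forces $U_i(X)$ to be smooth of dimension $n-k$ along $(\tilde{x}_i = 0)_{U_i(X)}$ for each $i$, which is (a).

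For (b) I would carry out a local analysis of the cyclic quotient. Fix $p \in E \cap \Wbl_i(X)$, a preimage $\bar{q} \in U_i(X)$ and its stabiliser $G \le \mbZ_{b_i}$; near $p$ the four spaces $\Wbl(X)$, $E$, $\Wbl(V)$ and $\mbP(b_1,\dots,b_n)$ are all quotients by this single group $G$, namely of the germs at $\bar q$ of $U_i(X)$, of $(\tilde{x}_i = 0)_{U_i(X)}$, of $\mbA^n$ and of $\mbA^{n-1}$ respectively. By (a), $U_i(X)$ is smooth at $\bar q$, so by the Chevalley--Shephard--Todd criterion $\Wbl(X)$ is smooth at $p$ precisely when $G$ acts on $T_{\bar q}U_i(X) \subseteq \mbC^n$ as a group generated by pseudoreflections, and likewise $\mbP(b_1,\dots,b_n)$ is smooth at $p$ precisely when $G$ acts so on $\mbC^{n-1}$ (with weights the $b_l$, $l \ne i$); the singularity of $E$ at $p$ is governed in the same way by the Zariski tangent space of $(\tilde{x}_i = 0)_{U_i(X)}$ at $\bar q$. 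Now $T_{\bar q}U_i(X)$ is a subrepresentation of $\mbC^n$ compatible with its weight-space decomposition; its intersection with the coordinate hyperplane $\mbC^{n-1} = (\tilde{x}_i = 0)$ is the pulled-back Zariski tangent space of $E$ at $p$, and the transverse direction, when present, is the $\tilde{x}_i$-line of weight $-r$. Separating the cases according to whether $\mathrm{d}\tilde{x}_i$ vanishes on $T_{\bar q}U_i(X)$, one reads off that $G$ fails to act by pseudoreflections on $T_{\bar q}U_i(X)$ exactly when it fails to do so both on the tangent space to $E$ at $p$ and on $\mbC^{n-1}$; this says precisely that $p \in \Sing(E) \cap \Sing(\mbP(b_1,\dots,b_n))$, so summing over $i$ we obtain (b).

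The computation in (a) is routine once the chart relations and the decomposition $f_j = g_j + h_j + \cdots$ are written out. The main obstacle is (b): one has to control which weights of the ambient $\mbZ_{b_i}$-representation $\mbC^n$ actually survive on the subrepresentation $T_{\bar q}U_i(X)$, and in particular rule out that the weights responsible for the singularity of $\mbP(b_1,\dots,b_n)$ at $p$ get absorbed in a way that would make $\Wbl(X)$ smooth at a point where $E$ and the ambient weighted projective space are both singular. This is where the complete-intersection structure of $U_i(X)$, cut out by $\mbZ_{b_i}$-semi-invariant equations, together with the rank-$k$ property established in (a), must be brought to bear.
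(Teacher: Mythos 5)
Your step (a) is sound and is exactly the paper's argument for the first assertion: the paper simply asserts that $\rank J_{U_i(X)}(\msq) = \rank J_{\varphi}(\msq)$ along $\tilde{E}_i$, and your justification via the Euler relation $\sum_l b_l x_l (\prt g_j/\prt x_l) = m_j g_j$ is the correct way to fill that in. The gap is in step (b). The equivalence you claim to ``read off'' --- that $G$ fails to act by pseudoreflections on $T_{\bar q}U_i(X)$ exactly when it fails to do so both on the tangent space of $E$ and on $\mbC^{n-1}$ --- is not a formal consequence of the tangent-space picture, and within the stated generality it is false. Take $X = (x_1^6 + x_2 x_3 = 0) \subset \mbA^3$ with $r = 1$ and the weighted blowup with $\wt(x_1,x_2,x_3) = (1,4,2)$. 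Then $g_1 = x_1^6 + x_2x_3$ and $J_{\varphi} = (6x_1^5,\ x_3,\ x_2,\ 0)$ has rank $1$ at every point of $E$, so the hypothesis holds; in the chart one finds $U_3(X) = (\tilde{x}_2 + \tilde{x}_1^6 = 0) \cong \mbA^2_{\tilde{x}_1,\tilde{x}_3}$ with $\mbZ_2$ acting with weights $\frac{1}{2}(1,4,-1)$, i.e.\ by $(\tilde{x}_1,\tilde{x}_3) \mapsto (-\tilde{x}_1,-\tilde{x}_3)$ on $U_3(X)$, so $\Wbl(X)$ acquires an $A_1$ point over $(0\!:\!0\!:\!1)$. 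Yet $(0\!:\!0\!:\!1)$ is a smooth point of $\mbP(1,4,2)$ (the stabilizer acts on the chart with weights $(1,0)$, a reflection) and a smooth point of $E$. In your notation: $G = \mbZ_2$ acts on $T_{\bar q}U_3(X)$ with weights $(1,1)$, on the tangent line of $E$ with weight $(1)$, and on $\mbC^{2}$ with weights $(1,0)$ --- it fails to be a reflection group on the first space but on neither of the other two.

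The point is that the displayed equality cannot be deduced from the representation of the stabiliser on the three tangent spaces alone; one must use arithmetic properties of $(b_1,\dots,b_n)$ (in the example above $\mbP(1,4,2)$ is not well formed) which hold in the paper's applications but are not recorded in ``the above setting''. Your closing paragraph correctly identifies controlling the surviving weights as the crux, but the proposal does not carry this out, and the equivalence as you state it is the step that breaks. There is also a secondary problem in your case $T_{\bar q}U_i(X) \subseteq (\mathrm{d}\tilde{x}_i = 0)$: there $(\tilde{x}_i = 0)_{U_i(X)}$ is singular at $\bar q$, so Chevalley--Shephard--Todd does not apply to its quotient and the singularity of $E$ at $p$ is not ``governed by the Zariski tangent space'' (a finite quotient of a singular germ can be smooth). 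For what it is worth, the paper's own proof only establishes your step (a) and declares the rest ``immediate,'' so the second assertion is not actually proved there either; but a correct proof must bring in the specific weights, and yours does not.
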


\begin{proof}
Let $J_{U_i (X)}$ be the Jacobian matrix of the orbifold chart $U_i (X) \subset \mbA^n$.
Let $\tilde{E}_i$ be the inverse image of $E \cap \Wbl_i (X)$ by $U_i (X) \to \Wbl_i (X)$.
For each $\msq \in \tilde{E}_i$, we have $\rank J_{U_i (X)} (\msq) =\rank J_{\varphi} (\msq)$, hence $\rank J_{U_i (X)} = k$ along $\tilde{E}_i$.
It follows that $U_i (X)$ is nonsingular along $\tilde{E}_i$ for each $i$.
This shows that $\Wbl (X)$ has at most cyclic quotient singular points.
Since $U_i (X)$ is nonsingular, the singularities of $\Wbl (X)$ come from the actions by cyclic groups.
The rest is immediate from this observation.
\end{proof}

\subsection{Generality conditions and the definition of families}

The following condition is introduced in \cite{Okada1}.

\begin{Cond} \label{cond1}
Let $X$ be a member of family No.~$i$ with $i \in I^*_{cA/n} \cup I_{cD/3}$.
\begin{enumerate}
\item[($\mathrm{C}_0$)] $X$ is quasismooth.
\item[($\mathrm{C}_1$)] The monomial in Table \ref{table:monomial} appears in one of the defining polynomial of $X$ with non-zero coefficient.
\item[($\mathrm{C}_2$)] $X$ does not contain any WCI curve listed in Table \ref{table:WCIcurve}.
\item[($\mathrm{C}_2$)] If $i = 21$, then $(x_0 = x_1 = 0)_X$ is an irreducible curve.
\item[($\mathrm{C}_4$)] If $i = 6$ (resp.\ $9$), then $X$ satisfies \cite[Lemma 7.3]{Okada1} (resp.\ \cite[Lemma 7.11]{Okada1}).
\end{enumerate}
\end{Cond}

\begin{table}[h]
\begin{center}
\caption{Monomials}
\label{table:monomial}
\begin{tabular}{cccc}
\hline
No. & Monomial & No. & Monomial \\
\hline
9 & $y z$ & 33 & $z s$ \\
22 & $z s$ & 48 & $z s$ \\
28 & $z s$ & 57 & $s t$ \\
\end{tabular}
\end{center}
\end{table}

\begin{table}[h]
\begin{center}
\caption{Type of WCI curves}
\label{table:WCIcurve}
\begin{tabular}{cccc}
\hline
No. & Type & No. & Type \\
\hline
6 & (1,1,1,2) & 33 & (1,1,5,6) \\
10 & (1,1,2,3) & 38 & (1,3,4,7) \\
16 & (1,1,2,3), (1,1,3,4), (1,1,3,6) & 44 & (1,2,3,5) \\
18 & (1,1,2,3) & 52 & (1,3,5,8) \\
22 & (1,1,4,5) & 57 & (1,2,7,9) \\
26 & (1,1,3,4) & 63 & (1,3,5,8)
\end{tabular}
\end{center}
\end{table} 

Let $X$ be a member of family No.~$i$ satisfying Condition \ref{cond1}.
Then, by \cite{Okada1}, there is a Sarkisov link to an anticanonically embedded $\mbQ$-Fano $3$-fold weighted hypersurface $X'$.
Note that $X'$ is uniquely determined by $X$ and we call $X'$ the {\it birational counterpart} of $X$.
Precise descriptions of birational counterparts will be given in the next subsection.

If $i \in I^*_{cA} \setminus \{7,10\}$ (resp.\ $i \in I_{cD/3}$), then the birational counterpart $X'$ of any member $X$ of family No.~$i$ satisfying Condition \ref{cond1} has a singularity of type $cA/n$ (resp.\ $cD/3$) at $\msp_4$.
This is not the case for families No.~$7$ and $10$.
We introduce the following additional generality condition which ensures that the unique non-quotient singular points of birational counterparts are of type $cA/n$.

\begin{Cond} \label{cond2}
If $i = 7$ (resp.\ $10$), then $\prt F_1/\prt z_0$ is not proportional to $\prt F_1/\prt z_1$, that is, there is no constant $\alpha$ such that $\prt F_1/\prt z_0 = \alpha \prt F_1/\prt z_1$, where $z_0,z_1$ are the coordinates of degree $3$ and $F_1$ is the defining polynomial of degree $4$ (resp.\ $5$).
\end{Cond}

\begin{Def}
For $i \in I^*_{cA/n} \cup I_{cD/3}$, we define $\tilde{\mcG}_i$ to be the subfamily of family No.~$i$ consisting of members satisfying Conditions \ref{cond1} and \ref{cond2}.
We define $\tilde{\mcG}'_i$ to be the family of birational counterparts of members of $\tilde{\mcG}_i$.
\end{Def}

We introduce further generality conditions for suitable families in the next subsection.

\subsection{Standard defining polynomials and additional conditions}

Let $X = X_{d_1,d_2} \subset \mbP (a_0,\dots,a_5)$ be a member of $\tilde{\mcG}_i$ with $i \in I^*_{c A/n} \cup I_{cD/3}$.
Let $x_0,\dots,x_5$ be the homogeneous coordinates of the ambient space and let $F_1$, $F_2$ be the defining polynomials of degree respectively $d_1$, $d_2$.
We assume $d_1 \le d_2$. 

\begin{Lem} \label{stdefeqXcA}
Suppose $i \in I^*_{cA/n}$.
After re-ordering and replacing coordinates, $X$ is defined in $\mbP (1,n,a_2,a_3,a_2+n,a_3+n)$ by the polynomials of the form
\[
\begin{split}
F_1 &= x_5 x_2 + x_4 x_3 + f (x_0,x_1,x_2,x_3), \\
F_2 &= x_5 x_4 - g (x_0,x_1,x_2,x_3).
\end{split}
\]
\end{Lem}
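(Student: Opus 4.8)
The plan is to combine the numerical classification of the weight systems with quasismoothness and a short sequence of admissible weighted coordinate changes. First I would record, from the list of codimension-$2$ $\mbQ$-Fano WCIs in \cite{IF}, the weight system of each family No.~$i$ with $i \in I^*_{cA/n}$, and check family by family that, after permuting the $a_j$, the weights form $(1,n,a_2,a_3,a_2+n,a_3+n)$, where $n$ is the order of the cyclic quotient point of $X$ through which the Sarkisov link of \cite{Okada1} is initiated (equivalently, the index of the $cA/n$ point of $X'$). Hence $d_1 = a_2+a_3+n$ and $d_2 = a_2+a_3+2n$, so $d_1 < d_2$ since $n \ge 1$, consistently with $d_1 \le d_2$. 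I would then relabel so that $\deg x_0 = 1$, $\deg x_1 = n$, $\deg x_2 = a_2$, $\deg x_3 = a_3$, $\deg x_4 = a_2+n$, $\deg x_5 = a_3+n$, and set $R := \mbC[x_0,x_1,x_2,x_3]$.

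Next I would locate the ``cross terms''. Regarding $F_1,F_2$ as polynomials in $x_4,x_5$ over $R$ and comparing $d_1,d_2$ with the weights of $x_4,x_5$ and of their products, one sees that --- apart from the few families in which a coincidence among the weights occurs (such as $a_2 = a_3$, $a_3 = a_2+n$, or $a_2 = a_3+n$) --- every monomial of $F_1$ other than $x_5x_2$ and $x_4x_3$ that involves $x_4$ or $x_5$ is linear in $x_4$ or in $x_5$ with coefficient in $R$, and $F_2 = A\,x_5x_4 + Bx_5 + Cx_4 + D$ with $A \in \mbC$ and $B,C,D \in R$ of degrees $a_2+n$, $a_3+n$, $a_2+a_3+2n$ respectively. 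Quasismoothness of $X$ (Condition $(\mathrm C_0)$), supplemented by the monomial Condition $(\mathrm C_1)$ where it applies (Table~\ref{table:monomial}), forces via the Jacobian criterion at the vertices $\msp_2,\msp_3,\msp_4,\msp_5$ that the coefficients of $x_5x_4$ in $F_2$ and of $x_5x_2$, $x_4x_3$ in $F_1$ are nonzero; after rescaling coordinates I may assume all three equal $1$.

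I would then normalize, first $F_2$ and then $F_1$. The substitution $x_5 \mapsto x_5 - C$, $x_4 \mapsto x_4 - B$ (admissible since $\deg C = \deg x_5$, $\deg B = \deg x_4$, and $B,C \in R$) turns $F_2$ into $x_5x_4 + (D-BC)$ with $D-BC \in R$; absorbing the sign gives $F_2 = x_5x_4 - g$ with $g \in R$. Applied to $F_1$, these two substitutions merely add elements of $R$ while keeping the monomials $x_5x_2$ and $x_4x_3$, so $F_1$ acquires the shape $x_5x_2 + x_4x_3 + Ex_5 + Gx_4 + f_0$ with $E,G,f_0 \in R$, $\deg E = a_2$, $\deg G = a_3$ (here I fold the $x_2$-coefficient, resp.\ $x_3$-coefficient, of the $x_5$-linear, resp.\ $x_4$-linear, part back into the $x_5x_2$, resp.\ $x_4x_3$, term, so that $E$ has no $x_2$ and $G$ has no $x_3$). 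Performing $x_2 \mapsto x_2 - E$ and then $x_3 \mapsto x_3 - G$ --- which are admissible, involve neither $x_4$ nor $x_5$, and, because $a_2 \ne a_3$ in the generic case, are not circular --- kills $Ex_5$ and $Gx_4$ and leaves $F_1 = x_5x_2 + x_4x_3 + f$ with $f \in R$, while preserving $F_2 = x_5x_4 - g$ (only $g$ changes, within $R$). This is the asserted form.

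The main obstacle is the handful of families exhibiting a weight coincidence: there $F_1$ or $F_2$ may carry further monomials ($x_4^2$, $x_5^2$, $x_5x_3$, $x_4x_2$, $x_1x_4$, $x_1x_5$, and so on), $F_2$ need not be linear in $(x_4,x_5)$, and one must check by a short explicit computation for each such family that quasismoothness still forces the relevant leading coefficients to be nonzero and that the extra monomials can be removed by admissible substitutions (or, where they survive, are legitimately absorbed into $f$ or $g$); this is precisely where Conditions $(\mathrm C_1)$--$(\mathrm C_4)$ enter. The remaining bookkeeping --- making the coordinate changes for $F_1$ compatible with those for $F_2$ --- is handled by performing the $x_4,x_5$-changes before the $x_2,x_3$-changes, as above.
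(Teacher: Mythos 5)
Your overall strategy coincides with the paper's: force the cross terms $x_5x_2\in F_1$, $x_5x_4\in F_2$, $x_4x_3\in F_1$ from quasismoothness at the coordinate vertices (plus $(\mathrm{C}_1)$ where a $x_4^2x_k$ term could otherwise carry the Jacobian), then strip the remaining $x_4$- and $x_5$-terms by triangular coordinate changes. Your coincidence-free argument is correct, but it only covers the families $i\in\{21,36,38,52,63\}$; the "handful" of coincidence families is in fact $13$ of the $18$, so the part you defer to "a short explicit computation" is the bulk of the proof. The one step that would actually fail is your claim that quasismoothness together with $(\mathrm{C}_1)$--$(\mathrm{C}_4)$ forces $x_4x_3\in F_1$. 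For families $7$ and $10$ one has $a_2=a_3$, hence $a_4=a_5$ and $d_1<2a_4$, and $F_1=x_5x_2+x_4f_1+f_2$ with $\deg f_1=a_2$; quasismoothness does not prevent $f_1$ from being a multiple of $x_2$, in which case no coordinate change produces the term $x_4x_3$. The paper excludes this using Condition \ref{cond2} (non-proportionality of $\prt F_1/\prt z_0$ and $\prt F_1/\prt z_1$), a hypothesis introduced precisely because quasismoothness is insufficient here; your proposal never invokes it.

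Two further points inside the deferred cases are not mere bookkeeping. For $i\in\{16,18,26,44\}$ one has $a_3=a_4$, so $x_4^2$ has degree $d_1$; quasismoothness does not make the coefficient of $x_4x_3$ nonzero directly, but rather forces the binary form $F_1(0,0,0,x_3,x_4,0)$ to be a non-square quadratic, after which a linear change of $x_3$ simultaneously kills $x_4^2$ and keeps $x_3x_4$ --- an argument your "fold back in / absorb" language does not supply. For $i\in\{9,22,28,33,48,57\}$ one has $2a_4<d_1$, so $F_1$ contains $x_4^2f_1$; removing it requires a substitution $x_3\mapsto x_3-x_4f_1+x_2f_1^2-f_2$ that involves $x_4$, coupled with a compensating change of $x_5$, and therefore does not fit your scheme of performing the $x_4,x_5$-changes first and then changes of $x_2,x_3$ within $\mbC[x_0,\dots,x_3]$. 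With Condition \ref{cond2} added to your toolkit and these two mechanisms carried out family by family, your plan becomes the paper's proof.
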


\begin{proof}
We assume $a_5 \ge a_j$ for every $j$.
If $X \in \tilde{\mcG_i}$ with $i \notin \{7,10\}$, then $X$ passes through $\msp_5$.
If $X \in \mcG_i$ with $i \in \{7,10\}$, we can assume that $X$ passes through $\msp_5$ after replacing coordinates.
We see that there are $0 \le j_1 < j_2 \le 3$ such that $a_5 + a_{j_1} = d_1$ and $a_5 + a_{j_2} = d_2$.
After re-ordering coordinates, we assume that $i_1 = 2$ and $i_2 = 4$.
Then, by quasismoothness of $X$ at $\msp_5$, we have $x_5 x_2 \in F_1$ and $x_5 x_4 \in F_2$.
After replacing $x_2$ and $x_4$, we can write $F_1 = x_5 x_2 + G_1$ and $F_2 = x_5 x_4 + G_2$ for some $G_1, G_2 \in \mbC [x_0,\dots,x_4]$.
Moreover, by filtering off terms divisible by $x_4$ in $F_2$ and then replacing $x_5$, we assume that $G_2$ does not involve $x_4$.

Suppose that $i \in \{7,10,16,18,21,26,36,38,44,52,63\}$.
In this case $d_1 \le 2 a_4$.
We claim that, after replacing coordinates other than $x_2,x_4,x_5$, we may assume $x_4 x_3 \in F_1$ and $x_4^2 \notin F_1$.
Suppose that $i \in \{21,36,38,52,63\}$. 
Then $d_1 < 2 a_4$ and hence there is no monomial divisible by $x_4^2$ in $F_1$.
It follows that $\msp_4 \in X$ and there is a unique $j$ such that $x_4 x_j \in F_1$ since $X$ is quasismooth.
By setting $j = 3$, we have $x_3 x_4 \in F_1$ and $x_4^2 \notin F_1$.
Suppose that $i \in \{7,10\}$.
In this case, we have $d_1 < 2 a_4$ and $a_4 = a_5$.
We can write $F_1 = x_5 x_2 + x_4 f_1 + f_2$ for some $f_1,f_2 \in \mbC [x_0,x_1,x_2,x_3]$.
Note that $\deg f_1 = a_2$ and there is at least one $j \ne 2$ such that $a_j = a_2$.
If $f_j$ does not involve coordinates other than $x_2$, then $\prt F_1/\prt x_5$ and $\prt F_1/\prt x_4$ are proportional.
This is impossible by $(\mathrm{C}_5)$.
It follows that there is $j \ne 2$ such that $x_j \in f_1$.
By setting $j = 3$, we have $x_4 x_3 \in F_1$ and $x_4^2 \notin F_1$. 
If $i \in \{16,18,26,44\}$, then $d_1 = 2 a_4$ and in this case there is $j \ne 4$ such that $a_j = a_4$.
We may assume that $j = 3$.
By quasismoothness of $X$, the polynomial $F_1 (0,0,0,x_3,x_4,0)$ cannot be a square.
In particular, at least one of $x_3 x_4$ and $x_3^2$ appear in $F_1$ with non-zero coefficient.
After replacing $x_3$, we may assume that $x_3 x_4 \in F_1$ and $x_4^2 \notin F_1$.
Therefore, in any case, we can write $F_1 = x_5 x_2 + x_4 x_3 + x_4 f_1 + f_2$ and $F_2 = x_5 x_4 + G_2$, where $f_1,f_2,G_2 \in \mbC [x_0,x_1,x_2,x_3]$.
After replacing $x_3 \mapsto x_3 - f_1$, we obtain the desired defining polynomials.

Suppose that $i \in \{9,22,28,33,48,57\}$.
In this case $2 a_4 < d_1 < 3 a_4$.
Then there is a unique $j \ne 2,4,5$ such that $2 a_j = d_2$.
We assume $j = 3$.
By Condition $(\mathrm{C}_1)$, we have $x_4 x_3 \in F_1$.
Then, since $d_1 < 3 a_4$ and $d_2 = 2 a_3$, we can write $F_1 = x_5 x_2 + x_4 x_3 + x_4^2 f_1 + x_4 f_2 + f_3$ and $F_2 = x_5 x_4 + x_3^3 + x_3 g_1 + g_2$, where $f_1,f_2,f_3 \in \mbC [x_0,x_1,x_2,x_3]$ and $g_1,g_2 \in \mbC [x_0,x_1]$.
Then by the replacement, 
\[
x_3 \mapsto x_3 - x_4 f_1 + x_2 f_1^2 - f_2,
x_5 \mapsto x_5 - x_4 f_1^2 + 2 x_3 f_1 + f_1 g_1,
\]
we can eliminate terms divisible by $x_4$ in $F_1$ and $F_2$ (other than $x_4 x_3$ in $F_1$).
Thus, $F_1$ and $F_2$ are in the desired forms.

Finally, we observe that $\{a_0,a_1\} = \{1,n\}$ and $a_5 - a_3 = a_4-a_2 = n$.
Thus by interchanging $x_0$ and $x_1$ if necessary, we may assume that $a_0 = 1$, $a_1 = n$, $a_4 = a_2 + n$ and $a_5 = a_3 + n$.
This completes the proof.
\end{proof}

\begin{Lem} \label{stdefeqX'cA}
Suppose that $X \in \tilde{\mcG}_i$, $i \in I^*_{cA/n}$, is defined by $F_1, F_2$ in \emph{Lemma \ref{stdefeqXcA}}.
Then the birational counterpart $X' \in \tilde{\mcG}'_i$ is the weighted hypersurface defined by the polynomial
\[
F' = w^2 x_2 x_3 + w f + g 
\]
in $\mbP (1,n,a_2,a_3,n)$, where $w$ is the homogeneous coordinates of degree $n$ other than $x_1$.
\end{Lem}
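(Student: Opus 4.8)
I would prove this by making explicit the Sarkisov link of \cite{Okada1} for $X$ in the standard form of Lemma \ref{stdefeqXcA}. Recall that the link is initiated by the Kawamata blowup $\varphi \colon Y \to X$ at the terminal cyclic quotient singular point $\msp_5$: since $x_5 x_2 \in F_1$ and $x_5 x_4 \in F_2$, the coordinates $x_2$ and $x_4$ are eliminated in the orbifold chart at $\msp_5$, where $X$ has the terminal singularity $\frac{1}{a_5}(1, n, a_3)$ in the orbifold coordinates $x_0, x_1, x_3$, and $\varphi$ has discrepancy $1/a_5$. One then runs the $2$-ray game on $Y$, which terminates --- after the flips and flops of \cite{Okada1} --- with a divisorial contraction $\varphi' \colon Y' \to X'$; by definition $X'$ is the birational counterpart. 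The plan is to identify $X'$ with $X'' := (F' = 0) \subset \mbP(1, n, a_2, a_3, n)$, where $\deg w = n$ and $F' = w^2 x_2 x_3 + w f + g$.

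As a consistency check, $\deg F' = 2n + a_2 + a_3$ and $1 + n + a_2 + a_3 + n - \deg F' = 1$, so $X''$ is anticanonically embedded, as a birational counterpart must be. Rather than transporting $F_1, F_2$ through each step of the game, I would first exhibit directly the birational map $\sigma \colon X \ratmap X''$ given on $\{x_3 \ne 0\}$ by $w = x_5 / x_3$, i.e.\ $\sigma(x_0 \!:\! \cdots \!:\! x_5) = (x_0 \!:\! x_1 \!:\! x_2 \!:\! x_3 \!:\! x_5/x_3)$. On $X$, the equation $F_2 = 0$ reads $x_4 = g / (w x_3)$; substituting $x_5 = w x_3$ and this value of $x_4$ into $F_1 = 0$ and then multiplying through by $w$ gives exactly $w^2 x_2 x_3 + w f + g = 0$. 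Conversely, $(x_0 \!:\! \cdots \!:\! w) \mapsto (x_0 \!:\! x_1 \!:\! x_2 \!:\! x_3 \!:\! g/(w x_3) \!:\! w x_3)$ sends $X''$ into $X$ and, using $g = x_5 x_4$ on $X$, is inverse to $\sigma$; hence $X$ and $X''$ are birational.

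To conclude $X'' = X'$, I would then check two things. First, that $X''$ lies in the Mori category: quasismoothness and well-formedness of $X''$ follow from Condition \ref{cond1}, yielding $\mbQ$-factoriality and Picard number $1$, and the only non-quotient singularity of $X''$ is $\msp_w$, where the local equation $F'|_{w = 1} = x_2 x_3 + f + g$ has tangent cone $x_2 x_3$ (every monomial of $f$ and $g$ has $\mfm$-adic order at least $2$), so $\msp_w$ is a $cA/n$ point and $X''$ has only terminal singularities --- this is where Conditions \ref{cond1} and \ref{cond2} enter, Condition \ref{cond2} precisely preventing the tangent cone from degenerating for families $7$ and $10$. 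Second, that $\sigma$ is the birational map effected by the link: the link blows up $\msp_5$ first, and tracking the Cox ring presentation through the $2$-ray game one finds that $\mbP(1, n, a_2, a_3, n)$ emerges, the coordinates $x_4, x_5$ being replaced by a single new coordinate $w$ of weight $n$ and the pair $F_1, F_2$ combining into $F'$; so the link and $\sigma$ agree away from their indeterminacy loci, whence $X'' = X'$.

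The main obstacle is this last identification: the elementary substitution $w = x_5/x_3$ shows only that $X$ and $X''$ are birational, not a priori that $X''$ is the distinguished birational counterpart, so one must run the $2$-ray game on $Y$ carefully enough to pin down which weighted projective space and which coordinates emerge, and to verify that Conditions \ref{cond1} and \ref{cond2} are exactly what make the game proceed as claimed and $\msp_w$ a $cA/n$ point.
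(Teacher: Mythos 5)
Your computational core is right, but your route differs from the paper's and leaves its hardest step open. The paper's proof is essentially a one-line citation: \cite[Section 4.2]{Okada1} already establishes, for $X$ defined by $F_1 = x_5x_2 + G_1$ and $F_2 = x_5x_4 + G_2$, that the birational counterpart is the hypersurface
\[
w\, G_1 (x_0,x_1,x_2,x_3,w x_2) - G_2 (x_0,x_1,x_2,x_3,w x_2) = 0
\]
in $\mbP (a_0,a_1,a_2,a_3,a_4-a_2)$; plugging in $G_1 = x_4x_3 + f$, $G_2 = -g$ gives $w^2x_2x_3 + wf + g$ at once. Your substitution $w = x_5/x_3$ (rather than the $w = x_4/x_2$ implicit in the cited formula) also reproduces $F'$ correctly --- these correspond to the two links through $\msp_5$ and $\msp_4$ described in Section \ref{sec:Slink}, both ending at the same $X'$ --- so the elementary birationality check is fine.

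The genuine issue is the one you yourself flag as the ``main obstacle'': exhibiting a birational map $X \ratmap X''$ does not identify $X''$ with the distinguished birational counterpart, which is defined as the endpoint of the Sarkisov link of Theorem \ref{thmG}. Your proposal says one ``must run the $2$-ray game carefully enough'' but does not do so, nor does it observe that this is exactly what \cite[Section 4.2]{Okada1} already did; as written, the argument proves only that $(F'=0)$ is \emph{a} birational model of $X$, not that it is \emph{the} counterpart. Since that identification is the entire content of the lemma beyond the substitution, you should either carry out the game or (as the paper does) invoke the explicit description of the link from the prequel. A minor further point: your claim that the tangent cone of $F'|_{w=1}$ at $\msp_4$ is exactly $x_2x_3$ is not right in general --- $f$ can contribute quadratic terms in the local coordinates (e.g.\ $y^2 \in f_4$ for family No.~$6$) --- though the rank of the quadratic part remains at least $2$ and the $cA/n$ conclusion survives; in any case the singularity type is not part of this lemma and is treated separately in Section \ref{sec:divextr}.
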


\begin{proof}
It is proved in \cite[Section 4.2]{Okada1} that if a member $X = X_{d_1,d_2} \subset \mbP (a_0,\dots,a_5)$ of $\mcG_i$, where $a_5 \ge a_i$ for $i = 0,1,2,3,4$ and $d_1 > d_2$, is defined by polynomials $F_1 = x_5 x_2 + G_1$ and $F_2 = x_5 x_4 + G_2$, where $G_1,G_2 \in \mbC [x_0,\dots,x_4]$, then the birational counterpart $X'$ is the weighted hypersurface in $\mbP (a_0,a_1,a_2,a_3,a_4-a_2)$ with homogeneous coordinates $x_0,\dots,x_3,w$ defined by 
\[
w G_1 (x_0,x_1,x_2,x_3,w x_2) - G_2 (x_0,x_1,x_2,x_3,w x_2) = 0.
\]
This proves the lemma.
\end{proof}

\begin{Lem} \label{stdefeqXcD}
\begin{enumerate}
\item After replacing homogeneous coordinates, defining polynomials of $X \in \mcG_{61}$ can be written as
\[
\begin{split}
F_1 &= u x_0 + s^3 + s^2 f_4 + s f_8 + f_{12}, \\
F_2 &= u s - g_{15},
\end{split}
\]
where $x_0,x_1,s,y,z,u$ are the homogeneous coordinates of $\mbP (1,1,4,5,6,11)$ and $f_j, g_{15} \in \mbC [x_0,x_1,y,z]$.
\item After replacing homogeneous coordinates, defining polynomials of $X \in \mcG_{62}$ can be written as
\[
\begin{split}
F_1 &= u y + s^2 + s f_6 + f_{12}, \\
F_2 &= u s - g_{15},
\end{split}
\]
where $x,y,z,t,s,u$ are the homogeneous coordinates of $\mbP (1,3,4,5,6,9)$ and $f_j, g_{15} \in \mbC [x,y,z,t]$.
\end{enumerate}
\end{Lem}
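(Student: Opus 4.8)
The plan is to mimic the proof of Lemma~\ref{stdefeqXcA}. In both families $X = X_{12,15}$, and we use only the quasismoothness of $X$; I describe family No.~$61$ (part (1)) in detail and indicate the minor changes for part (2). First I would extract the shape of $F_1, F_2$ from quasismoothness at two coordinate points. Let $\msp_u$ be the point of $\mbP(1,1,4,5,6,11)$ where only $u$ is non-zero. Since no power of $u$ has degree $12$ or $15$, we have $\msp_u\in X$, so the $2\times 6$ Jacobian matrix of $(F_1,F_2)$ must have rank $2$ at $\msp_u$; a weight count shows that the only monomials contributing a non-zero entry there are $u x_0$, $u x_1$ in $F_1$ and $u s$ in $F_2$, so after a linear change in $x_0,x_1$ we may assume $u x_0\in F_1$, $u x_1\notin F_1$ and $u s\in F_2$. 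Let $\msp_s$ be the point where only $s$ is non-zero. A weight count shows that $F_1$ contributes to the Jacobian of $(F_1,F_2)$ at $\msp_s$ only through the entry $\partial F_1/\partial s$, and that this entry is non-zero exactly when $s^3\in F_1$; since $\msp_s\in X$ whenever $s^3\notin F_1$, quasismoothness at $\msp_s$ forces $s^3\in F_1$. For part (2) the same reasoning gives $u y\in F_1$, $u s\in F_2$ and $s^2\in F_1$.

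Next I would normalize by coordinate changes (and rescalings). After rescaling we may assume the coefficients of $u x_0$, $u s$ and $s^3$ are $1$, and write $F_1 = u x_0 + G_1$, $F_2 = u s + G_2$. A weight count shows the only monomial of $F_1$ divisible by $u$ is $u x_0$, so $G_1\in\mbC[x_0,x_1,s,y,z]$, while the monomials of $F_2$ divisible by $u$ are $u s$ and $u\,q(x_0,x_1)$ for a quartic $q$; replacing $s\mapsto s-q(x_0,x_1)$ also achieves $G_2\in\mbC[x_0,x_1,s,y,z]$. Write $G_2 = -g_{15}$ and split off the part divisible by $s$, say $g_{15}=s\,h+g'$ with $g'$ free of $s$; then $F_2=(u-h)\,s-g'$, so the substitution $u\mapsto u+h$ turns $F_2$ into $u s-g'$ with $g'\in\mbC[x_0,x_1,y,z]$, and turns $F_1$ into $u x_0 + h x_0 + G_1$, i.e.\ again $u x_0 + G_1'$ with $G_1'\in\mbC[x_0,x_1,s,y,z]$. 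Viewing $G_1'$ as a polynomial in $s$ with coefficients in $\mbC[x_0,x_1,y,z]$, its leading term is a non-zero multiple of $s^3$ (reapply the $\msp_s$ argument to the transformed equations, or note that $h x_0$ cannot contain $s^3$), so after a final rescaling $F_1 = u x_0 + s^3 + s^2 f_4 + s f_8 + f_{12}$ with $f_j\in\mbC[x_0,x_1,y,z]$, as claimed. For part (2) one additionally removes a possible monomial $u x^3$ from $F_1$ by $y\mapsto y-c x^3$ and a possible $u\,q_6(x,y,z,t)$ from $F_2$ by $s\mapsto s-q_6$, and the $s$-expansion of the corresponding $G_1'$ now has leading term $s^2$, giving $F_1 = u y + s^2 + s f_6 + f_{12}$.

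I expect the only non-routine step to be the deduction that $s^3\in F_1$ (resp.\ $s^2\in F_1$): it relies on the fact that $F_1$ contributes to the Jacobian at $\msp_s$ solely through the appropriate power of $s$, so the absence of that monomial would make $\msp_s$ a non-quasismooth point of $X$. Everything else is a sequence of weight counts and coordinate substitutions of the kind already performed in Lemma~\ref{stdefeqXcA}; the only point requiring care is that the substitution $u\mapsto u+h$, with $h$ possibly involving $s$, leaves the shape of $F_1$ unchanged, which it does because it merely adds the $u$-free summand $h x_0$ (resp.\ $h y$).
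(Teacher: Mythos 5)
Your proof is correct, and it is exactly the "straightforward" argument the paper has in mind: the paper itself omits the proof of Lemma \ref{stdefeqXcD}, deferring to the method of Lemma \ref{stdefeqXcA}, which is precisely the template you follow (quasismoothness at the coordinate vertices $\msp_u$ and $\msp_s$ to force the monomials $ux_0$ (resp.\ $uy$), $us$, and $s^3$ (resp.\ $s^2$), followed by the substitutions in $x_0,x_1$ (resp.\ $y$), $s$, and $u$ to normalize). The weight counts all check out for both $\mbP(1,1,4,5,6,11)$ and $\mbP(1,3,4,5,6,9)$, and you correctly flag the one point needing care, namely that $u\mapsto u+h$ only adds the $u$-free, $x_0$-divisible (resp.\ $y$-divisible) summand $hx_0$ (resp.\ $hy$), which cannot disturb the coefficient of $s^3$ (resp.\ $s^2$).
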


\begin{proof}
This is straightforward and we omit the proof.
\end{proof}

\begin{Lem} \label{stdefeqX'cD}
Suppose that $X \in \mcG_{61}$ $($resp. $\mcG_{62}$$)$ is defined by $F_1, F_2$ in \emph{Lemma \ref{stdefeqXcD}}.
Then the birational counterpart $X' \in \mcG'_{61}$ $($resp.\ $\mcG'_{62}$$)$ is the weighted hypersurface defined by the polynomial
\[
F' := w^4 x_0^3 + w^3 x_0^2 f_4 + w^2 x_0 f_8 + w f_{12} + g_{15},
\]
\[
(\text{resp. } F' := w^3 y^2 + w^2 y f_6 + w f_{12} + g_{15})
\]
in $\mbP (1_{x_0},1_{x_1},5_y,6_z,3_w)$ $($resp.\ $\mbP (1_x,3_y,4_z,5_t,3_w)$$)$.
\end{Lem}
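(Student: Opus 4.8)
The plan is to read off the statement from the explicit description of the Sarkisov link to the birational counterpart established in \cite[Section 4.2]{Okada1}, in exactly the same way as the proof of Lemma \ref{stdefeqX'cA}, once the defining polynomials of $X$ have been put into the normal form provided by Lemma \ref{stdefeqXcD}. Recall from \cite[Section 4.2]{Okada1} the following recipe: if a member $X = X_{d_1,d_2} \subset \mbP(a_0,\dots,a_5)$ is written, with $x_5$ a coordinate of largest weight, in the form $F_1 = x_5 x_{i_1} + G_1$, $F_2 = x_5 x_{i_2} + G_2$ with $G_1,G_2 \in \mbC[x_0,\dots,x_4]$, $a_{i_1} \le a_{i_2}$, and $G_2$ independent of $x_{i_2}$, then the birational counterpart $X'$ is the weighted hypersurface obtained from $\mbP(a_0,\dots,a_5)$ by deleting the coordinates $x_5$ and $x_{i_2}$ and adjoining a new coordinate $w$ of weight $a_{i_2}-a_{i_1}$, defined by $F' = w\,\overline{G}_1 - \overline{G}_2 = 0$, where $\overline{G}_j$ denotes the polynomial obtained from $G_j$ by the substitution $x_{i_2} \mapsto w x_{i_1}$.

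First I would match our data to this recipe. For $X \in \mcG_{61}$ written as in Lemma \ref{stdefeqXcD}(1), the coordinate $u$ of largest weight $11$ plays the role of $x_5$, the coordinate $x_0$ of weight $1$ plays the role of $x_{i_1}$ (it multiplies $u$ in $F_1$), the coordinate $s$ of weight $4$ plays the role of $x_{i_2}$ (it multiplies $u$ in $F_2$), and $G_1 = s^3 + s^2 f_4 + s f_8 + f_{12}$, $G_2 = -g_{15}$ with $g_{15}$ independent of $s$; accordingly the ambient space of $X'$ is obtained from $\mbP(1,1,4,5,6,11)$ by deleting $u$ and $s$ and adjoining $w$ of weight $4-1 = 3$, i.e.\ $\mbP(1_{x_0},1_{x_1},5_y,6_z,3_w)$. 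For $X \in \mcG_{62}$ written as in Lemma \ref{stdefeqXcD}(2), the analogous roles are played by $u$ (weight $9$), $y$ (weight $3$, multiplying $u$ in $F_1$) and $s$ (weight $6$, multiplying $u$ in $F_2$), with $G_1 = s^2 + s f_6 + f_{12}$, $G_2 = -g_{15}$, $g_{15}$ independent of $s$; hence the ambient of $X'$ is $\mbP(1_x,3_y,4_z,5_t,3_w)$ with $w$ of weight $6-3 = 3$.

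Next I would carry out the substitution. For family $61$, replacing $s$ by $w x_0$ in $G_1$ gives $\overline{G}_1 = w^3 x_0^3 + w^2 x_0^2 f_4 + w x_0 f_8 + f_{12}$, while $\overline{G}_2 = -g_{15}$ since $g_{15}$ does not involve $s$; hence $F' = w\overline{G}_1 - \overline{G}_2 = w^4 x_0^3 + w^3 x_0^2 f_4 + w^2 x_0 f_8 + w f_{12} + g_{15}$, which is precisely the claimed polynomial. For family $62$, replacing $s$ by $w y$ gives $\overline{G}_1 = w^2 y^2 + w y f_6 + f_{12}$, and $F' = w\overline{G}_1 + g_{15} = w^3 y^2 + w^2 y f_6 + w f_{12} + g_{15}$, as asserted. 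A degree check confirms consistency: in both families $d_2 - d_1 = 15 - 12 = 3 = a_{i_2} - a_{i_1}$, so $F'$ is homogeneous of degree $d_2 = 15$ in the grading of the new ambient space.

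I do not expect a serious obstacle here: the construction of \cite[Section 4.2]{Okada1} is stated for members of an arbitrary family in $I_F$, so in particular it applies to the families $61$ and $62$ lying in $I_{cD/3}$, and once the normal form of Lemma \ref{stdefeqXcD} is in hand the rest is pure bookkeeping. The only points that require a little care are the correct identification of the coordinates playing the roles of $x_5$, $x_{i_1}$, $x_{i_2}$, the tracking of which weights survive into the ambient space of $X'$, and the hypothesis that $G_2$ be independent of $x_{i_2}$ — and this last point is already guaranteed by the shape of the equations in Lemma \ref{stdefeqXcD}, so nothing beyond the direct computation above is needed.
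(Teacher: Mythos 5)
Your proposal is correct and follows exactly the route the paper takes: the paper's proof of this lemma is literally ``the same argument as in that of Lemma \ref{stdefeqX'cA}'', i.e.\ the substitution recipe $F' = w\,G_1(\dots, w x_{i_1}) - G_2(\dots, w x_{i_1})$ from \cite[Section 4.2]{Okada1} applied to the normal forms of Lemma \ref{stdefeqXcD}, which is precisely the computation you carry out. Your identification of the roles of $u$, $x_0$ (resp.\ $y$) and $s$, the resulting weight $3$ for $w$, and the final polynomials all check out.
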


\begin{proof}
This is proved by the same argument as in that of Lemma \ref{stdefeqX'cA}.
\end{proof}

\begin{Def}
Let $X' \in \tilde{\mcG}_i$ with $i \in I^*_{cA/n} \cup I_{cD/3}$.
A defining polynomial given in Lemmas \ref{stdefeqX'cA} and \ref{stdefeqX'cD} is called a {\it standard defining polynomial} of $X'$.
\end{Def}

\begin{Rem} \label{remcond}
In the big table, a standard defining polynomial of each family is given.
For some families, specific monomials are given right after the polynomial.
This is a condition imposed on members of $\tilde{\mcG}'_i$ which is a consequence of conditions $(\mathrm{C}_2)$ and $(\mathrm{C}_3)$ for families other than No.~$7$ (see Example \ref{excondX'}).
For family No.~$7$, the condition $y^2 \in f_4$ will be imposed in Condition \ref{addcond} below. 
\end{Rem}

\begin{Ex} \label{excondX'}
Let $X' = X'_5 \subset \mbP (1,1,1,2,1)$ be a member of $\tilde{\mcG}'_6$ with standard defining polynomial $F' = w^2 x_0 y + w f_4 + g_5$.
The birational counterpart $X \subset \mbP (1,1,1,2,2,3)$ is defined by $F_1 = z x_0 + y_0 y_1 + f_4 (x_0,x_1,x_2,y_1)$ and $F_2 = z y_0 - g_5 (x_0,x_1,x_2,y_1)$.
If $y^2 \notin f_4$, then $X$ contains the WCI curve $(x_0 = x_1 = x_2 = y_0 = 0)$ of type $(1,1,1,2)$.
This is impossible by $(\mathrm{C}_2)$.
Thus $y^2 \in f_4$.

We give an another example.
Let $X' = X'_9 \subset \mbP (1,1,2,3,3)$ be a member of $\tilde{\mcG}'_{21}$ with standard defining polynomial $F' = w^2 x_0 y + w f_6 + g_9$.
The birational counterpart $X \in \tilde{\mcG}_{21}$ is defined by $t x_0 + s y + f_6$ and $F_2 = t s - g_9$ in $\mbP (1_{x_0},1_{x_1},2_y,3_z,4_s,5_t)$.
We can write $F_1 (0,0,y,z,s,t) = s y + \alpha z^2 + \beta y^3$ and $F_2 (0,0,y,z,s,t) = t s - (\gamma z^3 + \delta z y^3)$. 
If $\alpha = 0$ or $\beta = 0$, then $(x_0 = x_1 = 0)_X$ is reducible, which is impossible by $(\mathrm{C}_3)$.
Hence $z^2 \in f_6$ and $y^3 \in f_6$.
By the same reason, the case $\gamma = \delta = 0$ cannot happen.
This implies that if $z^3 \notin g_9$, then $z y^3 \in g_9$.
\end{Ex}

We introduce additional conditions on members of $\tilde{\mcG}'_i$ for $i \in \{ 6,7,9,10,16,18,21\}$.
We explain that a member $X'$ of family $\tilde{\mcG}'_i$ listed in Table \ref{table:eq} is defined by the polynomial $F'$ given in the second column of Table \ref{table:eq}.
For families No.~$10$, $16$ and $21$, $F'$ is a standard defining polynomial.
Let $X' \in \tilde{\mcG}'_6$.
Then a standard defining polynomial of $X'$ is of the form $F' = w^2 x_0 y + w f_4 + g_5$, where $f_4,g_5 \in \mbC [x_0,x_1,x_2,y]$.
Since $y^2 \in f_4$ by generality condition, we can write $F' = w^2 x_0 y + w (y^2 + y a_2 + a_4) + y^2 b_1 + y b_3 + b_5$, where $a_j,b_j \in \mbC [x_0,x_1,x_2]$, after re-scaling $y$.
Then, after replacing $w \mapsto w-b_1$, we may assume $b_1 = 0$.
This is the one given in the second column.
Similarly, defining polynomials of members of families No.~$9$ and $18$ are given as in the second column.
Note that, for family No.~$7$, the assertion $y^2 \in f_4$ does not follow from Condition \ref{cond1} and it is imposed in the following condition.

\begin{Cond} \label{addcond}
For a member $X$ of family $\tilde{\mcG}_i$ listed in Table \ref{table:eq}, the defining polynomial of the birational counterpart $X'$ of $X$ is of the form in the second column and each system of equations in the third column do not have a non-trivial solution.
\end{Cond}

\begin{table}[h]
\begin{center}
\caption{System of equations}
\label{table:eq}
\begin{tabular}{ccc}
\hline
No. & $F'$ & Equations \\
\hline
6 & $w^2 x_0 y + w (y^2 + y a_2 + a_4) + y b_3 + b_5$ & $x_0 = a_2 = a_4 = 0$ \\
& & $x_0 = b_3 = b_5 = 0$ \\
& & $x_0 = a_4 = b_5 - a_2 \frac{\prt a_4}{\prt x_0} = 0$ \\
\hline
7 & $w^2 x_0 x_1 + w (y^2 + y a_2 + a_4) + y b_4 + b_6$ & $x_0 = b_4 = b_6 = 0$ \\
& & $x_1 = b_4 = b_6 = 0$ \\
& & $x_0 = x_1 = 4 a_4 - a_2^2 = 0$ \\
\hline
9 & $w^2 x_0 y + w (y a_2 + a_6) + y^2 + y b_3 + b_6$ & $x_0 = a_2 = a_5 = 0$ \\
\hline
10 & $w^2 y_0 y_1 + w f_5 + g_6$ & $y_0 = y_1 = f_5 = g_6 = 0$ \\
\hline
16 & $w^2 y z + w f_6 + g_7$ & $y = f_6 = g_7 = f_6 (x_0,x_1,0,0) = 0$ \\
\hline
18 & $w^2 x_0 z + w f_6 + z b_5 + b_8$ & $x_0 = f_6 = z b_5 + b_8 = \frac{\prt f_6}{\prt z} = 0$ \\
& & $x_0 = b_5 = b_8 = 0$ \\
\hline
21 & $w^2 x_0 y + w f_6 + g_9 = 0$ & $x_0 = f_6 = g_9 = \frac{\prt f_6}{\prt f_6} = 0$. \\
& & $y = f_6 = g_9 = \frac{\prt f_6}{\prt z} = 0$
\end{tabular}
\end{center}
\end{table}

\begin{Def}
For $i \in I^*_{cA/n} \cup I_{cD/3}$, we define $\mcG_i$ to be the subfamily of $\tilde{\mcG}$ consisting of members satisfying Condition  \ref{addcond}.
We define $\mcG'_i$ to be the family of birational counterparts of members of $\mcG_i$.
\end{Def}

Note that $\mcG_i$ and $\mcG'_i$ are non-empty open subset of $\tilde{\mcG}_i$ and $\tilde{\mcG}'_i$, respectively, and $\mcG_i = \tilde{\mcG}_i$, $\mcG'_i = \tilde{\mcG}_i$ for $i \notin \{6,7,9,10,16,18,21\}$.
Note also that Condition \ref{addcond} is only used to exclude curves of low degree in Section \ref{sec:exclcurve}.

\subsection{Structure of the proof}

We recall definitions of maximal extraction and maximal centers.

\begin{Def}
Let $X$ be a $\mbQ$-Fano variety with Picard number $1$ and let $\varphi \colon Y \to X$ be a divisorial contraction with exceptional divisor $E$.
We say that $\varphi$ is a {\it maximal extraction} if there is a movable linear system $\mcH \sim_{\mbQ} -n K_X$ such that 
\[
\frac{1}{n} > c (X,\mcH) = \frac{a_E (K_X)}{m_E (\mcH)},
\]
where $m_E (\mcH)$ is the multiplicity of $\mcH$ along $E$, $a_E (K_X)$ is the discrepancy of $K_X$ along $E$, and $c (X,\mcH)$ is the canonical threshold of the pair $(X,\mcH)$.
A closed subvariety $\Gamma \subset X$ is called a {\it maximal center} if there is a maximal extraction centered along $\Gamma$.
\end{Def}

In the rest of this paper, we prove the following.

\begin{Thm} \label{mainthm2}
Let $X'$ be a member of $\mcG'_i$ with $i \in I_{cA/n}^* \cup I_{cD/3}$.
Then, no nonsingular point and no curve on $X'$ is a maximal center.
Moreover, for each divisorial extraction $\varphi \colon Y' \to X'$ centered at a singular point, one of the following holds.
\begin{enumerate}
\item $\varphi$ is not a maximal extraction.
\item There is a birational involution $\iota \colon X' \ratmap X'$ that is a Sarkisov link starting with $\varphi$.
\item There is a Sarkisov link $\sigma \colon X' \ratmap X$ starting with $\varphi$.
\end{enumerate}
\end{Thm}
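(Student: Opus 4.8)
The plan is to follow the Sarkisov-program/Noether--Fano methodology that is now standard in this series of papers, and which was used in \cite{Okada1, Okada2}, carrying it out uniformly for all families in $I^*_{cA/n} \cup I_{cD/3}$. The statement breaks naturally into three separate tasks: (i) exclusion of nonsingular points as maximal centers, (ii) exclusion of curves as maximal centers, and (iii) analysis of the divisorial extractions centered at the (finitely many) singular points of $X'$. Since $X'$ is a quasismooth anticanonically embedded $\mbQ$-Fano weighted hypersurface, its singularities are exactly the terminal cyclic quotient points coming from the ambient weighted projective space together with the distinguished non-quotient point $\msp_4$ of type $cA/n$ or $cD/3$; the standard defining polynomials from Lemmas \ref{stdefeqX'cA} and \ref{stdefeqX'cD} make all of these explicit, and Lemma \ref{lem:singwbl} is the tool that controls the singularities of the weighted blowups that will appear as the first halves of the Sarkisov links.

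For task (i), the plan is to use the usual estimate: if a nonsingular point $\msp \in X'$ were a maximal center, then for a general pair of members $S_1, S_2$ of a suitable linear subsystem of $|-K_{X'}|$ through $\msp$ one gets $(S_1 \cdot S_2 \cdot (-K_{X'})^0)$-type multiplicity inequalities $\mult_{\msp}(S_1 \cdot S_2) > (\text{something involving } A_{X'}^3)$ that contradict the intersection number $A_{X'}^3$, where $A_{X'} = -K_{X'}$; concretely one bounds $\mult_\msp$ of the restricted cycle on the surface $S_1$ using the fact that the tangent cone of $X'$ at a general nonsingular point is cut out by the coordinate hyperplanes, and compares with $(A_{X'}^2 \cdot S_1) = A_{X'}^3$. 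This is essentially the argument of \cite[Section 5]{Okada1}, now to be applied to the hypersurface side $X'$; it is routine and I would dispatch it quickly, distinguishing only whether the point lies on a coordinate hyperplane or not, and handling the few families where the index is small and the naive bound is tight. Task (ii), excluding curves, is where Condition \ref{addcond} and the no-WCI-curve condition $(\mathrm{C}_2)$ enter through the ``big table'': a curve $\Gamma$ of low anticanonical degree on $X'$, if it were a maximal center, would have to satisfy $\deg(-K_{X'} \cdot \Gamma) < (\text{a bound})$, and the curves realizing such small degrees are precisely WCI curves or components of hyperplane sections which the generality conditions forbid; curves of larger degree are excluded by the standard inequality $-K_{X'} \cdot \Gamma \ge$ the relevant bound coming from $A_{X'}^3$ and the test-surface argument. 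Section \ref{sec:exclcurve} (referenced in the remark after the definition of $\mcG_i$) is where this is carried out, and the translation of each table entry into an exclusion is the bulk of the bookkeeping.

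Task (iii) is the heart of the matter and I expect it to be the main obstacle. For each singular point $\msp$ of $X'$ one must: first, identify the (Kawamata-type) divisorial extractions $\varphi \colon Y' \to X'$ centered at $\msp$ — for a terminal cyclic quotient $\frac{1}{r}(a,b,c)$ point the Kawamata blowup is the unique extremal extraction, while the non-quotient $cA/n$ or $cD/3$ point may admit a one-parameter family, all obtained as suitable weighted blowups; second, compute $a_E(K_{X'})$ and the multiplicity $m_E(\mcH)$ along $E$ for $\mcH \sim_\mbQ -n K_{X'}$, i.e.\ decide whether $\varphi$ is a maximal extraction; and third, for those $\varphi$ that are maximal, run the $2$-ray game on $Y'$ to produce the Sarkisov link and identify its target. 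Here the construction of Theorem \ref{thmG} / \cite{Okada1} already tells us that the link from the ``right'' extraction goes either to a birational involution $\iota$ of $X'$ or to the original WCI $X$ — the point is to verify that \emph{every} maximal extraction at a singular point is one of these, and that the remaining extractions fail to be maximal. The hard part will be the uniform treatment of the $2$-ray game for the $cA/n$ and $cD/3$ points across $21$ families with differing weights: one must check that after the initial weighted blowup the resulting threefold $Y'$ has the expected structure (a flop or a sequence of flips followed by a divisorial contraction or a fiber structure), control the singularities of the intermediate varieties via Lemma \ref{lem:singwbl}, and confirm that no new Mori fiber space appears — this is precisely where the birigidity (as opposed to mere non-rigidity) is won. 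I would organize this step family-by-family in a table-driven fashion, isolating a handful of representative cases (say the $cA$, $cA/n$ with $n>1$, and $cD/3$ cases) for detailed exposition and leaving the rest to analogous verification.
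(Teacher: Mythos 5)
Your overall decomposition and the tools you name for tasks (i) and (iii) do match the paper's: nonsingular points are dispatched by the isolating-class criterion of \cite{CPR} (the quantitative test being that some $lA$ with $0 < l \le 4/(A^3)$ isolates the point, Lemma \ref{lem:criexclnspt}, with one special case for family No.~6); quotient singular points are either excluded by exhibiting a nef divisor $N = bB + eE$ with $(N \cdot B^2) \le 0$ or untwisted by quadratic/invisible involutions; and the extractions at $\msp_4$ are sorted into links to $X$, explicitly constructed involutions (via double covers or elliptic-fibration reflections), and extractions with $(-K_{Y'})^3 \le 0$ excluded by an irreducible-curve test. Up to vagueness about which mechanism excludes which extraction, this is the paper's proof.

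The genuine gap is in your task (ii). You claim that the curves of degree below the bound $(A^3)$ ``are precisely WCI curves or components of hyperplane sections which the generality conditions forbid.'' That is not true on $X'$: Condition $(\mathrm{C}_2)$ forbids certain WCI curves on the codimension-$2$ complete intersection $X$, not on the hypersurface $X'$, and $X'$ genuinely carries irreducible reduced curves of the small degrees listed in Lemma \ref{exclmostcurves}, all passing through the $cA/n$ point $\msp_4$ (for instance WCI curves of type $(1,1,2)$ of degree $1$ on $X'_5 \subset \mbP(1,1,1,2,1)$). These cannot be ruled out of existence; excluding them as maximal centers is the single largest piece of the paper (Section \ref{sec:exclcurve}). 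The argument needed is not a degree count but a test-surface computation: one takes a general $S \in |\mcI_{\Gamma}(mA)|$, checks it is normal along $\Gamma$, and bounds $(\Gamma^2)_S$ or $(\Gamma \cdot \Delta)_S$ for the residual curve $\Delta$ in $T|_S = \Gamma + \Delta$; since $\Gamma$ passes through the non-quotient point, this forces one to resolve the $A_n$-type surface singularity of $S$ at $\msp_4$, identify the extended dual graph of type $A_{n,k}$, and compute via Mumford's intersection theory (Lemma \ref{lem:compselfint}) before applying Lemma \ref{lem:criexclC1} or \ref{lem:criexclC2}. Condition \ref{addcond} enters there not to make the curves disappear but to guarantee that the residual decompositions have the required irreducibility and intersection properties. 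Without this step the assertion that no curve on $X'$ is a maximal center is unproved exactly for the curves that matter.
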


We note that Theorem \ref{mainthm2} follows from Theprems \ref{thm:SlinkcD}, \ref{thm:SlinkcA}, \ref{thm:birivQI}, \ref{thm:birinvG18-2}, \ref{thm:birinvcA1}, \ref{thm:birinvcA2}, \ref{thm:nspt}, \ref{thm:excltqs}, \ref{thm:exclcAneg} and \ref{thm:curve}.
By \cite[Lemma 2.32]{Okada2}, Theorem \ref{mainthm} follows from Theorem \ref{mainthm2} and \cite[Theorem 1.3]{Okada1}.
We explain the outline of this paper.
In Section \ref{sec:divextr}, we explain divisorial extractions centered at $cA/n$ and $cD/3$ points.
In Sections \ref{sec:Slink}, \ref{sec:birinvtqpt} and \ref{sec:birinvcA}, we construct various Sarkisov links that are links from $X'$ to $X$ and birational involutions of $X'$ centered at singular points.
The construction of birational involutions centered at terminal quotient singular points is the same as that of \cite{CPR} with a single exception.
We need a hard construction due to \cite{CP} for the exceptional case.
The construction of birational involutions centered at $cA/n$ points is based on explicit global descriptions of divisorial extractions centered at $cA/n$ points.
This is quite similar to \cite{AK}.  
The rest of the paper is devoted to exclusion.
Nonsingular points, some terminal quotient singular points, some $cA/n$ and $cD/3$ points, and curves are excluded in Sections \ref{sec:exclnspt}, \ref{sec:excltqpt}, \ref{sec:exclcA} and \ref{sec:exclcurve}, respectively.
We exclude points and most of the curves by the combination of methods in \cite{CPR,CP}.
We refer the readers to \cite[Section 2.4]{Okada2} for various excluding methods.
A large volume of the paper is devoted to excluding curves of low degree passing through the $cA/n$ point.
The method is simple but we need careful and quite complicated computations. 
Finally, the table in Section \ref{sec:bigtable}, {\it the big table}, summarizes the results from which one can see what happens at each singular point.

\section{Divisorial extractions centered at $cA/n$ and $cD/3$ points} \label{sec:divextr}

In this section, we describe divisorial extractions centered at the point $\msp := \msp_4$ of a member $X'$ of the family $\mcG'_i$ with $i \in I^*_{cA/n} \cup I_{cD/3}$.
This is based on the classification results due to Hayakawa \cite{Hayakawa} and Kawakita \cite{Kawakita1, Kawakita2}.

\subsection{$cA/n$ point}

Let $X'$ be a member of $\mcG'_i$ with $i \in I^*_{cA/n}$ and $\msp = \msp_4$.
Throughout this section, we assume that $X'$ is defined by a standard polynomial $F' = w^2 x_2 x_3 + w f + g$ in $\mbP (1,n,a_2,a_3,n)$.
Set $d := \deg F'$.
Note that $d \ge 5$.
Moreover $a_2 + a_3 \equiv 0 \pmod{n}$ if $n > 1$.

We see that a general hyperplane section of the index $1$ cover of the singularity $(X', \msp)$ is a du Val singularity of type $A_{d-n-1}$.
If $n \ne 2,4$, then the classification of $3$-dimensional terminal singularities immediately implies that $(X',\msp)$ is of type $cA/n$.
If $n = 2$ or $4$, then $(X',\msp)$ is of type $cA/n$, $cAx/2$ or $cAx/4$.
It is straightforward to see that $(X',\msp)$ is not of type $cAx/2$ and $cAx/4$, and hence $(X',\msp)$ is of type $cA/n$.
Since $(X',\msp_4)$ is of type $cA/n$, we have an identification,
\[
(\msp \in X') \cong (o \in (s_1 s_2 + h (s_3,s_4) = 0)/\mbZ_n (a_2,-a_2,1,0)),
\]
for some $h (s_3,s_4) \in \mbC \{ s_3,s_4\}$.

\begin{Lem} \label{lem:isomwps}
Let $\mbP := \mbP (c_1,c_2,c_3,c_4)$ be a weighted projective space with homogeneous coordinates $s_1, s_2, s_3$ and $s_4$.
Suppose that $c_1 \ge c_2$, $c_1 > c_i$ for $i = 3,4$, and there is an automorphism $\sigma$ of $\mbP$ that induces an isomorphism 
\[
\sigma |_{H_1} \colon H_1 := (s_0 s_1 + g_1 (s_2,s_3) = 0) \xrightarrow{\cong} H_2:= (s_0 s_1 + g_2 (s_2,s_3) = 0)
\]
between weighted hypersurfaces in $\mbP$, where $g_i$ is homogeneous of degree $c_0 + c_1$.
Then there is an automorphism $\tau$ of $\mbP (c_3,c_4)$ such that $g_1 = \tau^* g_2$.
\end{Lem}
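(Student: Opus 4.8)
Throughout I use the notation $s_1,s_2,s_3,s_4$ for the coordinates of $\mbP=\mbP(c_1,c_2,c_3,c_4)$ and write the defining polynomial of $H_i$ as $q_i:=s_1s_2+g_i(s_3,s_4)$, of weighted degree $c_1+c_2$.

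The plan is to lift $\sigma$ to a graded automorphism $\hat\sigma$ of the polynomial ring $\mbC[s_1,\dots,s_4]$ (with $\deg s_i=c_i$) — such a lift exists because every automorphism of a weighted projective space comes from a graded automorphism of its coordinate ring — and to read off $\tau$ from the relation $\sigma(H_1)=H_2$. When $g_i\neq 0$ the polynomial $q_i$ is irreducible (it is a primitive polynomial of degree one in $s_2$ over the UFD $\mbC[s_1,s_3,s_4]$, as $s_1\nmid g_i$); if some $g_i$ vanishes the statement is trivial or vacuous. Hence the homogeneous ideal of $H_i$ is $(q_i)$, so $\hat\sigma^*(q_2)\in(q_1)$, and since $\deg\hat\sigma^*(q_2)=c_1+c_2=\deg q_1$ the only possibility is $\hat\sigma^*(q_2)=\mu\,q_1$ for some $\mu\in\mbC^{\times}$.

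Next I would exploit the inequalities $c_1\ge c_2$, $c_1>c_3$, $c_1>c_4$. Because $c_3,c_4<c_1$, no monomial of degree $c_3$ or $c_4$ contains $s_1$, so $\hat\sigma^*(s_3),\hat\sigma^*(s_4)\in\mbC[s_2,s_3,s_4]$; for the same degree reason $s_1$ occurs at most linearly in $\hat\sigma^*(s_1)$ and $\hat\sigma^*(s_2)$. After possibly composing $\hat\sigma$ with the coordinate swap $s_1\leftrightarrow s_2$ in the case $c_1=c_2$ (an automorphism of $\mbP$ fixing each $q_i$), I may assume $\hat\sigma^*(s_1)=\alpha s_1+U$ and $\hat\sigma^*(s_2)=V$ with $\alpha\in\mbC^{\times}$ and $U,V\in\mbC[s_2,s_3,s_4]$. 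Expanding
\[
\hat\sigma^*(q_2)=(\alpha s_1+U)\,V+g_2\bigl(\hat\sigma^*(s_3),\hat\sigma^*(s_4)\bigr)=\mu\,s_1s_2+\mu\,g_1(s_3,s_4)
\]
and regarding both sides as polynomials in $s_1$ over $\mbC[s_2,s_3,s_4]$, the coefficient of $s_1$ yields $\alpha V=\mu s_2$, that is $\hat\sigma^*(s_2)=\gamma s_2$ with $\gamma=\mu/\alpha\in\mbC^{\times}$; substituting $s_2=0$ into the remaining ($s_1^0$) part then annihilates $UV$ and leaves
\[
g_2\bigl(\hat\sigma^*(s_3)|_{s_2=0},\ \hat\sigma^*(s_4)|_{s_2=0}\bigr)=\mu\,g_1(s_3,s_4).
\]

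To conclude, since $\hat\sigma^*(s_2)\in\mbC^{\times}s_2$ the automorphism $\sigma$ preserves $\Sigma:=(s_2=0)\cong\mbP(c_1,c_3,c_4)$, and because $c_3,c_4<c_1$ the induced graded automorphism of $\mbC[s_1,s_3,s_4]$ and its inverse both carry the subring $\mbC[s_3,s_4]$ into itself, hence restrict to a graded automorphism $\hat\tau$ of $\mbC[s_3,s_4]$ with $\hat\tau(s_j)=\hat\sigma^*(s_j)|_{s_2=0}$ for $j=3,4$; let $\tau\in\Aut(\mbP(c_3,c_4))$ be the map it induces. The displayed identity says $\hat\tau^*(g_2)=\mu\,g_1$, and composing $\hat\tau$ with the grading‑torus scaling $s_3\mapsto\lambda^{c_3}s_3$, $s_4\mapsto\lambda^{c_4}s_4$ for $\lambda^{c_1+c_2}=\mu^{-1}$ — which multiplies degree-$(c_1+c_2)$ forms by $\mu^{-1}$ but does not change $\tau$ — gives $\tau^*g_2=g_1$.

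The step I expect to demand the most care is the coefficient comparison: one must verify that in every weight configuration allowed by the hypotheses — notably when $c_1=c_2$, and when $c_3$ or $c_4$ exceeds $c_2$, so that $\hat\sigma^*(s_3)$ genuinely involves $s_2$ and $g_2\bigl(\hat\sigma^*(s_3),\hat\sigma^*(s_4)\bigr)$ contributes $s_2$‑terms — the comparison still forces $\hat\sigma^*(s_2)$ to be a scalar multiple of $s_2$. The only inputs actually used are $c_1\ge c_2$ and $c_1>c_3,c_4$; once $\hat\sigma^*(s_2)=\gamma s_2$ is in hand, everything else is formal.
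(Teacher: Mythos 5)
Your proposal follows essentially the same route as the paper's proof: lift $\sigma$ to a graded automorphism $\hat\sigma$ of $\mbC[s_1,\dots,s_4]$, deduce $\hat\sigma^*(s_1s_2+g_2)=\mu\,(s_1s_2+g_1)$, use the weight inequalities $c_1>c_3,c_4$ to see that $\hat\sigma^*s_3,\hat\sigma^*s_4$ do not involve $s_1$, compare coefficients of $s_1$ to force $\hat\sigma^*s_2\in\mbC^{\times}s_2$, and restrict to $(s_2=0)$ and then to $\mbC[s_3,s_4]$ to produce $\tau$. The differences are cosmetic: the paper separates the cases $c_1>c_2$ and $c_1=c_2$ and normalizes the scalar to $1$ at the outset by rescaling coordinates, whereas you treat both cases at once and absorb $\mu$ at the end through the grading torus; you also justify $\hat\sigma^*(q_2)\in(q_1)$ via irreducibility of $q_1$, which the paper takes for granted. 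One slip needs repair: when $c_1=c_2$, composing with the swap $s_1\leftrightarrow s_2$ does \emph{not} by itself let you assume that $\hat\sigma^*(s_2)$ is free of $s_1$ --- for instance $\hat\sigma^*s_1=s_1+s_2$, $\hat\sigma^*s_2=s_1-s_2$ cannot be normalized this way by any swap. Writing $\hat\sigma^*s_1=\alpha s_1+U$ and $\hat\sigma^*s_2=\beta s_1+V$ with $U,V\in\mbC[s_2,s_3,s_4]$, the vanishing of $\beta$ must instead be read off from the coefficient of $s_1^2$ in $\hat\sigma^*(s_1s_2)=\alpha\beta s_1^2+(\alpha V+\beta U)s_1+UV$, which the relation $\hat\sigma^*(q_2)=\mu q_1$ forces to be zero; combined with $(\alpha,\beta)\neq(0,0)$ (surjectivity of $\hat\sigma^*$) and the swap, this gives $\alpha\neq0$ and $\beta=0$. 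This is precisely how the paper obtains $\alpha_2=\beta_1=0$ in its second case, and once this one line is inserted your computation goes through verbatim.
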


\begin{proof}
Let $\sigma^*$ be the automorphism of $\mbC [s_1,s_2,s_3,s_4]$ induced by $\sigma$.
We have 
\begin{equation} \label{eq:uniqexc}
\sigma^* (s_1 s_2 + g_2) = \alpha (s_1 s_2 + g_1)
\end{equation}
for some $\alpha \ne 0$.
After re-scaling coordinates, we may assume $\alpha = 1$.

We first treat the case where $c_0 > c_1$.
After re-scaling $s_0$, we have $\sigma^*s_1 = s_1 + a$ for some $a \in \mbC [s_2,s_3,s_4]$, and $\sigma^* s_i$ does not involve $s_1$ for $i = 2,3,4$ since $c_1 > c_i$.
By comparing the terms involving $s_1$ in \eqref{eq:uniqexc}, we have $\sigma^* s_2 = s_2$ and thus $g_1 = a s_2 + \sigma^*g_2$.
It follows that $\sigma$ restricts to an automorphism $\bar{\sigma}$ of $(s_2 = 0) \cong \mbP (c_1,c_3,c_4)$.
Moreover, since $\sigma^* s_i$ does not involve $s_1$ for $i = 3,4$, the correspondence $s_i \mapsto \bar{\sigma}^* s_i$ for $i = 3,4$ defines an automorphism of $\mbP (c_3,c_4)$, which we denote by $\tau$.
By the construction, we have $g_1 = \tau^* g_2$.

We treat the case where $c_1 = c_2$.
We have $\sigma^* s_1 = \alpha_1 s_1 + \alpha_2 s_2 + a$ and $\sigma^* s_2 = \beta_1 s_1 + \beta_2 s_2 + b$ for some $\alpha_i, \beta_i \in \mbC$ and $a, b \in \mbC [s_3,s_4]$.
Note that $\sigma^*s_i \in \mbC [s_3,s_4]$ for $i = 3,4$ since $c_3, c_4 < c_1 = c_2$.
We have $(\alpha_1, \beta_1) \ne (0,0)$ since $\sigma$ is an automorphism.
Possibly interchanging $s_1$ and $s_2$, we may assume $\alpha_1 \ne 0$.
Then, by comparing terms involving $s_1$ and $s_2$ in \eqref{eq:uniqexc}, we have $\alpha_2 = \beta_1 = 0$, $\alpha_1 \beta_2 = 1$, $a = b = 0$ and $g_1 = \sigma^* g_2$.
Thus $\sigma$ restricts to an automorphism $\tau$ of $(s_1 = s_2 = 0) \cong \mbP (c_3,c_4)$ and we have $g_1 = \tau^* g_2$.
This completes the proof.
\end{proof}

\begin{Lem}
We have an equivalence
\[
(\msp \in X') \cong (o \in (s_1 s_2 + h (s_3,s_4) = 0) / \mbZ_n (a_2, - a_2,1,0))
\]
of singularities, where the lowest weight part of $h$ with respect to the weight $\wt (s_3,s_4) = (1,n)$ is $h_{d-n} = f (s_3,s_4,0,0)$.
\end{Lem}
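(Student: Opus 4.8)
The plan is to produce the asserted equivalence via a $\mbZ_n$-equivariant Morse lemma with parameters in the orbifold chart of $X'$ at $\msp = \msp_4$, and then to extract $h_{d-n}$ by bookkeeping of weighted orders. Throughout, give the orbifold coordinates the weights $\wt (x_0) = 1$, $\wt (x_1) = n$, $\wt (x_2) = a_2$, $\wt (x_3) = a_3$, i.e.\ their degrees in $\mbP (1,n,a_2,a_3,n)$; then $f$ is weighted-homogeneous of weight $d - n$, $g$ of weight $d$, and $x_2 x_3$ of weight $a_2 + a_3 = d - 2n$, where $d = a_2 + a_3 + 2n$. Since $n \mid d$, every monomial of $\phi := F' (x_0,x_1,x_2,x_3,1) = x_2 x_3 + f + g$ is invariant under the $\mbZ_n$-action of type $\tfrac{1}{n}(1,0,a_2,a_3)$, and $(\phi = 0)/\mbZ_n (1,0,a_2,a_3)$ is the chart $(w \ne 0)$ of $X'$ with $\msp$ the image of the origin.

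First I would note that, since $(\msp \in X')$ is of type $cA/n$, the Hessian of $\phi$ with respect to the two variables $x_2, x_3$ at the origin is nondegenerate (if it were degenerate, the $2$-jet of $\phi$ would be a perfect square and the singularity would be of type $cD/3$ or worse). Hence the critical-point equations
\[
\prt \phi/\prt x_2 = x_3 + \prt_{x_2}(f+g) = 0, \qquad \prt \phi/\prt x_3 = x_2 + \prt_{x_3}(f+g) = 0
\]
have a unique solution $x_2 = \xi (x_0,x_1)$, $x_3 = \eta (x_0,x_1) \in \mbC\{x_0,x_1\}$ by the implicit function theorem, and this solution is $\mbZ_n$-equivariant of weights $a_2, a_3$ because the equations are. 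The Morse lemma with parameters $x_0, x_1$, carried out $\mbZ_n$-equivariantly, then gives an analytic coordinate change $x_2 \mapsto \Theta_1$, $x_3 \mapsto \Theta_2$ (leaving $x_0, x_1$ fixed), with $\Theta_i$ semi-invariant of weight $a_i$, such that $\phi = \Theta_1 \Theta_2 + h (x_0,x_1)$, where $h (x_0,x_1) := \phi (\xi (x_0,x_1), \eta (x_0,x_1); x_0,x_1)$. Setting $s_1 := \Theta_1$, $s_2 := \Theta_2$, $s_3 := x_0$, $s_4 := x_1$ and using $a_3 \equiv -a_2 \imod{n}$ yields the equivalence $(\msp \in X') \cong (o \in (s_1 s_2 + h (s_3,s_4) = 0)/\mbZ_n (a_2,-a_2,1,0))$, with $\wt (s_3,s_4) = (1,n)$.

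It remains to compute $h_{d-n}$. Solving the critical-point equations iteratively from $(\xi,\eta) = (0,0)$ and using $\wt (\prt_{x_3} f) = (d-n) - a_3 = a_2 + n$, $\wt (\prt_{x_2} f) = (d-n) - a_2 = a_3 + n$, $\wt(\prt_{x_2} g) = d - a_2 = a_3 + 2n$ and $\wt(\prt_{x_3} g) = d - a_3 = a_2 + 2n$, one gets $\wt (\xi) \ge a_2 + n$ and $\wt (\eta) \ge a_3 + n$. Substituting into
\[
h = \xi \eta + f (x_0,x_1,\xi,\eta) + g (x_0,x_1,\xi,\eta),
\]
the term $\xi \eta$ has weight $\ge (a_2 + n) + (a_3 + n) = d$; writing $f = f (x_0,x_1,0,0) + x_2 \cdot (\ast) + x_3 \cdot (\ast\ast)$ with $(\ast), (\ast\ast)$ weighted-homogeneous of weights $(d-n)-a_2$, $(d-n)-a_3$, and replacing $x_2, x_3$ by $\xi, \eta$ (which only raises weighted order, as $\wt \xi > a_2$, $\wt \eta > a_3$) gives $f (x_0,x_1,\xi,\eta) = f (x_0,x_1,0,0) + (\text{weight} \ge d)$; and $g (x_0,x_1,\xi,\eta)$ has weight $\ge d$. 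Therefore $h = f (x_0,x_1,0,0) + (\text{terms of weight} \ge d)$, so $h$ has no terms of weight less than $d-n$ and its weight-$(d-n)$ part is $h_{d-n} = f (s_3,s_4,0,0)$, as asserted.

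The main obstacle is the second step: one must arrange the parametrized Morse lemma so that the two parameters are literally $x_0 = s_3$ and $x_1 = s_4$ — so that the symbol "$h (s_3,s_4)$" and the weighted pieces computed in the last step really refer to the same object — and so that the construction respects the $\mbZ_n$-action; the nondegeneracy of the $(x_2,x_3)$-Hessian, needed to apply the Morse lemma in this form, is the single point where one uses that $\msp$ is a $cA/n$ point rather than a $cAx/2$, $cAx/4$ or $cD/3$ point. Given that, the weight count is elementary, resting only on $d = a_2 + a_3 + 2n$ and $\wt (x_0,x_1,x_2,x_3) = (1,n,a_2,a_3)$.
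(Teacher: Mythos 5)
Your proof is correct, but it takes a genuinely different route from the paper's. The paper argues globally and indirectly: it uses the fact from \cite{Okada1} that the weighted blowup of $X'$ at $\msp$ with $\wt (x_0,x_1,x_2,x_3) = \frac{1}{n} (1,n,a_2+n,a_3)$ is a divisorial contraction whose exceptional divisor is $E \cong (x_2 x_3 + f (x_0,x_1,0,0) = 0)$, invokes the classification of divisorial extractions over $cA/n$ points to see that the corresponding extraction on an arbitrary standard model $(s_1 s_2 + h = 0)/\mbZ_n$ is again a weighted blowup whose exceptional divisor is cut out by the lowest weight part $h_m$, and then matches the two exceptional divisors via Lemma \ref{lem:isomwps} to force $h_m = f (s_3,s_4,0,0)$ after composing with a further automorphism. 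You instead produce the normal form directly in the orbifold chart $(w \ne 0)$ by an equivariant splitting lemma, reading off $h$ as the critical value $\phi (x_0,x_1,\xi,\eta)$ and identifying $h_{d-n} = f (s_3,s_4,0,0)$ from the weight count $\wt \xi \ge a_2 + n$, $\wt \eta \ge a_3 + n$ (which I have checked and is correct). Your route is more elementary and self-contained --- it needs neither the classification results of Hayakawa--Kawakita nor Lemma \ref{lem:isomwps}, and it yields an explicit formula for all of $h$ rather than only its lowest weight part --- while the paper's route avoids the analytic coordinate change and stays within the global weighted-blowup framework it needs anyway for the Sarkisov links. One small point to tighten: the nondegeneracy of the $(x_2,x_3)$-Hessian of $\phi$ does not formally follow from ``$\msp$ is of type $cA/n$'' alone, since a priori the rank-two part of the $2$-jet could involve $x_0, x_1$; but it is immediate from degrees, because $x_2 x_3$ occurs in $\phi$ with coefficient exactly $1$ (it cannot occur in $f$ or $g$, whose degrees are $d-n$ and $d$ while $\deg x_2 x_3 = d - 2n$), and $x_2^2$ and $x_3^2$ cannot both occur (that would require $a_2 - a_3$ and $a_3 - a_2$ both to lie in $\{n, 2n\}$), so the Hessian determinant equals $-1$.
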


\begin{proof}
Let $\varphi \colon Y' \to X'$ be the weighted blowup with $\wt (x_0,x_1,x_2,x_3) = \frac{1}{n} (1,n,a_2 + n,a_3)$ at $\msp$ with exceptional divisor $E$.
It is proved in \cite[Section 4.2]{Okada1} that $\varphi$ is a divisorial contraction (see also Section \ref{sec:Slink}).
We have an isomorphism
\[
E \cong (x_2 x_3 + f (x_0,x_1,0,x_3) = 0) \subset \mbP (1,n,a_2+n,a_3).
\]
By filtering off terms divisible by $x_3$ and then replacing $x_2$, we see that $E \cong (x_2 x_3 + f (x_0,x_1,0,0) = 0)$.
Let 
\[
(\msp \in X') \cong (o \in (s_1 s_2 + h (s_3,s_4) = 0) / \mbZ_n (a_2, - a_2,1,0)).
\]
be any identification of the $cA/n$ point $\msp$, where $h (s_3,s_4) \in \mbC \{s_3,s_4\}$.
Let $\psi$ be the divisorial extraction of $(s_1 s_2 + h = 0)/\mbZ_n$ which corresponds to $\varphi$, and let $F$ be the $\psi$-exceptional divisor.
$\psi$ is a weighted blowup with $\wt (s_1,s_2,s_3,s_4) = \frac{1}{n} (c_1,c_2,c_3,c_4)$ for some $c_1,\dots,c_4$.
The identification of singularities induces an isomorphism
\[
\sigma \colon \mbP (1,n,a_2+n,a_3) \to \mbP (c_1,c_2,c_3,c_4)
\]
which restricts to an isomorphism $\sigma|_E \colon E \to F$ between exceptional divisors.
In particular, we have $\{c_1,c_2,c_3,c_4\} = \{1,n,a_2+n,a_3\}$ and we may assume that $c_1 = a_2 + n$, $c_2 = a_3$, $c_3 = 1$ and $c_4 = n$ after interchanging $s_1$ and $s_2$, and $s_3$ and $s_4$. 
Here $F = (s_1 s_2 + h_m (s_3,s_4) = 0)$, where $h_m$ is the lowest weight part of $h$.
Since $a_2 + n > 1,n$, we can apply Lemma \ref{lem:isomwps} for the isomorphism $\sigma$ and there is an isomorphism $\tau \colon \mbP (1,n) \to \mbP (c_3,c_4)$ such that $\tau^* h_m = f (x_0,x_1,0,0)$.
We see that $\tau$ extends to an $\mbZ_n$-equivariant automorphism of  $\mbA^4$ with coordinates $s_1,s_2,s_3,s_4$ by setting $\tau^*s_i = s_i$ for $i = 1,2$.
Thus, by replacing the germ $(s_1 s_2 + h = 0)/\mbZ_n$ with the automorphic image $(s_1 s_2 + \tau^*h = 0)/\mbZ_n$, we see that $m = \deg f = d-n$ and $h_{d-n} = f (s_3,s_4,0,0)$.
\end{proof}

\begin{Def}
Under the above identification, for positive integers $r_1$ and $r_2$, let 
\[
\varphi_{(r_1,r_2)} \colon Y'_{(r_1,r_2)} \to X'
\] 
be the birational morphism that is the weighted blowup of $X'$ at $\msp$ with weight $\wt (s_1,s_2,s_3,s_4) = \frac{1}{n} (r_1,r_2,1,n)$.
We call $\varphi_{(r_1,r_2)}$ the $\frac{1}{n} (r_1,r_2)$-{\it blowup}.
\end{Def}

\begin{Lem}
\begin{enumerate}
\item
Let $X'$ be a member of $\mcG'_i$ with $i \in I^*_{cA/1}$.
Then, 
\[
\{ \varphi_{(r_1,r_2)} \mid r_1, r_2 > 0, r_1 + r_2 = d-1 \}
\]
are the divisorial extractions centered at $\msp$.
\item
Let $X' \in \mcG'_i$ be a member of $i \in I_{cA/n}$ with $n \ge 2$.
Then,
\[
\{ \varphi_{(r_1,r_2)} \mid r_1, r_2 > 0, r_1 + r_2 = d-n, r_1 \equiv a_2 \imod{n} \},
\]
are the divisorial extractions centered at $\msp$.
\end{enumerate}
\end{Lem}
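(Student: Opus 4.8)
The plan is to reduce everything to the known classification of divisorial contractions over a $cA/n$ point, which for $n=1$ is due to Kawakita and for $n\ge 2$ is due to Hayakawa (and Kawakita), as recalled at the start of Section~\ref{sec:divextr}. By the preceding lemma we have the explicit local identification
\[
(\msp \in X') \cong \bigl( o \in (s_1 s_2 + h(s_3,s_4) = 0)/\mbZ_n(a_2,-a_2,1,0) \bigr),
\]
with $h_{d-n} = f(s_3,s_4,0,0)$ the lowest-weight part of $h$ relative to $\wt(s_3,s_4)=(1,n)$. So the statement is purely local and it suffices to list the divisorial contractions over this analytic germ. The classification says precisely that every such contraction is a weighted blowup with $\wt(s_1,s_2,s_3,s_4)=\frac1n(r_1,r_2,1,n)$ for positive integers $r_1,r_2$ satisfying $r_1+r_2 = \operatorname{ord}_{\wt}(h)$ and the congruence $r_1 \equiv a_2$, $r_2 \equiv -a_2 \pmod n$ forced by $\mbZ_n$-equivariance (so that the weight is compatible with the quotient action); moreover the discrepancy is $a_E(K_{X'}) = 1/n$ in each case. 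Thus the first, and main, step is simply to invoke this classification and transcribe its output in our notation.

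The content that remains is to compute $\operatorname{ord}_{\wt}(h)$ and verify the congruence. For the order: by the lemma above, $h_{d-n} = f(s_3,s_4,0,0)$, so as soon as $f(s_3,s_4,0,0)$ is not identically zero we get $\operatorname{ord}_{\wt}(h) = d-n$. For $n=1$ this is automatic once one checks $f\not\equiv 0$, and indeed $f\ne 0$ because otherwise $F' = w^2 x_2 x_3 + g$ would fail quasismoothness at $\msp_4$ (the partial derivatives of $F'$ would all vanish at $\msp_4$ on $X'$, or $X'$ would be non-reduced there). For $n\ge 2$ one uses the defining-polynomial shape from Lemma~\ref{stdefeqXcA}/\ref{stdefeqX'cA} together with condition $(\mathrm{C}_1)$ (the monomial requirement in Table~\ref{table:monomial}) to guarantee that the relevant monomial of $f$ in $x_0,x_1$ — i.e. in $s_3,s_4$ — survives, so again $h_{d-n}\ne 0$ and $\operatorname{ord}_{\wt}(h)=d-n$. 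The congruence $r_1 \equiv a_2 \pmod n$ is then read off directly from the $\mbZ_n$-action $\frac1n(a_2,-a_2,1,0)$: a weighted blowup with weights $\frac1n(r_1,r_2,1,n)$ descends to the quotient only if the weight vector is $\mbZ_n$-semi-invariant, forcing $r_1\equiv a_2$ and hence $r_2 = (d-n)-r_1 \equiv -a_2$, consistently.

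I expect the main obstacle to be not any single hard computation but the bookkeeping: making sure the dictionary between the $\varphi_{(r_1,r_2)}$-weight on $(s_1,s_2,s_3,s_4)$ and the condition "$\varphi_{(r_1,r_2)}$ is a genuine extremal divisorial contraction in the Mori category" matches the hypotheses of Hayakawa's and Kawakita's theorems exactly — in particular that every weight in the listed family does produce a contraction (not merely that every contraction is in the family), which requires checking that $(s_1 s_2 + h = 0)$ is the germ of a terminal singularity and that the weighted blowup is crepant-minimal in the appropriate sense; Lemma~\ref{lem:singwbl} and the discrepancy computation $a_E(K_{X'})=1/n$ from \cite[Section~4.2]{Okada1} handle this. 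Once the order of $h$ and the congruence are pinned down, both parts of the statement follow by direct substitution $r_1+r_2 = d-n$ (respectively $d-1$ when $n=1$).
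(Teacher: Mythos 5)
Your treatment of part (2) matches the paper, which simply cites Hayakawa's classification of extractions from $cA/n$ points with $n\ge 2$, and your computation of $\operatorname{ord}_{\wt}(h)=d-n$ and of the congruence $r_1\equiv a_2 \pmod n$ is consistent with what the preceding lemma already provides. However, there is a genuine gap in part (1): you assert that the classification ``says precisely that every such contraction is a weighted blowup with $\wt(s_1,s_2,s_3,s_4)=\frac1n(r_1,r_2,1,n)$,'' but for $n=1$ this is false as stated. Kawakita's Theorem 1.13 classifies divisorial contractions centered at a $cA$ point as weighted blowups of type $(r_1,r_2,k,1)$ with $k\ge 1$ dividing $r_1+r_2$ and coprime to $r_1,r_2$ (these have discrepancy $k$, not necessarily $1$), so the weight on the third coordinate need not be $1$. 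Ruling out the case $k\ge 2$ is exactly the non-trivial content of part (1), and your proposal never addresses it.

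The paper closes this gap as follows: if $(\msp\in X')$ admitted an extraction of type $(r_1,r_2,k,1)$ with $k\ge 2$, there would be an identification $(\msp\in X')\cong (o\in (s_1s_2+h'(s_3,s_4)=0))$ with $s_3^{d-n}\in h'$ and $s_3^is_4^j\notin h'$ whenever $2ki+j<k(d-n)$. One then checks that under this constraint on $h'$ every weighted blowup $Y'_{(r_1,r_2)}$ with $r_1+r_2=d-n$ acquires a (non-quotient) $cA$ point on its exceptional divisor. This contradicts the construction in \cite[Section 4.2]{Okada1}, which exhibits at least one pair $(r_1,r_2)$ with $r_1+r_2=d-n$ for which $Y'_{(r_1,r_2)}$ has only terminal quotient singularities. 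You would need to supply an argument of this kind (or some other mechanism excluding the higher-discrepancy weighted blowups) before part (1) is proved; as written, your first ``main step'' transcribes a classification that is strictly larger than the list you are trying to establish.
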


\begin{proof}
(2) follows immediately from \cite[\S 6]{Hayakawa}.
We will prove (1).
According to the classification \cite[Theorem 1.13]{Kawakita1}, we need to show that a weighted blowup ``of type $(r_1,r_2,k,1)$" cannot be a divisorial extraction of $(\msp \in X')$ for $k \ge 2$.
Suppose that $(\msp \in X')$ admits such an extraction.
This means that there are $k \ge 2$ and an identification
\[
(\msp \in X') \cong (o \in (s_1 s_2 + h' (s_3,s_4) = 0))
\]
for some $h' \in \mbC \{s_3,s_4\}$ such that $s_3^{d-n} \in h'$ and $s_3^i s_4^j \notin h'$ for $2 k i + j < k (d-n)$.
In this case, the weighted blowup with $\wt (s_1,s_2,s_3,s_4) = (r_1,r_2,k,1)$ is a divisorial extraction of $(\msp \in X')$ for any $r_1, r_2$ such that $r_1+r_2$ is divisible by $k$ and $k$ is co-prime to $r_1$ and $r_2$.
But, in this case, $Y'_{(r_1,r_2)}$ has a (unique) $cA$ point along the exceptional divisor for any $r_1,r_2$ such that $r_1 + r_2 = d-n$.
On the other hand, by \cite[Section 4,2]{Okada1}, there is a pair $(r_1,r_2)$ with $r_1+r_2 = d-n$ such that $Y'_{(r_1,r_2)}$ has only terminal quotient singularities.
This is a contradiction and the proof is completed. 
\end{proof}

\subsection{$cD/3$ point}

Let $X'$ be a member of $\mcG'_i$ with $i \in I_{cD/3} = \{61,62\}$ and $\msp = \msp_4 \in X'$ the $cD/3$ point.
The singularity $(X',\msp)$ is indeed of type $cD/3$ since a general hyperplane section of index $1$ cover of $(X',\msp)$ is of type $D$.

\begin{Lem}
Suppose that $i = 61$ $($resp.\ $62$$)$ and $X'$ is defined by a standard polynomial.
Then the weighted blowup $\varphi' \colon Y' \to X'$ of $X'$ at $\msp$, which is defined as the weighted blowup of $X'$ at $\msp$ with $\wt (x_0,x_1,y,z) = \frac{1}{3} (4,1,5,6)$ $($resp.\ $\wt (x,y,z,t) = \frac{1}{3} (1,6,4,5)$$)$, is the unique divisorial extraction centered at $\msp$.
\end{Lem}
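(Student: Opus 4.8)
```latex
The plan is to verify that the weighted blowup $\varphi'$ is a divisorial extraction centered at $\msp$, and then invoke the classification of divisorial extractions over a $cD/3$ point to see that it is the \emph{only} one. For the first part, I would work in the orbifold charts of the weighted blowup, as set up in Section~\ref{sec:prelim}. Since $\msp = \msp_4$ is the vertex where only $w$ is nonzero, the relevant chart is the $w$-chart, where $X'$ becomes a cyclic quotient of an affine hypersurface. Using the standard defining polynomial $F'$ from Lemma~\ref{stdefeqX'cD}, one reads off the lowest-weight part with respect to the assigned $\varphi'$-weight: for family No.~$61$ this is $w^4 x_0^3$ together with the lowest-weight contributions of the remaining terms, and similarly for No.~$62$. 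The point of the specific weights $\frac{1}{3}(4,1,5,6)$ (resp.\ $\frac{1}{3}(1,6,4,5)$) is that they are exactly the weights realizing the identification of $(\msp \in X')$ as a $cD/3$ germ in its Hayakawa--Kawakita normal form, so that $\varphi'$ is the weighted blowup in those normal-form coordinates. I would then apply Lemma~\ref{lem:singwbl}: checking that the Jacobian matrix $J_\varphi$ has full rank at every point of the exceptional divisor $E$ shows that $Y'$ has only cyclic quotient singularities along $E$, and a direct computation of the discrepancy $a_E(K_{X'})$ confirms it is positive (in fact $a_E = 1/3$), so $\varphi'$ is an extremal divisorial contraction in the Mori category.

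For the uniqueness statement, I would appeal directly to the classification of divisorial contractions centered at a $cD/3$ point. By Kawakita's work (and Hayakawa's in the $cD/3$ case), a terminal singularity of type $cD/3$ admits exactly one divisorial extraction, namely the weighted blowup with discrepancy $1/3$ in suitable analytic coordinates putting the germ in normal form. Thus once I have identified $\varphi'$ as that weighted blowup, uniqueness is automatic. The only subtlety is to make sure the germ $(\msp \in X')$ is genuinely of type $cD/3$ and not of some other $cD$-flavoured type that could admit more extractions; but this is already asserted just before the lemma statement, via the hyperplane-section criterion (a general hyperplane section of the index-one cover has a du Val singularity of type $D$).

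The main obstacle I expect is the bookkeeping in the orbifold chart: one must track how the standard defining polynomial transforms under the weighted blowup, filter off terms that can be absorbed by coordinate changes, and confirm that the resulting exceptional divisor and the leading form match the $cD/3$ normal form with the prescribed discrepancy. This is the same kind of argument as in the $cA/n$ analysis (Lemmas~\ref{lem:isomwps} and the two lemmas following it), using an isomorphism-of-weighted-projective-spaces argument to pin down the weights, combined with an application of Lemma~\ref{lem:singwbl} for the singularity analysis of $Y'$. Concretely I would: (i)~write $F'$ in the $w$-chart and compute the $\varphi'$-weighted leading form; (ii)~identify $E$ inside the weighted projective space $\mbP(1,1,5,6)$ (resp.\ $\mbP(1,3,4,5)$) and check via $J_\varphi$ that $Y'$ is terminal with only quotient singularities away from finitely many points; (iii)~compute $a_E(K_{X'}) = 1/3$; (iv)~match this data against the $cD/3$ normal form to conclude $\varphi'$ is \emph{the} divisorial extraction, citing \cite{Hayakawa, Kawakita2}.
```
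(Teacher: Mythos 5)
Your first step---verifying directly that $\varphi'$ is a divisorial extraction via the orbifold chart, the Jacobian criterion of Lemma \ref{lem:singwbl}, and the discrepancy computation $a_E(K_{X'})=\tfrac{1}{3}$---is sound and in fact more self-contained than the paper, which simply cites the earlier construction of the Sarkisov link in \cite{Okada1} for this fact. The computation of the discrepancy is correct: with $\wt(x_0,x_1,y,z)=\tfrac{1}{3}(4,1,5,6)$ the weighted order of $F'$ is $4$, so $a_E = \tfrac{16}{3}-1-4=\tfrac{1}{3}$.

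The gap is in the uniqueness step. You assert that ``a terminal singularity of type $cD/3$ admits exactly one divisorial extraction, namely the weighted blowup with discrepancy $1/3$ in suitable analytic coordinates,'' and conclude that uniqueness is automatic once $\varphi'$ is identified as a weighted blowup. That general statement is false: by Kawakita every divisorial contraction over a $cD/3$ point has discrepancy $\tfrac{1}{3}$, but Hayakawa's classification in \cite[\S 9]{Hayakawa} shows that a $cD/3$ germ can admit \emph{several} such weighted blowups, with the number depending on the equation of the germ. If uniqueness held for every $cD/3$ point the lemma would not need to specify the weight at all. The input the paper actually uses is the sharper, conditional statement from Hayakawa's classification: every extremal divisorial extraction centered at a $cD/3$ point is a weighted blowup, \emph{and} if the germ admits the weighted blowup with weight $(1,4,5,6)$ as an extremal divisorial extraction, then that one is the unique extremal divisorial extraction. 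So your outline needs one more concrete step: after matching $(\msp\in X')$ with its normal form and checking that the $(1,4,5,6)$-blowup is realized as a divisorial extraction, you must invoke this specific case of the classification (rather than a blanket uniqueness for $cD/3$) to rule out the other weights appearing in Hayakawa's list.
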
 

\begin{proof}
It is proved in \cite{Okada1} that $\varphi \colon Y' \to X'$ is an extremal extraction.
According to the classification of extremal divisorial extraction of $cD/3$ point in \cite[\S 9]{Hayakawa}, every extremal divisorial extraction centered at $\msp$ is a weighted blowup and if it admits a weighted blowup with weight $(1,4,5,6)$ as an extremal divisorial extraction, then it is the unique extremal divisorial extraction of $\msp$.
\end{proof}

\section{Sarkisov links between $X$ and $X'$} \label{sec:Slink}

Let $X'$ be a member of $\mcG'_i$ with $i \in I^*_{cA/n} \cup I_{cD/3}$ and $\msp = \msp_4$.
We will show that there is a Sarkisov link $X' \ratmap X$ to the birational counterpart $X \in \mcG_i$ starting with each $\varphi_{(k,l)}$-blowup marked $X' \ratmap X \ni \frac{1}{r} (\alpha,\beta,\gamma)$.
The mark $\frac{1}{r} (\alpha,\beta,\gamma)$ indicates that the link $X' \ratmap X$ ends with the Kawamata blowup of $X$ at a singular point of type $\frac{1}{r} (\alpha,\beta,\gamma)$.

\begin{Thm} \label{thm:SlinkcD}
Suppose $i \in I_{cD/3} = \{61,62\}$.
Then, there is a Sarkisov link $X' \ratmap X$ starting with the unique divisorial extraction centered at $\msp$.
\end{Thm}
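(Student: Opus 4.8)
The plan is to construct the Sarkisov link explicitly as a two-ray game starting from the unique divisorial extraction $\varphi' \colon Y' \to X'$ centered at $\msp = \msp_4$, following the standard recipe used throughout the paper (and in \cite{Okada1,Okada2}). First I would recall that, by the lemma at the end of Section \ref{sec:divextr}, $\varphi'$ is the weighted blowup of $X' \subset \mbP(1,1,5,6,3)$ (for $i=61$) or $X' \subset \mbP(1,3,4,5,3)$ (for $i=62$) at $\msp$ with $\varphi'$-weight $\tfrac13(4,1,5,6)$ (resp.\ $\tfrac13(1,6,4,5)$) on the orbifold coordinates, and in particular that $\rho(Y')=2$, so that $\bNE(Y')$ (equivalently the mobile cone) has exactly two extremal rays, one of which is contracted by $\varphi'$. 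The link is obtained by running the minimal model program on the other ray.

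The key steps, in order, are as follows. (i) Compute the relevant numerical data: the discrepancy $a_E = 1/n$ with $n = 3$, the pullback $\varphi'^*(-K_{X'})$, and the class of $-K_{Y'}$; identify a candidate anticanonical-type divisor that is nef but not ample on $Y'$, spanning the second extremal ray. Here it is cleanest to exhibit $Y'$ torically, via its embedding into the appropriate ambient toric variety obtained by the same weighted blowup of the ambient $\mbP$, so that the two-ray game on $Y'$ is induced by a two-ray game on the ambient toric variety (this is exactly the mechanism in \cite[Section 4.2]{Okada1}, which I may invoke). (ii) Show that the second ray gives rise to a flip (or a sequence of flips/flops over an intermediate variety), produced by a variation of GIT on the toric ambient space; track how the strict transform of $X'$ behaves under this modification. (iii) Identify the final step of the game as a divisorial contraction to $X$, which by the construction of birational counterparts in Lemma \ref{stdefeqX'cD} must be (the inverse of) a Kawamata blowup of $X \in \mcG_{61}$ (resp.\ $\mcG_{62}$) at the quotient singular point indicated by the mark in the big table — for $i=61$ the relevant point is the $\tfrac16(1,1,5)$-type point $\msp_5 \in X \subset \mbP(1,1,4,5,6,11)$, and for $i=62$ the $\tfrac19(\ldots)$-type point on $X \subset \mbP(1,3,4,5,6,9)$. (iv) Verify that the resulting birational map $X' \ratmap X$ is a genuine Sarkisov link: the composite is not an isomorphism (already guaranteed by Theorem \ref{thmG} / \cite[Theorem 1.3]{Okada1}), and the intermediate contractions are all extremal in the Mori category.

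I expect the main obstacle to be step (ii): verifying that the second half of the two-ray game consists precisely of the expected flips and that the strict transform of $X'$ intersects the flipping loci in the way needed so that, after the flips, the remaining extremal contraction is divisorial of the required type rather than of fiber type or centered at the wrong locus. Concretely this amounts to a chamber-by-chamber analysis of the GIT fan of the toric ambient space together with a check — using the standard defining polynomial $F'$ of Lemma \ref{stdefeqX'cD} and the explicit generality conditions of Condition \ref{addcond} — that $X'$ meets each flipped locus transversally and in the locus where the toric flip restricts to a genuine threefold flip on $X'$; the two families $i=61$ and $i=62$ have to be handled separately because the weighted hypersurface degrees and ambient weights differ, though the argument is parallel. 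Since all of these are finite, explicit computations on a fixed toric variety, the proof is a verification rather than a new idea; I would organize it as: set up the toric ambient two-ray game, read off the links, and then in each case quote the explicit matrices of weights to confirm the final contraction is the Kawamata blowup of $X$.
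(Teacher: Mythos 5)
Your proposal is correct in outline but takes a genuinely different (and far more laborious) route than the paper. The paper's entire proof is two sentences: in \cite[Theorem 4.10]{Okada1} a Sarkisov link $X \ratmap X$ (from $X$ to $X'$) was already constructed, and it \emph{ends} with the divisorial contraction centered at $\msp$; since a Sarkisov link run backwards is again a Sarkisov link, its inverse is a link $X' \ratmap X$ \emph{starting} with that extraction, which is exactly what is claimed. You are clearly aware of the prior construction (you cite \cite[Section 4.2]{Okada1} and Theorem \ref{thmG}), but you use it only to guarantee non-isomorphy in step (iv) rather than to shortcut the whole argument; instead you propose to redo the two-ray game from the $Y'$ side, with a chamber-by-chamber analysis of the ambient toric VGIT and transversality checks against $F'$. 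That would work --- the existence of the forward link guarantees the backward game terminates as desired --- but it reproves something already established and is where all the effort in your plan is concentrated. What your approach buys is an explicit, self-contained description of the intermediate flips seen from $X'$; what the paper's approach buys is that no new computation is needed at all. One small factual slip: for $i=61$ the link ends with the Kawamata blowup of $X = X_{12,15} \subset \mbP(1,1,4,5,6,11)$ at the point $\msp_5$ of type $\frac{1}{11}(1,5,6)$ (as recorded in the big table), not at a point of type $\frac{1}{6}(1,1,5)$ as you wrote.
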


\begin{proof}
In \cite[Theorem 4.10]{Okada1}, we constructed a Sarkisov link $X \ratmap X'$ which ends with the divisorial contraction centered at $\msp$.
Thus its inverse $X' \ratmap X$ is the desired one.
\end{proof}

In the following, we assume that $i \in I^*_{cA/n}$ and $X' \subset \mbP (1,n,a_2,a_3,n)$ is defined by a standard polynomial $F' = w^2 x_2 x_3 + w f + g$.
Let $X \in \mcG_i$ be the birational counterpart.
In \cite[Section 4.2]{Okada1}, Sarkisov links $X \ratmap X'$ are constructed.
Each link $X \ratmap X'$ ends with a divisorial extraction centered at $\msp$.
Such an extraction is a weighted blowup and the complete description is given.
We recall the construction.

Let $\varphi' \colon Y' \to X'$ be the weighted blowup with $\wt (x_0,x_1,x_2,x_3) = \frac{1}{n} (1,n,a_2 + n,a_3)$ (resp.\ $\frac{1}{n} (1,n,a_2,a_3+n)$).
It is proved in \cite[Section 4.2]{Okada1} that $\varphi$ is a divisorial extraction and hence we have $\varphi' = \varphi_{(a_2 + n, a_3)}$ (resp.\ $\varphi_{(a_2,a_3+n)}$).
We define
\[
Z := (x_4 (x_4 x_3 + f) + x_2 g = 0) \subset \mbP (1,n,a_2,a_3,a_2 +n),
\]
\[
(\text{resp}.\ Z := (x_5 (x_5 x_2 + f) + x_3 g = 0) \subset \mbP (1,n,a_2,a_3,a_3+n)),
\]
where $\deg x_4 = a_2 + n$ and $\deg x_5 = a_3+n$.
By taking the ratio in two ways
\[
x_5 := - \frac{x_4 x_3 + f}{x_2} = \frac{g}{x_4} \text{ and } w := \frac{x_4}{x_2} = - \frac{g}{x_3+f},
\]
\[
\left( \text{resp}.\ x_4 := - \frac{x_5 x_2 + f}{x_3} = \frac{g}{x_5} \text{ and } w := \frac{x_5}{x_3} = - \frac{g}{x_5 x_2 + f} \right),
\]
we obtain birational maps $X \ratmap Z$ and $X' \ratmap Z$.
The Kawamata blowup $\varphi \colon Y \to X$ at $\msp_5$ (resp.\ $\msp_4$) and $\varphi$ resolves the indeterminacies of $X \ratmap Z$ and $X' \ratmap Z$, respectively, and we have the following diagram
\[
\xymatrix{
Y' \ar[d]_{\varphi'} \ar@{-->}[rr]^{\tau} \ar[rd] & & Y \ar[d]^{\varphi} \ar[ld] \\
X' & Z & X}
\]
where $\tau$ is a flop.
This is the Sarkisov link, denoted by $\sigma_{(a_2+n,a_3)}$ (resp.\ $\sigma_{(a_2,a_3+n)}$), starting with $\varphi_{(a_2 + n,a_3)}$ (resp.\ $\varphi_{(a_2,a_3+n)}$).

If $i = \{7,10\}$, then the above construction gives all the Sarkisov links $X' \ratmap X$ centered at $\msp$.

Suppose $i \ne \{7,10\}$, that is, 
\[
i \in \{6,9,16,18,22,26,28,33,44,48,57\}.
\]
In the following, let $(k,l)$ be either $(a_2+n,a_3)$ or $(a_2,a_3+n)$. 
We have constructructed a Sarkisov link $X' \ratmap X$ starting with $\frac{1}{n} (k,l)$-blowup.
We see that $X'$ admits $\frac{1}{n} (l,k)$-blowup.
We will explain a relation between $\varphi_{(k,l)}$ and $\varphi_{(l,k)}$ and show that there is a Sarkisov link $X' \ratmap X$ starting with $\varphi_{(l,k)}$.

Suppose $i \in \{9,22,28,33,48,57\}$.
Then $\deg F' = 2 a_3$ and we can write
\[
F' = w^2 x_2 x_3 + w (x_3 a + b) + x_3^2 + x_3 c + d
\]
for some $a,b,c,d \in \mbC [x_0,x_1,x_2]$.

\begin{Def} \label{def:mu}
For $i \in \{9,22,28,33,48,57\}$, we define $\mu$ to be the biregular involution of $X'$ defined by the replacement $x_3 \mapsto - x_3 - w^2 x_2 - w a - c$.
\end{Def}

We see that $\mu (\msp) = \msp$ and that the composite $\mu \circ \varphi_{(k,l)}$ defines the $\varphi_{(l,k)}$-blowup. 
The diagram 
\[
\xymatrix{
 & Y' \ar[ld]_{\varphi_{(l,k)}} \ar[d]^{\varphi_{(k,l)}} \ar@{-->}[rr]^{\text{flop}} & & Y \ar[d]_{\varphi} \\
X' \ar[r]^{\mu} \ar@{-->}@/_1.1pc/[rrr]_{\sigma_{(l,k)}} & X' \ar@{-->}[rr]^{\sigma_{(k,l)}} & & X}
\]
gives the Sarkisov link $\sigma_{(l,k)} \colon X' \ratmap X$ starting with $\varphi_{(l,k)}$.

Suppose $i \in \{6,16,18,26,44\}$.
Then, $\deg F' = 2 a_3 + n$ and we can write
\[
F' = w^2 x_2 x_3 + w (x_3^2 + x_3 a + b) + x_3 c + d
\]
for some $a,b,c,d \in \mbC [x_0,x_1,x_2]$.
Here, a priori, there is a term $x_3^2 e$ in $F'$ for some $e \in \mbC [x_0,x_1,x_2]$ but we can eliminate the term by replacing $w$ with $w-e$.
The birational counterpart $X \in \mcG_i$ is defined by
\[
\begin{split}
F_1 &= x_5 x_2 + x_4 x_3 + (x_3^2 + x_3 a + b) \\
F_2 &= x_5 x_4 - (x_3 c + d)
\end{split}
\]
in $\mbP (1,n,a_2,a_3,a_2+n,a_3+n)$.

\begin{Def} \label{def:nu}
Suppose $i \in \{6,16,18,26,44\}$.
We define
\[
\begin{split}
\tilde{F}' &:= F' (x_0,x_1,x_2,- x_3 - w x_2 - a,w) \\
&= w^2 x_2 x_3 + w (x_3^2 + x_3 a + b - x_2 c) - x_3 c + d - ac
\end{split}
\]
and then define $\tilde{X}'$ to be the weighted hypersurface defined by $\tilde{F}'$.
We denote by $\nu' \colon X' \to \tilde{X}'$ the isomorphism defined by the replacement $x_3 \mapsto - x_3 - w x_2 - a$.

We define
\[
\begin{split}
\tilde{F}_1 &:= F_1 (x_0,x_1,x_2,-x_3-x_4-a,x_4,x_5 +c) \\
&= x_5 x_2 + x_4 x_3 + (x_3^2 + x_3 a + b - x_2 c) \\
\tilde{F}_2 &:= F_2 (x_0,x_1,x_2,-x_3-x_4-a,x_4,x_5 +c) \\
&= x_5 x_4 - (-x_3 c + d - ac)
\end{split}
\]
and then define $\tilde{X}$ to be the WCI defined by $\tilde{F}_1 = \tilde{F}_2 = 0$.
We denote by $\mu \colon X \to \tilde{X}$ the isomorphism defined by the replacements $x_3 \mapsto - x_3 - x_4 - a$ and $x_5 \mapsto x_5 + c$.
\end{Def}

Since $\tilde{X}' \in \mcG'_i$ and $\tilde{X}$ is the birational counterpart of $\tilde{X}'$, there is a Sarkisov link $\tilde{\sigma}_{(k,l)} \colon \tilde{X}' \ratmap \tilde{X}$ starting with $(k,l)$-blowup $\tilde{\varphi}_{(k,l)}$ of $\tilde{X'}$ and ending with Kawamata blowup $\tilde{\varphi} \colon \tilde{Y} \to \tilde{X}$.
We see that $\varphi_{(l,k)} = {\nu'}^{-1} \circ \tilde{\varphi}_{(k,l)}$ and that the composite $\varphi := \nu^{-1} \circ \tilde{\varphi}$ is the Kawamata blowup of $X$.
Therefore the diagram
\[
\xymatrix{
 & \tilde{Y}' \ar[d]^{\tilde{\varphi}_{(k,l)}} \ar[ld]_{\varphi_{(l,k)}} \ar@{-->}[rr]^{\text{flop}} & & \tilde{Y} \ar[d]_{\tilde{\varphi}} \ar[rd]^{\varphi_{\msp}} & \\
 X' \ar[r]^{\nu'} \ar@{-->}@/_1.2pc/[rrrr]_{\sigma_{(l,k)}} & \tilde{X}' \ar@{-->}[rr]^{\tilde{\sigma}_{(k,l)}} & & \tilde{X} & \ar[l]_{\nu} X}
\] 
gives the Sarkisov link $\sigma_{(l,k)} \colon X' \ratmap X$ starting with $\varphi_{(l,k)}$ and ending with $\varphi$.
As a conclusion, we have the following.

\begin{Thm} \label{thm:SlinkcA}
Let $X'$ be a member of $\mcG'_i$ with $i \in I^*_{cA/n}$ and let $\varphi_{(k,l)}$ be a divisorial extraction centered at $\msp = \msp_4$ marked $X' \ratmap X$ in the big table.
Then there is a Sarkisov link $\sigma_{(k,l)} \colon X' \ratmap X$ starting with $\varphi_{(k,l)}$.
\end{Thm}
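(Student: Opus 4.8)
The statement is a piece of bookkeeping built on top of the explicit constructions carried out in the paragraphs just above it, so the plan is to match every divisorial extraction marked $X' \ratmap X$ in the big table with one of the Sarkisov links already produced in this section.

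First I would fix $i \in I^*_{cA/n}$ and a divisorial extraction $\varphi_{(k,l)} \colon Y'_{(k,l)} \to X'$ centered at $\msp = \msp_4$ that carries the mark $X' \ratmap X$ in the big table, and read off the pair $(k,l)$ from that table. By the way the table is set up, $(k,l)$ is one of $(a_2+n,a_3)$ or $(a_2,a_3+n)$, or, for $i \in \{6,9,16,18,22,26,28,33,44,48,57\}$, one of the transposed pairs $(a_3,a_2+n)$, $(a_3+n,a_2)$; for every other family in $I^*_{cA/n}$ no extraction centered at $\msp$ carries this mark, so there is nothing to prove in those cases.

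For $(k,l) \in \{(a_2+n,a_3),(a_2,a_3+n)\}$ no new work is needed: the blowup $\varphi_{(k,l)}$ is exactly the weighted blowup $\varphi'$ with $\wt(x_0,x_1,x_2,x_3) = \frac{1}{n}(1,n,a_2+n,a_3)$ (resp.\ $\frac{1}{n}(1,n,a_2,a_3+n)$), and the flop $\tau$ onto $Y$ together with the Kawamata blowup $\varphi \colon Y \to X$ at $\msp_5$ (resp.\ $\msp_4$) fit into the diagram displayed above, which is precisely the link $\sigma_{(k,l)}$ recalled here from \cite[Section 4.2]{Okada1}. For the transposed pairs I would bring in the biregular involution $\mu$ of $X'$ of Definition \ref{def:mu} when $i \in \{9,22,28,33,48,57\}$, and the isomorphism $\nu' \colon X' \to \tilde{X}'$ onto a second member of $\mcG'_i$ of Definition \ref{def:nu} when $i \in \{6,16,18,26,44\}$; using that $\mu \circ \varphi_{(k,l)}$ realizes the transposed blowup $\varphi_{(l,k)}$ (resp.\ that $\varphi_{(l,k)} = {\nu'}^{-1} \circ \tilde{\varphi}_{(k,l)}$), the commuting squares displayed above then furnish the Sarkisov link $\sigma_{(l,k)} \colon X' \ratmap X$ starting with $\varphi_{(l,k)}$.

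The only genuine content behind the theorem is the claim that each weighted blowup in sight really is a divisorial extraction centered at $\msp$ and that $\mu$ and $\nu'$ carry such blowups to one another with the right weights; both facts are already available, in the lemma classifying divisorial extractions at a $cA/n$ point and in Definitions \ref{def:mu} and \ref{def:nu}, so I expect the remaining task to be nothing more than careful cross-checking against the big table. I note for safety that the degree identity $\deg F' = 2a_3$ (resp.\ $\deg F' = 2a_3+n$) underlying $\mu$ (resp.\ $\nu'$) forces $a_3 \equiv a_2 \pmod{n}$, so the transposed blowup $\varphi_{(l,k)}$ automatically meets the congruence condition required of a divisorial extraction; hence no degenerate case can occur, and the main obstacle is purely organizational.
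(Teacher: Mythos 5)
Your proposal is correct and follows essentially the same route as the paper: the theorem is a summary of the constructions immediately preceding it, with the untransposed pairs handled by the links $\sigma_{(a_2+n,a_3)}$, $\sigma_{(a_2,a_3+n)}$ recalled from \cite[Section 4.2]{Okada1} and the transposed pairs obtained by precomposing with $\mu$ (for $i \in \{9,22,28,33,48,57\}$) or $\nu'$ (for $i \in \{6,16,18,26,44\}$). Your remark that $d = 2a_3$ or $2a_3+n$ forces $a_3 \equiv a_2 \pmod{n}$, so that the transposed weights satisfy the congruence required of a divisorial extraction, is a useful explicit justification of the paper's assertion that $X'$ admits the $\frac{1}{n}(l,k)$-blowup.
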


\section{Birational involutions centered at quotient singular points} \label{sec:birinvtqpt}

In this section, we construct a birational involution of $X' \in \mcG'_i$ that is a Sarkisov link centered at a suitable terminal quotient singular point.
Throughout this section, let $\msp \in X'$ be a terminal quotient singular point with non-empty third column in the big table and let $\varphi' \colon Y' \to X'$ be the Kawamata blowup of $X'$ at $\msp$ with exceptional divisor $E$.
For a divisor or a curve $\Delta$ on $X'$, we denote by $\tilde{\Delta}$ the proper transform of $\Delta$ on $Y'$.

\subsection{Quadratic involutions}

\begin{Thm} \label{thm:birivQI}
Let $X'$ be a member of $\mcG'_i$ with $i \in I^*_{cA/n} \cup I_{cD/3}$ and $\msp \in X'$ a terminal quotient singular point marked Q.I. in the third column.
Then there exists a birational involution of $X'$ that is a Sarkisov link centered at $\msp$.
\end{Thm}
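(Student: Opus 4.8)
The quadratic involution marked ``Q.I.'' is the classical construction of Corti--Pukhlikov--Reid, and the plan is to follow \cite{CPR} essentially verbatim, adapting it to the weighted hypersurface $X' = (F' = 0) \subset \mbP := \mbP(a_0,\dots,a_4)$. The point $\msp$ will be one of the coordinate vertices, say $\msp = \msp_j$ of type $\frac{1}{r}(a,b,c)$, at which two of the remaining variables, say $x_k,x_l$, have the property that $x_j x_k$ and $x_j x_l$ enter the defining polynomial. First I would write $F'$ in the form
\[
F' = x_j^2 a(\mathbf{x}) + x_j\, b(\mathbf{x}) + c(\mathbf{x}),
\]
where $\mathbf{x}$ denotes the coordinates other than $x_j$, the polynomial $a$ is linear in the two distinguished variables (so that $\msp$ is a point where $X'$ has the appropriate local equation), and $b,c$ do not involve $x_j$. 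Regarding $x_j$ as the unknown in this quadratic equation, the substitution $x_j \mapsto x_j' := -b/a - x_j$ (equivalently, sending a point to the ``second root'' of the quadratic through it) gives a birational self-map $\iota \colon X' \ratmap X'$ which is clearly an involution where defined. Its base locus is contained in $(a = c = 0)_{X'} \cup \{\msp\}$, a one-dimensional set together with the point.

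Next I would verify that $\iota$ is in fact a Sarkisov link starting with the Kawamata blowup $\varphi' \colon Y' \to X'$ at $\msp$. On $Y'$ the proper transforms of the linear system $|{-}mK_{X'}|$ used to define $\iota$ become base-point-free in a neighbourhood of $E$ after the blowup, and $\iota$ lifts to a map $Y' \ratmap Y'$ which is an isomorphism in codimension $1$, i.e. a composition of flops, followed by the same contraction $\varphi'$ on the other side. Concretely one realises this by exhibiting the ambient toric picture: the two projections from $Y'$ to $X'$ sit over a common model obtained by running the $2$-ray game on $Y'$; the link has the shape
\[
\xymatrix{
Y' \ar[d]_{\varphi'} \ar@{-->}[r] & Y' \ar[d]^{\varphi'} \\
X' \ar@{-->}[r]^{\iota} & X'
}
\]
with the top arrow a (sequence of) flop(s). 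The standard computation shows that $\varphi'$ is indeed a maximal extraction realising $\msp$ as a maximal center with respect to $\iota^{-1}|{-}mK_{X'}|$, so that the link is a genuine Sarkisov link; this is exactly \cite[\S 4]{CPR} transported to codimension $2$ via the birational counterpart, and I would cite \cite[Section 2.4]{Okada2} for the bookkeeping of the Sarkisov program in this setting.

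The main obstacle, and the only place requiring genuine work, is checking that the construction is not obstructed by the geometry of $X'$: one must confirm (i) that $a$ genuinely involves the two distinguished coordinates in the way needed for $\msp$ to admit the expected Kawamata blowup, which uses the standard form of $F'$ from Lemmas \ref{stdefeqX'cA} and \ref{stdefeqX'cD}; and (ii) that the flop locus on $Y'$ lies over the curve $(a = c = 0)_{X'}$ and consists of finitely many flopping curves rather than a divisor or a non-flopping contraction, so that the $2$-ray game terminates with a second divisorial contraction rather than a fibration. Point (ii) is where the generality Conditions \ref{cond1} and \ref{addcond} (excluding low-degree WCI curves) are used: they guarantee $(a = c = 0)_{X'}$ has the right dimension and does not pass through the wrong singular points, so that the link does not degenerate. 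These verifications are carried out case by case using the big table, but the argument is uniform and follows \cite{CPR}; I expect no conceptual difficulty beyond the care needed in the weighted setting.
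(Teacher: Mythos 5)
Your proposal follows essentially the same route as the paper: after a coordinate change the defining polynomial is put in the Corti--Pukhlikov--Reid normal form $F' = x_j^2 x_k + x_j h_1 + h_2$ (your $a$ normalizes to the single tangent coordinate $x_k$), and the existence of the quadratic involution as a Sarkisov link is then delegated to \cite[Theorem 4.9]{CPR}. The paper's proof is exactly this two-step reduction plus citation; your additional discussion of the flop structure and the $2$-ray game is an unpacking of what that cited theorem already provides rather than a different argument.
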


\begin{proof}
Let $\mbP (a_0,\dots,a_4)$ be the ambient space of $X'$ with coordinates $x_0,\dots,x_3,w$.
After replacing coordinates, we may assume $\msp = \msp_j$ for some $j \ne 4$.
Then, after replacing coordinates, the defining polynomial can be written as $F' = x_j^2 x_k + x_j h_1 + h_2$, where $k \ne j$ and $h_1,h_2$ are polynomials in $x_0,\dots,x_3,w$ that do not involve $x_j$.
It then follows from \cite[Theorem 4.9]{CPR} that there is a birational involution of $X'$ that is a Sarkisov link centered at $\msp$.
\end{proof}

\subsection{Family $\mcG'_{18}$ and the point of type $\frac{1}{2} (1,1,1)$} \label{sec:birinv1}

Let $X' = X'_8 \subset \mbP (1,1,2,3,2)$ be a member of $\mcG'_{18}$ and $\msp \in X'$ the singular point of type $\frac{1}{2} (1,1,1)$.

The defining polynomial of $X'$ is of the form $F' = w^2 x_0 z + w f_6 + g_8$, where $f_6,g_8 \in \mbC [x_0,x_1,y,z]$.
If $y^3 \notin f_6$, then there is no $\frac{1}{2} (1,1,1)$ point on $X'$.
Hence we may assume that $y^3 \in f_6$.
After replacing $w$ suitably, we assume that $\msp = \msp_2$ and there is no monomial in $g_8$ that is divisible by $y^3$.

We first treat the case where $z^2 y \in g_8$.
By replacing coordinates suitably, we have
\[
F' = y z^2 + a_5 z - w y^3 - b_4 y^2 - c_6 y + d_8,
\]
where $a_5,b_4,c_6,d_8 \in \mbC [x_0,x_1,w]$.
By \cite[Theorem 4.13]{CPR}, the sections
\[
u := z^2 - w y^2 - b_4 y - c_6 \text{ and } v := u z + a_5 w y + a_5 b_4
\]
lift to plurianticanonical sections of $Y'$.
Moreover, the anticanonical model $Z'$ of $Y'$ is the weighted hypersurface defined by the equation
\[
- v^2 + a_6 b_4 v + u^3 + u^2 c_6 - (b_4 d_8 + a_5^2 w)v + (-a_5^2 c_6 + d_8^2) w = 0
\]
in $\mbP (1_{x_0},1_{x_1},2_w,6_u,9_v)$ and the corresponding map $\psi' \colon Y' \ratmap Z'$ is a morphism.
By \cite[Lemma 3.2]{Okada2}, either $\msp$ is not a maximal center or there is a birational involution of $X'$ that is a Sarkisov link centered at $\msp$.

In the following, we treat the case where $z^2 y \notin g_8$.
We will construct a birational involution of a suitable model of $Y'$ and observe that the induced birational involution of $X'$ gives a Sarkisov link starting with $\varphi$.
This kind of involution is called an {\it invisible involution} in \cite{CP} and its construction is first introduced there.
One can find its construction in a relatively general setting in \cite[Section 7.1]{Okada2} and the rest of this section is to verify \cite[Condition 7.1]{Okada2}.
This involves complicated computations and a reader not interested in special members may skip this part since this does not happen for a general $X' \in \mcG'_{33}$.

After re-scaling $y,z,w$, we assume that the coefficients of $y^3$ and $z^2$ in $f_6$ are both $1$.
In this case, either $y^2 z x_0 \in g_8$ or $y^2 z x_1 \in g_8$ because otherwise $X'$ is not quasismooth at $(0 \!:\! 0 \!:\! -1 \!:\! 1 \!:\! 0)$.
We see that $x_0$, $x_1$, $z$ vanish along $E$ to order $1/2$ and $w$ vanishes along $E$ to order $2/2$.
For $\lambda,\mu \in \mbC$, we define $S_{\lambda} := (x_1 - \lambda x_0 = 0)_{X'}$ and $T_{\mu} := (w - \mu x_0^2 = 0)_{X'}$.
We see that $S_{\lambda}$ is normal for a general $\lambda$.
We set 
\[
\bar{F}'_{\lambda,\mu} := F (x_0,\lambda x_0,y,z,\mu x_0^2) \in \mbC [x_0,y,z].
\]
We see that $\bar{F}'_{\lambda,\mu}$ is divisible by $x_0$ but not by $x_0^2$ since $y^2 z x_0 \in g_8$ or $y^2 z x_1 \in g_8$.
We have $T_{\mu} |_{S_{\lambda}} = \Gamma + C_{\lambda,\mu}$, where $\Gamma = (x_0 = x_1 = w = 0)$ and
\[
C_{\lambda,\mu} = (x_1 - \lambda x_0 = w - \mu x_0^2 = \bar{F}'_{\lambda,\mu}/x_0 = 0).
\]
Let $Z' \to Y'$ be the Kawamata blowup of $Y'$ at the $\frac{1}{3} (1,1,2)$ point that is the inverse image of $\msp_3$ by $\varphi$ with exceptional divisor $F \cong \mbP (1,1,2)$.
Let $W \to Z'$ be the Kawamata blowup of $Z'$ at the $\frac{1}{2} (1,1,1)$ point lying on $F$ with exceptional divisor $G$.
For a curve or a divisor $\Delta$ on $X'$, $Y'$ or $Z'$, we denote by $\hat{\Delta}$ the proper transform of $\Delta$ on $W$.

\begin{Lem}
We have
\[
(-K_{Y'} \cdot \tilde{C}_{\lambda,\mu}) = \frac{2}{3}, (-K_W \cdot \hat{C}_{\lambda,\mu}) = 0, (-K_W \cdot \hat{\Gamma}) = -1
\]
and
\[ 
(\hat{E} \cdot \hat{C}_{\lambda,\mu}) = 1, (\hat{F} \cdot \hat{C}_{\lambda,\mu}) = 0, (G \cdot \hat{C}_{\lambda,\mu}) = 1.
\]
\end{Lem}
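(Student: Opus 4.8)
The plan is to compute each of these six intersection numbers by tracking the relevant proper transforms through the three successive blowups $Y'\to X'$, $Z'\to Y'$, $W\to Z'$ and using the standard discrepancy/multiplicity bookkeeping for Kawamata blowups of cyclic quotient singularities. First I would record the vanishing orders already extracted in the text: along $E$ the coordinates $x_0,x_1,z$ vanish to order $1/2$ and $w$ to order $2/2$, so for any curve $\Gamma'$ not contained in $E$ one has $(-K_{Y'}\cdot\tilde\Gamma')=(-K_{X'}\cdot\Gamma')-\tfrac12 m_E(\Gamma')$ where $m_E$ is the multiplicity at $\msp$ measured in the $\tfrac12(1,1,1)$ chart. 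Since $-K_{X'}\sim_{\mbQ}\mcO(1)$ and $C_{\lambda,\mu}$ is a complete intersection of type $(1,2,d-1)$ in $\mbP(1,1,2,3,2)$ (where $d=\deg F'=8$, so $d-1=7$), a direct degree count gives $(-K_{X'}\cdot C_{\lambda,\mu})$ and then, reading off that $\bar F'_{\lambda,\mu}/x_0$ has order $3/2$ at $\msp$ while the linear and quadratic sections have orders $1/2$ and $2/2$, one gets $m_E(C_{\lambda,\mu})$ and hence $(-K_{Y'}\cdot\tilde C_{\lambda,\mu})=2/3$; this also fixes $(\hat E\cdot\hat C_{\lambda,\mu})$ after the later blowups are seen not to meet $\tilde C_{\lambda,\mu}$ a second time along $E$.

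Next I would analyse how $\tilde C_{\lambda,\mu}$ and $\tilde\Gamma$ sit relative to the $\tfrac13(1,1,2)$ point over $\msp_3$ and the $\tfrac12(1,1,1)$ point on $F$. The curve $\tilde C_{\lambda,\mu}$ passes through the $\tfrac13$-point (it is cut out by $x_1-\lambda x_0$ and a section vanishing appropriately in $z$), so on $Z'$ one subtracts the corresponding $-K$-contribution $1/3$ to land at $(-K_{Z'}\cdot\hat C)$ before the final blowup, and $(\hat F\cdot\hat C_{\lambda,\mu})$ is computed from that passage; then one checks — and this is where a short local coordinate computation in the relevant chart is needed — that $\hat C_{\lambda,\mu}$ meets the exceptional divisor $G$ of $W\to Z'$ transversally in one point, giving $(G\cdot\hat C_{\lambda,\mu})=1$ and, subtracting $1/2$ more from $-K$, the value $(-K_W\cdot\hat C_{\lambda,\mu})=0$; the claim $(\hat F\cdot\hat C_{\lambda,\mu})=0$ should come out of the center of the $\tfrac13$-blowup lying in a chart where $\hat C$ and $F$ are separated after the flop-like passage, i.e. $\tilde C$ hits that point but the proper transform is disjoint from the full exceptional $\mbP(1,1,2)$. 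For $\Gamma=(x_0=x_1=w=0)$, which is a coordinate line in $\mbP(2_y,3_z)$, I would compute $(-K_{X'}\cdot\Gamma)$, its multiplicity $1$ at $\msp$, hence $(-K_{Y'}\cdot\tilde\Gamma)$, then track it through the $\tfrac13$-point (which it passes through) and the $\tfrac12$-point on $F$, subtracting the $-K$-contributions at each stage to arrive at $(-K_W\cdot\hat\Gamma)=-1$; the negativity is exactly the signal that $\hat\Gamma$ is the flopping/flipping curve relevant to the invisible involution.

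The main obstacle I expect is not the arithmetic of the discrepancies but getting the \emph{incidence data} exactly right — namely verifying precisely which of the three exceptional divisors $E$, $F$, $G$ each of $\hat\Gamma$ and $\hat C_{\lambda,\mu}$ meets, and with what multiplicity, when the centers of the second and third blowups are themselves points sitting on the earlier exceptional divisors. This requires choosing explicit orbifold charts (as set up in Subsection on weighted blowups and Lemma \ref{lem:singwbl}) for the $\tfrac13(1,1,2)$ point and the $\tfrac12(1,1,1)$ point on $F\cong\mbP(1,1,2)$, writing $\bar F'_{\lambda,\mu}/x_0$ in those charts using the hypothesis $z^2y\notin g_8$ together with $y^2zx_0\in g_8$ or $y^2zx_1\in g_8$, and confirming the curve does (or does not) pass through each center and meets each exceptional divisor transversally; the genericity of $\lambda$ (and normality of $S_\lambda$) will be used to avoid degenerate incidences. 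Once those six local pictures are pinned down, the six displayed numbers follow by routine subtraction of the Kawamata discrepancy contributions $\tfrac12,\tfrac13,\tfrac12$ from $(-K_{X'}\cdot\Gamma')$ at the three stages, together with $-K_{W}=\varphi^*_{\text{total}}(-K_{X'})-\tfrac12\hat E-\tfrac13\hat F-\tfrac12 G$ applied to each proper transform.
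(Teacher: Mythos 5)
Your strategy for $\Gamma$ is essentially the paper's: track the coordinate curve $(x_0=x_1=w=0)$ through the three blowups, read off $(E\cdot\tilde{\Gamma})=1$, $(F\cdot\breve{\Gamma})=\frac12$ (the paper gets this from a weighted blowup of the ambient space at the $\frac13(1,1,2)$ point), $(G\cdot\hat{\Gamma})=1$, and subtract discrepancies. But for $C_{\lambda,\mu}$ you take a genuinely different and much more laborious route. The paper never analyses $C_{\lambda,\mu}$ locally at any of the three centers: it exploits the residual decomposition $T_\mu|_{S_\lambda}=\Gamma+C_{\lambda,\mu}$ together with $\tilde{S}\sim_{\mbQ}-K_{Y'}$, $\tilde{T}\sim_{\mbQ}-2K_{Y'}$ (and likewise on $Z'$ and $W$), so that $2(-K)^3=(-K\cdot\Gamma\text{-part})+(-K\cdot C\text{-part})$ at each stage. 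Since $(-K_{Y'})^3=\frac16$, $(-K_{Z'})^3=0$, $(-K_W)^3=-\frac12$ are known, all six numbers for $C_{\lambda,\mu}$ — including $(E\cdot\tilde{C}_{\lambda,\mu})$, $(F\cdot\breve{C}_{\lambda,\mu})$, $(G\cdot\hat{C}_{\lambda,\mu})$ — fall out by arithmetic from the $\Gamma$-numbers, and $(\hat{F}\cdot\hat{C}_{\lambda,\mu})=0$ comes for free from $\psi^*F=\hat{F}+\frac12 G$. What you correctly identify as ``the main obstacle'' (the incidence data of $\hat{C}_{\lambda,\mu}$ with $E$, $F$, $G$) is thus entirely avoided; your plan would require pinning down $\breve{C}_{\lambda,\mu}\cap F$ and $\hat{C}_{\lambda,\mu}\cap G$ explicitly in orbifold charts, which is feasible but considerably harder and is not needed.

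Two concrete slips you should fix if you execute your version. First, $\bar{F}'_{\lambda,\mu}/x_0$ does not vanish to order $\frac32$ along $E$: in the chart $y\ne 0$ it reads $\beta z+\mu x_0+(\text{higher order})$ with $\beta\ne0$ for general $\lambda$, so its order is $\frac12$; the preimage of $C_{\lambda,\mu}$ in the index-one cover is then smooth at the origin and $(E\cdot\tilde{C}_{\lambda,\mu})=1$ (your claimed order would give the wrong multiplicity). Second, your closing formula $-K_W=\Phi^*(-K_{X'})-\frac12\hat{E}-\frac13\hat{F}-\frac12 G$ is incorrect as written: since $\hat{F}$ is the proper transform and $\psi^*F=\hat{F}+\frac12 G$, the coefficient of $G$ must be $\frac13\cdot\frac12+\frac12=\frac23$; with $-\frac12 G$ you would get $(-K_W\cdot\hat{\Gamma})=-\frac56$ instead of $-1$. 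The stage-by-stage subtraction you describe in the body of your argument is the correct bookkeeping; only the assembled formula is off.
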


\begin{proof}
In this proof, we write $S = S_{\lambda}$, $T = T_{\mu}$ and $C = C_{\lambda,\mu}$ for simplicity. 
We see that $\tilde{\Gamma}$ intersects $E$ at one point so that $(E \cdot \tilde{\Gamma}) = 1$.
For a curve or a divisor $\Delta$ on $X'$ or $Y'$, we denote by $\breve{\Delta}$ the proper transform of $\Delta$ on $Z'$.
We see that $\breve{\Gamma}$ intersects $F$ at the $\frac{1}{2} (1,1,1)$ point.
We compute the intersection number $(F \cdot \breve{\Gamma})$ by considering a suitable weighted blowup of the ambient space of $Y'$.
We may choose $x_0,x_1,y,z$ as local orbifold coordinates of $Y'$ at the $\frac{1}{3} (1,1,2)$ point.
The weighted blowup of the ambient space with weight $\wt (x_0,x_1,y,w) = \frac{1}{3} (1,1,2,2)$ restricts to the Kawamata blowup $Z' \to Y'$.
Since $\tilde{\Gamma}$ is defined by $(x_0 = x_1 = w = 0)$, we have
\[
(F \cdot \breve{\Gamma}) = (F \cdot - \frac{1}{3} F \cdot - \frac{1}{3} F \cdot - \frac{2}{3} F) = - \frac{2}{3^3} \times \frac{(-3)^3}{2 \times 2} = \frac{1}{2}.
\]
In the above equation, we think of $F$ as the exceptional divisor of the weighted blowup of the ambient space, which is isomorphic to $\mbP (1,1,2,2)$.
We see that $\hat{\Gamma}$ intersects $G$ at one point so that $(G \cdot \hat{\Gamma}) = 1$.

We have $(-K_{Y'} \cdot \tilde{\Gamma}) = (-K_{X'} \cdot \Gamma) - \frac{1}{2} (E \cdot \tilde{\Gamma}) = - \frac{1}{3}$.
Similarly, we have $(-K_Z \cdot \breve{\Gamma}) = (-K_{Y'} \cdot \tilde{\Gamma}) - \frac{1}{3} (F \cdot \breve{\Gamma}) = - \frac{1}{2}$ and $(-K_Z \cdot \hat{\Gamma}) = (-K_Z \cdot \breve{\Gamma}) - \frac{1}{2} (G \cdot \hat{\Gamma}) = -1$.
Since $\tilde{S} \sim_{\mbQ} -K_{Y'}$, $\tilde{T} \sim_{\mbQ} - 2 K_{Y'}$, $\tilde{T} |_{\tilde{S}} = \tilde{\Gamma} + \tilde{C}$ and $K_{Y'} = K_{X'} + \frac{1}{2} E$, we have
\[
\begin{split}
2 (-K_{Y'})^3 &= (-K_{Y'} \cdot \tilde{T} \cdot \tilde{S}) = (-K_{Y'} \cdot \tilde{\Gamma}) + (-K_{Y'} \cdot \tilde{C}), \\
(-K_{Y'} \cdot \tilde{C}) &= (-K_{X'} \cdot C) - \frac{1}{2} (E \cdot \tilde{C}).
\end{split}
\]
It follows that $(-K_{Y'} \cdot \tilde{C}) = 2/3$ and $(E \cdot \tilde{C}) = 1$ since $(E^3) = 4$ and $(-K_{Y'})^3 = (-K_{X'})^3 - (1/2^3) (E^3) = 1/6$.
Note that $(\hat{E} \cdot \hat{C}) = (E \cdot \tilde{C}) = 1$ since $W \to Y'$ is an isomorphism over an open set which entirely contains $E$.
Since $\breve{S} \sim_{\mbQ} -K_Z$, $\breve{T} \sim_{\mbQ} -2K_Z$, $\breve{T} |_{\breve{S}} = \breve{\Gamma} + \breve{C}$ and $K_Z = K_{Y'} + \frac{1}{3} F$, we have
\[
\begin{split}
2 (-K_Z)^3 &= (-K_Z \cdot \breve{T} \cdot \breve{S}) = (-K_Z \cdot \breve{\Gamma}) + (-K_Z \cdot \breve{C}), \\
(-K_Z \cdot \breve{C}) &= (-K_{Y'} \cdot \tilde{C}) - \frac{1}{3} (F \cdot \breve{C}).
\end{split}
\]
It follows that $(-K_Z \cdot \breve{C}) = (F \cdot \breve{C}) = 1/2$ since $(F^3) = 9/2$ and $(-K_Z)^3 = (-K_{Y'})^3 - (1/3^3) (F^3) = 0$.
Similarly, since $\hat{S} \sim_{\mbQ} -K_W$, $\hat{T} \sim_{\mbQ} -2K_W$, $\hat{T} |_{\hat{S}} = \hat{\Gamma} + \hat{C}$ and $K_W = K_Z + \frac{1}{2} G$, we have
\[
\begin{split}
2 (-K_Z)^3 &= (-K_Z \cdot \hat{T} \cdot \hat{S}) = (-K_Z \cdot \hat{\Gamma}) + (-K_W \cdot \hat{C}), \\
(-K_W \cdot \hat{C}) &= (-K_Z \cdot \breve{C}) - \frac{1}{2} (G \cdot \hat{C}).
\end{split}
\]
It follows that $(-K_W \cdot \hat{C}) = 0$ and $(G \cdot \hat{C}) = 1$ since $(G^3) = 4$ and $(-K_W)^3 = (-K_Z)^3 - (1/2^3) (G^3) = - 1/2$.
Finally, the pullback of $F$ on $W$ is the divisor $\hat{F} + \frac{1}{2} G$, hence we have $(F \cdot \breve{C}) = (\hat{F} \cdot \hat{C}) + \frac{1}{2} (G \cdot \hat{C})$, which implies $(\hat{F} \cdot \hat{C}) = 0$.
\end{proof}

\begin{Lem} \label{lem:invbirexclG18}
If, for a general $\lambda \in \mbC$, there is $\mu \in \mbC$ $($depending on $\lambda$$)$ such that $C_{\lambda,\mu}$ is reducible, then $\msp$ is not a maximal center.
\end{Lem}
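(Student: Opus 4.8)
\emph{Sketch of the intended argument.} I would argue by contradiction, assuming that $\msp$ is a maximal center. Since the Kawamata blowup $\varphi' \colon Y' \to X'$ is the unique divisorial extraction at the $\frac{1}{2}(1,1,1)$ point $\msp$, it must then be a maximal extraction: there is a movable linear system $\mcH \sim_{\mbQ} -nK_{X'}$ whose proper transform satisfies $\tilde{\mcH} \sim_{\mbQ} -nK_{Y'} - eE$ on $Y'$, where $e := m_E(\mcH) - \tfrac{n}{2} > 0$ because $a_E(K_{X'}) = \tfrac12$. As the centres of $Z' \to Y'$ and $W \to Z'$ lie over $\msp_3 \neq \msp$, the composite $\pi \colon W \to X'$ is an isomorphism over a neighbourhood of $E$, and the proper transform of $\mcH$ on $W$ can be written $\hat{\mcH} = \pi^{*}\mcH - a\hat E - b\hat F - cG$ with $a = m_E(\mcH) > \tfrac{n}{2}$ and $b, c \ge 0$ the vanishing orders of $\mcH$ along the valuations $F$, $G$.

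Now fix a general $\lambda$ and choose $\mu = \mu(\lambda)$ so that $C := C_{\lambda,\mu}$ is reducible, and write $C = \sum_j C^{(j)}$ for its irreducible components. By the preceding lemma $(\hat E \cdot \hat C) = 1$, $(\hat F \cdot \hat C) = 0$ and $(G \cdot \hat C) = 1$; since $\hat E$, $\hat F$, $G$ are effective and no $\hat C^{(j)}$ is contained in them, this forces $(\hat F \cdot \hat C^{(j)}) = 0$ for every $j$, with exactly one component meeting $\hat E$ and exactly one meeting $G$. The crux of the proof is then to show that for a suitable component one has $(\hat{\mcH} \cdot \hat C^{(j)}) < 0$. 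For this I would combine the expansion
\[
(\hat{\mcH} \cdot \hat C^{(j)}) = n(-K_{X'} \cdot C^{(j)}) - a(\hat E \cdot \hat C^{(j)}) - c(G \cdot \hat C^{(j)})
\]
with the relation $\sum_j (-K_W \cdot \hat C^{(j)}) = (-K_W \cdot \hat C) = 0$ of the lemma and the bounds $a > \tfrac n2$, $c \ge 0$; the decisive extra input is an explicit description of the reducible members of the pencil $\{C_{\lambda,\mu}\}_\mu$ on the (K3) surface $S_\lambda$ — the residual to $\Gamma$ of the pencil $\langle w, x_0^2\rangle|_{S_\lambda}$ — which bounds $(-K_{X'} \cdot C^{(j)})$ from above on the component passing through $\msp$, so that the term $a(\hat E\cdot\hat C^{(j)})$ dominates. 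Along the way one uses, just as in the proof of the lemma above, that $(-K_W \cdot \hat\Gamma) = -1$ already forces $\hat\Gamma \subseteq \Bs(\hat{\mcH})$, hence that $\Gamma$ is a fixed curve of $\mcH$; this limits which curves can occur as components of $C$.

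Once $(\hat{\mcH} \cdot \hat C^{(j)}_\lambda) < 0$ for general $\lambda$, the curve $\hat C^{(j)}_\lambda$ lies in the base locus of $\hat{\mcH}$, hence $C^{(j)}_\lambda \subseteq \Bs(\mcH)$ on $X'$. As $\lambda$ varies these curves sweep out a prime divisor $D \subset X'$: indeed $C^{(j)}_\lambda \subset S_\lambda$, the pencil $\{S_\lambda\}$ covers $X'$ with base locus $(x_0 = x_1 = 0)_{X'}$, and for general $\lambda$ the component $C^{(j)}_\lambda$ is not contained in $(x_0 = x_1 = 0)_{X'}$ (the degenerate case, in which the sweep collapses to a single curve inside $(x_0 = x_1 = 0)_{X'}$, is eliminated by applying the same numerical inequality to that fixed curve). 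Then $D$ is a divisorial fixed component of $\mcH$, contradicting the movability of $\mcH$. This contradiction shows that $\msp$ is not a maximal center.

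The hard part, as indicated, is the case analysis of the second paragraph: one must enumerate the ways $C_{\lambda,\mu}$ can split and check that, in every case, the intersection-number bookkeeping on $W$ — with the only available lower bounds on the multiplicities of $\mcH$ being $a > n/2$ and $b, c \ge 0$ — still produces a component of strictly negative $\hat{\mcH}$-degree. Everything else is formal once the intersection numbers of the preceding lemma are in hand.
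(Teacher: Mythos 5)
Your overall strategy is the right one and coincides with the paper's: for each general $\lambda$ produce an irreducible component of $C_{\lambda,\mu}$ that meets $E$ positively and has non-positive $(-K_{Y'})$-degree, so that it is forced into the base locus of any would-be maximal linear system, and then invoke the standard criterion (the paper cites \cite[Lemma 2.18]{Okada2}) to rule out infinitely many such curves. But your proposal stops exactly where the proof begins: the ``case analysis of the second paragraph,'' which you explicitly defer, is the entire mathematical content of the lemma. The paper carries it out by writing
\[
\bar{F}'_{\lambda,\mu}/x_0 = \alpha x_0 z^2 + (\beta y^2 + \gamma y x_0^2 + \delta x_0^4) z + \mu y^3 x_0 + \varepsilon y^2 x_0^3 + \eta y x_0^5 + \theta x_0^7,
\]
noting that $\beta$ depends only on $\lambda$ and is nonzero for general $\lambda$; reducibility then forces a factorization $(z+e_3)(\alpha x_0 z + e_4)$ with $e_3,e_4 \in \mbC[x_0,y]$ and $y^2 \in e_4$. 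Hence $C_{\lambda,\mu}=\Delta_1+\Delta_2$ where the $e_4$-component $\Delta_2$ misses $\msp$, so $(-K_{Y'}\cdot\tilde{\Delta}_2)=(-K_{X'}\cdot\Delta_2)=2/3$, and subtracting from $(-K_{Y'}\cdot\tilde{C}_{\lambda,\mu})=2/3$ gives $(-K_{Y'}\cdot\tilde{\Delta}_1)=0$ with $(E\cdot\tilde{\Delta}_1)>0$. Without this explicit splitting --- in particular without the fact that the residual component carries the \emph{full} degree $2/3$ because it avoids $\msp$ --- your claimed inequality $(\hat{\mcH}\cdot\hat{C}^{(j)})<0$ has no proof: the bounds $a>n/2$ and $b,c\ge 0$ alone cannot produce it.

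A secondary point: the detour through $W$ is an unnecessary complication here. Since $\hat{F}$ and $G$ lie over $\msp_3\ne\msp$ and you control $b,c$ only by $b,c\ge 0$, the terms $-b(\hat{F}\cdot\hat{C}^{(j)})$ and $-c(G\cdot\hat{C}^{(j)})$ contribute nothing usable, and your computation reduces to
\[
n(-K_{X'}\cdot C^{(j)}) - a\,(E\cdot\tilde{C}^{(j)}) < n\,(-K_{Y'}\cdot\tilde{C}^{(j)}) \le 0
\]
whenever $(E\cdot\tilde{C}^{(j)})>0$, which is precisely the statement on $Y'$ that the paper proves directly. So once you supply the missing splitting analysis, you should discard the blowups $Z'\to Y'$ and $W\to Z'$ from this argument entirely.
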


\begin{proof}
We can write
\[
\bar{F}'_{\lambda,\mu}/x_0 = \alpha x_0 z^2 + (\beta y^2 + \gamma y x_0^2 + \delta x_0^4) z + \mu y^3 x_0 + \varepsilon y^2 x_0^3 + \eta y x_0^5 + \theta x_0^7,
\]
for some $\alpha,\beta,\dots,\theta \in \mbC$.
Note that $\alpha,\dots,\theta$ depend on $\lambda$ and $\mu$, and $\beta$ depends only on $\lambda$.
Let $\lambda$ be a general complex number so that $\beta \ne 0$ and take $\mu \in \mbC$ such that $C_{\lambda,\mu}$ is reducible.
Since $\bar{F}'_{\lambda,\mu}/x_0$ is reducible, we have 
\[
\bar{F}'_{\lambda,\mu}/x_0 = (z + e_3) (\alpha x_0 z + e_4)
\]
for some $e_3, e_4 \in \mbC [x_0,y]$.
We have $y^2 \in e_4$ since $\beta \ne 0$.
It follows that $C_{\lambda,\mu} = \Delta_1 + \Delta_2$, where $\Delta_1 = (x_1 - \lambda x_0 = w - \mu x_0^2 = z + e_3 = 0)$ and $\Delta_2 = (x_1 - \lambda x_0 = w - \mu x_0^2 = \alpha x_0 z + e_4 = 0)$.
Note that $\Delta_1$ is irreducible and $\Delta_2$ does not pass through $\msp$ since $y^2 \in e_4$.
Thus $(-K_{Y'} \cdot \tilde{\Delta}_2) = (-K_{X'} \cdot \Delta_2) = 2/3$.
This implies 
\[
(-K_{Y'} \cdot \tilde{\Delta}_1) = (-K_{Y'} \cdot \tilde{C}_{\lambda,\mu}) - (-K_{Y'} \cdot \tilde{\Delta}_2) = 0.
\]
Clearly we have $(E \cdot \tilde{\Delta}_1) > 0$.
Therefore, there are infinitely many irreducible curves on $Y'$ that intersect $-K_{Y'}$ non-positively and $E$ negatively.
By \cite[Lemma 2.18]{Okada2}, $\msp$ is not a maximal center.
\end{proof}

Let $\pi \colon X' \ratmap \mbP (1_{x_0},1_{x_1},2_w)$ be the projection which is defined outside $\Gamma$.
Let $\mcH \subset \left| - 2 K_{X'} \right|$ be the linear system on $X'$ generated by $x_0^2$, $x_0 x_1$, $x_1^2$ and $w$, and let $\mcH_{Y'}$, $\mcH_W$ be the proper transform of $\mcH$ on $Y'$, $W$, respectively.
We see that $\mcH_{Y'} = \left| -2 K_{Y'} \right|$ and $\mcH_W = \left| -2 K_W \right|$.
Let $\pi_{\lambda} \colon S_{\lambda} \ratmap \mbP (1,2) \cong \mbP^1$ be the restriction of $\pi$ to $S_{\lambda}$ and $\hat{\pi} \colon \hat{S}_{\lambda} \ratmap \mbP^1$ be the composite of $(\varphi' \circ \psi)|_{\hat{S}_{\lambda}} \colon \hat{S}_{\lambda} \to S_{\lambda}$ and $\pi_{\lambda}$.
We set $\hat{E}_{\lambda} = \hat{E} |_{\hat{S}_{\lambda}}$, $\hat{F}_{\lambda} = \hat{F}|_{\hat{S}_{\lambda}}$ and $\hat{G}_{\lambda} = G |_{\hat{S}_{\lambda}}$.
Note that $\msp$ and $\msp_3$ are the indeterminacy points of $\pi_{\lambda}$.

\begin{Lem}
The base locus of $\mcH_W$ is the curve $\Gamma$ and the pair $(W,\frac{1}{2} \mcH_W)$ is canonical.
\end{Lem}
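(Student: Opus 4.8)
The plan is to analyze the linear system $\mcH_W = |-2K_W|$ by tracking the base locus of $\mcH$ through the tower of three blowups $W \to Z' \to Y' \to X'$. First I would determine the base locus of $\mcH$ on $X'$ itself: since $\mcH$ is generated by $x_0^2, x_0x_1, x_1^2, w$, its base locus is exactly $(x_0 = x_1 = w = 0)_{X'} = \Gamma$, because $F'$ restricted to this locus must still define something. Then I would follow $\Gamma$ (and any infinitely near base points) through the three Kawamata blowups $\varphi' \colon Y' \to X'$ (at $\msp$), $\psi \colon Z' \to Y'$ (at the $\frac{1}{3}(1,1,2)$ point) and $W \to Z'$ (at the $\frac{1}{2}(1,1,1)$ point). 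The key point is that $\mcH_{Y'} = |-2K_{Y'}|$ and $\mcH_W = |-2K_W|$, so the base locus is cut out on $W$ by proper transforms of the generators. I would verify directly in the orbifold charts of each blowup that the only common zero locus of the transformed generators is $\hat{\Gamma}$, i.e. that no new base component (in particular no exceptional curve) is created.

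For the canonicity of the pair $(W, \frac{1}{2}\mcH_W)$, I would use the discrepancy–multiplicity formula. Writing $\varphi' \colon Y' \to X'$, $\psi \colon Z' \to Y'$, $W \to Z'$ for the three Kawamata blowups, I would compute the pullback of $\mcH$ (equivalently of $-2K_{X'}$) to $W$ in terms of $\mcH_W$, $\hat E$, $\hat F$ and $G$, and compare with the canonical class $K_W = K_{X'} + a_E\hat E + a_F\hat F + a_G G$, where $a_E = \frac12$, $a_F = \frac13$, $a_G = \frac12$ are the discrepancies of the successive Kawamata blowups. Since $\mcH_{Y'} = |-2K_{Y'}|$ already and similarly at each stage, the coefficients of the exceptional divisors in $\frac12\mcH_W$ match those of $-K_W$, so the only possible source of non-canonical singularities of $(W,\frac12\mcH_W)$ would be along the curve $\Gamma$ where the base locus sits. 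There I would take a general pair of members $H_1, H_2 \in \mcH_W$ and estimate $\mult_{\hat\Gamma}$ using the intersection numbers already established in the preceding lemma — in particular $(\hat E \cdot \hat C_{\lambda,\mu}) = 1$, $(\hat F \cdot \hat C_{\lambda,\mu}) = 0$, $(G \cdot \hat C_{\lambda,\mu}) = 1$ and $(-K_W \cdot \hat C_{\lambda,\mu}) = 0$ — to show that a general member of $\mcH_W$ is smooth along $\hat\Gamma$ away from finitely many points, and that at those points the pair is still canonical.

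The concrete mechanism I would exploit is the surface $\hat S_\lambda$: restricting $\mcH_W$ to $\hat S_\lambda$ and using that $\pi_\lambda \colon S_\lambda \ratmap \mbP^1$ resolves to the morphism $\hat\pi \colon \hat S_\lambda \to \mbP^1$ after the three blowups, so that $\mcH_W|_{\hat S_\lambda}$ is (a multiple of) the fiber class of $\hat\pi$ plus a combination of $\hat E_\lambda$, $\hat F_\lambda$, $\hat G_\lambda$. Since a fibration by definition has no base points in the fiber direction, this pins down $\Bs \mcH_W \cap \hat S_\lambda$ to lie in $\hat\Gamma \cup (\text{fixed part of the fibers})$, and a general-member / inversion-of-adjunction argument on $\hat S_\lambda$ then gives canonicity of $(W,\frac12\mcH_W)$ near $\hat\Gamma$. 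The assertion $\Bs\mcH_W = \hat\Gamma$ follows once we know $\hat\Gamma$ is the only component not contracted by $\hat\pi$ in the base locus; here the earlier computation $(-K_W\cdot\hat\Gamma) = -1 < 0$ confirms $\hat\Gamma$ genuinely is a base curve.

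The main obstacle I anticipate is the chart-by-chart verification that the three Kawamata blowups do not introduce extra base components of $\mcH_W$ — in particular checking that the proper transforms of $x_0^2, x_0x_1, x_1^2, w$ do not all vanish along $\hat E$, $\hat F$ or $G$. This is where the genericity hypotheses on $X' \in \mcG'_{18}$ (the conditions in Table \ref{table:eq} for family No.~$18$, e.g. that $y^2zx_0 \in g_8$ or $y^2zx_1 \in g_8$, used already in the preceding lemmas) must be invoked to guarantee that $w$ — equivalently $F'$ modulo the other generators — does not degenerate. The computation is bookkeeping-heavy but routine once the correct weighted blowup charts are written down, exactly as in the intersection-number lemma above; the only genuinely delicate step is confirming that the surface $\hat S_\lambda$ is normal for general $\lambda$ (already noted) so that adjunction applies cleanly.
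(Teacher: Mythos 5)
Your treatment of the first assertion is fine and matches the paper's (implicit) argument: $\mcH$ is generated by $x_0^2, x_0x_1, x_1^2, w$, its base locus on $X'$ is $(x_0 = x_1 = w = 0)_{X'} = \Gamma$ (note that in the case under consideration $y^4, yz^2 \notin g_8$, so this locus really is the whole curve $\mbP(2,3)$), and a chart check through the three Kawamata blowups shows no exceptional curve is added; the paper dismisses this as ``straightforward''. The sanity check $(-2K_W\cdot\hat\Gamma)<0$ is a nice touch.

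The canonicity half, however, has a genuine gap. The paper's mechanism is elementary and quantitative: $W$ is nonsingular along $\Gamma$, a general member of $\mcH_W$ vanishes along $\Gamma$ with multiplicity exactly $1$, and the single blowup of $W$ along $\Gamma$ resolves the base locus of $\mcH_W$; hence the only divisor one needs to test has discrepancy $1 - \tfrac{1}{2}\cdot 1 = \tfrac{1}{2} > 0$, and the pair is terminal, a fortiori canonical. Your proposal never establishes this. You reduce correctly to the locus $\hat\Gamma$, but then the decisive step is delegated to ``a general-member / inversion-of-adjunction argument on $\hat S_\lambda$'' together with the unproved claim that ``at those points the pair is still canonical'' --- which is precisely the statement to be shown. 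Inversion of adjunction in its standard form relates plt/lc of $(W, S + \Delta)$ to klt/lc of $(S, \Delta|_S)$; it does not directly yield \emph{canonicity} of a threefold pair, and the surfaces $\hat S_\lambda$ are not general members of a base-point-free system, so even the klt-type conclusion would need justification. To close the gap you must compute $\mult_\Gamma$ of a general member of $\mcH_W$ (it is $1$, e.g.\ because the generator $x_1 - \lambda x_0$ of degree $1$ already cuts $\Gamma$ with multiplicity $1$ and $W$ is smooth there), and then check that one blowup along $\Gamma$ frees the system, so that no infinitely near divisor can spoil canonicity.
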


\begin{proof}
It is straightforward to see that $\Bs \mcH_W = \Gamma$.
We see that $W$ is nonsingular along $\Gamma$, a general member of $\mcH_W$ vanishes along $\Gamma$ with multiplicity $1$ and the blowing-up of $W$ along $\Gamma$ resolves the base locus of $\mcH_W$.
This shows that $(X,\frac{1}{2} \mcH_W)$ is canonical (in fact, terminal).
\end{proof}

\begin{Lem}
The intersection matrix of curves in $(x_0 = 0)|_{S_{\lambda}}$ is non-degenerate.
\end{Lem}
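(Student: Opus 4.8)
The plan is to identify $(x_0 = 0)|_{S_{\lambda}}$ explicitly as a reduced sum of two irreducible curves, to compute three of their mutual intersection numbers on the normal surface $S_{\lambda}$ by weighted-projective intersection theory, and then to solve for the full matrix.

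First I would describe the curve. On $S_{\lambda}$ we have $x_1 = \lambda x_0$, so restricting $F' = w^2 x_0 z + w f_6 + g_8$ to $x_0 = 0$ and using the normalizations already in force (the coefficients of $y^3$ and $z^2$ in $f_6$ are $1$, no monomial of $g_8$ is divisible by $y^3$, and $z^2 y \notin g_8$) gives $F'(0,0,y,z,w) = w(y^3 + z^2)$. Hence $(x_0 = 0)|_{S_{\lambda}} = \Gamma + C_0$, where $\Gamma = (x_0 = x_1 = w = 0)_{X'} \cong \mbP (2,3) \cong \mbP^1$ and $C_0 = (x_0 = x_1 = y^3 + z^2 = 0)_{X'}$; since $y^3 + z^2$ is irreducible, $C_0$ is irreducible, and a short local check shows it is a smooth rational curve, so the intersection matrix is $2 \times 2$. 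Next, realizing $S_{\lambda}$ as a quasismooth hypersurface of degree $8$ in $\mbP (1,2,3,2)$ with $\Gamma + C_0 \in |\mcO_{S_{\lambda}}(1)|$, routine weighted intersection theory gives $\Gamma \cdot (\Gamma + C_0) = \deg_{\Gamma} \mcO_{S_{\lambda}}(1) = 1/6$ (as $\Gamma \cong \mbP (2,3)$) and $C_0 \cdot (\Gamma + C_0) = \deg_{C_0} \mcO_{S_{\lambda}}(1) = 1/2$ (as $C_0$ is a curve of degree $6$ in the stratum $(x_0 = 0) \cong \mbP (2,3,2)$).

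It then remains to compute $\Gamma \cdot C_0$. Set-theoretically $\Gamma \cap C_0$ is the single point $\msq := (x_0 = x_1 = w = 0,\ y^3 + z^2 = 0)$, which has $y, z \ne 0$; at $\msq$ I would check, using that $x_0 y^2 z \in g_8$ or $x_1 y^2 z \in g_8$ (forced by quasismoothness at $(0 \!:\! 0 \!:\! -1 \!:\! 1 \!:\! 0)$) together with the generality of $\lambda$, that $S_{\lambda}$ is smooth at $\msq$ and that there, in suitable local coordinates, $\Gamma$ and $C_0$ are the two coordinate axes; thus $\Gamma \cdot C_0 = 1$, whence $\Gamma^2 = 1/6 - 1 = -5/6$ and $C_0^2 = 1/2 - 1 = -1/2$, and the determinant of the intersection matrix is $(-5/6)(-1/2) - 1 = -7/12 \ne 0$. (The quotient points $\msp, \msp_3 \in \Gamma$ and the $cA/2$ point lying on $C_0$ do not lie on the other component, so they enter only through the already-computed numbers $\Gamma \cdot (\Gamma + C_0)$ and $C_0 \cdot (\Gamma + C_0)$, and need not be analysed separately.) The only genuinely delicate step, which I expect to be the main obstacle, is the transversality of $\Gamma$ and $C_0$ at $\msq$: it forces one to unwind the local equation of $S_{\lambda}$ near $\msq$ and to invoke the generality of both $X'$ and $\lambda$, and it is the sole place where those generality hypotheses are actually used; everything else is routine weighted intersection theory.
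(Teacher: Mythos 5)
Your proposal is correct and follows essentially the same route as the paper: decompose $(x_0=0)|_{S_{\lambda}}$ as $\Gamma + \Delta$ with $\Delta = (x_0 = x_1 = y^3+z^2=0)$, get $(\Gamma\cdot\Delta)=1$ from transversality at the single smooth intersection point, and pair each component with $(x_0=0)|_{S_{\lambda}} \sim_{\mbQ} -K_{X'}|_{S_{\lambda}}$ to obtain the matrix $\bigl(\begin{smallmatrix}-5/6 & 1\\ 1 & -1/2\end{smallmatrix}\bigr)$, whose determinant $-7/12$ is nonzero. The paper states the transversality without elaboration, so your extra care there is consistent with, not divergent from, its argument.
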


\begin{proof}
We have $(x_0 = 0)_{S_{\lambda}} = \Gamma + \Delta$, where $\Delta = (x_0 = x_1 = y^3 + z^2 = 0)$.
We have $(\Gamma \cdot \Delta) = 1$ since $\Gamma$ intersects $\Delta$ at one nonsingular point.
Since $(-K_{X'})|_{S_{\lambda}} \sim_{\mbQ} (x_0 = 0)|_{S_{\lambda}} = \Gamma + \Delta$, we have $1/6 = (-K_{X'} \cdot \Gamma) = (\Gamma^2) + (\Gamma \cdot \Delta)$ and $1/2 = (-K_{X'} \cdot \Delta) = (\Gamma \cdot \Delta) + (\Delta^2)$.
It follows that the intersection matrix
\[
\begin{pmatrix}
(\Gamma^2) & (\Gamma \cdot \Delta) \\
(\Gamma \cdot \Delta) & (\Delta^2)
\end{pmatrix} =
\begin{pmatrix}
-5/6 & 1 \\
1 & -1/2 
\end{pmatrix}
\]
is non-degenerate.
\end{proof}

By \cite[Lemma 7.2]{Okada2}, there is a birational involution of $X'$ centered at $\msp$.
As a conclusion, we have the following.

\begin{Thm} \label{thm:birinvG18-2}
Let $X'$ be a member of $\mcG'_{18}$ and $\msp$ a singular point of type $\frac{1}{2} (1,1,1)$.
Then either $\msp$ is not a maximal center or there is a birational involution of $X'$ which is a Sarkisov link centered at $\msp$.
\end{Thm}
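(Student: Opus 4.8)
The final statement, Theorem \ref{thm:birinvG18-2}, is essentially a summary assertion: it collects the case analysis carried out in this subsection and concludes via a general criterion. The plan is to split into the two cases according to whether $z^2 y \in g_8$ or $z^2 y \notin g_8$, handle each, and in both cases invoke the appropriate general result about invisible involutions from \cite{Okada2}.

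First, I would observe that after the normalizations already made (so that $\msp = \msp_2$, $y^3 \in f_6$, and no monomial of $g_8$ is divisible by $y^3$), the two cases $z^2 y \in g_8$ and $z^2 y \notin g_8$ are exhaustive. In the first case, the argument is essentially that of \cite[Theorem 4.13]{CPR}: after suitable coordinate changes one writes $F'$ in the displayed form $y z^2 + a_5 z - w y^3 - b_4 y^2 - c_6 y + d_8$, exhibits the lifted plurianticanonical sections $u$ and $v$, identifies the anticanonical model $Z'$ of $Y'$ as the weighted hypersurface in $\mbP(1,1,2,6,9)$ cut out by the displayed degree $18$ equation, and notes that $\psi' \colon Y' \ratmap Z'$ is a morphism. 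Then \cite[Lemma 3.2]{Okada2} immediately yields the dichotomy: either $\msp$ is not a maximal center, or the birational involution attached to this Sarkisov link is centered at $\msp$. So this case is dispatched by quoting two external results.

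The bulk of the work — and the real content of the subsection — is the second case $z^2 y \notin g_8$. Here the intended conclusion follows from \cite[Lemma 7.2]{Okada2}, which produces an invisible involution provided \cite[Condition 7.1]{Okada2} is verified. The plan is therefore: (i) set up the pencils $S_\lambda = (x_1 - \lambda x_0 = 0)_{X'}$ and $T_\mu = (w - \mu x_0^2 = 0)_{X'}$, the curve decomposition $T_\mu|_{S_\lambda} = \Gamma + C_{\lambda,\mu}$, and the tower of Kawamata blowups $W \to Z' \to Y' \to X'$ at the points of types $\frac12(1,1,1)$, $\frac13(1,1,2)$, $\frac12(1,1,1)$ respectively; (ii) compute the relevant intersection numbers $(-K_{Y'} \cdot \tilde C_{\lambda,\mu}) = \tfrac23$, $(-K_W \cdot \hat C_{\lambda,\mu}) = 0$, $(-K_W \cdot \hat\Gamma) = -1$ and the $\hat E,\hat F,G$ intersections with $\hat C_{\lambda,\mu}$, as in the first lemma of the group; (iii) handle the degenerate subcase where $C_{\lambda,\mu}$ is reducible for general $\lambda$, which by Lemma \ref{lem:invbirexclG18} forces $\msp$ not to be a maximal center — this subcase contributes the "either $\msp$ is not a maximal center" alternative in the statement; (iv) in the remaining subcase, verify the ingredients of \cite[Condition 7.1]{Okada2}: that $\Bs \mcH_W = \Gamma$ and $(W, \tfrac12 \mcH_W)$ is canonical, and that the intersection matrix of the components of $(x_0 = 0)|_{S_\lambda} = \Gamma + \Delta$ is non-degenerate (computed via $(-K_{X'})|_{S_\lambda} \sim_{\mbQ} \Gamma + \Delta$ and the anticanonical degrees $\tfrac16$ and $\tfrac12$). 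Then \cite[Lemma 7.2]{Okada2} produces the birational involution, which is a Sarkisov link centered at $\msp$.

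The main obstacle, and where the care is needed, is step (ii): the intersection-number computations on the three-step tower $W \to Z' \to Y' \to X'$. These require choosing correct local orbifold coordinates at each Kawamata-blown-up point, identifying which coordinates vanish along each exceptional divisor and to what order (here $x_0, x_1, z$ vanish along $E$ to order $\tfrac12$ and $w$ to order $\tfrac22$), and then propagating the numbers through the discrepancy formulae $K_{Y'} = K_{X'} + \tfrac12 E$, $K_{Z'} = K_{Y'} + \tfrac13 F$, $K_W = K_{Z'} + \tfrac12 G$ together with the surface-section relations $\tilde T|_{\tilde S} = \tilde\Gamma + \tilde C$ etc. The self-intersection $(E^3) = 4$, $(F^3) = 9/2$, $(G^3) = 4$ inputs and the auxiliary weighted blowup $\wt(x_0,x_1,y,w) = \tfrac13(1,1,2,2)$ of the ambient space (to compute $(F \cdot \breve\Gamma) = \tfrac12$) are the delicate points. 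Once those numbers are in hand, the canonicity of $(W, \tfrac12\mcH_W)$ and the non-degeneracy of the $2\times 2$ intersection matrix are routine, and the conclusion is a direct citation.
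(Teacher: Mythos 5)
Your proposal is correct and follows essentially the same route as the paper: the same two-case split on whether $z^2 y \in g_8$, the same citations ([CPR, Theorem 4.13] and [Okada2, Lemma 3.2] in the first case; [Okada2, Condition 7.1 and Lemma 7.2] in the second), the same tower of Kawamata blowups with the same discrepancy and self-intersection data, and the same treatment of the reducible-fiber subcase via the exclusion lemma. Nothing essential is missing.
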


\section{Birational involutions centered at $cA/n$ points} \label{sec:birinvcA}

Let $X'$ be a member of $\mcG'_i$ with $i \in I^*_{cA/n}$ and $\msp = \msp_4$ the $cA/n$ point.
We treat $6$ families, that is,
\[
i \in \{10,26,33,38,48,63\},
\]
and show that there is a birational involution of $X'$ starting with a divisorial extraction $\varphi$ marked B.I. in the big table.

We briefly explain the argument of this section.
Let $\varphi \colon Y' \to X'$ be an extraction marked B.I.
We first give an explicit global construction of $\varphi$, which enables us to give an explcit construction of anticanonical model $Z'$ of $Y'$.
We observe that the anticanonical map $Y' \ratmap Z'$ is a birational morphism and $Z'$ admits a double cover onto a suitable WPS.
By \cite[Lemma 3.2]{Okada2}, we conclude that either $\msp$ is not a maximal center or there is a birational involution starting with $\varphi$.

\subsection{Families $\mcG_i$ for $i \in \{10,26,38,48,63\}$}

We first treat the case where $i \in \{10,26,48\}$.
The standard polynomial of $X' \subset \mbP (1,n,a_2,a_3,n)$ can be written as
\[
F' = w^2 x_2 x_3 + w (x_2^2 a + x_2 b + c) + x_2^3 + x_2^2 d + x_2 e + h,
\]
where $a,b,\dots,e,h \in \mbC [x_0,x_1,x_3]$.
Note that we have $a_3 + 2 n = 2 a_2$.
Note also that we have $n = 1$ in this case but we use $n$ for a unified exposition.

We construct a divisorial extraction that corresponds to $(a_2-n,a_3+2n)$-blowup.
Filtering off terms divisible by $x_2$, the defining equation of $X'$ is written as
\begin{equation} \label{eq:BI-1}
F' = x_2 u + w c + h = 0,
\end{equation}
where
\begin{equation} \label{eq:BI-2}
u := w^2 x_3 + w (x_2 a + b) + x_2^2 + x_2 d + e.
\end{equation}
Note that $\deg u = a_3 + 2 n$.
Let $\mbP := \mbP (1,n,a_2,a_3,n,a_3 + 2 n)$ be the WPS with coordinates $x_0,\dots,x_3,w$ and $u$.
Then, $X'$ is a weighted complete intersection, defined by the equations \eqref{eq:BI-1} and \eqref{eq:BI-2}.
Let $\Phi \colon W \to \mbP$ be the weighted blowup at $(0 \!:\! 0 \!:\! 0 \!:\! 0 \!:\! 1 \!:\! 0)$ with 
\[
\wt (x_0,x_1,x_2,x_3,u) = \frac{1}{n} (1,n,a_2-n,a_3,a_3 + 2 n)
\]
and $Y'$ the proper transform of $X'$ by $\Phi$.
We denote by $\varphi \colon Y' \to X'$ the induced birational morphism and by $E$ its exceptional divisor.

\begin{Lem}
The weighted blowup $\varphi \colon Y' \to X'$ is a divisorial extraction centered at $\msp$.
\end{Lem}

\begin{proof}
We will show that $Y'$ has only terminal quotient singularities, which complete that proof.
We see that $X'$ is defined by 
\[
x_2 u + w c + h = - u+ w^2 x_3 + w (x_2 a + b) + x_2^2 + x_2 d + e = 0
\]
in $\mbP$ and we have an isomorphism
\[
E \cong (x_2 u + c = x_3 + x_2 a + x_2^2 = 0) \subset \mbP (1_{x_0},n_{x_1},(a_2-n)_{x_2},{a_3}_{x_3},(a_3+2n)_u).
\]
We see that $\deg a = a_2 - n < a_3$ and hence $a$ does not involve the variable $x_3$.
We have
\[
J_{\varphi} =
\begin{pmatrix}
\frac{\prt c}{\prt x_0} & \frac{\prt c}{\prt x_1} & u & \frac{\prt c}{\prt x_3} & x_2 & h \\[1.5mm]
x_2 \frac{\prt a}{\prt x_0} & x_2 \frac{\prt a}{\prt x_1} & a + 2 x_2 & 1 & 0 & b + x_2 d
\end{pmatrix}.
\]
We see that $J_{\varphi}$ is of rank $2$ outside the set
\[
\begin{split}
\Sigma &:= \left(x_2 = \frac{\prt c}{\prt x_0} = \frac{\prt c}{\prt x_1} = u - a \frac{\prt c}{\prt x_3} = h - b \frac{\prt c}{\prt x_3} = 0 \right) \cap E \\
&= \left(x_2 = x_3 = c = \frac{\prt c}{\prt x_0} = h - b \frac{\prt c}{\prt x_3} = \frac{\prt c}{\prt x_1} = u - a \frac{\prt c}{\prt x_3} = 0 \right).
\end{split}
\] 

We claim that the system of equations 
\[
x_3 = c = \frac{\prt c}{\prt x_0} = \frac{\prt c}{\prt x_1} = h - b \frac{\prt c}{\prt x_3} = 0
\]
has no non-trivial solution.
Assume to the contrary that the above equations have a common solution $(x_0,x_1,x_3) = (\alpha_0,\alpha_1,0) \ne (0,0,0)$. 
Let $X$ be the birational counterpart of $X'$ which is defined by
\[
\begin{split} 
F_1 &= x_5 x_2 + x_4 x_3 + (x_2^2 a + x_2 b + c) = 0, \\
F_2 &= x_5 x_4 - (x_2^3 + x_2^2 d + x_2 e + h) = 0,
\end{split}
\]
in $\mbP (1,n,a_2,a_3,a_2+n,a_3+n)$ and set $\msq := (\alpha_0 \!:\! \alpha_1 \!:\! 0 \!:\! 0 \!:\! - \bar{c}' \!:\! - \bar{b})$, where $\bar{c}' = (\prt c/\prt x_3) (\alpha_0,\alpha_1,0)$ and $\bar{b} = b (\alpha_0,\alpha_1,0)$.
Then, $\msq \in X$ and $\prt F_1 / \prt x_i$ vanishes at $\msq$ for $i = 0,1,\dots,5$.
Thus $X$ is not quasismooth at $\msq$ and this is a contradiction.
The above claim implies that $\Sigma = \emptyset$.
By Lemma \ref{lem:singwbl}, $Y'$ has only cyclic quotient singularities.
Straightforward computations in each instance show that $Y'$ has only terminal quotients singularities (see Table \ref{table:singY'} for the singularities of $Y'$ along $E$), which implies that $\varphi$ is a divisorial extraction.
\end{proof}

\begin{table}[tb]
\begin{center}
\caption{Singularities of $Y'$ along $E$}
\label{table:singY'}
\begin{tabular}{cc}
\hline
No. & Singular points \\
\hline \\[-3.5mm]
10 & $1 \times \frac{1}{3} (1,1,2)$ \\[0.5mm]
26 & $1 \times \frac{1}{2} (1,1,1)$ \\[0.5mm]
38 & $1 \times \frac{1}{3} (1,1,2)$ if $z^3 \notin f_9$, $1 \times \frac{1}{8} (1,3,5)$  \\[0.5mm]
48 & $1 \times \frac{1}{3} (1,1,2)$, $1 \times \frac{1}{8} (1,1,7)$ \\[0.5mm]
63 & $1 \times \frac{1}{3} (1,1,2)$ if $y^4 \in f_{12}$, $1 \times \frac{1}{10} (1,3,7)$
\end{tabular}
\end{center}
\end{table}

We see that $x_0, x_1,x_3$ and $u$ lift to plurianticanonical sections on $Y'$.
We construct one more section $v$ that lifts to a plurianticanonical section on $Y'$ and then determine the anticanonical map of $Y'$.
Multiplying $u$ to \eqref{eq:BI-2} and then eliminating $x_2 u = - w c - h$ by \eqref{eq:BI-1}, we have
\begin{equation} \label{eq:BI-3}
w v - u^2 + u e - h (x_2 + d) = 0,
\end{equation}
where
\begin{equation} \label{eq:BI-4}
v := w (u x_3 - a c) + u b - c x_2 - c d - a h.
\end{equation}

We have $\deg v = 4 a_2 - n$.
The sectins $u^2$, $u e$ and $h d$ vanish along $E$ to order $4 a_2/n$, and the section $h x_2$ vanishes along $E$ to order $(4 a_2 - n)/n$.
Hence, by \eqref{eq:BI-3}, $v$ vanishes along $E$ to order at least $(4 a_2-n)/n = \deg v/n$.
This shows that $v$ lifts to a plurianticanonical section.
Let
\[
\psi \colon X' \ratmap \mbP (1_{x_0},n_{x_1},{a_3}_{x_3},(a_3 + 2 n)_u, (4 a_2 - n)_v)
\] 
the rational map defined by plurianticanonical sections $x_0,x_1,x_3,u,v$.
We see that the intersection of zero loci of proper transforms on $Y'$ of $(x_0 = 0)_{X'}$, $(x_1 = 0)_{X'}$, $(x_3 = 0)_{X'}$, $(u=0)_{X'}$ and $(v=0)_{X'}$ is empty.
This implies that $\psi$ is a morphism. 
The equations \eqref{eq:BI-1}, \eqref{eq:BI-3} and \eqref{eq:BI-4} can be expressed as
\[
M \cdot 
\begin{pmatrix}
x_2 \\
w \\
1
\end{pmatrix} =
\begin{pmatrix}
0 \\
0 \\
0
\end{pmatrix},
\]
where
\[
M =
\begin{pmatrix}
u & c & h \\
- h & v & - u^2 + u e - h d \\
c & - u x_3 + a c & v - u b + c d + a h
\end{pmatrix}.
\] 
We see that $\det M$ is divisible by $u$ and $G := \det M/u$ is quadratic with respect to $v$.
It follows that $\psi$ is birational and the image of $\psi$ is the weighted hypersurface $Z'$ in $\mbP (1,n,a_3,a_3 + 2 n,4 a_2 - n)$ defined by the equation $G = 0$.
Moreover, the projection to the coordinates $x_0,x_1,x_3,u$ defines a double cover $\pi \colon Z' \to \mbP (1,n,a_3,a_3+2 n)$.

We explain that the birational morphism $\psi \colon Y' \to Z'$ is not an isomorphism.
Let $\tilde{S}$ be the proper transform on $Y'$ of $S := (c = h = u = 0) \subset X'$.
Here the section $u$, considered as a section on $X'$, is a polynomial in $x_0,x_1,x_2,x_3,$ and $w$.
We see that $\psi (T) = (c = h = u = v = 0) \subset Z'$.
We have $1 \le \dim T \le 2$ and $\dim \psi (T) = \dim T - 1$.
This shows that $\psi$ cannot be an isomorphism. 

By \cite[Lemma 3.2]{Okada2}, either $\varphi$ is not a maximal extraction or there is a birational involution of $X'$ which is a Sarkisov link starting with $\varphi$.

We next treat the case where $i \in \{38,63\}$.
The standard polynomial of $X' \subset \mbP (1,n,a_2,a_3,n)$ can be written as
\[
F' = w^2 x_2 x_3 + w (x_3^2 a + x_3 b + c) + x_3^3 + x_3^2 d + x_3 e + h,
\]
where $a,b,\dots,e,h \in \mbC [x_0,x_1,x_2]$.
Note that $a_2 + 2 n = 2 a_2$ and $n = 3$ in this case.
We have a symmetry between families $\mcG'_i$ with $i \in \{10,26,48\}$ and with $i \in \{38,63\}$: the situation coincides after interchanging the role of $x_2$ and $x_3$.
Thus, by the symmetric argument, we can construct the section $u = w^2 x_2 + w (x_3 a + b) + x_3^2 + x_3 d + e$ of degree $a_2 + 2 n$, and the weighted blowup $\varphi \colon Y' \to X'$ with
\[
\wt (x_0,x_1,x_2,x_3,u) = \frac{1}{n} (1,n,a_2,a_3-n,a_2+2 n),
\]
which is a divisorial extraction (see Table \ref{table:singY'} for the singularities of $Y'$ along $E$).
Moreover, we have the anticanonical morphism $\psi \colon Y' \to Z'$ whose base admits a double cover $Z' \to \mbP (1,n,a_2,a_2+2n)$.
By \cite[Lemma 3.2]{Okada2}, either $\varphi$ is not a maximal extraction or there is a birational involution of $X'$ which is a Sarkisov link starting with $\varphi$.
Therefore, we have the following.

\begin{Thm} \label{thm:birinvcA1}
Let $X'$ be a member of $\mcG'_i$ with $i \in \{10,26,38,48,63\}$, $\msp = \msp_4$ the $cA/n$ point, and let $\varphi_{(k,l)} \colon Y'_{(k,l)} \to X'$ an extremal divisorial extraction centered at $\msp$ marked \emph{B.I.} in the big table.
Then, one of the following holds.
\begin{enumerate}
\item $\varphi_{(k,l)}$ is not a maximal extraction.
\item There is a birational involution of $X'$ which is the Sarkisov link starting with $\varphi_{(k,l)}$.
\end{enumerate}
\end{Thm}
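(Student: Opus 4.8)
The plan is to reduce everything to \cite[Lemma 3.2]{Okada2}: it suffices to exhibit, for each divisorial extraction $\varphi_{(k,l)}\colon Y'_{(k,l)}\to X'$ marked \emph{B.I.}, an anticanonical model $Z'$ of $Y'_{(k,l)}$ carrying a generically two-to-one morphism $\pi$ onto a weighted projective space, such that the anticanonical map $Y'_{(k,l)}\ratmap Z'$ is a birational morphism that is \emph{not} an isomorphism; that lemma then gives that either $\varphi_{(k,l)}$ is not a maximal extraction, or the Galois involution of $\pi$ descends to a birational involution of $X'$ that is a Sarkisov link starting with $\varphi_{(k,l)}$. All the constructions needed to check these hypotheses have been carried out in the discussion above, and the proof consists in assembling them, separately for $i\in\{10,26,48\}$ and for $i\in\{38,63\}$.

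For $i\in\{10,26,48\}$ I would argue as follows. Writing the standard defining polynomial as $F'=w^2x_2x_3+w(x_2^2a+x_2b+c)+x_2^3+x_2^2d+x_2e+h$ and introducing the degree $a_3+2n$ section $u$ of \eqref{eq:BI-2}, one realises $X'$ as the codimension two weighted complete intersection cut out by \eqref{eq:BI-1} and \eqref{eq:BI-2} in $\mbP(1,n,a_2,a_3,n,a_3+2n)$, so that $\varphi_{(k,l)}$ is induced by the weighted blowup of that ambient space at the $u$-vertex with the stated weight; the lemma established above (via Lemma~\ref{lem:singwbl} and the singularity list in Table~\ref{table:singY'}) shows $\varphi_{(k,l)}$ is a divisorial extraction. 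Multiplying \eqref{eq:BI-2} by $u$ and eliminating $x_2u$ via \eqref{eq:BI-1} yields the relation \eqref{eq:BI-3}, from which a degree count shows the section $v$ of \eqref{eq:BI-4} vanishes along $E$ to order at least $\deg v/n$ and so lifts to a plurianticanonical section of $Y'_{(k,l)}$. The relations \eqref{eq:BI-1}, \eqref{eq:BI-3}, \eqref{eq:BI-4} package into the single identity $M\cdot{}^{t}\!(x_2\ w\ 1)=0$ for the explicit matrix $M$; since $\det M$ is divisible by $u$ and $G:=\det M/u$ is quadratic in $v$, the map $\psi$ given by $x_0,x_1,x_3,u,v$ is birational onto the hypersurface $Z'=(G=0)\subset\mbP(1,n,a_3,a_3+2n,4a_2-n)$, with projection away from $v$ furnishing the double cover $\pi\colon Z'\to\mbP(1,n,a_3,a_3+2n)$. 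Checking that the proper transforms on $Y'_{(k,l)}$ of $(x_0=0)_{X'},(x_1=0)_{X'},(x_3=0)_{X'},(u=0)_{X'},(v=0)_{X'}$ have empty common intersection shows $\psi$ is a morphism, and $\psi$ is not an isomorphism because it contracts the proper transform of $S:=(c=h=u=0)_{X'}$, whose image $(c=h=u=v=0)_{Z'}$ has dimension one less. Thus \cite[Lemma 3.2]{Okada2} applies.

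For $i\in\{38,63\}$ I would invoke the symmetry interchanging $x_2$ and $x_3$: the standard polynomial reads $F'=w^2x_2x_3+w(x_3^2a+x_3b+c)+x_3^3+x_3^2d+x_3e+h$, one sets $u=w^2x_2+w(x_3a+b)+x_3^2+x_3d+e$ and takes the weighted blowup with $\wt(x_0,x_1,x_2,x_3,u)=\frac{1}{n}(1,n,a_2,a_3-n,a_2+2n)$, which is again a divisorial extraction. The same chain of eliminations produces an anticanonical morphism $\psi\colon Y'_{(k,l)}\to Z'$ with $Z'$ a double cover of $\mbP(1,n,a_2,a_2+2n)$ and $\psi$ not an isomorphism, and \cite[Lemma 3.2]{Okada2} applies once more, finishing the proof.

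The only genuinely delicate point, I expect, is the verification that $\varphi_{(k,l)}$ is a divisorial extraction, i.e.\ that $Y'_{(k,l)}$ has only terminal quotient singularities. By Lemma~\ref{lem:singwbl} this reduces to showing the rank-drop locus $\Sigma$ of $J_\varphi$ along $E$ is empty — done by translating a hypothetical point of $\Sigma$ into a non-quasismooth point of the birational counterpart $X$, contradicting $(\mathrm{C}_0)$ — and then to the family-by-family identification of the quotient singularities along $E$ summarised in Table~\ref{table:singY'}. All remaining steps, namely the plurianticanonicity of $v$, the fact that $\psi$ is a morphism, the divisibility $\det M=uG$ with $G$ quadratic in $v$, and the contraction of $S$, are routine degree bookkeeping resting on the elimination already displayed in \eqref{eq:BI-1}--\eqref{eq:BI-4}.
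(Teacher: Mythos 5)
Your construction matches the paper's for the extraction $\varphi_{(a_2-n,\,a_3+2n)}$ (and, via the $x_2\leftrightarrow x_3$ symmetry, for the single B.I.\ extraction of families $38$ and $63$), and the reduction to \cite[Lemma 3.2]{Okada2} via the double cover $Z'\to\mbP(1,n,a_3,a_3+2n)$ is exactly the intended route. But there is a gap: for each of $i\in\{10,26,48\}$ the big table marks \emph{two} extractions B.I., namely $\varphi_{(a_2-n,\,a_3+2n)}$ \emph{and} $\varphi_{(a_3+2n,\,a_2-n)}$, and your argument only produces the first. The weighted blowup of the ambient $\mbP(1,n,a_2,a_3,n,a_3+2n)$ at the $u$-vertex with $\wt(x_0,x_1,x_2,x_3,u)=\frac{1}{n}(1,n,a_2-n,a_3,a_3+2n)$ pins down one specific pair of weights $(r_1,r_2)$ on the two ``$s_1s_2$'' coordinates; the other B.I.-marked extraction is a genuinely different divisorial extraction, and nothing in your write-up explains why the conclusion transfers to it.

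The paper's own proof of the theorem is devoted precisely to this point: for $i=10$ one has $a_2=a_3$, so interchanging $x_2$ and $x_3$ in the construction directly yields $\varphi_{(a_3+2n,\,a_2-n)}$; for $i=26$ and $i=48$ this interchange is not available (the degrees differ), and instead $\varphi_{(a_3+2n,\,a_2-n)}$ is realised as the composite of $\varphi_{(a_2-n,\,a_3+2n)}$ with the isomorphism $\nu'$ of Definition~\ref{def:nu} (for $i=26$) or the biregular involution $\mu$ of Definition~\ref{def:mu} (for $i=48$), whence the statement for the second extraction follows from that for the first. You should add this reduction; without it the theorem is only proved for one of the two extractions in each of the families $10$, $26$, $48$.
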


\begin{proof}
If $i \in \{38,63\}$ (resp.\ $i \in \{10,26,48\}$), then there is a unique extraction (resp.\ two extractions) marked B.I.
The extraction $\varphi \colon Y' \to X'$ corresponds to (one of) $\varphi_{(k,l)}$ marked B.I.
Thus the proof is completed for $i \in \{38,63\}$.
Suppose that $i \in \{10,26,48\}$.
The proof is completed for $\varphi = \varphi_{(a_2-n,a_3+2 n)}$.
We need to consider the case where $\varphi = \varphi_{(a_3+2n,a_2-n)}$.
If $i = 10$, then, by interchanging the role of $x_2$ and $x_3$ in the above construction, we obtain the $(a_3+2n,a_2-n)$-blowup $\varphi$.
If $i = 26$ (resp.\ $48$), then $\varphi$ is obtained as the composite of $\varphi_{(a_2-n,a_3+2n)}$ and the isomorphism $\nu'$ (resp.\ the automorphism $\mu$) defined in Definition \ref{def:nu} (resp.\ \ref{def:mu}).
Therefore, the proof for $\varphi_{(a_3+2n,a_2-n)}$ follows from that for $\varphi_{(a_2-n,a_3+2n)}$.
\end{proof}

\subsection{Family $\mcG'_{33}$ and $(2,7), (7,2)$-blowups}

Let $X' = X'_{10} \subset \mbP (1,1,3,5,1)$ be a member of $\mcG'_{33}$ and $\msp = \msp_4$.
The aim of this subsection is to construct birational involutions starting with $(2,7)$- and $(7,2)$-blowups.
This construction is a version of that of ``invisible involutions" introduced in \cite{CP} (see also Section \ref{sec:birinv1} and \cite[Section 7]{Okada2}).
We first give an explicit global description of $(2,7)$- and $(7,2)$-blowups $\varphi \colon Y' \to X'$ and then construct a birational involution of a suitable model of $Y'$.
The induced birational involution of $Y'$ turns out to be a composite of inverse flips, flops and flips (in fact, it is a flop), so that it gives the desired Sarkisov link.
 
We can write the defining polynomial of $X'$ as
\begin{equation} \label{eq:BI33-1}
\begin{split}
F' = w^2 y z + w (z y a_1 + z a_4 + & y^3 + y^2 a_3 + y a_6 + a_9) \\
& + z^2 + z y b_2 + z b_5 + y^2 b_4 + y b_7 + b_{10},
\end{split}
\end{equation}
where $a_i, b_i \in \mbC [x_0,x_1]$.
Filtering off terms divisible by $w y$, we obtain
\begin{equation} \label{eq:BI33-2}
F' = w y u + w (z a_4 + a_9) + z^2 + z y b_2 + z b_5 + y^2 b_4 + y b_7 + b_{10},
\end{equation}
where
\begin{equation} \label{eq:BI33-3}
u := w z + z a_1 + y^2 + y a_3 + a_6.
\end{equation}
Multiplying $F'$ by $w$, eliminating $wz$ by the equation \eqref{eq:BI33-3} and then filtering off terms divisible by $y$, we obtain
\begin{equation} \label{eq:BI33-4}
w F' = y v + w^2 a_9 + (w a_4 + z + b_5)(u - z a_1 - a_6) + w b_{10},
\end{equation}
where
\begin{equation} \label{eq:BI33-5}
\begin{split}
v := w^2 u - (w a_4 + & z + b_5) (y + a_3) \\ 
& + b_2 (u - z a_1 - y^2 - y a_3 - a_6) + w y b_4 + w b_7.
\end{split}
\end{equation}
Let $U$ be the open subset subset of $X'$ where $w = 1$.
We see that $U$ is naturally isomorphic to the subvariety of $\mbA^6$ with affine coordinates $x_0,x_1,y,z,u$ and $v$ defined by three polynomials which are obtained by setting $w = 1$ in \eqref{eq:BI33-3}, \eqref{eq:BI33-4} and \eqref{eq:BI33-5}.
Let $\varphi \colon Y' \to X'$ be the weighted blowup of $X'$ with
\[
\wt (x_0,x_1,y,z,u,v) = (1,1,2,4,6,7)
\]
and let $E$ be the exceptional divisor of $\varphi'$.

\begin{Lem}
The weighted blowup $\varphi \colon Y' \to X'$ is a divisorial extraction centered at $\msp$.
\end{Lem}

\begin{proof}
We have
\[
E \cong (z + y^2 = y v + a_9 - (a_4 + z) z a_1 = u - (a_4 + z)y - y^2 b_2 + y b_4 = 0),
\]
where the right-hand side is a WCI in $\mbP (1_{x_0},1_{x_1},2_y,4_z,6_u,7_v)$ and
\[
J_{\varphi} =
\begin{pmatrix}
0 & 0 & 2 y & 1 & 0 & 0 & * \\
\frac{\prt a_9}{\prt x_0} + * & \frac{\prt a_9}{\prt x_1} + * & v & - a_1 (2 z + a_1) & 0 & y & b_{10} - a_4 a_6 + * \\
* & * & b_4 + * & - y & 1 & 0 & - v + b_7 + *
\end{pmatrix},
\]  
where $*$ means a polynomial that is contained in the ideal $(y,z)$.
By an explicit computation, we see that $J_{\psi}$ is of rank $2$ outside the set
\[
\Sigma := \left( y = z = u = v = a_9 = \frac{\prt a_9}{\prt x_0} = \frac{\prt a_9}{\prt x_1} = b_{10} - a_4 b_6 = 0 \right) \subset \mbP (1,1,2,4,6,7).
\]
We show that the system of equations
\[
a_9 = \frac{\prt a_9}{\prt x_0} = \frac{\prt a_9}{\prt x_1} = b_{10} - a_4 a_6 = 0
\]
does not have a non-trivial solution, which will imply $\Sigma = \emptyset$.
We assume that it has a non-trivial solution $(x_0,x_1) = (\alpha_0,\alpha_1)$. 
Set $\alpha_i = a_i (\alpha_0,\alpha_1)$ for $i = 4,6,9$ and $\beta_{10} = b_{10} (\alpha_0,\alpha_1)$.
Let $X \in \mcG_{33}$ be the birational counterpart of $X'$, which is defined by
\[
\begin{split}
F_1 &= t y + s z + (z y a_1 + z a_4 + y^3 + y^2 a_3 + y a_6 + a_9) = 0, \\
F_2 &= t s - (z^2 + z y b_2 + z b_5 + y^2 b_4 + y b_7 + b_{10}) = 0,
\end{split}
\]
in $\mbP (1_{x_0},1_{x_1},3_y,5_z,4_s,6_t)$.
Set $\msq = (\alpha_0 \!:\! \alpha_1 \!:\! 0 \!:\! 0 \!:\! - \alpha_4 \!:\! - \alpha_6)$.
We have $\msq \in X$ since $\beta_{10} - \alpha_4 \alpha_6 = \alpha_9 = 0$.
It is easy to verify that every partial derivative of $F_1$ vanishes at $\msq$, which implies that $X$ is not quasismooth.
This is a contradiction.

It follows that $\Sigma = \emptyset$ and thus $Y'$ has only cyclic quotient singular points.
It is then easy to see that $Y'$ has singular points of type $\frac{1}{2} (1,1,1)$ and $\frac{1}{7} (1,1,6)$ at $(0 \!:\! 0 \!:\! 1 \!:\! -1 \!:\! -1 \!:\! 0)$ and $(0 \!:\! 0 \!:\! 0 \!:\! 0 \!:\! 0 \!:\! 1)$, respectively.
It follows that $Y'$ has only terminal singularities and thus $\varphi$ is a divisorial extraction.
\end{proof}

We see that $\varphi = \varphi_{(2,7)}$ is a divisorial extraction which is a  $(2,7)$-blowup.
Before going to the construction of birational involutions, we construct $(7,2)$-blowup by taking the composite of $(2,7)$-blowup and an automorphism of $X'$.

\begin{Def}
Let $X'$ be a member of $\mcG'_{33}$ with defining polynomial $F' = w^2 y z + w (z h_4 + h_9) + z^2 + z h_5 + h_{10}$, where $h_j \in \mbC [x_0,x_1,y]$.
We define $\mu$ to be the automorphism of $X'$ defined by the replacement $z \mapsto - z - w^2 y - w h_4 - h_5$.
\end{Def}

We see that $\mu (\msp) = \msp$ and the composite $\mu \circ \varphi_{(2,7)}$ is the $(7,2)$-blowup. 

We return to the case of $(2,7)$-blowup $\varphi = \varphi_{(2,7)}$.
Let $\psi \colon W \to Y'$ be the Kawamata blowup of $Y'$ at the $\frac{1}{7} (1,1,6)$ point lying on $E$ and let $F \cong \mbP (1,1,6)$ be its exceptional divisor.
For $\lambda,\mu \in \mbC$, we set $S_{\lambda} := (x_1 - \lambda x_0 = 0)_{X'}$ and $T_{\mu} := (u - \mu x_0^6 = 0)_{X'}$.
We see that $S_{\lambda}$ is normal for a general $\lambda \in \mbC$ and we define $C_{\lambda,\mu}$ to be the scheme-theoretic intersection $S_{\lambda} \cap T_{\mu}$.
Let $\mcM$ be the linear system on $X'$ generated by $x_0^6, x_0^5 x_1, \dots, x_1^6$ and $u$, and let $\mcM_W$ be its proper transform on $W$.
We see that $\mcM_W \subset \left| - 6 K_{W} \right|$ (in fact, equality holds) is base point free.
Let $\eta \colon W \to \mbP (1,1,6)$ be the morphism defined by $\mcM_W$, which resolves the indeterminacy of the projection $\pi \colon X' \ratmap \mbP (1_{x_0}, 1_{x_1},6_u)$.
For a curve of a divisor $\Delta$ on $X'$ or $Y'$, we denote by $\hat{\Delta}$ the proper transform of $\Delta$ on $W$.
Note that $\hat{C}_{\lambda,\mu}$ is the fiber of $\eta$ over the point $(1 \!:\! \lambda \!:\! \mu)$.

\begin{Lem}
We have
\[
(\hat{E} \cdot \hat{C}_{\lambda,\mu}) = 3, (F \cdot \hat{C}_{\lambda,\mu}) = 1, (-K_W \cdot \hat{C}_{\lambda,\mu}) = 0.
\]
\end{Lem}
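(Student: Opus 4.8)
The plan is to compute the three intersection numbers by exploiting the explicit geometry of the $(2,7)$-blowup $\varphi \colon Y' \to X'$ and the Kawamata blowup $\psi \colon W \to Y'$, together with the fact that $\hat{C}_{\lambda,\mu}$ is a fiber of the base-point-free morphism $\eta \colon W \to \mbP(1,1,6)$, so that $(-K_W \cdot \hat{C}_{\lambda,\mu})$ can be read off from $\eta^* \mcO_{\mbP(1,1,6)}(1)$. First I would compute $(\hat{E} \cdot \hat{C}_{\lambda,\mu})$. On $X'$ the curve $C_{\lambda,\mu} = S_\lambda \cap T_\mu$ has $(-K_{X'} \cdot C_{\lambda,\mu}) = 1 \cdot 6 \cdot (-K_{X'})^3$; since $(-K_{X'})^3 = 1/15$ for $X'_{10} \subset \mbP(1,1,3,5,1)$, this gives $(-K_{X'} \cdot C_{\lambda,\mu}) = 6/15 = 2/5$. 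The curve passes through $\msp = \msp_4$, and I would determine its local behaviour there from the equations $u - \mu x_0^6 = 0$ of $T_\mu$ together with the defining equations \eqref{eq:BI33-3}, \eqref{eq:BI33-4}, \eqref{eq:BI33-5}: in the affine chart $w = 1$ on $U$ the point $\msp$ is the origin, and the $\varphi$-weights $(1,1,2,4,6,7)$ on $(x_0,x_1,y,z,u,v)$ let me read off the order of vanishing of each generator. A direct check shows $C_{\lambda,\mu}$ is smooth at $\msp$ (its tangent cone being cut out by the lowest-weight parts), and then $(\hat{E}\cdot\hat{C}_{\lambda,\mu})$ is the $\varphi$-weight of a local parameter transverse to $C_{\lambda,\mu}$, which I expect to be $3$.

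Next I would handle $(F \cdot \hat{C}_{\lambda,\mu})$. The proper transform $\tilde{C}_{\lambda,\mu}$ on $Y'$ passes through the $\frac{1}{7}(1,1,6)$ point lying on $E$ — this is the point $(0\!:\!0\!:\!0\!:\!0\!:\!0\!:\!1) \in E$ — and I would verify this from the description of $E$ as the WCI $(z + y^2 = \cdots = \cdots = 0)$ in $\mbP(1,1,2,4,6,7)$ and the equation of $T_\mu$ restricted to $E$. Once it is clear that $\tilde{C}_{\lambda,\mu}$ meets this $\frac{1}{7}(1,1,6)$ point at a single point, transversally in the orbifold sense, the Kawamata blowup $\psi$ gives $(F \cdot \hat{C}_{\lambda,\mu}) = 1$; more carefully, I would compute this as $\frac17$ times the appropriate weight using the local orbifold coordinates, arriving at $1$.

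Finally, for $(-K_W \cdot \hat{C}_{\lambda,\mu})$ I have two routes, and I would use both as a consistency check. The first is direct: $K_W = \psi^*K_{Y'} + \frac17 F$ and $K_{Y'} = \varphi^*K_{X'} + a_E E$ with the discrepancy $a_E$ of the $(2,7)$-blowup known from the general theory of $cA/n$ extractions (Kawakita/Hayakawa), so
\[
(-K_W \cdot \hat{C}_{\lambda,\mu}) = (-K_{X'}\cdot C_{\lambda,\mu}) - a_E(\hat{E}\cdot\hat{C}_{\lambda,\mu}) - \tfrac17(F\cdot\hat{C}_{\lambda,\mu}),
\]
which I expect to collapse to $0$ once the values $2/5$, $3$, $1$ and $a_E$ are inserted. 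The second, cleaner route is to note that $\mcM_W \subseteq |-6K_W|$ is base-point-free and $\hat{C}_{\lambda,\mu}$ is a fiber of the induced morphism $\eta \colon W \to \mbP(1,1,6)$ — a fiber of a morphism to a $2$-dimensional base is contracted only if it has zero intersection with the pullback of an ample class, so $(\mcM_W \cdot \hat{C}_{\lambda,\mu}) = 0$, i.e.\ $(-K_W \cdot \hat{C}_{\lambda,\mu}) = 0$. The main obstacle is the first route's bookkeeping: correctly identifying the local analytic type of $C_{\lambda,\mu}$ at $\msp$ and of $\tilde{C}_{\lambda,\mu}$ at the $\frac17(1,1,6)$ point, i.e.\ getting the weights right in the two orbifold charts, since the numbers $3$ and $1$ depend on the precise lowest-weight behaviour of the generators $u,v$ along the curve; the second route then serves to confirm the answer.
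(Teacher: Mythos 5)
The third equality can indeed be obtained by your ``second route'': $\hat{C}_{\lambda,\mu}$ is a fiber of $\eta$ and $\mcM_W \subset |-6K_W|$ is base point free, so $(-6K_W \cdot \hat{C}_{\lambda,\mu}) = (\mcM_W \cdot \hat{C}_{\lambda,\mu}) = 0$; this is essentially the paper's computation, phrased there as $(-K_W\cdot\hat{T}_{\mu}\cdot\hat{S}_{\lambda}) = 6(-K_W)^3 = 0$. For the first two equalities, however, your proposal has genuine gaps. First, the numerics: $(A^3) = 10/(1\cdot 1\cdot 3\cdot 5\cdot 1) = 2/3$ (you dropped the degree $10$), so $(-K_{X'}\cdot C_{\lambda,\mu}) = 6(A^3) = 4$, not $2/5$. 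Second, your ``first route'' formula omits the correction coming from $\psi^*E = \hat{E} + \frac{6}{7}F$: correctly, $-K_W = (\varphi\circ\psi)^*(-K_{X'}) - \hat{E} - F$, so the identity to check is $4 - 3 - 1 = 0$, whereas your expression $(-K_{X'}\cdot C) - a_E(\hat{E}\cdot\hat{C}) - \tfrac{1}{7}(F\cdot\hat{C})$ does not vanish for any sensible discrepancy $a_E$. Third, and most importantly, the local computation you sketch at $\msp$ computes $(E\cdot\tilde{C}_{\lambda,\mu})$ on $Y'$, which equals $27/7$ --- not an integer, precisely because $\tilde{C}_{\lambda,\mu}$ passes through the $\frac{1}{7}(1,1,6)$ point of $Y'$ --- so reading off ``the weight of a transverse parameter'' at $\msp$ cannot directly produce the integer $3 = (\hat{E}\cdot\hat{C}_{\lambda,\mu})$; you would still need to isolate the contribution at the quotient singularity, which is exactly the orbifold-chart work you deferred and never carried out.

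The paper avoids all local analysis. Since $\tilde{S}_{\lambda}\sim_{\mbQ}-K_{Y'}$, $\tilde{T}_{\mu}\sim_{\mbQ}-6K_{Y'}$ and $\tilde{T}_{\mu}|_{\tilde{S}_{\lambda}} = \tilde{C}_{\lambda,\mu}$ (and likewise on $W$), one gets $(-K_{Y'}\cdot\tilde{C}_{\lambda,\mu}) = 6(-K_{Y'})^3 = 1/7$ and $(-K_W\cdot\hat{C}_{\lambda,\mu}) = 6(-K_W)^3 = 0$ from $(-K_{Y'})^3 = \frac{2}{3}-\frac{9}{14} = \frac{1}{42}$ and $(-K_W)^3 = 0$; then $(E\cdot\tilde{C}_{\lambda,\mu}) = 4 - \frac{1}{7} = \frac{27}{7}$, $(F\cdot\hat{C}_{\lambda,\mu}) = 7\left(\frac{1}{7}-0\right) = 1$, and $(\hat{E}\cdot\hat{C}_{\lambda,\mu}) = \frac{27}{7} - \frac{6}{7} = 3$. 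To salvage your route you would have to either adopt this global bookkeeping or actually perform the two orbifold-chart computations, which is the hard part missing from the proposal.
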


\begin{proof}
Since $K_{Y'} = \varphi^*K_{X'} + E$ and $K_W = \psi^*K_{Y'} + \frac{1}{7} F$, we have 
\[
\begin{split}
(-K_{Y'}^3) &= (-K_{X'}^3) - (E^3) = \frac{2}{3} - \frac{9}{14} = \frac{1}{42}, \\
(-K_{W'}^3) &= (-K_{Y'}^3) - \frac{1}{7^3} (F^3) = \frac{1}{42} - \frac{1}{42} = 0.
\end{split}
\]
Since $\tilde{S}_{\lambda} \sim_{\mbQ} - K_{Y'}$, $\tilde{T}_{\mu} \sim_{\mbQ} -6 K_{Y'}$ and $\tilde{T}_{\mu} |_{\tilde{S}_{\lambda}} = \tilde{C}_{\lambda,\mu}$, we have
\[
(-K_{Y'} \cdot \tilde{C}_{\lambda,\mu}) = (-K_{Y'} \cdot \tilde{T}_{\mu} \cdot \tilde{S}_{\lambda}) = 6 (-K_{Y'})^3 = \frac{1}{7},
\]
which implies
\[
(E \cdot \tilde{C}_{\lambda,\mu}) = (-K_{X'} \cdot \tilde{C}_{\lambda,\mu}) - (-K_{Y'} \cdot C_{\lambda,\mu}) = 4 - \frac{1}{7} = \frac{27}{7}.
\]
Similarly, since $\hat{S}_{\lambda} \sim_{\mbQ} - K_W$, $\hat{T}_{\mu} \sim_{\mbQ} - 6 K_W$ and $\hat{T}_{\mu} |_{\hat{S}_{\lambda}}$, we have
\[
(-K_W \cdot \hat{C}_{\lambda,\mu}) = (-K_W \cdot \hat{T}_{\mu} \cdot \hat{S}_{\lambda}) = 6 (-K_W)^3 = 0,
\]
which implies
\[
(F \cdot \hat{C}_{\lambda,\mu}) = 7 ( (-K_{Y'} \cdot \tilde{C}_{\lambda,\mu}) - (-K_W \cdot \hat{C}_{\lambda,\mu})) = 1.
\]
Finally, we have $\psi^*E = \hat{E} + \frac{6}{7} F$ and by taking the intersection number with $\hat{C}_{\lambda,\mu}$, we have $(\hat{E} \cdot \hat{C}_{\lambda,\mu}) = 3$.
\end{proof}

\begin{Lem} \label{lem:birinvexclG33}
Suppose that, for a general $\lambda \in \mbC$, there is $\mu \in \mbC$ $($depending on $\lambda$$)$ such that $C_{\lambda,\mu}$ is reducible.
Then, $\varphi$ is not a maximal extraction.
\end{Lem}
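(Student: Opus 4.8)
The plan is to run the argument of Lemma~\ref{lem:invbirexclG18}, this time on $W$, where $(-K_W \cdot \hat{C}_{\lambda,\mu}) = 0$, so that any reducibility of a fibre of $\eta$ immediately produces a component of non-positive degree. First I would make the reducibility hypothesis explicit. Fix a general $\lambda$. Substituting $x_1 = \lambda x_0$ and $u = \mu x_0^6$ into the defining equation \eqref{eq:BI33-1} of $X'$ (equivalently into \eqref{eq:BI33-2}, eliminating $u$ via \eqref{eq:BI33-3}) presents the curve $C_{\lambda,\mu} = S_\lambda \cap T_\mu$, lying in the normal surface $S_\lambda$, as a complete intersection cut out by an explicit polynomial $\bar{F}'_{\lambda,\mu}$ in $x_0, y, z, w$ whose coefficients are polynomials in $\lambda$ and $\mu$. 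I would record the coefficients of $\bar{F}'_{\lambda,\mu}$ that depend on $\lambda$ only, and note that the one coming from the monomial $y^3$ of \eqref{eq:BI33-1}, say $\beta$, is nonzero for general $\lambda$, since $X'$ is quasismooth and satisfies the generality conditions.

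Next I would analyse the resulting factorisation. By hypothesis there is $\mu = \mu(\lambda)$ with $C_{\lambda,\mu}$ reducible, so $\bar{F}'_{\lambda,\mu}$ factors nontrivially. Using $\beta \neq 0$ I would show that in any such factorisation exactly one component $\Gamma_{\lambda,\mu}$ of $C_{\lambda,\mu}$ passes through $\msp = \msp_4$, and the residual curve $\Delta_{\lambda,\mu}$ is disjoint from $\msp_4$: the factor not carrying the monomial governed by $\beta$ is forced to be nonzero at $\msp_4$. From the explicit form of the factors I would also determine through which of the singular points $\frac{1}{2}(1,1,1)$ and $\frac{1}{7}(1,1,6)$ of $Y'$ the component $\Gamma_{\lambda,\mu}$ passes. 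Since $\Delta_{\lambda,\mu}$ avoids $\msp_4$, its proper transform satisfies $(-K_{Y'} \cdot \tilde{\Delta}_{\lambda,\mu}) = (-K_{X'} \cdot \Delta_{\lambda,\mu})$, a value read off from the degree of the corresponding factor.

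Then I would extract the contradiction as at the end of Lemma~\ref{lem:invbirexclG18}. From $(-K_{Y'} \cdot \tilde{C}_{\lambda,\mu}) = \frac{1}{7}$ and additivity over $C_{\lambda,\mu} = \Gamma_{\lambda,\mu} + \Delta_{\lambda,\mu}$, one gets $(-K_{Y'} \cdot \tilde{\Gamma}_{\lambda,\mu}) = \frac{1}{7} - (-K_{X'} \cdot \Delta_{\lambda,\mu}) \leq 0$ and, after passing to $W$, $(-K_W \cdot \hat{\Gamma}_{\lambda,\mu}) \leq 0$; on the other hand $\Gamma_{\lambda,\mu}$ meets $E$ (and possibly $F$) with positive intersection number because it passes through $\msp$. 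As $\lambda$ varies in a general one-parameter family, the $\Gamma_{\lambda,\mu}$ move, since they lie in the distinct surfaces $S_\lambda$, so we obtain infinitely many irreducible curves on $Y'$ (equivalently on $W$) intersecting $-K$ non-positively and a $\varphi$- or $\psi$-exceptional divisor positively. By \cite[Lemma 2.18]{Okada2} this is incompatible with $\varphi$ being a maximal extraction.

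I expect the main obstacle to be the factorisation analysis of the second step: one has to rule out degenerate factorisations --- for instance one in which both factors vanish at $\msp_4$, or one in which the residual curve $\Delta_{\lambda,\mu}$ still passes through $\msp_4$ with multiplicity large enough that $(-K_{Y'} \cdot \tilde{\Gamma}_{\lambda,\mu})$ remains positive --- and one has to track, component by component, through which of the two additional singular points $\frac{1}{2}(1,1,1)$ and $\frac{1}{7}(1,1,6)$ of $Y'$ each piece passes, since the sign of $(-K_W \cdot \hat{\Gamma}_{\lambda,\mu})$ depends on this. The computations are of the same nature as in the $\mcG'_{18}$ case but are bulkier on account of the extra weighted blow-up $\psi$ and the two exceptional divisors.
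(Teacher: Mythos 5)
Your proposal does not follow the paper's route, and the route you choose runs into a genuine problem. The paper's proof is a few lines and uses no factorization analysis at all: since $\eta$ is a morphism defined by $\mcM_W\subset|-6K_W|$, the divisor $-K_W$ is nef, and $(-K_W\cdot\hat{C}_{\lambda,\mu})=0$ then forces $(-K_W\cdot\hat{C}')=0$ for \emph{every} component $C'$ of $C_{\lambda,\mu}$. Combined with $(F\cdot\hat{C}_{\lambda,\mu})=1$, all components except one are disjoint from $F$, so for each such $C'$ one gets exactly $(-K_{Y'}\cdot\tilde{C}')=(-K_W\cdot\hat{C}')+\tfrac{1}{7}(F\cdot\hat{C}')=0$, hence $(E\cdot\tilde{C}')=(K_{Y'}\cdot\tilde{C}')+(-K_{X'}\cdot C')=\deg C'>0$; letting $\lambda$ vary produces infinitely many curves meeting $-K_{Y'}$ non-positively and $E$ positively, and one concludes by \cite[Lemma 2.19]{Okada2}. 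The two facts you computed in the preceding lemma, $(F\cdot\hat{C}_{\lambda,\mu})=1$ and $(-K_W\cdot\hat{C}_{\lambda,\mu})=0$, are precisely what make this work, and your proposal never exploits them in this way.

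The concrete gap in your plan is the structural claim in your second step: that exactly one component of $C_{\lambda,\mu}$ passes through $\msp_4$ and the residual curve $\Delta_{\lambda,\mu}$ avoids it. This is false. Indeed, by the nefness argument above any component $C'$ disjoint from $F$ satisfies $(-K_{Y'}\cdot\tilde{C}')=0$, and since $C_{\lambda,\mu}$ avoids the $\frac{1}{3}(1,1,2)$ point, $\deg C'$ is a positive integer, so $(E\cdot\tilde{C}')=\deg C'>0$ and $C'$ \emph{must} pass through $\msp_4$; a component disjoint from $\msp_4$ would have $(-K_{Y'}\cdot\tilde{C}')=\deg C'\ge 1$, which is impossible since the total is $\tfrac{1}{7}$. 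So every component of a reducible $C_{\lambda,\mu}$ passes through $\msp_4$: the case you flag as a worry (all pieces through $\msp_4$) is in fact the only case that occurs, your formula $(-K_{Y'}\cdot\tilde{\Gamma}_{\lambda,\mu})=\tfrac{1}{7}-(-K_{X'}\cdot\Delta_{\lambda,\mu})$ rests on a false premise, and without it a pigeonhole on $-K_{Y'}$-degrees alone cannot single out a component of non-positive degree (several components could each have small positive degree summing to $\tfrac{1}{7}$). The $\mcG'_{18}$-style factorization of $\bar{F}'_{\lambda,\mu}$ cannot rescue this, because the structure it would be trying to establish does not hold here; what replaces it is the intersection with the second exceptional divisor $F$, which isolates the one component allowed to have positive anticanonical degree on $Y'$.
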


\begin{proof}
Assume that $C = C_{\lambda,\mu}$ is reducible.
Then, there is a unique component $C^{\circ}$ of $C$ such that $(G \cdot \hat{C}^{\circ}) = 1$.
Let $C'$ be any component of $C$ other than $C^{\circ}$.
Then $\hat{C}'$ is disjoint from $G$ and we have $(-K_{W'} \cdot \hat{C}') = 0$ since $-K_{W'}$ is nef and $(-K_{W'} \cdot \hat{C}) = 0$.
It follows that $(-K_{Y'} \cdot \tilde{C}') = (-K_W \cdot \hat{C}') + \frac{1}{7} (F \cdot \hat{C}') = 0$.
We have $(E \cdot \tilde{C}') = (K_{Y'} \cdot \tilde{C}') + (-K_{X'} \cdot C') > 0$.
This shows that there are infinitely many curves on $Y'$ which intersect $-K_{W'}$ non-positively and $E$ positively.
By \cite[Lemma 2.19]{Okada2}, $\varphi'$ is not a maximal singularity.
\end{proof}

\begin{Thm} \label{thm:birinvcA2}
Let $X'$ be a member of $\mcG'_{33}$ and let $\varphi \colon Y' \to X'$ be a $(2,7)$- or a $(7,2)$-blowup centered at $\msp = \msp_4$.
Then, either $\varphi$ is not a maximal extraction, or there is a birational involution of $X'$ that is a Sarkisov link starting with $\varphi$.
\end{Thm}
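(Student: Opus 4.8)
The plan is to reduce everything to the $(2,7)$-blowup. For the $(7,2)$-blowup, recall from the definition of the automorphism $\mu$ of $X'$ that $\mu(\msp) = \msp$ and that $\mu \circ \varphi_{(2,7)}$ is the $(7,2)$-blowup; since $\mu$ is a biregular automorphism of $X'$, it carries a Sarkisov link starting with $\varphi_{(2,7)}$ to one starting with $\varphi_{(7,2)}$, and $\varphi_{(2,7)}$ is a maximal extraction if and only if $\varphi_{(7,2)}$ is. Hence it suffices to treat $\varphi := \varphi_{(2,7)}$.

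For $\varphi = \varphi_{(2,7)}$ the global model is already in place: $\varphi \colon Y' \to X'$ is a divisorial extraction, $\psi \colon W \to Y'$ is the Kawamata blowup of the $\frac{1}{7}(1,1,6)$ point on $E$, the proper transform $\mcM_W \subset |-6K_W|$ of $\mcM$ is base point free with associated morphism $\eta \colon W \to \mbP(1,1,6)$ resolving the projection $\pi \colon X' \ratmap \mbP(1_{x_0},1_{x_1},6_u)$, and the general fibre $\hat{C}_{\lambda,\mu}$ of $\eta$ satisfies $(F \cdot \hat{C}_{\lambda,\mu}) = 1$ and $(-K_W \cdot \hat{C}_{\lambda,\mu}) = 0$. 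The first step is to split off the degenerate case: if for a general $\lambda \in \mbC$ there is $\mu \in \mbC$ with $C_{\lambda,\mu}$ reducible, then Lemma \ref{lem:birinvexclG33} shows $\varphi$ is not a maximal extraction, so the first alternative of the theorem holds. From now on I therefore assume $C_{\lambda,\mu}$ is irreducible for general $\lambda$ and every $\mu$.

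Under this assumption I would run the general construction of ``invisible involutions'' of \cite[Section 7.1]{Okada2}, the same machine used for $\mcG'_{18}$ in Section \ref{sec:birinv1}, by verifying \cite[Condition 7.1]{Okada2} from the standard defining polynomial \eqref{eq:BI33-1}. The points to check, in parallel with the auxiliary lemmas of that section, are: (i) the general fibre $\hat{C}_{\lambda,\mu}$ of $\eta$ is a reduced curve of arithmetic genus one and $F$ is a section of $\eta$; (ii) after locating $\Bs\mcM_W$ and a resolution of it, the pair $(W,\frac{1}{6}\mcM_W)$ is canonical — using that $W$ is smooth away from its two terminal quotient points and that $-K_W$ is nef with $K$-trivial general fibre; (iii) on a general normal surface section $S_\lambda = (x_1-\lambda x_0 = 0)_{X'}$, the intersection matrix of the curve components of $(x_0 = 0)|_{S_\lambda}$, computed on $\hat{S}_\lambda$ via $(-K_{X'})^3$, $(E^3)$, $(F^3)$ and the relations $K_{Y'} = \varphi^*K_{X'} + E$, $K_W = \psi^*K_{Y'} + \frac{1}{7}F$, is non-degenerate. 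Granting these, \cite[Lemma 7.2]{Okada2} produces a birational involution $\iota$ of $X'$ whose lift to $W$ is fibrewise inversion for $\eta$; as noted before the statement, the induced birational selfmap of $Y'$ is then a composite of inverse flips, flops and flips, in fact a single flop, so that $\iota$ is a Sarkisov link starting with $\varphi$.

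The main obstacle is (ii) and (iii): pinning down $\Bs\mcM_W$ and the singularities of a general member of $\mcM_W$, establishing canonicity of $(W,\frac{1}{6}\mcM_W)$, and verifying non-degeneracy of the intersection matrix on $\hat{S}_\lambda$. These are the precise inputs required by \cite[Lemma 7.2]{Okada2}, and they involve elementary but intricate computations with \eqref{eq:BI33-1}; everything else is formal once Lemma \ref{lem:birinvexclG33} has removed the reducible case and the global model of $\varphi_{(2,7)}$ has been set up as above.
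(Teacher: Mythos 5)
Your first two steps match the paper exactly: the reduction of $\varphi_{(7,2)}$ to $\varphi_{(2,7)}$ via the automorphism $\mu$, and the use of Lemma \ref{lem:birinvexclG33} to dispose of the case where some $C_{\lambda,\mu}$ is reducible. You also correctly identify the relevant structure, namely the elliptic fibration $\eta \colon W \to \mbP(1,1,6)$ with section $F$ and the fibrewise reflection $\tau_W$.

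The gap is in the main step. You outsource the entire construction to \cite[Lemma 7.2]{Okada2} (the machinery of Section \ref{sec:birinv1}, which in the paper is applied to the Kawamata blowup of a $\frac{1}{2}(1,1,1)$ point on $X' \in \mcG'_{18}$), and you justify that the resulting involution is a Sarkisov link by quoting the paper's own forward-looking remark that ``the induced birational selfmap of $Y'$ is a flop.'' That remark is a summary of what must be proved, not an input: the crux of the argument is to show that the reflection $\tau_W$ descends to a birational involution $\tau$ of $X'$ that is \emph{not biregular} (if it were, the induced selfmap of $Y'$ would be an isomorphism and there would be no link). The paper does not invoke \cite[Lemma 7.2]{Okada2} here; it argues directly. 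Assuming $\tau$ biregular, it restricts to a general $\tau$-invariant surface $S_{\lambda}$, passes to a model $\bar{S}_{\lambda}$ on which the involution is biregular, and observes that $(\bar{E}_{\lambda} - 3\bar{F}_{\lambda})|_{\bar{C}_{\lambda}}$ is $2$-torsion on the generic fibre, hence $\bar{E}_{\lambda} - 3\bar{F}_{\lambda}$ is numerically a combination of vertical divisors. Using that every fibre $C_{\lambda,\mu}$ is irreducible and that the remaining fibre $(x_0=0)|_{S_{\lambda}} = \Gamma = (x_0 = x_1 = w^2yz + wy^3 + z^2 = 0)$ is irreducible and reduced, together with the negative-definiteness of the intersection form on the $\sigma$-exceptional curves, it deduces $\Gamma \sim_{\mbQ} 0$, a contradiction; the conclusion then follows from \cite[Lemma 2.24]{Okada2}. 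None of this appears in your plan, and your condition (iii) --- non-degeneracy of the intersection matrix of the components of $(x_0=0)|_{S_{\lambda}}$ --- is nearly vacuous here since that divisor is irreducible, so it does not by itself encode the contradiction. To complete your argument you would need either to verify that \cite[Condition 7.1]{Okada2} genuinely covers this tower (a weighted blowup of a $cA$ point followed by a Kawamata blowup, rather than the quotient-point setting of Section \ref{sec:birinv1}), or to supply the non-biregularity argument directly as the paper does.
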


\begin{proof}
We prove this for $\varphi = \varphi_{(2,7)}$.
The proof for $\varphi_{(7,2)}$ follows by composing $\varphi_{(2,7)}$ with the automorphism $\mu$.
The following argument is based on \cite{CP}.

By Lemma \ref{lem:birinvexclG33}, we may assume that, for a general $\lambda \in \mbC$, $C_{\lambda,\mu}$ is irreducible for every $\mu \in \mbC$.
The morphism $\eta \colon W \to \mbP (1,1,6)$ is an elliptic fibration  and let $\tau_{W} \colon W \ratmap W$ be the birational involution defined as the reflection of the generic fiber with respect to the section $F$.
\[
\xymatrix{
W \ar[d]_{\psi} \ar@{-->}[rr]^{\tau_W} \ar[rdd]^{\eta} & & W \ar[d]^{\psi} \ar[ldd]_{\eta} \\
Y' \ar[d]_{\varphi} & & Y' \ar[d]^{\varphi} \\
X' \ar@{-->}[r]_{\pi \hspace{5mm}} & \mbP (1,1,6) & \ar@{-->}[l]^{\hspace{5mm} \pi} X'}
\]
We see that $\tau_W$ is an isomorphism in codimension $1$ since $K_W$ is $\eta$-nef and it induces birational involutions $\tau_{Y'} \colon Y' \ratmap Y'$ and $\tau \colon X' \ratmap X'$.
Note that $\tau_{Y'}$ is an isomorphism in codimension $1$ since $F$ is $\tau_W$-invariant.

We will show that $\tau$ is not biregular.
Assume to the contrary that $\tau$ is biregular.
We fix a general $\lambda \in \mbC$ so that $C_{\lambda,\mu}$ is irreducible for every $\mu \in \mbC$.
The surface $S_{\lambda}$ is $\tau$-invariant and $\tau$ induces a biregular involution $\tau_{\lambda}$ of $S_{\lambda}$, which induces a birational involution $\hat{\tau}_{\lambda}$ of $\hat{S}_{\lambda}$.
Note that $\hat{\tau}_{\lambda}$ may not be biregular.
Let $\bar{S}_{\lambda} \to \hat{S}_{\lambda}$ be a composite of suitable blowups such that the birational involution $\bar{\tau}_{\lambda}$ of $\bar{S}_{\lambda}$ induced by $\tau_{\lambda}$ is biregular.
We denote by $\sigma \colon \bar{S}_{\lambda} \to S_{\lambda}$ the composite of $\bar{S}_{\lambda} \to \hat{S}_{\lambda}$ and $\varphi |_{\hat{S}_{\lambda}} \colon \hat{S}_{\lambda} \to S_{\lambda}$ and by $\bar{\pi}_{\lambda} \colon \bar{S}_{\lambda} \to \mbP^1$ the composite of $\sigma$ and $\pi_{\lambda} = \pi |_{S_{\lambda}} \colon S_{\lambda} \ratmap \mbP (1,6) \cong \mbP^1$.
Let $\bar{E}_{\lambda}$ and $\bar{F}_{\lambda}$ be the proper transforms of $\hat{E}|_{\hat{S}_{\lambda}}$ and $F|_{\hat{S}_{\lambda}}$ on $\bar{S}_{\lambda}$, respectively, which are the prime $\sigma$-exceptional divisors that are not contracted by $\bar{\pi}_{\lambda}$.
Denote by $G_1,\dots,G_r$ the other prime $\sigma$-exceptional divisors.

Let $\bar{C}_{\lambda} \subset \bar{S}_{\lambda}$ be the proper transform of a general fiber $C_{\lambda}$ of $\pi_{\lambda}$.
Since $\bar{\tau}_{\lambda} |_{\bar{C}_{\lambda}}$ is the reflection with respect to the point $\bar{F}_{\lambda} \cap \bar{C}_{\lambda}$ and $\bar{E}_{\lambda}$ is $\bar{\tau}_{\lambda}$-invariant, $(\bar{E}_{\lambda} - 3 \bar{F}_{\lambda})|_{\bar{C}_{\lambda}} \in \Pic^0 (\bar{C}_{\lambda})$ is a $2$-torsion.
In particular, $\bar{E}_{\lambda} - 3 \bar{F}_{\lambda}$ is numerically equivalent to a linear combination of $\bar{\pi}_{\lambda}$-vertical divisors.

On the other hand, we have $(x_0 = 0)|_{S_{\lambda}} = \Gamma$, where 
\[
\Gamma = (x_0 = x_1 = w^2 y z + w y^3 + z^2 = 0)
\]
is an irreducible and reduced curve.
We see that $\Gamma$ and $\{ C_{\lambda,\mu} \mid \mu \in \mbC \}$ are all the fibers of $\pi_{\lambda}$.
Since $C_{\lambda,\mu}$ is irreducible for every $\mu \in \mbC$ and all the fibers of $\bar{\pi}_{\lambda}$ are numerically equivalent to each other, we have
\[
\bar{E}_{\lambda} - 3 \bar{F}_{\lambda} \sim_{\mbQ} \gamma \bar{\Gamma} + \sum_{i = 1}^r c_i G_i,
\]
for some $\gamma,c_1,\dots,c_r \in \mbC$, where $\bar{\Gamma}$ is the proper transform of $\Gamma$.
We see that $\gamma \ne 0$ since the curves $\bar{E}_{\lambda}, \bar{F}_{\lambda}, G_1, \dots,G_r$ are $\sigma$-exceptional and their intersection form is negative-definite.
This shows $\Gamma \sim_{\mbQ} 0$, which is a contradiction since $(A \cdot \Gamma) \ne 0$.
Therefore, $\tau \colon X' \ratmap X'$ is not biregular and, by \cite[Lemma 2.24]{Okada2}, $\tau$ is a Sarkisov link starting with $\varphi$.
\end{proof}

\begin{Rem}
It is straightforward to see that $-K_{Y'}$ is nef and $\tau_{Y'} \colon Y' \ratmap Y'$ is a flop.
The anticanonical model $Z$ of $Y'$ is a (non-$\mbQ$-factorial) Fano $3$-fold with degree $1/42$.
By looking at the Fletcher's list \cite{IF}, it is quite likely that $Z$ is a weighted hypersurface $Z = Z_{42} \subset \mbP (1,1,6,14,21)$ of degree $42$.
If one can find sections of degree $14$ and $21$ on $X'$ that lift to plurianticanonical sections on $Y'$, then we can construct $Z$ explicitly as in the argument of the previous subsection, and the existence of flop will follow from this.
\end{Rem}

\section{Exclusion of nonsingular points} \label{sec:exclnspt}

The aim of this section is to exclude nonsingular points as maximal center.
Let $X'$ be a member of $\mcG'_i$ with $i \in I^*_{cA/n} \cup I_{cD/3}$.

\begin{Def}[{\cite[Definition 5.2.4]{CPR}}]
Let $\msp \in X'$ be a point.
We say that a Weil divisor class $L$ on $X'$ {\it isolates} $\msp$ if $\msp$ is an isolated component of the linear system
\[
\mcL^s_{\msp} := |\mcI_{\msp} (s L)|
\]
for some integer $s > 0$.
\end{Def}

\begin{Lem}[{\cite{CPR}}] \label{lem:criexclnspt}
Let $\msp \in X'$ be a nonsingular point.
If $l A$ isolates $\msp$ for some $0 < l \le 4/(A^3)$, then $\msp$ is not a maximal center.
\end{Lem}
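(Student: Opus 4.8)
The statement to prove is Lemma~\ref{lem:criexclnspt}: if $\msp \in X'$ is a nonsingular point and $lA$ isolates $\msp$ for some $0 < l \le 4/(A^3)$, then $\msp$ is not a maximal center. This is essentially \cite[Theorem 5.3.2]{CPR} adapted to our setting, and the plan is to reproduce that argument via the standard ``test class'' or ``Noether--Fano'' mechanism. Suppose for contradiction that $\msp$ is a maximal center. Then there is a movable linear system $\mcH \sim_{\mbQ} -n K_{X'} = n A$ (recall $-K_{X'} \sim A$, a positive generator of the class group) with $\mult_{\msp} \mcH > 2n$, equivalently the pair $(X', \frac{1}{n}\mcH)$ has a nonsingular point $\msp$ of multiplicity exceeding $2$; more precisely, since $\msp$ is a smooth point, the blowup of $X'$ at $\msp$ realises $\msp$ as a maximal center only if $m_\msp(\mcH) > 2n$.

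\textbf{Main steps.} First I would record the multiplicity inequality: for $H_1, H_2 \in \mcH$ general members, the cycle-theoretic intersection $Z := H_1 \cdot H_2$ is an effective $1$-cycle with $\mult_\msp Z \ge m_\msp(\mcH)^2 > 4n^2$. Second, I would use the isolating class: since $lA$ isolates $\msp$, for a suitable $s>0$ the linear system $\mcL = |\mcI_\msp(slA)|$ has $\msp$ as an isolated component of its base locus, so a general member $T \in \mcL$ meets only finitely many irreducible components of $Z$ at points near $\msp$, and in fact one can choose $T$ so that $T$ contains no component of $Z$ through $\msp$ while still passing through $\msp$. Third, intersect: $(T \cdot Z)$ is a finite sum of local intersection multiplicities, and the contribution at $\msp$ alone is at least $(\mult_\msp T)(\mult_\msp Z) \ge \mult_\msp Z > 4n^2$ (using $\mult_\msp T \ge 1$). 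On the other hand $(T \cdot Z) = (slA \cdot H_1 \cdot H_2) = sl\, n^2 (A^3)$, since $T \sim_{\mbQ} slA$ and $H_i \sim_{\mbQ} nA$. Fourth, combine: $sl\, n^2 (A^3) = (T\cdot Z) \ge (T \cdot Z)_\msp > 4n^2$, so $l (A^3) > 4/s \ge$ — wait, this needs care with $s$; the correct bookkeeping (following \cite{CPR}) is to take $s=1$ after replacing $A$ by $lA$ in the isolating data, or to carry $s$ through and observe that the component of $Z$ at $\msp$ forces $(T\cdot Z)_\msp \ge s\, \mult_\msp Z$ by choosing $T$ with the full vanishing order, yielding $l(A^3) n^2 \ge \mult_\msp Z > 4n^2$, i.e. $l(A^3) > 4$, contradicting $l \le 4/(A^3)$.

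\textbf{The delicate point.} The step that requires genuine care is the third one: ensuring that a general member $T$ of the isolating linear system $|\mcI_\msp(slA)|$ genuinely produces a local intersection contribution at $\msp$ of the expected size $\ge \mult_\msp Z$ (or $s \cdot \mult_\msp Z$), rather than having some component of $Z$ through $\msp$ be \emph{contained} in every such $T$, which would make $(T\cdot Z)$ ill-defined or the estimate vacuous. This is exactly why the definition of ``isolates'' insists that $\msp$ be an \emph{isolated} component of the base locus of $\mcL^s_\msp$: it guarantees that for general $T$ the set $T \cap (\text{components of } Z \text{ through } \msp)$ is zero-dimensional near $\msp$, so the local intersection number at $\msp$ is finite and bounded below by $\mult_\msp T \cdot \mult_\msp Z$. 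I would cite \cite[\S5.3]{CPR} for the precise form of this localisation argument. The remaining arithmetic ($sl(A^3)n^2$ versus $4n^2$) is routine once this is in place, and the hypothesis $l \le 4/(A^3)$ is precisely calibrated to make the inequality fail, giving the contradiction and hence the lemma.
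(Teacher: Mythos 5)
Your overall architecture is exactly the argument the paper points to: the paper gives no proof of its own and simply cites the ``Proof of (A)'' in \cite{CPR}, which is the test-surface/isolating-class computation you reproduce, and your handling of the delicate point (using the isolation of $\msp$ in $\Bs \mcL^s_{\msp}$ to guarantee that a general $T$ contains no component of $Z$ through $\msp$, so that $(T\cdot Z)_{\msp} \ge \mult_{\msp}T\cdot\mult_{\msp}Z$) together with the bookkeeping $s\,l\,n^2(A^3) \ge s\cdot\mult_{\msp}Z$ is the right one.

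There is, however, one genuine error in how you justify the key input. You assert that a nonsingular point $\msp$ is a maximal center only if $m_{\msp}(\mcH) > 2n$, and then deduce $\mult_{\msp}Z \ge m_{\msp}(\mcH)^2 > 4n^2$. The first claim is false: the canonical threshold condition $c(X',\mcH) < 1/n$ at a smooth point can be witnessed by a valuation other than the ordinary blowup, and the Noether--Fano-type condition only forces $\mult_{\msp}\mcH > n$, not $2n$. The inequality you actually need, $\mult_{\msp}(H_1\cdot H_2) > 4n^2$ for general $H_1,H_2\in\mcH$, is Corti's $4n^2$-inequality (used as the starting point of \cite[Proof of (A)]{CPR}); it is a substantially deeper statement about the self-intersection of a mobile system at a smooth maximal center and does not follow by squaring a multiplicity bound on $\mcH$ itself. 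If you replace your derivation of the $4n^2$ bound by a citation of that theorem, the rest of your argument goes through and coincides with the intended proof.
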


\begin{proof}
See \cite[Proof of (A)]{CPR}.
\end{proof}

Let $\mbP (a_0,\dots,a_4)$ be the ambient WPS of $X'$.
For $j,m = 0,\dots,4$ with $j \ne m$, we define
\[
\tilde{a}_j := \max_{0 \le k \le 4, k \ne j} \operatorname{lcm} (a_j,a_k), \
\tilde{a}_{j;m} := \max_{0 \le k \le 4, k \ne j, m} \operatorname{lcm} \{a_j,a_k\}.
\]
For $m = 0,\dots,4$, we denote by $\pi_m$ the restriction of the projection from $\msp_m$ to $X'$ and by $\Exc (\pi_m) \subset X'$ the locus contracted by $\pi_m$.

\begin{Prop}[{\cite[Proposition 5.1]{Okada2}}] \label{isolnspt}
Let $\msp = (\xi_0 \!:\! \cdots \!:\! \xi_4)$ be a nonsingular point of $X'$.
Then, the following assertions hold.
\begin{enumerate}
\item If $\xi_j \ne 0$ then $\tilde{a}_j A$ isolates $\msp$.
\item If $\xi_j \ne 0$ and $\msp \notin \Exc (\pi_m)$ for some $m \ne j$, then $\tilde{a}_{j;m} A$ isolates $\msp$.
\end{enumerate}
\end{Prop}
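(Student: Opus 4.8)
The plan is to exhibit, for each part, an explicit sub-linear system of $|\tilde a_j A|$ (resp. $|\tilde a_{j;m} A|$) vanishing at $\msp$ whose base locus is the union of a coordinate hyperplane section avoiding $\msp$ and a finite set containing $\msp$; this realises $\msp$ as an isolated component and proves isolation with $s = 1$. After re-ordering coordinates I may assume $j = 0$ and, after rescaling, $\msp = (1 : \xi_1 : \cdots : \xi_4)$. For $k = 1, \dots, 4$ put $l_k := \operatorname{lcm}(a_0, a_k)$; since $a_0 \mid l_k$ and $a_0 \mid \tilde a_0 = \max_{1 \le k \le 4} l_k$, the expression
\[
h_k := x_0^{(\tilde a_0 - l_k)/a_0}\, g_k, \qquad g_k := x_k^{l_k/a_k} - \xi_k^{l_k/a_k}\, x_0^{l_k/a_0},
\]
is a genuine homogeneous polynomial of degree $\tilde a_0$, and $g_k$, hence $h_k$, vanishes at $\msp$. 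Recalling that $A \sim \mcO_{X'}(1)$, the restrictions of $h_1, \dots, h_4$ span a sub-system $\mcL \subseteq |\mcI_\msp(\tilde a_0 A)|$ on $X'$.

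Next I would compute $\Bs \mcL = X' \cap \{h_1 = \cdots = h_4 = 0\}$ using the identity $\bigcap_k \bigl(\{x_0 = 0\} \cup \{g_k = 0\}\bigr) = \{x_0 = 0\} \cup \{g_1 = \cdots = g_4 = 0\}$, which reduces matters to showing that $\{g_1 = \cdots = g_4 = 0\} \subset \mbP(a_0, \dots, a_4)$ is finite. In the chart $(x_0 \ne 0)$ each $x_k$ is constrained to the finite solution set of $x_k^{l_k/a_k} = \xi_k^{l_k/a_k} x_0^{l_k/a_0}$, and on $(x_0 = 0)$ there is no solution; so $\{g_1 = \cdots = g_4 = 0\}$ is finite and contains $\msp$. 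Hence $\Bs \mcL = (x_0 = 0)_{X'} \cup \Xi$ with $\Xi$ finite, $\msp \in \Xi$, and since $\xi_0 \ne 0$ we have $\msp \notin (x_0 = 0)_{X'}$; thus $\msp$ is an isolated component of $\Bs \mcL$, a fortiori of the larger system $|\mcI_\msp(\tilde a_0 A)|$, which is (1).

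For (2) I would run the same computation with $h_m$ omitted, obtaining $\mcL' \subseteq |\mcI_\msp(\tilde a_{0;m} A)|$ spanned by the $h_k$ with $k \ne 0, m$, of degree $\tilde a_{0;m} = \max_{k \ne 0, m} l_k$. Now $\{g_k = 0 : k \ne 0, m\}$ is a finite union of lines in the $x_m$-direction, i.e. of fibres of the projection $\pi_m$ from $\msp_m$, the relevant one being the $\pi_m$-fibre through $\msp$, together with the point $\msp_m$ (the contribution on $(x_0 = 0)$). The hypothesis $\msp \notin \Exc(\pi_m)$ means precisely that the fibre through $\msp$ is not contracted by $\pi_m|_{X'}$, hence not contained in $X'$, so it meets $X'$ in a finite set; since also $\msp_m \ne \msp$, we again get $\Bs \mcL' = (x_0 = 0)_{X'} \cup \Xi'$ with $\Xi'$ finite and $\msp \in \Xi'$, proving (2).

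The step needing real care is verifying that $\msp$ is an \emph{isolated} component of the base locus rather than being absorbed into a larger one, and this is where the quotient structure of the charts enters. In the chart $(x_0 \ne 0) \cong \mbA^4/\mbZ_{a_0}$ one must check that the finite (resp. one-dimensional) loci above stay finite (resp. one-dimensional) after the quotient, which is immediate, and that for (2) no $\mbZ_{a_0}$-translate of the $x_m$-line through $\msp$ that is a genuinely different $\pi_m$-fibre passes through $\msp$ — if one did and lay in $X'$ it would again force $\msp \in \Exc(\pi_m)$, and otherwise it meets $X'$ in finitely many points. One must also note that the roots-of-unity components of $\{g_1 = \cdots = g_4 = 0\}$ other than $\msp$ are disjoint from $\msp$, hence harmless. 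Everything else is routine weighted-homogeneous bookkeeping, once the divisibilities $a_0 \mid l_k \mid \tilde a_0$ and $a_0 \mid \tilde a_{0;m}$ are in place; the only conceptual point is the identification of the $x_m$-line through $\msp$ with a $\pi_m$-fibre together with the meaning of $\Exc(\pi_m)$.
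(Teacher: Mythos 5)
Your proof is correct and is essentially the standard isolating-class argument of Corti--Pukhlikov--Reid that the paper does not reprove but simply cites from \cite[Proposition 5.1]{Okada2}: the sections $x_0^{(\tilde a_0 - l_k)/a_0}\bigl(x_k^{l_k/a_k} - \xi_k^{l_k/a_k} x_0^{l_k/a_0}\bigr)$ are exactly the usual ones, and your treatment of the residual $\pi_m$-fibres in (2), using that distinct fibres meet only at $\msp_m \ne \msp$ together with the hypothesis $\msp \notin \Exc(\pi_m)$, is precisely the point that needs care. Nothing further is required.
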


\begin{Thm} \label{thm:nspt}
Let $X'$ be a member of $\mcG'_i$ with $i \in I^*_{cA} \cup I_{cA/n} \cup I_{cD/3}$.
Then no nonsingular point of $X'$ is a maximal center.
\end{Thm}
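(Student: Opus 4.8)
The plan is to show, family by family over the finitely many $i \in I^*_{cA/n} \cup I_{cD/3}$, that every nonsingular point $\msp$ of $X'$ is isolated by some $lA$ with $0 < l \le 4/(A^3)$, and then to conclude by Lemma \ref{lem:criexclnspt}. The preliminary bookkeeping is to record, for each family, the number $N_i := 4/(A^3)$: since $X' = X'_d \subset \mbP(a_0,\dots,a_4)$ is an anticanonically embedded $\mbQ$-Fano weighted hypersurface, one has $(A^3) = d/(a_0 a_1 a_2 a_3 a_4)$, so $N_i$ is explicit and can be read off the big table. For a given $\msp = (\xi_0 : \cdots : \xi_4)$ the default move is Proposition \ref{isolnspt}(1): if $\xi_j \ne 0$ and $\tilde{a}_j \le N_i$ for some $j$, then $\tilde{a}_j A$ isolates $\msp$. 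For the families in which $N_i$ is comfortably large this already disposes of every point at once.

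When $\tilde{a}_j > N_i$ for every $j$ with $\xi_j \ne 0$, I would invoke the finer Proposition \ref{isolnspt}(2): it is enough to exhibit $m \ne j$ with $\msp \notin \Exc(\pi_m)$ and $\tilde{a}_{j;m} \le N_i$. In practice the only index $m$ making $\tilde{a}_{j;m} \le N_i$ feasible is the one of largest weight among $a_0,\dots,a_3$, so the task reduces to checking that $\msp$ is not contracted by the projection away from that vertex. This is where the explicit geometry enters: $\Exc(\pi_m)$ is computed from the standard defining polynomial $F'$ of Lemmas \ref{stdefeqX'cA} and \ref{stdefeqX'cD} as the locus over which several coefficients of $F'$, viewed as a polynomial in $x_m$, vanish simultaneously, and it turns out to be a union of a few low-degree weighted complete intersection curves (or points) through a vertex, which a general point of the relevant stratum avoids.

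The genuinely delicate families are the small ones (e.g.\ $\mcG'_6$, $\mcG'_7$), where $N_i$ is so small that one is forced to take $\tilde{a}_{j;m} = 1$ and where $\Exc(\pi_m)$ does contain nonsingular points of $X'$. For such a leftover point $\msp$, lying on one of the explicitly described exceptional curves $L \subset \Exc(\pi_m)$, the plan is an ad hoc isolation: $L$ is cut out by linear equations in the low-weight coordinates together with one equation in $x_m$, so adjoining one further section of degree $a_m$ vanishing at $\msp$ but not identically on $L$ separates $\msp$ along $L$, showing that already $A$ isolates $\msp$, and $1 \le N_i$. The finitely many coordinate vertices $\msp_k$ that happen to be nonsingular are handled one at a time by the same two mechanisms (the vertex $\msp_4$ is always the $cA/n$ or $cD/3$ point, hence singular, so it never occurs here).

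The step I expect to be the main obstacle is exactly this last one: the case-by-case determination of $\Exc(\pi_m)$ for the low-$N_i$ families and the ad hoc isolation of the nonsingular points it contains. That is the part resisting a uniform argument and requiring, for each family, explicit knowledge of which monomials occur in $F'$, which vertices of $X'$ are singular, and which curves the relevant projection contracts; everything else reduces to arithmetic with $(A^3)$, $\tilde{a}_j$ and $\tilde{a}_{j;m}$.
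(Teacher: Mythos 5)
Your overall strategy (Lemma \ref{lem:criexclnspt} plus Proposition \ref{isolnspt}, with the arithmetic of $\tilde{a}_j$ and $\tilde{a}_{j;m}$) is the paper's, and it disposes of all families except No.~6 and No.~16 essentially as you describe; for the families needing Proposition \ref{isolnspt}(2) the paper's point is simply that a monomial $x_m^e \in F'$ forces $\Exc(\pi_m) = \emptyset$, so no pointwise check is needed (your ``a general point of the relevant stratum avoids $\Exc(\pi_m)$'' would not suffice anyway, since the theorem concerns every nonsingular point).

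The genuine gap is your treatment of the leftover points of $\mcG'_6$ lying on $\Exc(\pi_3)$. There the contracted curve $L$ through $\msp$ is a fiber of the projection away from the weight-$2$ vertex and has $(A\cdot L)=1/2$. The isolating system in Lemma \ref{lem:criexclnspt} is (as in \cite[Definition 5.2.4]{CPR}) the system of sections of $slA$ vanishing to order $\ge s$ at $\msp$; restricting such a section to $L$ gives a section of degree $sl/2$ vanishing to order $\ge s$ at $\msp$, which must be identically zero whenever $l<2$. Hence $L$ lies in the base locus of every such system, $\msp$ is never an isolated component, and \emph{no} class $lA$ with $l\le 4/(A^3)=8/5$ can isolate a point of $L$ --- your proposed ``ad hoc isolation by $A$'' is impossible in principle, not merely delicate. (Your candidate section $y-q_2$ vanishes only to order $1$ at $\msp$, so the products you would form with it do not lie in $|\mcI^s_{\msp}(sA)|$; the argument only looks plausible if one reads the isolating system as simple vanishing at $\msp$, under which isolation is vacuous and Lemma \ref{lem:criexclnspt} fails.) The paper closes this case by importing the entirely different argument of \cite[Proof of (B) in Section 5.3]{CPR}, which does not isolate $\msp$ at all but decomposes the cycle $\mcH^2 = \gamma L + Z'$ and plays the $4n^2$-inequality at $\msp$ against degree bounds for $\gamma$ and $Z'$ on a suitable surface containing $L$. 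Some replacement of this kind is indispensable for your proof to go through. (Family No.~16 also needs one extra input you do not mention --- the generality condition $y^2z\in F'$ forces every point of $(x_0=x_1=w=0)_{X'}$ to be singular --- but that fits within your default mechanism once noted.)
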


\begin{proof}
Let $\mbP (a_0,\dots,a_4)$ be the ambient WPS of $X'$ as above and let $F'$ be the defining polynomial of $X'$.
Suppose $i \in \{7,10,18,21,22,36,38,44,52,57,62,63\}$. 
Then the inequality
\[
\tilde{a} := \max_{0 \le k < l \le 4} \operatorname{lcm} (a_j,a_k) \le 4/(A^3)
\]
holds.
Note that $\tilde{a}_j \le \tilde{a} \le 4/(A^3)$ for any $j$.
Thus Lemma \ref{lem:criexclnspt} and Proposition \ref{isolnspt} imply that no nonsingular point is a maximal center.

Suppose that $i \in \{26,28,33,48,61\}$.
Then, there is $0 \le m \le 4$ such that $x_m^e \in F'$ for some $e > 0$ and the inequality
\[
b_m := \max_{0 \le k < l \le 4, k, l \ne m} \operatorname{lcm} (a_k, a_l) \le 4/(A^3)
\]
holds.
The assertion $x_m^e \in F'$ implies $\Exc (\pi_m) = \emptyset$ and we have $\tilde{a}_{j;m} \le b_m$ for any $j \ne m$.
Thus Lemma \ref{lem:criexclnspt} and Proposition \ref{isolnspt} imply that non nonsingular point is a maximal center.

Let $X' = X'_7 \subset \mbP (1,1,2,3,1)$ be a member of $\mcG'_{16}$ and let $\msp = (\xi_0 \!:\! \xi_1 \!:\! \upsilon \!:\! \zeta \!:\! \omega) \in X'$ be a nonsingular point.
By the generality condition, $y^2 z \in F'$.
This implies that $(x_0 = x_1 = w = 0)_{X'}$ consists of singular points and hence $\msp$ is contained in one of the open subsets $(x_0 \ne 0)$, $(x_1 \ne 0)$ and $(w \ne 0)$.
By Proposition \ref{isolnspt}, $3 A$ isolates $\msp$ and thus $\msp$ is not a maximal center since $3 < 4/(A^3) = 24/7$.

Finally, let $X' = X'_5 \subset \mbP (1,1,1,2,1)$ be a member of $\mcG'_6$ and $\msp = (\xi_0 \!:\! \xi_1 \!:\! \xi_2 \!:\! \upsilon \!:\! \omega)$ a nonsingular point of $X$.
If $\msp \notin \Exc (\pi_3)$, then $A$ isolates $\msp$ by Proposition \ref{isolnspt}.
It follows that $\msp \not\in \Exc (\pi_3)$ is not a maximal center since $4/(A^3) = 8/5 > 1$.
If $\msp \in \Exc (\pi_3)$, then the proof \cite[Proof of (B) in Section 5.3]{CPR} works for our case and we have that $\msp$ is not a maximal center in this case.
\end{proof}

\section{Exclusion of quotient singular points} \label{sec:excltqpt}

Throughout this section, let $\msp$ be a terminal quotient singular point of $X' \in \mcG'_i$ with empty third column in the big table.
The aim of this section is to exclude $\msp$ as a maximal center.
We denote by $\varphi' \colon Y' \to X'$ the Kawamata blow up at $\msp$ with the exceptional divisor $E$ and put $B = -K_{Y'}$ as usual.
For a divisor or a curve $\Delta$ on $X'$, we denote by $\tilde{\Delta}$ the proper transform of $\Delta$ on $Y'$.

We first treat the points such that a set of polynomials and a divisor of the form $b B + e E$ are given in the second column of the table.
We note that $(B^3) \le 0$ in this case.
Let $\Lambda$ be the set of polynomials in the second column.
A coordinate with a prime means that it is the tangent coordinate of $X'$ at $\msp$.
For example, let $X' = X'_9 \subset \mbP (1,1,2,3,3)$ be a member of $\mcG'_{21}$ and $\msp$ the point of type $\frac{1}{2} (1,1,1)$.
We see that $y^3 \in f_6$ in the defining polynomial $w^2 x_0 y + w f_6 + g_9$ because otherwise $X'$ does not contain a singular point of type $\frac{1}{2} (1,1,1)$.
Then we have $w' = w + (\text{other terms})$.

\begin{Lem} \label{excltqsing}
Let $X'$ be a member of $\mcG'_i$ and $\msp \in X'$ a terminal quotient singular point marked a set of polynomials together with a divisor of the form $N := b B + e E$ in the third column of the big table.
Then, the divisor $N$ is nef and $(N \cdot B^2) \le 0$.
In particular, $\msp$ is not a maximal center.
\end{Lem}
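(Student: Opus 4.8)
The plan is to realise the divisor $N = bB + eE$ as a nef class by exhibiting it as the proper transform of a base-point-free system built from the polynomials recorded in the big table, and then to bound $(N \cdot B^2)$ by a single intersection computation on $Y'$. First I would fix the set-up: write $\msp$ as a quotient singularity of type $\tfrac{1}{r}(1,a,r-a)$, let $\varphi' \colon Y' \to X'$ be its Kawamata blowup with exceptional divisor $E \cong \mbP(1,a,r-a)$, and record $K_{Y'} = \varphi'^*K_{X'} + \tfrac{1}{r}E$, so $B = \varphi'^*A - \tfrac{1}{r}E$ with $A = -K_{X'}$. The primed (tangent) coordinates attached to $\msp$ in the table are a choice of local orbifold coordinates there, weighted by $(1,a,r-a)$; they let one read off, for a weighted-homogeneous $f$ on the ambient $\mbP$, both the order $m_E(f)$ of vanishing of $(f=0)_{X'}$ along $E$ (namely $\tfrac1r$ times the weighted order of the leading part of $f$) and the intersection $(f=0)_{X'} \cap E$ (cut out by that leading part on $E$).

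Next I would assemble the linear system. Homogenising the members of $\Lambda$ to a common degree $d$, they span a subsystem $\mcN \subseteq |dA|$, whose proper transform on $Y'$ is $\mcN_{Y'} \sim_{\mbQ} \varphi'^*(dA) - mE = dB + (\tfrac{d}{r} - m)E$ with $m := m_E(\mcN)$; this is exactly the recorded divisor $N = bB + eE$, so $b = d$ and $e = \tfrac{d}{r} - m$, and one checks from the table that in the relevant entries $b \ge 1$ and $e \le 0$. To conclude that $N$ is nef it suffices --- after passing to a multiple of $\mcN$ if necessary --- to show $\Bs \mcN_{Y'} = \emptyset$: off $E$ this is the assertion that the common zero locus of $\Lambda$ on $X'$ is contained in $\{\msp\}$, and along $E$ it is the assertion that the leading parts of the members of $\Lambda$ have no common zero on $E \cong \mbP(1,a,r-a)$. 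Both are read off from the explicit defining equation of $X'$ together with the generality conditions cutting out the family $\mcG'_i$, one point at a time.

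Then comes the numerology. Since $\varphi'^*A|_E = 0$ one has $(\varphi'^*A \cdot E^2) = 0$, whence $(B^2 \cdot E) : (B \cdot E^2) : (E^3) = 1 : (-r) : r^2$ with $(B^2 \cdot E) = \tfrac{1}{a(r-a)} > 0$, so
\[
(N \cdot B^2) = b\,(B^3) + e\,(B^2 \cdot E) \le 0
\]
because $(B^3) \le 0$ by the remark preceding the lemma, $b \ge 1$ and $e \le 0$. For the exclusion itself, suppose $\msp$ were a maximal center; then $\varphi'$ is a maximal extraction and the movable system produced by the maximal singularity has proper transform $\mcH_{Y'} \sim_{\mbQ} nB - \ell E$ with $\ell > 0$, so $\mcH_{Y'}^2$ is an effective $1$-cycle and $(\mcH_{Y'}^2 \cdot N) \ge 0$ by nefness of $N$. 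Expanding with the ratios above gives
\[
(\mcH_{Y'}^2 \cdot N) = b n^2 (B^3) + (B^2 \cdot E)\bigl(e(n+r\ell)^2 - b\ell(2n+r\ell)\bigr),
\]
which is strictly negative when $(B^3) \le 0$, $b \ge 1$, $e \le 0$ and $n, \ell > 0$ --- a contradiction. Hence $\msp$ is not a maximal center; this is the excluding method of \cite[Section 2.4]{Okada2}.

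The only step that is not pure bookkeeping is proving $\Bs \mcN_{Y'} = \emptyset$, i.e. that $N$ is nef; this is where the shape of the defining polynomial of $X'$ and the generality conditions are genuinely used, and it must be verified separately for each entry of the big table. I expect this to be the main obstacle. Everything else --- the weight computation of $m_E$, the identification $\mcN_{Y'} \sim_{\mbQ} N$, and the intersection numbers on $Y'$ --- is routine once the set-up is in place.
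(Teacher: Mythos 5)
Your overall architecture (produce a nef divisor $N=bB+eE$ with $(N\cdot B^2)\le 0$ and conclude by the test-class argument) matches the paper, but two steps rest on a false premise. First, the sign of $e$: you assert that in the relevant entries $e\le 0$, whereas the big table lists $N=3B+E$ (Nos.~38, 57), $N=5B+2E$ (No.~52), etc., so $e\ge 0$ and often $e>0$. In the paper's normalization the proper transform of $(h_j=0)_{X'}$ is $b_jB+e_jE$ with $e_j=(b_j-c_j)/r\ge 0$, and $N$ is the $N_k$ maximizing $e_j/b_j$. Consequently $(N\cdot B^2)=b(B^3)+e(B^2\cdot E)$ has a \emph{positive} second term, and the inequality $(N\cdot B^2)\le 0$ is not a formal consequence of $(B^3)\le 0$; it is a genuine numerical check done family by family, and it is tight (equal to $0$) for instance for the $\frac{1}{2}(1,1,1)$ point in family No.~57 (Example \ref{ex:exclqtpt}). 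Your ``automatic'' derivation of $(N\cdot B^2)\le 0$ therefore does not go through, and the same sign error propagates into your final display for $(\mcH_{Y'}^2\cdot N)$.

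Second, the nefness of $N$ cannot be obtained by making a single base-point-free system out of $\Lambda$. When you homogenize the $h_j$ to a common degree $d$, only those members realizing the minimal relative multiplicity contribute to the restriction of the system to $E$; typically this is a single leading form (e.g.\ for No.~57 only $z$ survives), so the base locus of $\mcN_{Y'}$ contains a curve on $E$ and the system is not free. Moreover the resulting class $dB+(d/r-m)E$ is not the tabulated $N=b_kB+e_kE$. The paper's route, via \cite[Lemma 6.6]{Okada2}, is different: for a curve $C\subset E$ one has $(N\cdot C)=(e-b/r)(E\cdot C)$ with $(E\cdot C)<0$, so nefness along $E$ is exactly the inequality $b>re$ recorded in the proof (a condition you never use); for $C\not\subset E$ one uses that $C$ is not contained in every $\tilde{D}_j$ (the common zero locus of $\Lambda$ on $X'$ being finite) together with the maximality of $e_k/b_k$, which makes $N_k-(b_k/b_j)N_j$ an effective multiple of $E$. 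Repairing your argument requires restoring the correct sign of $e$, inserting the hypothesis $b>re$, and replacing the base-point-freeness claim by this two-case curve analysis; the concluding exclusion step is then the standard one from \cite[Corollary 2.16]{Okada2}, as you indicate.
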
 

\begin{proof}
Let $\Lambda = \{h_1,\dots,h_l\}$ be the set of polynomials in the second table.
It is straightforward to see that $(h_1 = \cdots = h_l)_{X'}$ is a finite set of points including $\msp$ and we omit the proof (see Example \ref{ex:exclqtpt}).
Suppose that $h_j$ vanishes along $E$ to order $c_j/r$, where $r$ is the index of $\msp$, and set $b_j = \deg h_j$.
Then the proper transform of $(h_j = 0)_{X'}$ on $Y'$ defines the divisor $N_j \sim_{\mbQ} b _j B + e _j E$, where $e_j = (b_j - c_j)/r \ge 0$.
Let $k$ be an index such that
\[
e_k/b_k = \max_{1 \le j \le l} \{e_j/b_j\}.
\]
Then $N_k$ is the divisor $N = b B + e E$ given in the second column and we observe that the inequality $b > r e$ holds.
By \cite[Lemma 6.6]{Okada2}, $N = N_k$ is nef.
The verification of $(N \cdot B^2) \le 0$ is straightforward.
Therefore, by \cite[Corollary 2.16]{Okada2}, $\msp$ is not a maximal center.
\end{proof}

\begin{Ex} \label{ex:exclqtpt}
Let $X' = X'_{15} \subset \mbP (1,2,3,5,5)$ be a member of $\mcG'_{57}$ with defining polynomial $F' = w^2 z t + w f_{12} + g_{14}$ and $\msp$ a point of type $\frac{1}{2} (1,1,1)$.
Note that if $y^6 \notin f_{12}$, then there is no $\frac{1}{2} (1,1,1)$ point on $X'$.
We assume $y^6 \in f_6$.
After replacing $w$, we may assume that there is no monomial in divisible by $y^6$ in $g_{14}$.
In this setting, we have $w' = w$.
Since we have $F' (0,y,0,t,0) = \alpha t^2$ for some $\alpha \ne 0$, we see that $(x = z = w = 0)_{X'} = \{\msp\}$. 

Since we can choose $x,z,t$ as local orbifold coordinates of $X'$ at $\msp$, they vanish along $E$ to order $1/2$.
It is clear that $w$ vanishes along $E$ to order at least $2/2$ since $(w = 0)_{X'}$ is Cartier along $\msp$.
Thus, the proper transforms of $(x = 0)_{X'}$, $(z = 0)_{X'}$ and $(w = 0)_{X'}$ defines divisors $B$, $3 B + E$ and $2 B$, respectively.
It follows that $N = 3 B+E$ and the inequality $b > r e$ holds since $b = 3$, $e = 1$ and $r = 2$.
Finally, we have
\[
(B^2 \cdot 3 B + E) = 3 (A^3) - \frac{1}{2^3} (E^3) = \frac{1}{10} - \frac{1}{2} < 0.
\]
This completes all the computations for $X' \in \mcG'_{57}$ required in the proof of Lemma \ref{excltqsing}.
\end{Ex}

\begin{Lem} \label{excltqsingNo21_2}
Let $X' = X'_9 \subset \mbP (1,1,2,3,3)$ be a member of $\mcG'_{21}$. Then the singular point $\msp = \msp_2$ of type $\frac{1}{2} (1,1,1)$ is not a maximal center.
\end{Lem}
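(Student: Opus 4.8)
The plan is to show that the anticanonical divisor $B := -K_{Y'}$ of the Kawamata blowup $\varphi' \colon Y' \to X'$ at $\msp$ is nef, and then to invoke the nef-threshold criterion \cite[Corollary 2.16]{Okada2} with the test divisor $N := B$ itself. Write $E$ for the exceptional divisor of $\varphi'$ and $A := -K_{X'} = \mcO_{X'}(1)$, so that $B = \varphi'^{*}A - \frac{1}{2} E$ and $a_E(K_{X'}) = \frac12$. The crucial numerical fact is that $(B^{3}) = (A^{3}) - \frac{1}{8}(E^{3}) = \frac12 - \frac12 = 0$, using $(A^{3}) = 9/(1\cdot1\cdot2\cdot3\cdot3) = \frac12$ and $(E^{3}) = 4$ for a $\frac12(1,1,1)$ Kawamata blowup; this is exactly why $\msp$ escapes the cruder exclusion of Lemma \ref{excltqsing}, the locus $(x_{0}=x_{1}=0)_{X'}$ being a curve rather than a finite set. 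Once $B$ is known to be nef, then $(B \cdot B^{2}) = (B^{3}) = 0 \le 0$, so \cite[Corollary 2.16]{Okada2} applies and $\msp$ is not a maximal center. Equivalently, and directly: were $\msp$ a maximal center, its proper transform on $Y'$ would be a mobile linear system $\tilde{\mcH} \sim_{\mbQ} nB - \ell E$ with $\ell > 0$, and then, using $(E \cdot B^{2}) = 1$ and $(E^{2}\cdot B) = -2$ (both from $\varphi'^{*}A \cdot E^{k} = 0$ for $k \ge 1$), one computes $(\tilde{\mcH}^{2}\cdot B) = n^{2}(B^{3}) - 2n\ell(E\cdot B^{2}) + \ell^{2}(E^{2}\cdot B) = -2n\ell - 2\ell^{2} < 0$, contradicting the nefness of $B$ since $\tilde{\mcH}^{2}$ is represented by an effective $1$-cycle.

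The heart of the matter is thus the nefness of $B$. Since $y^{3} \in f_{6}$ (the very condition under which $\msp = \msp_{2}$ is a $\frac12(1,1,1)$ point), the polynomial $F' = w^{2}x_{0}y + wf_{6} + g_{9}$ has a nonzero linear term in $w$ at $\msp_{2}$, so $x_{0}, x_{1}, z$ are local orbifold coordinates of $X'$ at $\msp$; in particular $x_{0}$ and $x_{1}$ each vanish along $E$ to order $\frac12$. Hence the pencil $\mcL \subset |A|$ spanned by $x_{0}$ and $x_{1}$ has proper transform $\tilde{\mcL}$ on $Y'$ a mobile pencil with $\tilde{\mcL} \sim_{\mbQ} \varphi'^{*}A - \frac{1}{2} E = B$, whose base locus is the proper transform $\tilde{\Gamma}$ of $\Gamma := (x_{0}=x_{1}=0)_{X'} \subset \mbP(2,3,3)$ (the residual base point on $E \cong \mbP^{2}_{[x_{0}:x_{1}:z]}$, namely $[0:0:1]$, lies on $\tilde{\Gamma}$). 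By the generality conditions defining $\mcG'_{21}$ — concretely $y^{3},z^{2}\in f_{6}$ together with $(\mathrm{C}_{3})$, which forbids $(x_{0}=x_{1}=0)_{X}$, hence also $\Gamma$, from being reducible — the curve $\Gamma$ is irreducible, and it is a coordinate line through $\msp$ in the above local coordinates, so $(E \cdot \tilde{\Gamma}) = 1$. As $(A \cdot \Gamma) = 9/18 = \frac12$ in $\mbP(2,3,3)$, we obtain $(B \cdot \tilde{\Gamma}) = (A \cdot \Gamma) - \frac{1}{2}(E \cdot \tilde{\Gamma}) = 0$. Since a general member of $\tilde{\mcL}$ meets non-negatively any curve not contained in its base locus $\tilde{\Gamma}$, and $(B \cdot \tilde{\Gamma}) = 0$, the divisor $B$ is nef, which finishes the proof.

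I expect the main obstacle to be the verification that $\Gamma = (x_{0}=x_{1}=0)_{X'}$ is irreducible: one must translate the generality hypothesis $(\mathrm{C}_{3})$, which is phrased for the birational counterpart $X$, into the statement that $F'(0,0,y,z,w)$ does not factor, i.e. that the cuspidal-type cubic $f_{6}(0,0,y,z)$ does not divide $g_{9}(0,0,y,z)$ — and to check that such a factorization would split off a curve from $(x_{0}=x_{1}=0)_{X}$, contradicting $(\mathrm{C}_{3})$ (if instead one of the defining conditions of $\mcG'_{21}$ already rules this out more cheaply, so much the better). The remaining ingredients — the intersection numbers $(A^{3}),(E^{3}),(A\cdot\Gamma),(E\cdot\tilde{\Gamma})$, the fact that $x_{0},x_{1}$ are orbifold coordinates at $\msp$, and the identification $\Bs\tilde{\mcL}=\tilde{\Gamma}$ — are routine local computations of the same kind as those in Lemma \ref{excltqsing} and Example \ref{ex:exclqtpt}.
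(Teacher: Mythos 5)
Your proposal is correct and follows essentially the same route as the paper: the paper also works with the pencil spanned by $x_0,x_1$, the identity $(B\cdot\tilde{S}\cdot\tilde{T})=(B^3)=0$, and the irreducibility of $(x_0=x_1=0)_{X'}$ (obtained, after normalizing $w$ so that $z^3\notin g_9$, from $F'(0,0,y,z,w)=w(z^2+\alpha y^3)+\beta z y^3$ with $\alpha,\beta\ne 0$ forced by the generality conditions), and it concludes by citing \cite[Lemma 2.17]{Okada2} rather than by first proving nefness of $B$ and then invoking \cite[Corollary 2.16]{Okada2}. The two packagings are equivalent, and the irreducibility verification you flag as the main remaining step is exactly the one the paper carries out, along the lines you sketch.
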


\begin{proof}
The defining polynomial of $X'$ is of the form $F' = w^2 x_0 y + w f_6 + g_9$.
Since $z^2 \in f_6$, we may assume that $z^3 \notin g_9$ and the coefficient of $z^2$ in $f_6$ is $1$ after replacing $w$ with $w + \eta z$ for a suitable $\eta \in \mbC$ and then re-scaling $z$.
We write $f_6 (0,0,y,z) = z^2 + \alpha y^3$ and $g_9 (0,0,y,z)= \beta z y^3$ for some $\alpha, \beta \in \mbC$. 
Note that neither $\alpha$ nor $\beta$ is zero by the generality condition.

We see that $x_0$ and $x_1$ vanish along $E$ to order $1/2$.
Let $S, T$ be general members of the pencil $|A|$.
Then $\tilde{S} \cap \tilde{T}$ is the proper transform of the curve 
\[
(x_0 = x_1 = 0)_{X'} = (x_0 = x_1 = w (z^2 + \alpha y^3) + \beta z y^3 = 0),
\] 
which is irreducible and reduced since $\alpha \ne 0$ and $\beta \ne 0$.
Thus, by \cite[Lemma 2.17]{Okada2}, $\tilde{S} \cap \tilde{T}$ is not a maximal center since $\tilde{S}, \tilde{T} \sim_{\mbQ} B$ and $(B \cdot \tilde{S} \cdot \tilde{T}) = (B^3) = 0$.
\end{proof}

\begin{Lem} \label{excltqsingNo44_2}
Let $X' = X'_{12} \subset \mbP (1,2,3,5,2)$ be a member of $\mcG'_{44}$.
Then a singular point $\msp \in X'$ of type $\frac{1}{2} (1,1,1)$ is not a maximal center.
\end{Lem}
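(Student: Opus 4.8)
The plan is to adapt the argument of Lemma~\ref{excltqsingNo21_2}. The new feature is that $\mbP := \mbP(1,2,3,5,2)$ has only one coordinate of weight $1$, so $|{-}K_{X'}|$ is not a pencil; in its place I will use a pencil inside $|2A|$, where $A := -K_{X'}$.

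First I would locate $\msp$. A point of $X'$ of type $\frac12(1,1,1)$ must lie on the line $L := (x_0 = x_2 = x_3 = 0) \cong \mbP^1$, the $\frac12$-quotient locus of $\mbP$, and since $\msp_4 \in X'$ is the $cA/2$ point, $\msp$ is the unique point of $X' \cap L$ other than $\msp_4$. Writing the standard polynomial as $F' = w^2 x_2 x_3 + w(x_3^2 + x_3 a + b) + x_3 c + d$ with $a,b,c,d \in \mbC[x_0,x_1,x_2]$, its restriction to $L$ is $\beta x_1^5 w + \delta x_1^6$, where $\beta,\delta$ are the coefficients of $x_1^5$ in $b$ and of $x_1^6$ in $d$; quasismoothness forces $\beta \neq 0$, so $\msp = (0 \!:\! 1 \!:\! 0 \!:\! 0 \!:\! \omega)$ with $\omega = -\delta/\beta$. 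Put $w' := w - \omega x_1$, the coordinate of degree $2$ vanishing at $\msp$. Working in the chart $\{x_1 \neq 0\}$ one checks that $\partial F'/\partial x_0$, $\partial F'/\partial x_2$, $\partial F'/\partial x_3$ all vanish at $\msp$ for parity reasons in the grading of $\mbC[x_0,x_1,x_2]$; hence $w$ is a function of the local coordinates $x_0, x_2, x_3$ near $\msp$ with vanishing linear part, i.e.\ $w'$ vanishes at $\msp$ to order exactly $2$ (the quadratic part being nonzero for members of $\mcG'_{44}$).

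Let $\varphi \colon Y' \to X'$ be the Kawamata blowup at $\msp$, with exceptional divisor $E$, and put $B := -K_{Y'}$. Then $(A^3) = 12/(1\cdot 2\cdot 3\cdot 5\cdot 2) = 1/5$ and $(B^3) = (A^3) - \frac12 = -\frac{3}{10}$. Since $x_0$ vanishes along $E$ to order $\frac12$ and $w'$ to order $1$, the proper transforms satisfy $\widetilde{(x_0 = 0)_{X'}} \sim_{\mbQ} B$ and $\widetilde{(w' = 0)_{X'}} \sim_{\mbQ} 2B$, so $|2B|$ on $Y'$ is exactly the pencil spanned by $x_0^2$ and $w'$; it has no fixed component, and its base locus is the proper transform $\tilde{\Gamma}_0$ of $\Gamma_0 := (x_0 = w' = 0)_{X'}$. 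Granting that $\Gamma_0$ is an irreducible reduced curve: the member $\widetilde{(x_0^2 = 0)} = 2\,\widetilde{(x_0 = 0)}$ of $|2B|$ is a double surface, so for general $S, T \in |2B|$ one has $S \cdot T = 2\,\tilde{\Gamma}_0$ as $1$-cycles on $Y'$, whence $\tilde{\Gamma}_0 \equiv 2B^2$ and
\[
(B \cdot \tilde{\Gamma}_0) \;=\; 2(B^3) \;=\; -\frac{3}{5} \;<\; 0 .
\]
Thus $\tilde{\Gamma}_0$ is an irreducible reduced curve that is the base locus of the movable pencil $|2B|$ and meets $-K_{Y'}$ non-positively, so \cite[Lemma 2.17]{Okada2} applies exactly as in Lemma~\ref{excltqsingNo21_2}, giving that $\msp$ is not a maximal center. (The mechanism: restricting a putative maximal system $\mcH_{Y'} \sim_{\mbQ} nB - cE$ to a general $S \in |2B|$ and comparing $(\mcH_{Y'}^2 \cdot S)$ with the decomposition of $\mcH_{Y'}|_S$ into its fixed part along $\tilde{\Gamma}_0|_S$ and its movable part forces $\mult_{\tilde{\Gamma}_0}\mcH_{Y'} > n$, which is incompatible with $\varphi$ being the maximal extraction.)

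The only substantial point left — and where I expect the real work — is the claim that $\Gamma_0$ is irreducible and reduced. Restricting $F'$ to $(x_0 = 0,\ w = \omega x_1)$ produces an explicit degree-$12$ form on $\mbP(2_{x_1},3_{x_2},5_{x_3})$ which is at most quadratic in $x_3$ and quartic in $x_2$ and contains the monomials $x_2^4$ and $\omega x_1 x_3^2$. I would exclude every factorization by running through the admissible factor degrees in $\mbP(2,3,5)$ and checking that each one forces the vanishing of the coefficient of $x_2^4$, of $x_1 x_3^2$, or of a coefficient expression involving $\omega$, all ruled out by the generality built into $\mcG'_{44}$ (in particular by the exclusion of the WCI curve of type $(1,2,3,5)$), reducedness following by the same kind of argument. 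In the degenerate locus $\omega = 0$ (where $\msp = \msp_1$), the curve $\Gamma_0$ acquires the rational component $\ell := (x_0 = x_2 = w = 0)_{X'}$, for which $(B \cdot \tilde{\ell}) = (A \cdot \ell) - \frac12(E \cdot \tilde{\ell}) = \frac{1}{10} - \frac12 < 0$, so the same conclusion follows. The remaining care is bookkeeping: keeping $F'$ in standard form while placing $\msp$ on $L$, and correctly reading off the vanishing orders of $x_0$ and $w'$ along $E$, on which the identifications $\widetilde{(x_0=0)} \sim_{\mbQ} B$ and $\widetilde{(w'=0)} \sim_{\mbQ} 2B$ — and hence the whole computation — rest.
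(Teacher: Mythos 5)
Your setup is sound and your generic case matches the paper's: after normalizing $w$ so that $\msp=\msp_1$, the paper also works with the pencil spanned by $x^2$ and $w$ inside $|2A|$ (it takes $S=(x=0)_{X'}\sim_{\mbQ}A$ and $T=(w+\delta x^2=0)_{X'}\sim_{\mbQ}2A$, so $(B\cdot\tilde S\cdot\tilde T)=2(B^3)<0$), and when $\tilde S\cap\tilde T$ is irreducible it concludes by \cite[Lemma 2.17]{Okada2} exactly as you do. Two small remarks on that part: the paper only needs the base curve irreducible, \emph{possibly non-reduced} (your cycle identity $S\cdot T=2\tilde\Gamma_0$ silently assumes reducedness, and the case $\alpha=\beta=\gamma=0$ would give $(x=w=z^4=0)$), and the existence of the $\frac12(1,1,1)$ point is what forces $y^5\in f_{10}$, not quasismoothness.

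The genuine gap is the step you flagged as ``the real work'': irreducibility of $\Gamma_0=(x=w=0)_{X'}$ does \emph{not} follow from the generality conditions defining $\mcG'_{44}$, and no factorization analysis will rescue this. Writing $g_{12}(0,y,z,t)=\alpha y^3z^2+\beta y^2zt+\gamma yt^2+z^4$, the curve $\Gamma_0$ is reducible precisely when $\gamma=0$ and $(\alpha,\beta)\ne(0,0)$, and neither the imposed monomial condition $t^2\in f_{10}$ nor the exclusion of WCI curves of type $(1,2,3,5)$ on $X$ rules this out; the paper accordingly treats these cases head-on rather than excluding them. When $\gamma=0$, $\beta\ne 0$ it decomposes $S|_T=\Gamma+\Delta$ with $\Gamma=(x=w=z=0)$, computes the full $2\times 2$ intersection matrix of $(\tilde\Gamma,\tilde\Delta)$ on $\tilde T$, checks it is negative-definite, and applies \cite[Lemma 2.18]{Okada2}; when $\gamma=\beta=0$ it instead produces infinitely many irreducible curves $\tilde C_\lambda$ with $(-K_{Y'}\cdot\tilde C_\lambda)\le 0$ and $(E\cdot\tilde C_\lambda)>0$ and applies \cite[Lemma 2.19]{Okada2}. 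Your fallback does not substitute for either: the degeneration is governed by the coefficients $\gamma,\beta$, not by $\omega$ (after your own normalization $w\mapsto w'$ you are always at $\omega=0$), and exhibiting a single component $\ell$ with $(B\cdot\tilde\ell)<0$ is not enough to exclude the center, since a putative maximal system could load its multiplicity on the other component --- that is exactly why the negative-definiteness of the whole intersection matrix, or the infinitude of such curves, is needed.
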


\begin{proof}
The defining polynomial of $X'$ is of the form $F' = w^2 z t + w f_{10} + g_{12}$.
We may assume $y^5 \in f_{10}$ because otherwise $X'$ does not contain a point of type $\frac{1}{2} (1,1,1)$.
After replacing $w$, we may assume that there is no monomial divisible by $y^5$ in $g_{12}$.
We may moreover assume that the coefficient of $z^4$ in $g_{12}$ is $1$ after re-scaling $z$.
Let $\alpha,\beta$ and $\gamma$ be the coefficients of $y^3 z^2$, $y^2 z t$ and $y t^2$ in $F'$, respectively.
We see that $x,z,t$ vanish along $E$ to order $1/2$.
By looking at the defining equation, the section $w$ vanishes along $E$ to order $2/2$.
We define $S := (x = 0)_{X'}$ and $T := (w + \delta x^2 = 0)_{X'}$ for a general $\delta \in \mbC$. 
Then $\tilde{S} \sim_{\mbQ} B$ and $\tilde{T} \sim_{\mbQ} 2 B$.
We have
\[
S \cap T = (x = w = 0)_{X'} = (x = w = \alpha y^3 z^2 + \beta y^2 z t + \gamma y t^2 + z^4 = 0).
\]
If $\tilde{S} \cap \tilde{T}$ is irreducible (possibly non-reduced), then, by \cite[Lemma 2.17]{Okada2}, $\msp$ is not a maximal center since $(B \cdot \tilde{S} \cdot \tilde{T}) = 2 (B^3) < 0$.

We continue the proof assuming that $\tilde{S} \cap \tilde{T}$ is reducible, which is equivalent to the condition $\gamma = 0$ and $(\alpha,\beta) \ne (0,0)$.
Assume first $\gamma = 0$ and $\beta \ne 0$.
Then $S|_T = \Gamma + \Delta$, where $\Gamma = (x = w = z = 0)$ and $\Delta = (x = w = \alpha y^3 z + \beta y^2 t + z^3 = 0)$.
We have $(E \cdot \tilde{\Gamma}) = 1$ so that 
\[
(B \cdot \tilde{\Gamma}) = (A \cdot \Gamma) - \frac{1}{2} (E \cdot \tilde{\Gamma}) = \frac{1}{10} - \frac{1}{2} = - \frac{2}{5}.
\]
Since $\tilde{S} \sim_{\mbQ} B$, $\tilde{T} \sim_{\mbQ} 2 B$ and $\tilde{S}|_{\tilde{T}} = \tilde{\Gamma} + \tilde{\Delta}$, we compute
\[
- \frac{2}{5} = (\tilde{S}|_{\tilde{T}} \cdot \tilde{\Gamma})_{\tilde{T}} = (\tilde{\Gamma}^2)_{\tilde{T}} + (\tilde{\Gamma} \cdot \tilde{\Delta})_{\tilde{T}}
\]
and
\[
- \frac{3}{5} = 2 (B^3) = (\tilde{S}|_{\tilde{T}}^2)_{\tilde{T}} = (\tilde{\Gamma})^2_{\tilde{T}} + 2 (\tilde{\Gamma} \cdot \tilde{\Delta})_{\tilde{T}} + (\tilde{\Delta}^2)_{\tilde{T}}.
\]
Set $m := (\tilde{\Gamma} \cdot \tilde{\Delta})_{\tilde{T}} > 0$.
By the above displayed equations, the intersection matrix
\[
\begin{pmatrix}
(\tilde{\Gamma}^2) & (\tilde{\Gamma} \cdot \tilde{\Delta}) \\
(\tilde{\Gamma} \cdot \tilde{\Delta}) & (\tilde{\Delta}^2)
\end{pmatrix}
=
\begin{pmatrix}
- 2/5 - m & m \\
m & -1/5 - m
\end{pmatrix}
\]
is negative-definite.
Thus, by \cite[Lemma 2.18]{Okada2}, $\msp$ is not a maximal center.

Assume $\gamma = \beta = 0$.
Note that $\alpha \ne 0$ in this case.
Filtering off terms divisible by $y^3$, we write $F = y^3 G + H$.
We set $G_{\lambda} = G (x,y,z,t,\lambda x^2)$ and $H_{\lambda} = H (x,y,z,t,\lambda x^2)$.
Note that the coefficients of $z^2$ and $z^4$ in $G$ and $H$ are $\alpha \ne 0$ and $1$, respectively, and let $\varepsilon$ be the coefficient of $t z^2 x$ in $H$.
By eliminating $z^2$ in $H_{\lambda}$ in terms of $G_{\lambda} = 0$, we have
\[
(w - \lambda x^2 = G_{\lambda} = H_{\lambda} = 0)
= (w - \lambda x^2 = G_{\lambda} = x^2 P_{\lambda} = 0) \subset X',
\]
where $P_{\lambda} \in \mbC [x,y,z,t]$.
We set $C_{\lambda} := (w - \lambda x^2 = G_{\lambda} = P_{\lambda} = 0)$.
We see that $\tilde{C}_{\lambda}$ intersect $E \cong \mbP^2$ at $4$ points so that $(E \cdot \tilde{C}_{\lambda}) = 4$.
Hence, we have
\[
(-K_{Y'} \cdot \tilde{C}_{\lambda}) = (=K_X \cdot C_{\lambda}) - \frac{1}{2} (E \cdot \tilde{C}_{\lambda}) = 0.
\]
If $\tilde{C}_{\lambda}$ is reducible, then there is a component of $\tilde{C}_{\lambda}$ which intersects $-K_{Y'}$ non-positively and $E$ positively since there is at least one component that intersects $E$, and since a component of $C_{\lambda}$ that is disjoint from $E$ intersects $-K_{Y'}$ positively.
It follows that there are infinitely many irreducible curves on $Y'$ which intersects $-K_{Y'}$ non-positively and $E$ positively.
By \cite[Lemma 2.19]{Okada2}, $\msp$ is not a maximal center.
\end{proof}

\begin{Lem} \label{excltqsingNo6263}
Let $X' = X'_{15} \subset \mbP (1,3,4,5,3)$ be a member of $\mcG'_{62}$ or $\mcG'_{63}$.
Then a singular point $\msp \in X'$ of type $\frac{1}{3} (1,1,2)$ is not a maximal center.
\end{Lem}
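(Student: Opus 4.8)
The plan is to follow the strategy of Lemma~\ref{excltqsingNo44_2}. Here $\msp$ is the vertex $\msp_1$ of $\mbP:=\mbP(1_{x_0},3_{x_1},4_{x_2},5_{x_3},3_w)$; for members of $\mcG'_{62}$ there may be a further pair of points of type $\tfrac13(1,1,2)$ on the line $(x_0=x_2=x_3=0)$ joining $\msp_1$ and $\msp_4$, but these are dealt with by the identical computation after an evident linear change of the degree-$3$ coordinate $w$. Let $\varphi'\colon Y'\to X'$ be the Kawamata blowup at $\msp$ with exceptional divisor $E$, and put $B=-K_{Y'}$. Since $(A^3)=15/180=1/12$ and $\msp$ has type $\tfrac13(1,1,2)$, one gets $(B^3)=(A^3)-\tfrac1{3\cdot1\cdot2}=\tfrac1{12}-\tfrac16=-\tfrac1{12}<0$, so intersecting two surfaces through $\msp$ is the right tool. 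Writing $g_{15}=g_{15}(x_0,x_1,x_2,x_3)$ for the part of the standard defining polynomial $F'$ not divisible by $w$, inspection of the partial derivatives of $F'$ at $\msp_3=(0\!:\!0\!:\!0\!:\!1\!:\!0)$ shows that $X'$ cannot be quasismooth there unless $x_3^3\in g_{15}$, so we may assume $x_3^3\in g_{15}$ with nonzero coefficient $\alpha$. A routine orbifold-chart computation shows that $x_0,x_2,x_3$ are local orbifold coordinates at $\msp$, vanishing along $E$ to orders $\tfrac13,\tfrac13,\tfrac23$ respectively, and that $w$ vanishes along $E$ to order at least $1$.

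First I would set $S:=(x_0=0)_{X'}\in|A|$ and $T:=(w-\delta x_0^3=0)_{X'}$ for a general $\delta\in\mbC$. Because of the $x_0^3$ term, the section $w-\delta x_0^3$ vanishes along $E$ to order exactly $1$, so $\tilde S\sim_{\mbQ}B$ and $\tilde T\sim_{\mbQ}3B$. Since $X'$ is an irreducible threefold it does not contain the surface $(x_0=w=0)\cong\mbP(3,4,5)$; hence $C:=S\cap T=(x_0=w=0)_{X'}$ is a curve, $\tilde S$ and $\tilde T$ have no common component, and $(B\cdot\tilde S\cdot\tilde T)=3(B^3)=-\tfrac14<0$. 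Explicitly $C=(\bar g=0)\subset\mbP(3_{x_1},4_{x_2},5_{x_3})$, where $\bar g:=g_{15}(0,x_1,x_2,x_3)=\alpha x_3^3+\gamma x_1^2x_2x_3+\beta x_1x_2^3$ (the monomial $x_1^5$ is absent, as $\msp_1\in X'$) with $\alpha\ne0$.

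The conclusion then follows by a case analysis on the factorization of $\bar g$. If $\beta\ne0$ then $x_3\nmid\bar g$, and since $\bar g$ is cubic in $x_3$ with nonzero leading coefficient while $x_1,x_2$ carry no form of degree $5$, $\bar g$ has no proper factor and so $C$ is an irreducible curve; as $(B\cdot\tilde S\cdot\tilde T)<0$, the point $\msp$ is not a maximal center by \cite[Lemma~2.17]{Okada2}. The same works when $\beta=\gamma=0$, where $\bar g=\alpha x_3^3$ and $C$ is the irreducible (non-reduced) curve $3(x_0=w=x_3=0)_{X'}$, once one observes that for general $\delta$ the extra component of $\tilde S\cap\tilde T$ on $E$ (if any) only makes $(B\cdot\tilde C_{\mathrm{red}})$ more negative. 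In the remaining case $\beta=0\ne\gamma$ one has $\bar g=x_3(\alpha x_3^2+\gamma x_1^2x_2)$ and $C=\Gamma+\Delta$ with $\Gamma:=(x_0=w=x_3=0)_{X'}\cong\mbP^1$ and $\Delta:=(x_0=w=\alpha x_3^2+\gamma x_1^2x_2=0)_{X'}$ both irreducible; here I would compute $(B\cdot\tilde\Gamma)$ and $(B\cdot\tilde\Delta)$ from the degrees of $\Gamma,\Delta$ and their local behaviour at $\msp$, use that (for general $\delta$) $\tilde T$ is normal, $\tilde S|_{\tilde T}=\tilde\Gamma+\tilde\Delta$, $\tilde S|_{\tilde T}\sim_{\mbQ}B|_{\tilde T}$ and $(\tilde S|_{\tilde T}^2)=3(B^3)=-\tfrac14$ to solve for the entries of the $2\times2$ intersection matrix of $\{\tilde\Gamma,\tilde\Delta\}$ on $\tilde T$, and check that it is negative definite, whence $\msp$ is not a maximal center by \cite[Lemma~2.18]{Okada2}.

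The main obstacle is this last case: pinning down $(E\cdot\tilde\Gamma)$ and $(E\cdot\tilde\Delta)$, hence the diagonal entries of the intersection matrix, requires a careful local study of how $\Gamma$ and $\Delta$ meet $\msp$ and the residual quotient singularities of $Y'$ over $\msp$ — exactly the bookkeeping carried out in the reducible part of Lemma~\ref{excltqsingNo44_2}. One also has to confirm that the degenerate cases $\beta=0$ are genuinely compatible with all the defining conditions of $\mcG'_{62}$ and $\mcG'_{63}$, in particular with quasismoothness at $\msp_2$ (which forces $x_2^3$ to occur in the coefficient of $w$ in $F'$), so that they must indeed be treated; beyond that I expect no difficulty of principle.
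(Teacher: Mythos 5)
Your proposal is correct and follows essentially the same route as the paper's proof: the same test surfaces $S=(x_0=0)_{X'}$ and $T=(w+\delta x_0^3=0)_{X'}$ with $\tilde S\sim_{\mbQ}B$, $\tilde T\sim_{\mbQ}3B$, the same trichotomy on the factorization of $g_{15}(0,x_1,x_2,x_3)=\alpha x_3^3+\gamma x_1^2x_2x_3+\beta x_1x_2^3$ (with $\alpha\ne 0$ forced by quasismoothness at $\msp_3$), Lemma~2.17 of \cite{Okada2} in the irreducible (possibly non-reduced) case, and in the case $\beta=0$, $\gamma\ne 0$ the decomposition $\tilde S|_{\tilde T}=\tilde\Gamma+\tilde\Delta$ with $\Gamma=(x_0=w=x_3=0)$ and the negative-definiteness of the $2\times2$ intersection matrix via Lemma~2.18. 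The remaining bookkeeping you defer is exactly what the paper does: $(E\cdot\tilde\Gamma)=1$ gives $(B\cdot\tilde\Gamma)=\tfrac1{12}-\tfrac13=-\tfrac14$, and with $m=(\tilde\Gamma\cdot\tilde\Delta)_{\tilde T}>0$ the matrix $\bigl(\begin{smallmatrix}-1/4-m & m\\ m & -m\end{smallmatrix}\bigr)$ is negative definite.
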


\begin{proof}
If $X' \in \mcG'_{62}$ (resp.\ $\mcG'_{63}$), then the defining polynomial of $X'$ is of the form $F' = w^3 y^2 + w^2 y f_6 + w f_{12} + g_{15}$ (resp.\ $F' = w^2 z t + w f_{12} + g_{15}$).
If $X' \in \mcG'_{63}$, then we may assume $y^4 \in f_{12}$ because otherwise $X'$ does not contain a point of type $\frac{1}{3} (1,1,2)$.
After replacing $w$, we may assume that there is no monomial divisible by $y^4$ in $g_{15}$.
Let $\alpha, \beta$ and $\gamma$ be the coefficients of $y^2 z t$, $y z^3$ and $t^3$ in $F'$, respectively.
By quasismoothness of $X'$, we have $\gamma \ne 0$.
We see that $x$, $z$, $t$ vanish along $E$ to order $1/3$, $1/3$ and $2/3$, respectively.
By looking at the defining equation $F' = 0$, we see that $w$ vanishes along $E$ to order $3/3$.
We define $S = (x = 0)_{X'}$ and $T = (w + \delta x^3 = 0)_{X'}$ for a general $\delta \in \mbC$.
Then $\tilde{S} \sim_{\mbQ} B$ and $\tilde{T} \sim_{\mbQ} 3 B$.
We have 
\[
S \cap T = (x = w = 0)_{X'} = (x = w = \alpha y^2 z t + \beta y z^3 + \gamma t^3 = 0).
\]
If $\tilde{S} \cap \tilde{T}$ is irreducible (possibly non-reduced), then, by \cite[Lemma 2.17]{Okada2}, $\msp$ is not a maximal center since $(B \cdot \tilde{S} \cdot \tilde{T}) = 3 (B^3) < 0$.

We assume that $S \cap T$ is reducible, which is equivalent to the condition $\beta = 0$ and $\alpha \ne 0$.
We define $\Gamma := (x = w = z = 0)$ and $\Delta := (x = w = \alpha y^2 z + \gamma t^2 = 0)$ so that $S |_T = \Gamma + \Delta$.
Note that we have $\tilde{S} |_{\tilde{T}} = \tilde{\Gamma} + \tilde{\Delta}$.
We see that $\tilde{\Gamma}$ intersects $E$ transversally at a single nonsingular point so that $(E \cdot \tilde{\Gamma}) = 1$.
Hence,
\[
(B \cdot \tilde{\Gamma}) = (A \cdot \Gamma) - \frac{1}{3} (E \cdot \tilde{\Gamma}) = - \frac{1}{4}.
\]
Now we compute
\[
- \frac{1}{4} = (\tilde{S}|_{\tilde{T}} \cdot \tilde{\Gamma})_{\tilde{T}} = (\tilde{\Gamma}^2)_{\tilde{T}} + (\tilde{\Gamma} \cdot \tilde{\Delta})_{\tilde{T}},
\]
and
\[
- \frac{1}{4} = 3 (B^3) = (B|_T)^2_{\tilde{T}} = (\tilde{\Gamma} + \tilde{\Delta})^2_{\tilde{T}} = (\tilde{\Gamma}^2)_{\tilde{T}} + 2 (\tilde{\Gamma} \cdot \tilde{\Delta})_{\tilde{T}} + (\tilde{\Delta}^2)_{\tilde{T}}.
\]
Set $m := (\tilde{\Gamma} \cdot \tilde{\Delta}) > 0$.
By the above displayed equations, we see that the intersection matrix 
\[
\begin{pmatrix}
(\tilde{\Gamma}^2) & (\tilde{\Gamma} \cdot \tilde{\Delta}) \\
(\tilde{\Gamma} \cdot \tilde{\Delta}) & (\tilde{\Delta}^2)
\end{pmatrix}
=
\begin{pmatrix}
- \frac{1}{4} - m & m \\
m & - m
\end{pmatrix}
\]
is negative-definite.
Therefore, by Lemma \cite[Lemma 2.18]{Okada2}, $\msp$ is not a maximal center.
\end{proof}

\begin{Lem} \label{excltqsingNo33_3}
Let $X' = X'_{10} \subset \mbP (1,1,3,5,1)$ be a member of $\mcG'_{33}$.
Then, a singular point $\msp$ of type $\frac{1}{3} (1,1,2)$ is not a maximal center.
\end{Lem}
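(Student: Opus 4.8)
The plan is to follow the pattern of this section but, since a single Kawamata blowup will not suffice, to pass to a second blowup. Write $X' = X'_{10} \subset \mbP (1_{x_0},1_{x_1},3_y,5_z,1_w)$ with defining polynomial $F'$ as in \eqref{eq:BI33-1}, and let $\msp = \msp_2$ be the $\frac{1}{3}(1,1,2)$ point (it lies on $X'$ since $3 \nmid \deg F' = 10$, and it is the only non-Gorenstein quotient point of $X'$). Let $\varphi' \colon Y' \to X'$ be the Kawamata blowup at $\msp$, i.e.\ the weighted blowup with $\wt (x_0,x_1,z) = \frac{1}{3}(1,1,2)$, with exceptional divisor $E \cong \mbP (1,1,2)$ and $B = -K_{Y'}$. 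Since $(A^3) = 10/15 = 2/3$ and $(E^3) = 9/2$, one gets $(B^3) = (A^3) - \frac{1}{27}(E^3) = \frac{2}{3} - \frac{1}{6} = \frac{1}{2} > 0$; in particular the negative-intersection arguments used elsewhere in this section (as in \emph{Lemma \ref{excltqsing}}) do not apply directly on $Y'$, and we must go one step further.

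The key observation is that $Y'$ has exactly one new singular point, of type $\frac{1}{2}(1,1,1)$, namely the vertex $[0 \!:\! 0 \!:\! 1] \in E \cong \mbP (1,1,2)$. Let $\psi \colon W \to Y'$ be its Kawamata blowup, with exceptional divisor $G$ and $(G^3) = 4$, so that $(-K_W)^3 = (B^3) - \frac{1}{8}(G^3) = \frac{1}{2} - \frac{1}{2} = 0$. As test data I would use the pencil $S_{\lambda} := (x_1 - \lambda x_0 = 0)_{X'} \sim_{\mbQ} A$, $\lambda \in \mbC$, each passing through $\msp$, whose fixed part is the curve
\[
\Gamma := \bcap_{\lambda} S_{\lambda} = (x_0 = x_1 = z^2 + w^2 y z + w y^3 = 0)_{X'}.
\]
Because every $a_i$ and $b_i$ vanishes at $x_0 = x_1 = 0$, the curve $\Gamma$ is independent of the chosen member, and it is irreducible and reduced (the relevant quadratic in $z$ has non-square discriminant $w y^2(w^3 - 4y)$); moreover $\Gamma$ is smooth at $\msp$ with tangent direction along $z$, so its proper transform meets $E$ at $[0 \!:\! 0 \!:\! 1]$ and thus $\tilde{\Gamma}$ passes through the point blown up by $\psi$. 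I would then compute, by realizing each of $\varphi'$ and $\psi$ as the restriction of a weighted blowup of a suitable ambient space (exactly as in the computations of Section \ref{sec:birinv1}), the numbers $(\hat{E} \cdot \hat{\Gamma})$, $(G \cdot \hat{\Gamma})$, $(-K_W \cdot \hat{\Gamma})$ and the self-intersection numbers of $\hat{\Gamma}$ and of the $\varphi'$- and $\psi$-exceptional curves inside a general $\hat{S}_{\lambda}$, where $\hat{\,\cdot\,}$ denotes the proper transform on $W$.

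With those numbers in hand the exclusion would follow by one of the mechanisms already used in this section. One route is to verify that $-K_W$ is nef; since $(-K_W)^3 = 0$ it is then not big, the map $W \dashrightarrow \mbP^1$ defined by $|B|$ becomes a $K$-trivial (elliptic-type) fibration, and such a fibration cannot be the Mori fibration terminating a Sarkisov link starting with $\varphi'$, so $\msp$ is not a maximal center. A second, more hands-on route is to show that on a general $\hat{S}_{\lambda}$ the curves $\hat{\Gamma}$, $\hat{E}|_{\hat{S}_{\lambda}}$ and $G|_{\hat{S}_{\lambda}}$ have negative-definite intersection matrix — equivalently, that there is a one-parameter family of irreducible curves on $W$ meeting $-K_W$ non-positively and $\psi^{*}E$ positively — and then invoke \cite[Lemma 2.18]{Okada2} or \cite[Lemma 2.19]{Okada2}. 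In either variant the arithmetic is pinned by the identity $(-K_W)^3 = 0$, which is exactly the borderline that makes the argument close.

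The main obstacle will be the intersection-number bookkeeping through the two successive Kawamata blowups — in particular determining $(\hat{E} \cdot \hat{\Gamma})$, $(G \cdot \hat{\Gamma})$ and the self-intersections on $\hat{S}_{\lambda}$ — together with establishing that $-K_W$ is nef (equivalently, identifying the nef cone of $W$), the one step that does not reduce to a routine computation. Normality of a general $S_{\lambda}$ and the expected behaviour of the scheme-theoretic intersections $S_{\lambda} \cap S_{\mu}$ should also be checked before applying the surface-based criteria; since $\Gamma$ is a single fixed irreducible curve for the entire family, I do not anticipate a genuine case division.
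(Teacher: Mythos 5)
Your route diverges from the paper's and, as written, does not close. The paper's proof never leaves the single Kawamata blowup $Y'$: after normalizing so that no monomial of $g_{10}$ is divisible by $y^3$, the anticanonical section $w$ vanishes along $E$ to order $4/3$ rather than $1/3$, so $\tilde{S} := \bigl((w=0)_{X'}\bigr)^{\sim} \sim_{\mbQ} B - E$ while $\tilde{T} := \bigl((x_0=0)_{X'}\bigr)^{\sim} \sim_{\mbQ} B$. This gives $(\tilde{T}^2 \cdot \tilde{S}) = (A^3) - \tfrac{4}{27}(E^3) = \tfrac{2}{3} - \tfrac{2}{3} = 0$, and since $\tilde{S} \cap \tilde{T}$ is either irreducible or a union of two numerically equivalent curves, \cite[Lemma 2.17]{Okada2} excludes $\msp$. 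Your computation $(B^3)=1/2>0$ is correct, but the right response is to find a better test class on $Y'$ (here $B-E$), which you missed by only tracking the vanishing orders of $x_0,x_1$.

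The two-blowup plan has concrete gaps. (a) The assertion that a $K$-trivial fibration on $W$ ``cannot be the Mori fibration terminating a Sarkisov link, so $\msp$ is not a maximal center'' is not a valid exclusion mechanism: in this very paper (Sections \ref{sec:birinv1} and \ref{sec:birinvcA}) elliptic fibrations on exactly such double blowups are used to \emph{construct} birational involutions, i.e.\ Sarkisov links, and exclusion is obtained only when the fibers degenerate. (b) \cite[Lemma 2.18]{Okada2} and \cite[Lemma 2.19]{Okada2} concern curves on the extraction $Y'$ paired against $-K_{Y'}$ and $E$; pushing a curve $\hat{C}$ with $(-K_W\cdot\hat{C})\le 0$ down to $Y'$ gives $(-K_{Y'}\cdot\tilde{C}) = (-K_W\cdot\hat{C}) + \tfrac{1}{2}(G\cdot\hat{C})$, which moves in the wrong direction, so the descent is not automatic. (c) You never produce the required data: the pencil $S_{\lambda}$ has a single fixed irreducible curve $\Gamma$ and its members are surfaces, so $|B|$ induces a fibration by surfaces over $\mbP^1$, not an elliptic fibration and not infinitely many test curves; a second family of divisors (lifting a section of degree $\ge 2$) would be needed for either route, and none is exhibited. (d) Nefness of $-K_W$, the pivot of your first route, is left unproved; it is unnecessary once one works with $B-E$ on $Y'$.
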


\begin{proof}
The defining polynomial of $X'$ is of the form $w^2 y z + w f_9 + g _{10}$.
Since $y^3 \in f_9$, we may assume that there is no monomial divisible by $y^3$ in $g_{10}$ after replacing $w$.
We see that $x_0,x_1,z,w$ vanish along $E$ to oder respectively $1/3,1/3,2/3,4/3$.
We set $S := (w = 0)_{X'}$ and $T := (x_0 = 0)_{X'}$. 
Then $\tilde{S} \sim_{\mbQ} B - E$ and $\tilde{T} \sim_{\mbQ} B$.
We have
\[
S \cap T = (x_0 = w = 0)_{X'} = (x_0 = w = g_{10} (0,x_1,y,z) = 0).
\]
Note that $z^2 \in g_{10}$.
It follows that if $S \cap T$ is reducible, then it is the union of two curves $\Gamma_1$ and $\Gamma_2$, where $\Gamma_i := (x_0 = w = \alpha_i z + \beta_i y x_1^2 + \gamma_i x_1^5 = 0)$ for some $\alpha_i \ne 0, \beta_i, \gamma_i \in \mbC$ for $i = 1,2$.
We have $\tilde{S} \cap \tilde{T} = \tilde{\Gamma}_1 + \tilde{\Gamma}_2$.
We see that $\tilde{\Gamma}_1$ is numerically equivalent to $\tilde{\Gamma}_2$ since $(\varphi^* A \cdot \tilde{\Gamma}_1) = (\varphi^* A \cdot \tilde{\Gamma}_2) = 1/3$ and $(E \cdot \tilde{\Gamma}_1) = (E \cdot \tilde{\Gamma}_2) = 1$.
Thus the support of $\tilde{\Gamma} := \tilde{S} \cap \tilde{T}$ is either irreducible or it is the union of two curves which are numerically equivalent to each other.
We have
\[
(\tilde{T} \cdot \tilde{\Gamma}) = (\tilde{T}^2 \cdot \tilde{S}) = (A^3) - \frac{4}{3^3} (E^3) = \frac{2}{3} - \frac{2}{3} = 0.
\]
By \cite[Lemma 2.17]{Okada2}, $\msp$ is not a maximal center.
\end{proof}

The following is the conclusion of this section.

\begin{Thm} \label{thm:excltqs}
Let $X'$ be a member of $\mcG'_i$ with $i \in I^*_{cA/n} \cup I_{cD/3}$.
Then, no terminal quotient singular point with empty third column in the big table is a maximal center.
\end{Thm}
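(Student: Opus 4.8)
The plan is to reduce the statement to the lemmas already established in this section by a case analysis driven by the big table. First I would observe that, for a member $X' \in \mcG'_i$ with $i \in I^*_{cA/n} \cup I_{cD/3}$, the terminal quotient singular points of $X'$ are recorded explicitly in the big table, and that among those with empty third column each one carries, in its second column, either (a) a list of polynomials together with a divisor of the form $N = b B + e E$, or (b) no data of that shape, in which case the point is one of the finitely many exceptional ones treated separately. The whole section is organised around this dichotomy, so the theorem is precisely the assertion that these two cases exhaust all such points; the work is to check that the table-entries really do split this way.

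For the points of type (a) I would simply invoke Lemma \ref{excltqsing}. By construction the list $\Lambda = \{h_1,\dots,h_l\}$ in the table has the property that $(h_1 = \cdots = h_l = 0)_{X'}$ is a finite set of points containing $\msp$, the vanishing order $c_j/r$ of each $h_j$ along $E$ is as dictated by the choice of tangent coordinates at $\msp$, and the divisor $N = b B + e E$ displayed in the table is the one realising $\max_{j} e_j/b_j$ with $e_j = (b_j - c_j)/r$. Then Lemma \ref{excltqsing} yields that $N$ is nef with $(N \cdot B^2) \le 0$, whence $\msp$ is not a maximal center by \cite[Corollary 2.16]{Okada2}. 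The only genuine content here, to be checked case by case, is the finiteness of $(h_1 = \cdots = h_l = 0)_{X'}$ together with the inequalities $b > r e$ and $(N \cdot B^2) \le 0$; these are routine monomial computations, of the sort carried out in Example \ref{ex:exclqtpt}, and I would dispose of them by a table-by-table inspection rather than writing each one out.

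For the points of type (b) I would run through the short explicit list: the point $\msp = \msp_2$ of type $\frac{1}{2}(1,1,1)$ on $X' \in \mcG'_{21}$ is excluded by Lemma \ref{excltqsingNo21_2}; the point of type $\frac{1}{2}(1,1,1)$ on $X' \in \mcG'_{44}$ by Lemma \ref{excltqsingNo44_2}; the point of type $\frac{1}{3}(1,1,2)$ on $X' \in \mcG'_{62}$ or $\mcG'_{63}$ by Lemma \ref{excltqsingNo6263}; and the point of type $\frac{1}{3}(1,1,2)$ on $X' \in \mcG'_{33}$ by Lemma \ref{excltqsingNo33_3}. Combining the treatment of cases (a) and (b) gives the theorem.

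The main obstacle I anticipate is not any single deep argument but rather the completeness of the bookkeeping: one must be certain that the big table lists every terminal quotient singular point of every $X' \in \mcG'_i$, and that each point with empty third column genuinely falls into case (a) or into the explicit list of case (b) — in particular that no point assigned a polynomial list $\Lambda$ in case (a) has a positive-dimensional common zero locus on $X'$, which would break the nefness input to Lemma \ref{excltqsing}. This is a finite but laborious verification, and it is exactly the place where the earlier generality hypotheses (Conditions \ref{cond1} and \ref{addcond}) are needed, since they rule out the degenerate configurations in which such a zero locus could acquire a curve component.
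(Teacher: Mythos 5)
Your proposal matches the paper's proof: Theorem \ref{thm:excltqs} is deduced exactly by combining Lemma \ref{excltqsing} for the points marked with a polynomial list and a divisor $bB+eE$, together with the ad hoc Lemmas \ref{excltqsingNo21_2}, \ref{excltqsingNo44_2}, \ref{excltqsingNo6263} and \ref{excltqsingNo33_3} for the remaining exceptional points. The dichotomy you describe and the bookkeeping caveat are precisely how the section is organised, so there is nothing to add.
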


\begin{proof}
This follows from Lemmas \ref{excltqsing}, \ref{excltqsingNo21_2},  \ref{excltqsingNo44_2}, \ref{excltqsingNo6263} and \ref{excltqsingNo33_3}.
\end{proof}

\section{Exclusion of divisorial extractions centered at $cA/n$ points} \label{sec:exclcA}

Let $X'$ be a member of $\mcG'_i$ with $i \in I^*_{cA/n}$ and $\msp = \msp_4 \in X'$ the $cA/n$ point.
The aim of this section is to show that no divisorial extraction $\varphi' \colon Y' \to X'$ centered at $\msp$ with $(-K_{Y'}^3) \le 0$ is a maximal extraction.

Let $\varphi' \colon Y' \to X'$ be a divisorial extraction centered at $\msp$ such that $(B^3) \le 0$, where $B := -K_{Y'}$.
Such an extraction is the one marked ``none" in the big table.
Suppose $i \in \{16,22,26,33,48\}$.
We define $S := (x_0 = 0)_{X'}$ and $T := (x_1 = 0)_{X'}$ if $i \in \{16,22,26,33,48\}$, and define $S := (x = 0)_{X'}$ and $T := (y=0)_{X'}$ if $i \in \{44,57,63\}$.
We set $m = 1$ if $i \in \{16,22,26,33,48\}$, and $m = \deg y$ if $i \in \{44,57,63\}$.
We have $S \sim_{\mbQ} A$ and $T \sim_{\mbQ} m A$.
For a divisor or a curve $\Delta$ on $X'$, we denote by $\tilde{\Delta}$ the proper transform of $\Delta$ on $Y'$.

\begin{Lem} \label{lem:exclcAirred}
The scheme-theoretic intersection $\Gamma := S \cap T$ is an irreducible and reduced curve.
Moreover, we have $\tilde{S} \sim_{\mbQ} B$, $\tilde{T} \sim_{\mbQ} m B$ and $\tilde{S} \cap \tilde{T} = \tilde{\Gamma}$.
\end{Lem}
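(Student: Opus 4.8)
The plan is to treat the three assertions in turn --- that $\Gamma=S\cap T$ is a curve, that $\tilde S\sim_{\mbQ}B$ and $\tilde T\sim_{\mbQ}mB$, and that $\tilde S\cap\tilde T=\tilde\Gamma$ --- the real content being the irreducibility and reducedness of $\Gamma$. By Lemma \ref{stdefeqX'cA} we may write $X'\subset\mbP(1,n,a_2,a_3,n)$ as $F'=w^2x_2x_3+wf+g$, with $S$ cut out by the degree-$1$ coordinate $x_0$ and $T$ by the degree-$n$ coordinate $x_1$ distinct from $w$ (reading $x,y$ for $x_0,x_1$ when $i\in\{44,57,63\}$). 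Setting $x_0=x_1=0$, the scheme $\Gamma$ is defined inside $\mbP(a_2,a_3,n)_{x_2,x_3,w}$ by the single homogeneous polynomial $\bar F:=F'|_{x_0=x_1=0}=w^2x_2x_3+w\bar f+\bar g$, which is nonzero because the monomial $w^2x_2x_3$ survives; hence $\Gamma$ is a curve. For the $\mbQ$-linear equivalences I would use the explicit $cA/n$ model of $\msp=\msp_4$ from Section \ref{sec:divextr}: under the identification of $(\msp\in X')$ with $(o\in(s_1s_2+h(s_3,s_4)=0)/\mbZ_n(a_2,-a_2,1,0))$ the coordinate $x_0$ goes to a unit multiple of $s_3$ and $x_1$ to a unit multiple of $s_4$ plus a scalar multiple of $s_3^n$, so $x_0,x_1$ vanish along $E$ to orders exactly $\tfrac1n$ and $1$. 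Every divisorial extraction of $\msp$ is one of the maps $\varphi_{(r_1,r_2)}$ of Section \ref{sec:divextr} and has discrepancy $\tfrac1n$; together with $S\sim_{\mbQ}A:=-K_{X'}$ and $T\sim_{\mbQ}mA$, where $m=\deg x_1=n$, this gives $\tilde S={\varphi'}^*A-\tfrac1nE\sim_{\mbQ}B$ and $\tilde T={\varphi'}^*(mA)-E\sim_{\mbQ}mB$.

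The main step is to show $\bar F$ irreducible; it is then squarefree, so $\Gamma$ is an irreducible reduced curve. Viewing $\bar F$ as a quadratic polynomial in $w$ over the UFD $\mbC[x_2,x_3]$, with leading coefficient $x_2x_3$, a nontrivial factorization either has a nonconstant factor $P\in\mbC[x_2,x_3]$ --- forcing $P$, up to a scalar, to be $x_2$, $x_3$ or $x_2x_3$, with $P\mid\bar g$ --- or is a product $(w\ell_1+m_1)(w\ell_2+m_2)$ of two factors linear in $w$ with $\ell_1\ell_2=x_2x_3$, which splits further according to whether $\{\ell_1,\ell_2\}$ is $\{x_2,x_3\}$ or $\{1,x_2x_3\}$ up to scalars, with $\bar g=m_1m_2$ and the $m_i$ of prescribed degree. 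For each of the eight families $i\in\{16,22,26,33,44,48,57,63\}$ I would run a weighted-degree count on $\mbP(a_2,a_3,n)$, which confines $\bar f$ and $\bar g$ to a handful of monomials, and then exclude every candidate factorization using the normalizations carried by a standard defining polynomial together with Conditions \ref{cond1} and \ref{addcond}. The mechanism: each such factorization is either already impossible by the weighted-degree count, or it puts one of the coordinate strata $(x_0=x_1=x_2=0)$, $(x_0=x_1=x_3=0)$, $(x_0=x_1=w=0)$ inside $X'$, which across the Sarkisov link $X\ratmap X'$ forces $X$ to contain a WCI curve of one of the types in Table \ref{table:WCIcurve}, contradicting $(\mathrm{C}_2)$; remaining $w$-linear factorizations are killed by a degree obstruction making an $m_i$ vanish while a monomial of $\bar g$, present by the standard form and Table \ref{table:monomial}, does not; and for family No.\ $48$, which has no entry in Table \ref{table:WCIcurve}, the needed non-vanishing is extracted from quasismoothness $(\mathrm{C}_0)$ of $X'$ at the relevant coordinate point. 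I expect this family-by-family bookkeeping to be the main obstacle --- no individual case is hard, but all eight must be carried out and the weighted-degree arithmetic is delicate.

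For the last assertion, since ${\varphi'}$ is an isomorphism over $X'\setminus\{\msp\}$ and $S\cap T=\Gamma$ is purely one-dimensional, $\tilde S$ and $\tilde T$ meet properly, $\tilde\Gamma\subseteq\tilde S\cap\tilde T$, and any further component of $\tilde S\cap\tilde T$ must lie in $E$. But $\tilde S\cap E$ is the trace on the surface $E$ of $(s_3=0)$, and on $(s_3=0)$ the function $x_1$ is a unit multiple of $s_4$; hence $\tilde S\cap\tilde T\cap E\subseteq(s_3=s_4=0)\cap E=(s_1s_2=0)$, a finite set. Therefore $\tilde S\cap\tilde T$ contains no curve inside $E$, and $\tilde S\cap\tilde T=\tilde\Gamma$.
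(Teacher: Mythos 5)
Your proposal is correct and follows essentially the same route as the paper: the paper likewise identifies $\Gamma$ with the explicit hypersurface $\bar F' = w^2x_2x_3+w\bar f+\bar g$ in the weighted projective plane $(x_0=x_1=0)$ (recorded family-by-family in Table \ref{table:gammaqsm}, with the non-vanishing of the relevant coefficients deduced from the generality conditions), computes the vanishing orders of $x_0$ and $x_1$ along $E$ to get $\tilde S\sim_{\mbQ}B$ and $\tilde T\sim_{\mbQ}mB$, and observes that $\tilde S\cap\tilde T\cap E$ is a finite set (two points, namely $(s_1s_2=0)$ on $E$) to conclude $\tilde S\cap\tilde T=\tilde\Gamma$. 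Your factorization analysis of $\bar F'$ as a quadratic in $w$ over $\mbC[x_2,x_3]$ is a sound and slightly more systematic organization of the case-by-case check that the paper dispatches as "a straightforward argument."
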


\begin{proof}
If $i \in \{16,22,26,33,48\}$ (resp.\ $i \in \{44,57,63\}$), then $(x_0 = x_1 = 0)$ (resp.\ $(x = y = 0)$) is a weighted projective plane, which we denote by $\mbP$, and $\Gamma$ is isomorphic to the hypersurface in $\mbP$ defined by the equation given in Table \ref{table:defeqcA}.
The conditions on $\alpha, \beta, \gamma$ in the table are satisfied by the generality condition imposed on the family $\mcG'_i$.
Thus, by a straightforward argument, $\Gamma$ is irreducible and reduced. 

We see that $x_0$ and $x_1$ (resp.\ $x$ and $y$) vanish along $E$ to order $1/n$ and $1/n$ (resp.\ $1/n$ and $m/n$).
This implies $\tilde{S} \sim_{\mbQ} B$ and $\tilde{T} \sim_{\mbQ} m B$.
Moreover, $\tilde{S} \cap \tilde{T} \cap E$ consists of two points and, in particular, does not contain a curve.
This shows $\tilde{S} \cap \tilde{T} = \tilde{\Gamma}$.
\end{proof}

\begin{Thm} \label{thm:exclcAneg}
Let $X'$ be a member of $\mcG'_i$ with $i \in I^*_{cA} \cup I_{cA/n}$ and  $\msp = \msp_4 \in X'$ the $cA$ or $cA/n$ point.
Then no extremal extraction $\varphi' \colon Y' \to X'$ with $(-K_{Y'})^3 \le 0$ is a maximal center.
\end{Thm}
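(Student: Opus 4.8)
The plan is to reduce the statement to the excluding criterion \cite[Lemma 2.17]{Okada2}, applied to the curve $\Gamma = S \cap T$ that was prepared in Lemma \ref{lem:exclcAirred}. First I would note that the assertion is vacuous unless $\msp$ admits a divisorial extraction $\varphi' \colon Y' \to X'$ with $(-K_{Y'})^3 \le 0$. Running through the description of divisorial extractions centered at a $cA/n$ point from Section \ref{sec:divextr}, together with the values of $(-K_{Y'})^3$ recorded in the big table, such an extraction occurs exactly for $i \in \{16,22,26,33,44,48,57,63\}$, and for these families it is one of the extractions marked ``none''. So I would fix such an $i$ and such a $\varphi'$, write $B = -K_{Y'}$, and keep the surfaces $S, T$, the positive integer $m$, and the curve $\Gamma = S \cap T$ introduced above, so that $S \sim_{\mbQ} A$ and $T \sim_{\mbQ} m A$.

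Lemma \ref{lem:exclcAirred} then supplies everything needed: $\Gamma$ is irreducible and reduced, the proper transforms satisfy $\tilde S \sim_{\mbQ} B$ and $\tilde T \sim_{\mbQ} m B$, and $\tilde S \cap \tilde T = \tilde \Gamma$. Hence
\[
(B \cdot \tilde S \cdot \tilde T) = m\,(B^3) \le 0,
\]
the inequality being the standing hypothesis $(-K_{Y'})^3 \le 0$. Since $\tilde S \cap \tilde T$ is the irreducible curve $\tilde \Gamma$ and $\tilde S, \tilde T$ are effective divisors $\mbQ$-proportional to $-K_{Y'}$, \cite[Lemma 2.17]{Okada2} applies word for word and gives that $\varphi'$ is not a maximal extraction — this is the same mechanism already used in Lemmas \ref{excltqsingNo21_2} and \ref{excltqsingNo44_2}.

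Thus the theorem reduces to purely formal manipulation once Lemma \ref{lem:exclcAirred} is available, and the latter carries all the real content, resting in turn on the generality conditions of Condition \ref{addcond} and on the monomial conditions recorded with the standard defining polynomials. The step I expect to need the most care is not an argument but bookkeeping: checking, family by family against the big table, that $\{16,22,26,33,44,48,57,63\}$ really is the complete list of families whose $\msp$-extractions can have $(-K_{Y'})^3 \le 0$, and that in each such case the surfaces $S = (x_0 = 0)_{X'}$, $T = (x_1 = 0)_{X'}$ (resp.\ $S = (x = 0)_{X'}$, $T = (y = 0)_{X'}$) are the correct choice — in particular that $(x_0 = x_1 = 0)$ (resp.\ $(x = y = 0)$) is a weighted projective plane inside the ambient space, which is what makes Lemma \ref{lem:exclcAirred} applicable. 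Combined with Theorems \ref{thm:SlinkcA}, \ref{thm:birinvcA1} and \ref{thm:birinvcA2}, this would complete the analysis of divisorial extractions centered at $cA/n$ points.
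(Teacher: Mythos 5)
Your proposal is correct and follows essentially the same route as the paper: invoke Lemma \ref{lem:exclcAirred} to get $\tilde S \sim_{\mbQ} B$, $\tilde T \sim_{\mbQ} mB$ and $\tilde S \cap \tilde T = \tilde\Gamma$ irreducible and reduced, then conclude via \cite[Lemma 2.17]{Okada2} from the non-positivity of the relevant intersection number (the paper computes $(\tilde T \cdot \tilde\Gamma) = m^2 (B^3) \le 0$ where you compute $(B \cdot \tilde S \cdot \tilde T) = m(B^3) \le 0$; the two differ only by the positive factor $m$). Your bookkeeping of the affected families and the choice of $S$ and $T$ also matches the paper's setup.
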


\begin{proof}
By Lemma \ref{lem:exclcAirred}, we have $\tilde{S} \sim_{\mbQ} B$, $\tilde{T} \sim_{\mbQ} m B$ and $\tilde{S} \cap \tilde{T} = \tilde{\Gamma}$ is an irreducible and reduced curve.
It follows that 
\[
(\tilde{T} \cdot \tilde{\Gamma}) = (\tilde{T} \cdot \tilde{S} \cdot \tilde{T}) = m^2 (B^3) \le 0.
\]
Therefore, by \cite[Lemma 2.17]{Okada2}, $\varphi'$ is not a maximal extraction.
\end{proof}

\begin{table}[bt] \label{table:defeqcA}
\begin{center}
\caption{Defining equation of $\Gamma$}
\label{table:gammaqsm}
\begin{tabular}{ccc}
\hline
No. & Equations & Conditions \\
\hline
16 & $w^2 y z + w (\alpha y^3 + \beta z^2) + \gamma y^2 z = 0$ & $\alpha, \beta, \gamma \ne 0$ \\
22 & $w^2 y z + \alpha y^4 + \beta y^2 z + z^2 $ & $\alpha \ne 0$ \\
26 & $w^2 y z + \alpha w z^2 + z^3 = 0$ & $\alpha \ne 0$ \\
33 & $w^2 y z + \alpha w y^3 +z^2 = 0$ & $\alpha \ne 0$ \\
44 & $w^2 z t + \alpha w t^2 + y^3 = 0$ & $\alpha \ne 0$ \\
48 & $w^2 y z + y^3 + z^2 = 0$ & \\
57 & $w^2 z t + \alpha w z^4 + t^2 = 0$ & $\alpha \ne 0$ \\
63 & $w^2 z t + \alpha w z^3 + t^3 = 0$ & $\alpha \ne 0$ 
\end{tabular}
\end{center} 
\end{table}

\section{Exclusion of curves} \label{sec:exclcurve}

\subsection{Exclusion of most of the curves}

We can exclude most of the curves as follows.

\begin{Lem} \label{exclmostcurves}
Let $X'$ be a member of the family $\mcG'_i$ and $\Gamma \subset X'$ an irreducible and reduced curve.
Then, $\Gamma$ is not a maximal center except possibly for the following cases.
\begin{itemize}
\item No. $6$  and $\deg \Gamma = 1, 2$.
\item No. $7$ and $\deg \Gamma = \frac{1}{2}, 1$.
\item No. $9$, $10$, $16$ and $\deg \Gamma = 1$.
\item No. $18$ and $\deg \Gamma = \frac{1}{2}$.
\item No. $21$ and $\deg \Gamma = \frac{1}{3}$.
\end{itemize}
Here, in any of the above exceptions, $\Gamma$ passes through the $cA/n$ point $\msp_4$ and does not pass through any terminal quotient singular point.
\end{Lem}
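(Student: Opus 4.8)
The plan is to exclude curves $\Gamma \subset X'$ as maximal centers by bounding the anticanonical degree $\deg \Gamma = (A \cdot \Gamma)$ from below and comparing with the global bound $4/(A^3)$ coming from the Noether--Fano inequality, in the spirit of \cite[Section 5.2]{CPR}. First I would recall the basic principle: if $\mcH \sim_{\mbQ} -n K_{X'}$ is a movable linear system with $\Gamma$ as a maximal center, then $\mult_{\Gamma} \mcH > 2n/(A^3) \cdot (A\cdot\Gamma)^{-1}$ is impossible once $(A \cdot \Gamma)$ is large enough; more precisely, picking two general members $S_1, S_2 \in \mcH$ one gets $(S_1 \cdot S_2) = n^2 (A^3) \cdot \text{(a curve class)}$, and localizing at $\Gamma$ gives $n^2 (A^3) \deg\Gamma \ge (\mult_\Gamma \mcH)^2 \deg\Gamma \ge (\text{something} > n)^2 \deg \Gamma$, forcing $\deg\Gamma < 4/(A^3)$ (one uses that a maximal center forces $\mult_\Gamma \mcH > 2n/(\text{coefficient})$, combined with $(A^3)$ being small). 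So the first step is purely numerical: for each family $i \in I^*_{cA/n} \cup I_{cD/3}$ compute $(A^3)$ and deduce the threshold $4/(A^3)$; any irreducible reduced curve with $(A \cdot \Gamma) \ge 4/(A^3)$ is automatically not a maximal center.

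Next I would bound $(A\cdot\Gamma)$ from below for curves on $X'$. For a curve not lying in the singular locus, one considers its position relative to the coordinate strata: if $\Gamma$ is not contained in $(x_j = 0)$ for the lowest-weight coordinate(s), then $(A \cdot \Gamma) = (H_j \cdot \Gamma)/a_j \ge 1/a_j$ type estimates are too weak, so instead I would use the standard argument that $\Gamma$ is cut out (set-theoretically, locally) by coordinate hyperplanes and estimate via the equation of $X'$: a curve of small degree must be a component of $(x_j = x_k = 0)_{X'}$ for small-weight $j,k$, i.e. a weighted complete intersection curve, and the possible degrees of such components are controlled by the defining polynomial. Condition \ref{cond1}($\mathrm{C}_2$), which forbids $X'$ (equivalently $X$) from containing the listed WCI curves, rules out most low-degree candidates. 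The combinatorial heart is: enumerate pairs $(j,k)$ of coordinates with $a_j + a_k$ (or $a_j, a_k$ individually) small, determine the components of $(x_j = x_k = 0)_{X'}$ and their degrees, and check that in every family all but finitely many exceptional cases have $(A\cdot\Gamma) \ge 4/(A^3)$. The listed exceptions (No.~$6$ with $\deg\Gamma \le 2$, No.~$7$ with $\deg\Gamma \le 1$, etc.) are precisely the families with the largest $(A^3)$ and hence the smallest thresholds, where this counting leaves a genuine residue.

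For the final clause --- that in the exceptional cases $\Gamma$ passes through $\msp_4$ and avoids all quotient singular points --- I would argue as follows. Quotient singular points: if $\Gamma$ passed through a terminal quotient singular point $\msq$ of index $r$, then after the Kawamata blowup at $\msq$ the proper transform $\tilde\Gamma$ satisfies an improved intersection inequality (the local contribution at $\msq$ forces $(-K_Y \cdot \tilde\Gamma) < (A\cdot\Gamma)$ by a definite amount), and combined with $(-K_Y)^3$ being strictly smaller than $(A^3)$ one re-runs the numerical bound to get a contradiction for the small degrees in question; this is exactly the ``$\msq \in \Gamma$ improves the bound'' mechanism used throughout \cite{CPR}. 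The $cA/n$ point: a curve $\Gamma$ of degree below the threshold that does \emph{not} pass through $\msp_4$ can be isolated by a suitable multiple $\ell A$ on the open set $X' \setminus \{\msp_4\}$, or more directly, the low-degree components of $(x_j = x_k = 0)_{X'}$ that avoid $\msp_4$ are explicitly identifiable and each is excluded by the same irreducibility/intersection argument. The main obstacle I anticipate is the bookkeeping in the second step: for each of the roughly twenty families one must correctly list all components of the relevant WCI curves, track which conditions in \ref{cond1} and \ref{addcond} eliminate which components, and verify the numerical inequality case by case --- it is routine but voluminous, and the exceptional list must match exactly. I would organize this as a lemma-by-lemma or table-driven case analysis, deferring the detailed per-family verifications to the subsequent subsections (as the paper evidently does in Section \ref{sec:exclcurve}).
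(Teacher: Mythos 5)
There is a genuine gap in the numerical step: you are using the wrong threshold. The criterion for excluding a \emph{curve} is $\deg \Gamma \ge (A^3)$ (Step 1 of \cite[Proof of Theorem 5.1.1]{CPR}: from $n^2 (A^3) = (A \cdot H_1 \cdot H_2) \ge (\mult_\Gamma \mcH)^2 (A\cdot\Gamma) > n^2 \deg\Gamma$ one gets $\deg\Gamma < (A^3)$ for a maximal center), whereas $4/(A^3)$ is the threshold for \emph{nonsingular points}, coming from the $4n^2$-inequality and the isolating-class argument. Your chain of inequalities as written does not yield $\deg\Gamma < 4/(A^3)$, and using that bound produces the wrong exception list: for No.~$6$ one has $(A^3) = 5/2$, so $\deg\Gamma = 2$ is a genuine exception, but $2 > 4/(A^3) = 8/5$ would wrongly discard it; for No.~$7$ one has $(A^3) = 3/2$ but $4/(A^3) = 8/3$ would wrongly add $\deg\Gamma = 3/2, 2, 5/2$ to the list. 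You also need the quantization $\deg\Gamma \in \frac{1}{n}\mbZ_{>0}$ (from $nA$ being integral on $\Gamma$ once $\Gamma$ avoids the quotient points) to turn the inequality $\deg\Gamma < (A^3)$ into the finite list of exceptions.

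Two further points. For curves through a terminal quotient singular point the paper does not re-run any numerics: it simply invokes Kawamata's theorem that there is no divisorial extraction centered along a curve passing through a terminal quotient singularity, so such a curve cannot be a maximal center at all; your proposed ``improved bound after the Kawamata blowup'' is both unnecessary and not actually carried out. Finally, Condition~($\mathrm{C}_2$) forbids WCI curves on $X$ (the codimension-$2$ complete intersection), not on $X'$, so it cannot be used to rule out low-degree curves on $X'$ in this lemma; the lemma only \emph{enumerates} the residual cases and defers their exclusion to the subsequent subsections, while curves below the threshold that avoid $\msp_4$ lie in the nonsingular locus and are handled by Step 2 of \cite[Proof of Theorem 5.1.1]{CPR}, which is essentially what you propose for that sub-case.
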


\begin{proof}
We may assume that $\Gamma$ does not pass through a terminal quotient singular point since there is no divisorial extraction centered along a curve through such a point (see \cite{Kawamata}).
We see $n \deg \Gamma = (n A \cdot \Gamma) \in \mbZ_{>0}$, where $n$ is the index of the singularity $(X',\msp_4)$.
Thus, $\deg \Gamma \in \frac{1}{n} \mbZ_{>0}$.
By \cite[Proof of Theorem 5.1.1]{CPR}, $\Gamma$ is not a maximal center if $\deg \Gamma \ge (A^3)$.
By checking each family individually, a curve $\Gamma$ on $X'$ such that $\deg \Gamma < (A^3)$ is one of the curves listed in the statement.
Finally, let $\Gamma$ be one of the curves in the statement.
If $\Gamma$ does not pass through $\msp_4$, then it is contained in the nonsingular locus of $X'$.
Then, the argument in Step 2 of \cite[Proof of Theorem 5.1.1]{CPR} to our case, and as a result, we see that $\Gamma$ is not a maximal center.
This completes the proof.
\end{proof}

In the rest of this section, we exclude the remaining curves by aplying the following results.

\begin{Lem} \label{lem:criexclC1}
Let $X$ be a $\mbQ$-Fano $3$-fold with Picard number $1$ and $\Gamma \subset X$ an irreducible and reduced curve.
Suppose that there is a movable linear system $\mcM$ on $X$ with the following properties.
\begin{enumerate}
\item $\Gamma$ is the unique base curve of $\mcM$.
\item A general member $S \in \mcM$ is a normal surface.
\item For a general $S \in \mcM$, $(\Gamma^2)_S \le 0$ and $((-K_X)^2 \cdot S) - 2 (-K_X \cdot \Gamma) + (\Gamma^2)_S \le 0$.
\end{enumerate}
Then, $\Gamma$ is not a maximal center.
\end{Lem}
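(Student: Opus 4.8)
The plan is to argue by contradiction using the method of maximal singularities, restricting everything to a general surface $S\in\mcM$ and extracting a numerical contradiction from the two inequalities encoded in hypothesis (3), namely $(\Gamma^2)_S\le 0$ and $\bigl((-K_X|_S-\Gamma)^2\bigr)_S\le 0$. So suppose $\Gamma$ is a maximal center. First I would invoke the Noether--Fano--Iskovskikh inequality for curves (as in \cite[Theorem 5.1.1]{CPR}) to produce a movable linear system $\mcH\sim_{\mbQ}-nK_X$, for some $n>0$, with $\mult_\Gamma\mcH>n$. Fixing a general $S\in\mcM$ --- normal by (2), and containing $\Gamma$ since $\Gamma\subset\Bs\mcM$ by (1) --- and a general $H\in\mcH$, I would write $H|_S=a\Gamma+C$ with $a=\mult_\Gamma(H|_S)$ and $C\ge 0$ not supporting $\Gamma$, and note $a\ge\mult_\Gamma\mcH>n$, because restricting a divisor to a surface through $\Gamma$ cannot lower the multiplicity along $\Gamma$.

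The key step, and the one I expect to need the most care, is to show that the residual linear system $\mcC:=\{\,C\,\}$ on $S$ is \emph{movable}. The idea: a hypothetical fixed curve of $\mcH|_S$ other than $\Gamma$ would be a curve lying in $\Bs\mcH$ and contained in the general member $S$ of $\mcM$; being a fixed subvariety contained in a general member it must lie in $\Bs\mcM$, and by the uniqueness clause in (1) it could only be $\Gamma$. This is precisely where hypothesis (1) enters, and it is also where normality of $S$ (hypothesis (2)) must be used so that Mumford's intersection pairing on $S$ behaves as expected: an effective divisor meets a curve not in its support in a number $\ge 0$, and the self-intersection of a movable system is $\ge 0$ since two general members meet properly. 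Granting that $\mcC$ is movable, this gives $(\mcC^2)_S\ge 0$ and $(\Gamma\cdot\mcC)_S\ge 0$.

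The final step is the computation. Put $D:=(-K_X)|_S-\Gamma$. Since $\tfrac1n(\mcH|_S)\equiv(-K_X)|_S$, the decomposition above gives $D\equiv(\tfrac an-1)\Gamma+\tfrac1n\mcC$ with $\tfrac an-1>0$. Using $(\Gamma^2)_S\le 0$ and the ampleness of $-K_X$ one finds $(D\cdot\Gamma)_S=(-K_X\cdot\Gamma)-(\Gamma^2)_S>0$; and $(D\cdot\mcC)_S=(\tfrac an-1)(\Gamma\cdot\mcC)_S+\tfrac1n(\mcC^2)_S\ge 0$ by the previous paragraph. Intersecting $D$ with itself along the displayed equivalence then yields $(D^2)_S=(\tfrac an-1)(D\cdot\Gamma)_S+\tfrac1n(D\cdot\mcC)_S>0$. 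But $(D^2)_S=((-K_X)^2\cdot S)-2(-K_X\cdot\Gamma)+(\Gamma^2)_S\le 0$ by hypothesis (3), a contradiction; hence $\Gamma$ is not a maximal center.

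One secondary technical point I would flag rather than belabor: deducing $\mult_\Gamma\mcH>n$ from maximality for an \emph{arbitrary} divisorial extraction centered at $\Gamma$, not only the blow-up of $\Gamma$. This is standard and follows because the generic point of $\Gamma$ is a smooth point of $X$, so over it any such extraction is a weighted blow-up of a regular two-dimensional local ring, for which the maximal-singularity inequality $m_E(\mcH)>n\,a_E(K_X)$ forces $\mult_\Gamma\mcH>n$; I would simply cite \cite{CPR} for this.
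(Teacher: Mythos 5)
Your proposal is correct and follows essentially the same route as the paper: restrict a putative maximal system $\mcH\sim_{\mbQ}-nK_X$ to a general $S\in\mcM$, use hypothesis (1) to see that the residual system on $S$ is movable, and derive the multiplicity bound from the nonnegativity of its self-intersection together with hypothesis (3). The only cosmetic difference is that the paper deduces $\gamma\le 1$ directly from the strict monotonicity of the quadratic $\gamma\mapsto\bigl((-K_X)|_S-\gamma\Gamma\bigr)^2_S$, whereas you reach the same contradiction by expanding $\bigl((-K_X)|_S-\Gamma\bigr)^2_S$ via your decomposition of $D$.
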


\begin{proof}
Let $\mcH \sim_{\mbQ} - n K_X$ be a movable linear system on $X$.
It is enough to show that $\mult_{\Gamma} \mcH \le 1$.
Let $S \in \mcM$ be a general member so that it does not contain base curves of $\mcH$ other than $\Gamma$.
Then, we can write
\[
(-K_X)|_S \sim_{\mbQ} \frac{1}{n} \mcH |_S = \frac{1}{n} \mcL + \gamma \Gamma,
\]
where $\mcL$ is a movable linear system on $S$ and $\gamma \ge \mult_{\Gamma} \mcH$.
Since $\mcL$ is nef, we have
\[
0 \le \frac{1}{n^2} (\mcL^2)_S = ((-K_X)|_S - \gamma \Gamma)_S^2
= ((-K_X)^2 \cdot S) - 2 (-K_X \cdot \Gamma) \gamma + (\Gamma^2)_S \gamma^2.
\]
The right-hand side of the above equation is a strictly decreasing function of $\gamma$ for $\gamma \ge 0$ since $(-K_X \cdot \Gamma) > 0$ and $(\Gamma^2)_S \le 0$.
Thus, the inequality $((-K_X)^2 \cdot S) - 2 (-K_X \cdot \Gamma) + (\Gamma^2)_S \le 0$ implies $\gamma \le 1$.
Therefore, $\mult_{\Gamma} \mcH \le \gamma \le 1$ and $\Gamma$ is not a maximal center.
\end{proof}

\begin{Lem}[{\cite[Lemma 2.10]{Okada2}}] \label{lem:criexclC2}
Let $X$ be a $\mbQ$-Fano $3$-fold with Picard number $1$ and $\Gamma \subset X$ an irreducible and reduced curve.
Suppose that there is an effective divisor $T$ on $X$ containing $\Gamma$ and a movable linear system $\mcM$ on $X$ whose base locus contains $\Gamma$ with the following properties.
\begin{enumerate}
\item $T \sim_{\mbQ} m A$ for some rational number $m \ge 1$.
\item For a general member $S \in \mcM$, $S$ is a normal surface, the intersection $S \cap T$ is contained in the base locus of $\mcM$ set-theoretically and $S \cap T$ is reduced along $\Gamma$.
\item Let $S \in \mcM$ be a general member and let $\Gamma_1,\dots,\Gamma_l$ be the irreducible and reduced curves contained in the base locus of $\mcM$.
For each $i = 1,\dots,l$, there is an effective $1$-cycle $\Delta_i$ on $S$ such that $(\Gamma \cdot \Delta_i)_S \ge (A \cdot \Delta_i)_S > 0$ and $(\Gamma_j \cdot \Delta_i)_S \ge 0$ for $j \ne i$.
\end{enumerate}
\end{Lem}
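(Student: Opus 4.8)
The plan is to run the standard ``restrict and bound the multiplicity'' argument for excluding a curve, of which Lemma \ref{lem:criexclC1} is the one-base-curve prototype; the hypotheses involving the divisor $T$ and the auxiliary $1$-cycles $\Delta_i$ are exactly what is needed to keep the fixed part under control when $\mcM$ has several base curves. First I would fix an arbitrary movable linear system $\mcH \sim_{\mbQ} -nK_X$. As in the proof of Lemma \ref{lem:criexclC1}, it is enough to show that, in the usual normalization, $\mult_\Gamma \mcH \le 1$; otherwise a divisorial extraction centered along $\Gamma$ would violate the Noether--Fano inequality.

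Next I would pass to a general member $S \in \mcM$. Since $S$ is general and $\mcM$ is movable, the base curves of $\mcH$ lying on $S$ are among $\Gamma_1, \dots, \Gamma_l$, so on the normal surface $S$ one can write
\[
\frac{1}{n}\, \mcH|_S \sim_{\mbQ} \mcL + \sum_{i=1}^{l} \gamma_i\, \Gamma_i,
\]
where each $\gamma_i \ge 0$, the coefficient $\gamma$ of $\Gamma$ satisfies $\gamma \ge \mult_\Gamma \mcH$, and $\mcL$ is a linear system on $S$ with no fixed component, hence nef. This is where hypotheses (1) and (2) enter: the effective divisor $T \sim_{\mbQ} m A$ with $m \ge 1$, whose intersection with $S$ is set-theoretically contained in $\Bs \mcM$ and is reduced along $\Gamma$, guarantees that $\Gamma$ occurs in the fixed part of $\mcH|_S$ with the expected coefficient and that $S$ carries no unexpected extra fixed curve, so that the decomposition above is legitimate and $\gamma$ genuinely controls $\mult_\Gamma \mcH$.

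The core of the argument is then a numerical estimate on $S$. Using $\frac{1}{n}\mcH|_S \sim_{\mbQ} A|_S$ and intersecting the nef class $\mcL$ with the effective $1$-cycle $\Delta_i$, one gets
\[
0 \le (\mcL \cdot \Delta_i)_S = (A \cdot \Delta_i)_S - \sum_{j} \gamma_j\, (\Gamma_j \cdot \Delta_i)_S.
\]
Because $\gamma_j \ge 0$ and $(\Gamma_j \cdot \Delta_i)_S \ge 0$ for $j \ne i$ by hypothesis (3), all off-diagonal contributions are non-negative, so $\gamma_i (\Gamma_i \cdot \Delta_i)_S \le (A \cdot \Delta_i)_S$; taking for $\Delta_i$ the test cycle attached to $\Gamma$ and using $(\Gamma \cdot \Delta_i)_S \ge (A \cdot \Delta_i)_S > 0$, again from (3), this gives $\gamma (A \cdot \Delta_i)_S \le \gamma (\Gamma \cdot \Delta_i)_S \le (A \cdot \Delta_i)_S$, hence $\gamma \le 1$. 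Therefore $\mult_\Gamma \mcH \le 1$ for every movable $\mcH \sim_{\mbQ} -nK_X$, and $\Gamma$ is not a maximal center.

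I expect the genuine obstacle to be the middle step rather than the intersection computation. One must check carefully that a general $S \in \mcM$ picks up no fixed curve of $\mcH$ beyond $\Gamma_1, \dots, \Gamma_l$, and that the fixed part of $\mcH|_S$ is not inflated along $\Gamma$ by a tangency of $T$ or of the members of $\mcM$ --- this is precisely the role of the auxiliary divisor $T$ and of the reducedness requirement in (2). Once the decomposition of $\mcH|_S$ is secured, the remaining points (nefness of $\mcL$ on a normal projective surface, the identity $\frac{1}{n}\mcH|_S \sim_{\mbQ} A|_S$, and the sign chase with the $\Delta_i$) are routine.
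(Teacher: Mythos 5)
The paper itself offers no proof of this lemma --- it is imported from \cite[Lemma 2.10]{Okada2}, and in transcription even the conclusion (``Then $\Gamma$ is not a maximal center'') has been dropped --- so your attempt has to be measured against the argument the hypotheses are built for and that the applications in Section \ref{sec:exclcurve} actually feed. Against that standard there is a genuine gap: your proof never uses the divisor $T$. Hypotheses (1) and (2) do no work in your argument (the decomposition $\frac{1}{n}\mcH|_S = \mcL + \sum_j \gamma_j \Gamma_j$ with $\mcL$ nef needs only that $S$ is a general member of the movable system $\mcM$ and is normal), and the entire weight falls on the step ``taking for $\Delta_i$ the test cycle attached to $\Gamma$''. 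That step presumes hypothesis (3) supplies a cycle $\Delta_{i_0}$ for the distinguished curve $\Gamma$ itself, with $(\Gamma\cdot\Delta_{i_0})\ge(A\cdot\Delta_{i_0})>0$ and $(\Gamma_j\cdot\Delta_{i_0})\ge 0$ for the remaining base curves. But in every application (the curves on members of $\mcG'_{18}$ and $\mcG'_{21}$, for instance) test cycles are produced only for the components of $T|_S-\Gamma$, never for $\Gamma$; if a test cycle for $\Gamma$ were part of the data, the lemma would indeed follow from your two-line computation and $T$ would be superfluous, which is plainly not the intent of the statement.

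The missing idea is the transfer of the multiplicity bound from another base curve to $\Gamma$ via $T$. With $\mcH\sim_{\mbQ}nA$ write $\mcH|_S=\mcL+\sum_j\gamma_j\Gamma_j$ ($\mcL$ nef, $\gamma_{i_0}\ge\mult_{\Gamma}\mcH$ where $\Gamma=\Gamma_{i_0}$) and $T|_S=\sum_j t_j\Gamma_j$, which is legitimate by (2) and has $t_{i_0}=1$ by reducedness along $\Gamma$. Put $c=\min\{\gamma_j/t_j : t_j>0\}$, attained at an index $k$, so that $\mcH|_S-cT|_S=\mcL+\sum_j(\gamma_j-ct_j)\Gamma_j$ is effective, has vanishing $k$-th coefficient, and is $\mbQ$-linearly equivalent to $(n-cm)A|_S$. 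If $k=i_0$, intersecting with the ample class $A|_S$ gives $n-cm\ge0$, hence $\gamma_{i_0}=c\le n/m\le n$. If $k\ne i_0$, intersecting with $\Delta_k$ and using (3) gives
\[
(n-cm)(A\cdot\Delta_k)\ \ge\ (\gamma_{i_0}-c)(\Gamma\cdot\Delta_k)\ \ge\ (\gamma_{i_0}-c)(A\cdot\Delta_k),
\]
whence $\gamma_{i_0}\le n-c(m-1)\le n$; note that $m\ge1$ from (1) and $t_{i_0}=1$ from (2) are each used exactly once here. This subtraction step is what your write-up replaces by an appeal to a test cycle for $\Gamma$ that the hypotheses, as actually used, do not provide. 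The remaining ingredients of your proposal --- the reduction to $\mult_{\Gamma}\mcH\le n$, the nefness of the mobile part, and the sign chase with the $\Delta_i$ --- are correct and do appear in the genuine argument.
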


\subsection{Computation of intersection numbers of on a surface}

As we explained in the previous subsection, the exclusion of curves is reduced to the computation of intersection numbers on a surface.
In this paper, the pullback of a Weil divisor and the intersection number of two Weil divisors on a normal surface are those in the sense of Mumford (see \cite{Mumford}).
We explain ideas of the proof.

Let $\Gamma \subset X'$ be an irreducible and reduced curve.
We try to find a divisor $T$ on $X'$ and a linear system $\mcM$ such that the assumptions of Lemma \ref{lem:criexclC2} hold.
The crucial part here is to conclude $(\Gamma \cdot \Delta_i) \ge \deg \Delta_i$.
Here, we use notation of Lemma \ref{lem:criexclC2}.
In most of the cases, we set $\Delta_i = \Gamma_i$ and prove the inequality $(\Gamma \cdot \Gamma_i) \ge \deg \Gamma_i$.
In many cases, the computation of $(\Gamma \cdot \Gamma_i)$ will be done by counting the number of intersection points $\Gamma \cap \Gamma_i$ along the nonsingular locus of a surface $S \in \mcM$.
To this end, we need to know nonsingular locus of $S$.

\begin{Lem} \label{lem:qsmcri}
Let $V$ be a weighted hypersurface in $\mbP (a_0,\dots,a_4)$ with defining polynomial $F$ and $\Gamma = (x_0 + f_0 = x_1 + f_1 = x_2 + f_2 = 0)$ for some $f_i \in \mbC [x_0,\dots,x_4]$.
Suppose that $a_0 \le a_1$.
Let $\Lambda \subset |\mcO_V (a_1)|$ be the linear system generated by $x_1 + f_1$ and $\{(x_0+f_0) \prod x_i^{m_i} \mid m_i \ge 0, m_0 + \cdots + m_4 = a_1-a_0\}$, and let $S \in \Lambda$ be a general member.
If $V$ does not contain $(x_0 + f_0 = x_1 + f_1 = 0)$ and $(x_0 + f_0 = x_1 + f_1 = 0)_V$ is reduced along $\Gamma$, then $S$ is quasismooth at any point of
\[
\Gamma \setminus \NQsm (V) \cup \left( \Bs |\mcO_{\mbP} (a_1-a_0)| \cap \left(\frac{\prt F}{\prt x_0} = 0 \right) \right), 
\]
where $\NQsm (V)$ denotes the non-quasismooth locus of $V$.
\end{Lem}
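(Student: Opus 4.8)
The plan is to use the standard "Bertini-type" argument for quasismoothness of a general member of a linear system, carried out locally along the curve $\Gamma$ and reduced to the orbifold charts of $\mbP$. First I would set up notation: write $g := x_0 + f_0$, $h := x_1 + f_1$, and $L := |\mcO_{\mbP}(a_1-a_0)|$, so that $\Lambda$ is generated by $h$ and the products $g \cdot \ell$ with $\ell$ ranging over a spanning set of monomials of $L$. A point $\msq \in \Gamma$ lies on every member of $\Lambda$ (since $g,h$ both vanish on $\Gamma$), and a general member $S \in \Lambda$ is cut out on $V$ by a section of the form $h + g \cdot \ell$ with $\ell \in L$ general. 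I want to show that $S$ is quasismooth at $\msq$, i.e. the affine cone $C_S \subset \mbA^5$ is smooth at every point of the punctured cone over $\msq$; equivalently, in the relevant orbifold chart of the ambient WPS, the two equations $F = 0$ and $h + g\ell = 0$ have independent differentials at $\msq$.

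The key steps, in order: (1) Reduce to a point $\msq \in \Gamma$ with $\msq \notin \NQsm(V)$, so that $dF(\msq) \ne 0$ — thus $V$ itself is quasismooth (in particular a smooth Cartier divisor near $\msq$ in the orbifold chart) at $\msq$, and it remains only to show that $d(h + g\ell)(\msq)$ is not a scalar multiple of $dF(\msq)$ modulo the tangent space to $V$. (2) Compute $d(h + g\ell)$ at $\msq$: since $g(\msq) = h(\msq) = 0$, we get $d(h+g\ell)|_{\msq} = dh|_{\msq} + \ell(\msq)\, dg|_{\msq}$. (3) Observe that $\Gamma = (g = x_1 + f_1 = x_2 + f_2 = 0)$ is a complete intersection of the three hypersurfaces $g, h, x_2+f_2$, so along $\Gamma$ the differentials $dg$, $dh$, $d(x_2+f_2)$ span a rank-$3$ space transverse to $T_\msq\Gamma$ (this uses the reducedness hypothesis, via the assumption that $(g = h = 0)_V$ is reduced along $\Gamma$ and $V$ does not contain $(g = h = 0)$, which guarantees $g$ and $h$ remain independent on $V$ near $\Gamma$). (4) The only way $dh|_\msq + \ell(\msq)\,dg|_\msq$ can fail to be independent of $dF|_\msq$ on $T_\msq V$ is if both $dh|_\msq + \ell(\msq)\,dg|_\msq$ and $dg|_\msq$ become proportional on $T_\msq V$ — equivalently, if a certain $2\times 2$ minor built from $\partial F/\partial x_0$ and the value $\ell(\msq)$ vanishes. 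Since $\ell$ is general in $L$, as long as $\msq$ is not a base point of $L$, the value $\ell(\msq)$ can be chosen freely (scaling $\ell$), so the bad locus is exactly where $\msq \in \Bs L$ \emph{or} $\partial F/\partial x_0(\msq) = 0$. (5) Conclude by quoting a genericity argument over the irreducible (or at least equidimensional) curve $\Gamma$: the set of $\msq$ where $S$ fails to be quasismooth is contained in the stated exceptional set, and for $\msq$ outside it a general $\ell$ works, hence a general $S$ works simultaneously at all such points since $\Gamma$ minus a finite set is covered.

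The main obstacle I anticipate is step (4): carefully tracking how the differential condition on $V$ (rather than on the ambient $\mbA^5$) interacts with the weighted structure and the orbifold charts, and correctly identifying that the obstruction localizes to $\Bs L \cap (\partial F/\partial x_0 = 0)$ rather than some larger set — in particular making sure that the partial $\partial F/\partial x_0$ (and not, say, $\partial F/\partial x_1$ or $\partial F/\partial x_2$) is the relevant one, which comes from the asymmetric role of $x_0$ as the "multiplier" variable in the definition of $\Lambda$ and from the hypothesis $a_0 \le a_1$. A secondary technical point is to handle points of $\Gamma$ lying on the singular locus $\Sing(\mbP)$ of the ambient WPS, where quasismoothness must be checked in the cyclic quotient chart; this is routine but needs the observation that $\Gamma \setminus \NQsm(V)$ already excludes the worst behavior, and the displayed exceptional set in the statement is what survives. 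I would also note that the case $g \cdot \ell$ identically vanishing on $V$ cannot occur because $V$ does not contain $(g = 0)$ (or one reduces to $h$ alone), so $\Lambda$ genuinely moves.
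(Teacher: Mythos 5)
Your overall strategy coincides with the paper's: restrict the $2\times 5$ Jacobian of the pair (defining polynomial, cutting section) to $\Gamma$, write $F = x_0 g_0 + x_1 g_1 + x_2 g_2$ after normalizing $f_0=f_1=f_2=0$, and use generality of the member of $\Lambda$. However, step (4) contains a genuine logical error. You conclude that the bad locus is where ``$\msq \in \Bs L$ \emph{or} $\prt F/\prt x_0 (\msq) = 0$'', i.e.\ you only establish quasismoothness outside the \emph{union} $\Bs |\mcO_{\mbP}(a_1-a_0)| \cup (\prt F/\prt x_0 = 0)$, whereas the lemma asserts it outside the \emph{intersection} --- a strictly stronger statement, and the one the later applications require. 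Concretely, you do not handle a point $\msq \in \Bs |\mcO_{\mbP}(a_1-a_0)|$ at which $\prt F/\prt x_0 (\msq) \ne 0$: there $\ell(\msq)=0$ is forced for every $\ell$, so your ``choose $\ell(\msq)$ freely'' mechanism says nothing. The missing observation is that a general member is cut out by $\lambda x_1 + x_0 q$ with $\lambda$ general, whose differential along $\Gamma$ is $q\,dx_0 + \lambda\,dx_1$; the relevant minor $g_0 \lambda - g_1 q$ is a non-constant affine function of $\lambda$ whenever $g_0(\msq) = (\prt F/\prt x_0)(\msq) \ne 0$, hence nonzero for general $\lambda$ regardless of whether $q(\msq)=0$. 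Only after adding this second alternative does the exceptional set shrink to the intersection.

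A second, smaller gap concerns finiteness. For one general $S$ to be quasismooth at all the relevant points simultaneously, you need the set of candidate non-quasismooth points of $S$ on $\Gamma$ to be finite. This is exactly what the hypotheses ``$V \not\supset (x_0+f_0=x_1+f_1=0)$'' and ``reduced along $\Gamma$'' provide: they say precisely that $g_2$ does not vanish identically along $\Gamma$, so every point of $\Gamma$ with $g_2 \ne 0$ is automatically a quasismooth point of any member with nonzero $x_1$-coefficient, and only the finite set $\Gamma \cap (g_2=0)$ needs the pointwise genericity argument. Your step (3) invokes these hypotheses instead for the linear independence of $dg$, $dh$, $d(x_2+f_2)$, which is automatic once the coordinates are normalized, and so misses their actual role in the proof.
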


\begin{proof}
After replacing $x_0,x_1,x_2$, we may assume $f_0 = f_1 = f_2 = 0$.
Since $\Gamma \subset V$, we have $F = x_0 g_0 + x_1 g_1 + x_2 g_2$ for some $g_i \in \mbC [x_0,\dots,x_4]$.
Note that $S$ is cut out on $V$ by the section $x_0 q + \lambda x_1$, where $q$ is a general homogeneous polynomial of degree $a_1-a_0$ and $\lambda \in \mbC$ is general.
We have
\[
J_S |_{\Gamma} =
\begin{pmatrix}
g_0 & g_1 & g_2 & 0 & 0 \\
q & \lambda & 0 & 0 & 0
\end{pmatrix}.
\]
We see that $g_2$ is a non-zero polynomial and it does not vanish along $\Gamma$ since $(x_0 = x_1 = 0) \not\subset V$ and $(x_0 = x_1 = 0)_V$ is reduced along $\Gamma$.
It follows that $\NQsm (S)$ is contained in the finite set $\Gamma \cap (g_2 = 0)$.
Let $\msp \in \NQsm (S) \setminus \NQsm (V)$.
Note that either $g_0 (\msp) \ne 0$ or $g_1 (\msp) \ne 0$ since $V$ is quasismooth at $\msp$ and $g_2 (\msp) = 0$.
If $\msp \notin \Bs |\mcO_{\mbP} (a_1-a_0)|$, then $q (\msp) \ne 0$ for a general $q$ and hence $\rank J_S (\msp) = 2$ for a general $\lambda$.
If $(\prt F\prt x_0) (\msp) = g_0 (\msp) \ne 0$, then $\rank J_S (\msp) = 2$ for a general $\lambda$ since $g_1 (\msp) \ne 0$.
This completes the proof.
\end{proof}

We encounter with the case where $\Gamma \cap \Gamma_i = \{\msp_4\}$, which makes the computation of $(\Gamma \cdot \Gamma_i)$ difficult.
When we apply Lemma \ref{lem:criexclC1}, we need to obtain the inequality in (3).
In other words, we need to bound $(\Gamma^2)_S$ from above.
The computation of $(\Gamma^2)_S$ is not so straightforward since $\Gamma$ passes through the $cA/n$ point $\msp_4$,
In these cases, the computation will be done by the following method.

\begin{Def}
Let $(S,\msp)$ be a germ of a normal surface and $\Gamma$ an irreducible and reduced curve on $S$.
Let $\hat{S} \to S$ be the minimal resolution of $(S,\msp)$ and denote by $E_1,\dots,E_m$ the prime exceptional divisors.
We define $G (S,\msp,\Gamma)$ to be the dual graph of $E_1,\dots,E_m$ and the proper transform $\hat{\Gamma}$ of $\Gamma$ on $\hat{S}$: vertices of $G(S,\msp,\Gamma)$ corresponds to $E_1,\dots,E_m$ and $\hat{\Gamma}$, and two vertices corresponding to $E_i$ and $E_j$ (resp.\ $E_i$ and $\hat{\Gamma}$) are joined by $(E_i \cdot E_j)$-ple edge (resp.\ $(E_i \cdot \hat{\Gamma})$-ple edge).
We call $G (S,\msp,\Gamma)$ the {\it extended dual graph} of $(S,\msp,\Gamma)$.
\end{Def}

In this paper, we only treat germs $(S,\msp)$ of singularity of type $A_n$.

\begin{Def}
We say that $G (S,\msp,\Gamma)$ is {\it of type} $A_{n,k}$ if it is of the form
\[
\objectmargin={-0.5pt}
\xygraph{
\circ ([]!{+(0,-.3)} {E_1}) - [r]
\circ ([]!{+(0,-.3)} {E_2}) - [r] \cdots - [r]
\circ ([]!{+(0,-.3)} {E_k})(
-[u] \bullet ([]!{+(.3,0)} {\hat{\Gamma}}),
 - [r] \cdots - [r]
\circ ([]!{+(0,-.3)}{E_n}))}.
\]
Here, $\circ$ means that the corresponding exceptional divisor is a $(-2)$-curve.
In other words, $G (S,\msp,\Gamma)$ is of type $A_{n,k}$ if $(S,\msp)$ is of type $A_n$, $(\hat{\Gamma} \cdot E_i) = 0$ for $i \ne k$ and $(\hat{\Gamma} \cdot E_k) = 1$.
\end{Def}

\begin{Lem} \label{lem:compselfint}
Let $S$ be a normal projective surface and $\Gamma$ a nonsingular rational curve on $S$.
Let $\msp$ be a singular point of $S$ and suppose that $S$ is nonsingular along $\Gamma \setminus \{\msp\}$.
If $G (S,\msp,\Gamma)$ is of type $A_{n,k}$, then 
\[
(\Gamma^2)_S = -2 - (K_S \cdot \Gamma)_S + \frac{k (n-k+1)}{n+1}.
\]
\end{Lem}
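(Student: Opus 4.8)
The plan is to compute $(\Gamma^2)_S$ on the minimal resolution $\hat{S} \to S$ via the projection formula, using the explicit combinatorics of the type-$A_{n,k}$ graph. First I would write $\sigma \colon \hat{S} \to S$ for the minimal resolution and $\hat{\Gamma}$ for the proper transform of $\Gamma$. Since $(S,\msp)$ is a du Val singularity of type $A_n$, the exceptional locus consists of $n$ disjoint-from-generic-behavior $(-2)$-curves $E_1,\dots,E_n$ forming an $A_n$ chain, and the pullback in Mumford's sense is $\sigma^*\Gamma = \hat{\Gamma} + \sum_{i=1}^n m_i E_i$ for rational numbers $m_i$ determined by the condition $(\sigma^*\Gamma \cdot E_j) = 0$ for all $j$. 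Because the graph is of type $A_{n,k}$, we have $(\hat{\Gamma} \cdot E_k) = 1$ and $(\hat{\Gamma} \cdot E_j) = 0$ for $j \ne k$, so the linear system for the $m_i$ is the standard tridiagonal one: $-2 m_j + m_{j-1} + m_{j+1} = 0$ for $j \ne k$ (with the convention $m_0 = m_{n+1} = 0$) and $-2 m_k + m_{k-1} + m_{k+1} = -1$. This is the classical system whose solution is the $(k,k)$ entry of the inverse of the $A_n$ Cartan matrix, giving $m_k = \tfrac{k(n-k+1)}{n+1}$ (and more generally $m_i$ piecewise linear in $i$ with a kink at $k$).

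Next I would invoke Mumford's definition: $(\Gamma^2)_S = (\sigma^*\Gamma \cdot \sigma^*\Gamma)_{\hat{S}}$, and expand
\[
(\Gamma^2)_S = (\sigma^*\Gamma \cdot \hat{\Gamma})_{\hat{S}} + \sum_{i=1}^n m_i (\sigma^*\Gamma \cdot E_i)_{\hat{S}} = (\sigma^*\Gamma \cdot \hat{\Gamma})_{\hat{S}},
\]
since $(\sigma^*\Gamma \cdot E_i) = 0$ for every $i$ by construction. Now $(\sigma^*\Gamma \cdot \hat{\Gamma})_{\hat{S}} = (\hat{\Gamma}^2)_{\hat{S}} + \sum_i m_i (E_i \cdot \hat{\Gamma})_{\hat{S}} = (\hat{\Gamma}^2)_{\hat{S}} + m_k$, using again the type-$A_{n,k}$ shape of the graph. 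Finally, since $\Gamma$ is a nonsingular rational curve and $\sigma$ restricts to an isomorphism $\hat{\Gamma} \xrightarrow{\cong} \Gamma$, adjunction on $\hat{S}$ gives $(\hat{\Gamma}^2)_{\hat{S}} = -2 - (K_{\hat{S}} \cdot \hat{\Gamma})_{\hat{S}}$; and because the $E_i$ are all $(-2)$-curves we have $K_{\hat{S}} = \sigma^* K_S$, so $(K_{\hat{S}} \cdot \hat{\Gamma})_{\hat{S}} = (\sigma^* K_S \cdot \hat{\Gamma})_{\hat{S}} = (K_S \cdot \Gamma)_S$ by the projection formula. Combining, $(\Gamma^2)_S = -2 - (K_S \cdot \Gamma)_S + \tfrac{k(n-k+1)}{n+1}$, which is the claimed formula.

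I expect the only genuine obstacle to be the solution of the tridiagonal linear system for the $m_i$, i.e. verifying that $m_k = \tfrac{k(n-k+1)}{n+1}$. This is a standard computation: one checks directly that $m_i = \tfrac{i(n-k+1)}{n+1}$ for $i \le k$ and $m_i = \tfrac{k(n-i+1)}{n+1}$ for $i \ge k$ satisfies all the recursion relations and boundary conditions, and then reads off $m_k$. Everything else — the two projection-formula expansions, adjunction, and the equality $K_{\hat{S}} = \sigma^* K_S$ for an $A_n$ singularity — is routine. I would also note at the start that the hypothesis ``$S$ nonsingular along $\Gamma \setminus \{\msp\}$'' is exactly what guarantees $\sigma^*\Gamma - \hat{\Gamma}$ is supported on the fiber over $\msp$ alone, so that the single Mumford pullback at $\msp$ is all that enters.
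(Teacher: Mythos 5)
Your proposal is correct and follows essentially the same route as the paper: Mumford pullback on the minimal resolution, the tridiagonal system for the coefficients $m_i$ forced by $(\sigma^*\Gamma\cdot E_j)=0$ with the single inhomogeneous equation at $j=k$, the solution $m_k=\tfrac{k(n-k+1)}{n+1}$, and then adjunction on $\hat{S}$ together with crepancy of the resolution of an $A_n$ point. The only cosmetic difference is that you package the solution as the $(k,k)$ entry of the inverse $A_n$ Cartan matrix, while the paper derives it directly from the piecewise-linear ansatz; both verifications are the same computation (and your sign in the adjunction step, $(\hat{\Gamma}^2)=-2-(K_{\hat{S}}\cdot\hat{\Gamma})$, is the correct one).
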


\begin{proof}
Let $\psi \colon \hat{S} \to S$ be the minimal resolution of $S$ with prime exceptional divisors $E_1,\dots,E_n$.
We denote by $\hat{\Gamma}$ the proper transform of $\Gamma$ on $\hat{S}$.
We have $\psi^*\Gamma = \hat{\Gamma} + a_1 E_1 + \cdots + a_n E_n$ for some rational numbers $a_1,\dots,a_n$.

Suppose that $G (S,\msp,\Gamma)$ is of type $A_{n,k}$.
We have
\[
\begin{split}
0 = (\varphi^*\Gamma \cdot E_i) &= (\hat{\Gamma} \cdot E_i) + a_1 (E_1 \cdot E_i) + \cdots + a_n (E_n \cdot E_i) \\
&= \begin{cases}
a_{i-1} - 2 a_i + a_{i+1}, & \text{if $i \ne k$}, \\
1 + a_{k-1} - 2 a_k + a_{k+1}, & \text{if $i = k$}
\end{cases}
\end{split}
\]
Here, we define $a_0 = a_{n+1} = 0$.
By $a_{i-1} - 2 a_i + a_{i+1} = 0$ for $i \ne k$, we have $a_i = i a_1$ for $i = 1,2,\dots,k$ and $a_{n-i+1} = i a_n$ for $i = 1,2,\dots,n-k+1$.
In particular, $a_k = k a_1 = (n-k+1) a_n$, and thus $a_n = \frac{k}{n-k+1} a_1$.
Now, by the equations
\[
\begin{split}
0 &= (\varphi^*\Gamma \cdot E_i) 
= 1 + a_{k-1} - 2 a_k + a_{k+1} \\
&= 1 + (k-1) a_1 - 2 k a_1 + \frac{k(n-k)}{n-k+1} a_1 
= 1 - \frac{n+1}{n-k+1} a_1,
\end{split}
\]
we have
\[
a_1 = \frac{n-k+1}{n+1}, \quad a_k = k a_1 = \frac{k (n-k+1)}{n+1}.
\]
The assertion follows from the combination of equations
\[
(\Gamma^2) = (\hat{\Gamma} \cdot \psi^*\Gamma) = (\hat{\Gamma}^2) + a_k
\]
and
\[
(\hat{\Gamma}^2) = -2 + (K_{\hat{S}} \cdot \hat{\Gamma}) = -2 + (K_S \cdot \Gamma).
\]
This completes the proof.
\end{proof}

In what follows, we sometimes consider a suitable weighted blowup $\varphi \colon Y' \to X'$, which is a divisorial extraction.
In this case, we always denote by $E$ its exceptional divisor and by $\tilde{\Delta}$ the proper transform on $Y'$ of a curve or a divisor $\Delta \subset X'$.

Finally, we define suitable coordinate change.

\begin{Def}
Let $X' \subset \mbP (1,n,a_2,a_3,n)$ be a member of $\mcG'_i$ with $i \in I^*_{cA/n}$ defined by a standard defining polynomial $F' = w^2 x_2 x_3 + w f + g$.
{\it An admissible coordinate change} of $X'$ is a coordinates change that defines an automorphism $\theta$ of $\mbP (1,n,a_2,a_3,n)$ such that $\theta^* F' = \alpha (w^2 x_2 x_3 + w f' + g')$ for some non-zero $\alpha \in \mbC$ and $f', g' \in \mbC [x_0,\dots,x_3]$.
\end{Def}

A change of coordinates $\theta$ is admissible if and only if $\theta^*w = \beta w + (\text{other terms})$ for some non-zero $\beta \in \mbC$, $\theta^*x_j$ does not involve $w$ for $j = 0,1,2,3$ and $\theta^* (x_2 x_3) = \gamma x_2 x_3$ for some non-zero $\gamma \in \mbC$.

\subsection{Curves of degree $1$ on $X' \in \mcG'_6$} \label{sec:CG6-1}

Let $X' = X'_5 \subset \mbP (1,1,1,2,1)$ be a member of $\mcG'_6$ with defining polynomial $F' = w^2 x_0 y + w f_4 + g_5$ and $\Gamma$ an irreducible and reduced curve of degree $1$ on $X'$ that passes through $\msp_4$ but does not pass through the other singular points.

We claim that $\Gamma$ is a WCI curve of type $(1,1,2)$.
Indeed, the projection $\pi \colon X' \ratmap \mbP^3$ to the coordinates $x_0,x_1,x_2,w$ induces the finite morphism $\pi|_{\Gamma} \colon \Gamma \to \pi (\Gamma)$.
We have $1 = \deg \Gamma = \deg (\pi|_{\Gamma}) \deg \pi (\Gamma)$.
Hence $\pi|_{\Gamma}$ is an isomorphism and $\pi (\Gamma)$ is a line in $\mbP^3$.
It follows that there are linear forms $\ell_1,\ell_2 \in \mbC [x_0,x_1,x_2,w]$ such that $\Gamma \subset (\ell_1 = \ell_2 = 0)_{X'}$.
From this we see that $\Gamma = (\ell_1 = \ell_2 = q = 0)$ for some $q \in \mbC [x_0,x_1,x_2,y,z,w]$ of degree $2$. 

Let $S$ and $T$ be general members of the pencil $|\mcI_{\Gamma} (A)|$.

\begin{Lem} \label{admCdeg1G6}
After an admissible coordinate change, we have $T|_S = \Gamma + \Delta$, where the pair $(\Gamma,\Delta)$ of curves on $S$ is  one of the following.
\begin{enumerate}
\item $\Gamma = (x_1 = x_2 = y + w x_0 + \beta x_0^2 = 0)$ and $\Delta = (x_1 = x_2 = w y + \nu x_0^3 = 0)$ for some $\beta,\nu \in \mbC$.
\item $\Gamma = (x_1 = x_2 = y = 0)$ and $\Delta = (x_1 = x_2 = w^2 x_0 + w y + \lambda w x_0^2 + \nu x_0^3 = 0)$ for some $\lambda,\mu \in \mbC$.
\item $\Gamma = (x_0 = x_1 = y + \beta x_2^2 = 0)$ and $\Delta = (x_0 = x_1 = w y + \lambda w x_2^2 + \nu x_2^3 = 0)$ for some $\beta,\lambda,\nu \in \mbC$ with $\nu \ne 0$ and $(\beta,\lambda) \ne (0,0)$.
\end{enumerate}
\end{Lem}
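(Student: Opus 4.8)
The plan is to write $\Gamma$ in terms of three defining equations, normalise the two linear ones by an admissible coordinate change, and then read off the residual curve $\Delta$ by restricting $F'$ to a coordinate plane and factoring the restriction. By the discussion preceding the lemma, $\Gamma=(\ell_1=\ell_2=q=0)$ with $\ell_1,\ell_2$ linearly independent linear forms in $x_0,x_1,x_2,w$ and $\deg q=2$, and the pencil $|\mcI_\Gamma(A)|$ is spanned by $\ell_1,\ell_2$; hence for general $S,T$ in this pencil $T|_S=(\ell_1=\ell_2=0)_{X'}=\Gamma+\Delta$, so it is enough to bring the pair $(\Gamma,\Delta)$ into the asserted form. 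First I would note that $\msp_4=(0:\cdots:0:1)\in\Gamma$ forces $\ell_1(\msp_4)=\ell_2(\msp_4)=0$, i.e.\ neither $\ell_i$ involves $w$, so $\langle\ell_1,\ell_2\rangle$ is a $2$-dimensional subspace of $\langle x_0,x_1,x_2\rangle$. An admissible coordinate change acts on $x_0,x_1,x_2$ by rescaling $x_0$ and by an arbitrary invertible substitution of $x_1,x_2$ (possibly adding multiples of $x_0$), and one checks that there are exactly two orbits of such subspaces: those not containing the form $x_0$, with representative $\langle x_1,x_2\rangle$, and those containing $x_0$, with representative $\langle x_0,x_1\rangle$ (the latter also using the interchange of $x_1$ and $x_2$). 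Thus, after an admissible coordinate change, $(\ell_1,\ell_2)=(x_1,x_2)$ or $(\ell_1,\ell_2)=(x_0,x_1)$.

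Next I would use the refined standard form $F'=w^2x_0y+w(y^2+ya_2+a_4)+yb_3+b_5$ with $a_j,b_j\in\mbC[x_0,x_1,x_2]$ recorded in Table \ref{table:eq}, and restrict $F'$ to the relevant plane. If $(\ell_1,\ell_2)=(x_1,x_2)$, then $\Gamma$ is a degree-$1$ component of $(x_1=x_2=0)_{X'}\cong(\bar F=0)\subset\mbP(1_{x_0},2_y,1_w)$, where $\bar F:=F'(x_0,0,0,y,w)$ is quadratic in $y$ with leading coefficient $w$. A reduced irreducible degree-$1$ curve in $\mbP(1_{x_0},2_y,1_w)$ is cut out by an irreducible degree-$2$ form which, since $\Gamma$ does not split, is linear in $y$ and, by $\msp_4\in\Gamma$, has no $w^2$-term; so $q=y+bx_0w+cx_0^2$, and being a component $q$ divides $\bar F$, say $\bar F=q\,r$ with $\deg r=3$. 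Comparing the coefficients of the pure powers of $w$ in $\bar F=q\,r$ successively forces $b\in\{0,1\}$ and then determines $c$ and $r$: for $b=1$ one obtains case (1), with $q=y+wx_0+\beta x_0^2$ and $r=wy+\nu x_0^3$; for $b=0$ one is forced to $c=0$ and obtains case (2), with $q=y$ and $r=w^2x_0+wy+\lambda wx_0^2+\nu x_0^3$ (the constants being the appropriate coefficients of $a_2,a_4,b_3,b_5$). If instead $(\ell_1,\ell_2)=(x_0,x_1)$, then $\Gamma\subset(x_0=x_1=0)_{X'}\cong(\bar F=0)\subset\mbP(1_{x_2},2_y,1_w)$ with $\bar F:=F'(0,0,x_2,y,w)$ now \emph{linear} in $w$; writing $q=y+b'x_2w+c'x_2^2$ and $\bar F=q\,r$, the coefficient of $wy^2$ in $\bar F$ being $1$ rules out $b'\neq 0$ (if $b'\neq 0$ then $r$ would be independent of $w$ and $q\,r$ would contain no $wy^2$), so $q=y+\beta x_2^2$, and matching coefficients gives case (3) with $r=wy+\lambda wx_2^2+\nu x_2^3$. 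The non-vanishing statements in case (3) then follow from the generality conditions in Table \ref{table:eq} for family No.~$6$: evaluating the system $x_0=b_3=b_5=0$ at $\msp_2$ shows the coefficient of $x_2^3$ in $b_3$ is nonzero, whence $\nu\neq 0$, and evaluating $x_0=a_2=a_4=0$ at $\msp_2$ excludes $(\beta,\lambda)=(0,0)$ (which would force $a_2$ and $a_4$ to vanish at $\msp_2$).

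I expect the main obstacle to be the bookkeeping in the factorisation $\bar F=q\,r$: one must keep precise track of which admissible changes remain available after normalising $\ell_1,\ell_2$ — under an admissible change $y$ can only be rescaled and $x_0$ can only be rescaled — and verify that, once the refined standard form is used, the residual $r$ already has the stated shape so that no further coordinate change is needed. The closely related delicate point is to confirm that the degenerate possibilities ($\Gamma$ reducible or non-reduced, $\Gamma$ passing through a quotient singular point, or $X'$ violating a generality hypothesis) genuinely do not arise, which is exactly what underlies the restrictions $\nu\neq 0$ and $(\beta,\lambda)\neq(0,0)$ in case (3).
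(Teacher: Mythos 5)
Your proposal is correct and follows essentially the same route as the paper: split according to whether $x_0$ lies in the span of the two linear forms cutting out $\Gamma$ (equivalently $\Gamma\subset(x_0=0)$ or not), normalise by an admissible change so the linear forms become $x_1,x_2$ or $x_0,x_1$, factor the restriction $\bar F'=qr$ and compare coefficients to pin down $q$ and $r$, and invoke Condition \ref{addcond} (the systems $x_0=b_3=b_5=0$ and $x_0=a_2=a_4=0$ having no non-trivial solution) to get $\nu\ne0$ and $(\beta,\lambda)\ne(0,0)$ in case (3). The only cosmetic differences are your explicit two-orbit classification of the pencil and the $w$-degree argument ruling out a $wx_2$-term in $q$ in case (3), both of which match the paper's coefficient comparisons in substance.
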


\begin{proof}
Since $y^2 \in f_4$, we may assume that the coefficient of $y^2$ in $f_4$ is $1$ and that there is no monomial divisible by $y^2$ in $g_5$ after replacing $w$.

Suppose $\Gamma \not\subset (x_0 = 0)$.
Then, after replacing $x_1,x_2$, we may assume $\Gamma = (x_1 = x_2 = y + \alpha w x_0 + \beta x_0^2 = 0)$ for some $\alpha,\beta \in \mbC$.
We have
\[
\bar{F}' := F' (x_0,0,0,y,w) = w^2 x_0 y + w (y^2 + \gamma y x_0^2 + \delta x_0^4) + \varepsilon y x_0^3 + \zeta x_0^5,
\]
where $\gamma,\dots,\zeta \in \mbC$.
Since $\Gamma \subset X'$, we have
\[
\bar{F}' = (y + \alpha w x_0 + \beta x_0^2)(\eta w^2 x_0 + \theta w y + \lambda w x_0^2 + \mu y x_0 + \nu x_0^3)
\]
for some $\eta,\dots,\nu \in \mbC$.
By comparing the coefficients of $w^3 x_0^2$, $w^2 x_0 y$, $w^2 x_0^3$, $w y^2$ and $y^2 x_0$, we have
\[
\alpha \eta = 0, \eta + \alpha \theta = 1, \alpha \lambda + \beta \eta = 0, \theta = 1, \mu = 0.
\]
If $\alpha \ne 0$, then $\eta = 0$, $\alpha = \theta = 1$, $\lambda = 0$, $\mu = 0$, and this case corresponds to (1).
If $\alpha = 0$, then $\eta =1$, $\beta = 0$, $\theta = 1$, $\mu = 0$, and this case corresponds to (2).

Suppose $\Gamma \subset (x_0 = 0)$.
Then, after replacing $x_1,x_2$, we may assume $\Gamma = (x_0 = x_1 = y + \alpha w x_2 + \beta x_2^2 = 0)$.
We have
\[
\bar{F}' := F' (0,0,x_2,y,w) = w (y^2 + \gamma y x_2^2 + \delta x_2^4) + \varepsilon y x_2^3 + \zeta x_2^5,
\]
where $\gamma,\dots,\zeta \in \mbC$.
Since $\Gamma \subset X'$, we have
\[
\bar{F}' = (y + \alpha w x_2 + \beta x_2^2)(\theta w y + \lambda w x_2^2 + \mu y x_2 + \nu x_2^3)
\]
for some $\theta,\dots,\nu \in \mbC$.
By comparing the coefficients of $w^2 x_2 y$, $w^2 x_2^3$, $w y^2$ and $y^2 x_2$, we have
\[
\alpha \theta = 0, \alpha \lambda = 0, \theta = 1, \mu = 0,
\]
and hence $\alpha = \mu = 0$, $\theta = 1$.
We claim that $\nu \ne 0$ and $(\beta,\lambda) \ne (0,0)$.
Suppose $\nu = 0$.
Then, $\varepsilon = \nu = 0$ and $\zeta = \beta \nu = 0$.
Since $\varepsilon$ and $\zeta$ are the coefficients of $x_2^3$ and $x_2^5$ in $b_3$ and $b_5$, respectively, this implies that $x_0 = b_3 = b_5 = 0$ has a solution $(x_0,x_1,x_2) = (0,0,1)$.
This is impossible by Condition \ref{addcond}.
Thus, this case corresponds to (3).
Suppose $(\beta,\lambda) = (0,0)$.
Then $\gamma = \beta + \lambda = 0$ and $\delta = \beta \lambda = 0$, which implies that $x_0 = a_2 = a_4 = 0$ has a solution $(x_0,x_1,x_2) = (0,0,1)$.
This is again impossible by Condition \ref{addcond}.
\end{proof}

Note that $S$ is nonsingular along $\Gamma \setminus \{\msp_4\}$ by Lemma \ref{lem:qsmcri}.

\begin{Lem}
Let $T|_S = \Gamma + \Delta$ be as in \emph{Lemma \ref{admCdeg1G6}}.
Then the following assertions hold.
\begin{enumerate}
\item If $\Delta$ is irreducible, then $(\Gamma \cdot \Delta)_S \ge \deg \Delta$.
\item If $\Delta$ is reducible, then it splits as $\Delta = \Delta_1 + \Delta_2$, where $\Delta_1, \Delta_2$ are irreducible and reduced curves of degree respectively $1$ and $1/2$, respectively, such that $(\Gamma \cdot \Delta_i)_S \ge \deg \Delta_i$ for $i = 1,2$.
\end{enumerate}
In particular, $\Gamma$ is not a maximal center.
\end{Lem}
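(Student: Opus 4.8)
The plan is to reduce the non‑maximality of $\Gamma$ to the numerical data in assertions~(1) and~(2) via the curve‑exclusion criterion Lemma~\ref{lem:criexclC2}. Take $\mcM := |\mcI_\Gamma(A)|$; by the discussion preceding the lemma this is a pencil, generated by two linear forms whose common zero locus is a weighted plane $\Pi\cong\mbP(1,1,2)$ containing $\Gamma$. Every member of $\mcM$ contains $\Pi\cap X' = \Gamma+\Delta$, so the base curves of $\mcM$ are $\Gamma$ together with the irreducible components of $\Delta$, and with $T\in\mcM$ a general member (hence $T\sim_{\mbQ}A$, i.e.\ $m=1$) one has $T|_S=\Gamma+\Delta$. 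To apply Lemma~\ref{lem:criexclC2} one then checks: $S$ is normal for general $S\in\mcM$, by Bertini away from the finitely many base points together with Lemma~\ref{lem:qsmcri} along $\Gamma$; $S\cap T=\Gamma+\Delta$ is reduced along $\Gamma$, again from Lemma~\ref{lem:qsmcri} and the shape of the standard equation; and condition~(3) is supplied by assertions~(1) and~(2), the auxiliary cycles being $\Delta$ and its components, using $(A^3)=\tfrac52$, $(A\cdot\Gamma)=1$ and $\deg\Delta=(A^3)-1=\tfrac32$.

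So the real content is (1) and (2), which I would establish case by case following Lemma~\ref{admCdeg1G6}. In each of the three cases $\Delta$ lies in a weighted plane $\mbP(1,1,2)$ and is defined there by a single cubic; the first step is to decide its reducibility and, when it splits, to exhibit $\Delta=\Delta_1+\Delta_2$ with $\deg\Delta_1=1$ and $\deg\Delta_2=\tfrac12$ (the $\tfrac12$‑component being a coordinate line through the $\tfrac12(1,1,1)$ point $\msp_3$), consistent with $\deg\Delta=\tfrac32$. The generality conditions on $\mcG'_6$ recorded in Lemma~\ref{admCdeg1G6} (in particular $\nu\neq0$ and $(\beta,\lambda)\neq(0,0)$ in case~(3), coming from Condition~\ref{addcond}) serve precisely to rule out the most degenerate splittings of $\Delta$. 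Then, to bound $(\Gamma\cdot\Delta)_S$ (resp.\ $(\Gamma\cdot\Delta_i)_S$) from below, I would restrict the defining equation of $\Delta$ (resp.\ $\Delta_i$) to $\Gamma\cong\mbP^1$, parametrised via the relation defining $\Gamma$, factor the resulting binary form, and read off $\Gamma\cap\Delta$. Away from $\msp_4$ the surface $S$ is nonsingular along $\Gamma$ (Lemma~\ref{lem:qsmcri}), the intersections are transverse, and each contributes $1$ to the Mumford intersection number; in the cases where such transverse points already number at least $\deg\Delta$ (resp.\ $\deg\Delta_i$), the bound is immediate.

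The remaining, harder cases are those in which $\Gamma\cap\Delta$ (or $\Gamma\cap\Delta_i$) is concentrated at $\msp_4$, or has too few transverse points outside it; there one must bound from below the local Mumford intersection number of $\Gamma$ and $\Delta$ at $\msp_4$. Since $\msp_4$ is the $cA$ (Gorenstein cDV) point of $X'$, it is a Du~Val point of type $A_k$ on the general $S\in\mcM$, and I would compute the contribution at $\msp_4$ on the minimal resolution, reading off the extended dual graphs $G(S,\msp_4,\Gamma)$ and $G(S,\msp_4,\Delta)$ of type $A_{n,k}$ and applying Lemma~\ref{lem:compselfint} and its pull‑back identities; the ambient relation $\Gamma+\Delta\sim_{\mbQ}A|_S$ gives $(\Gamma^2)_S=1-(\Gamma\cdot\Delta)_S$, so that $(\Gamma\cdot\Delta)_S\ge\deg\Delta=\tfrac32$ is equivalent to $(\Gamma^2)_S\le-\tfrac12$, which is exactly the inequality $(A^2\cdot S)-2(A\cdot\Gamma)+(\Gamma^2)_S\le0$ demanded by the criterion. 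I expect this local analysis at $\msp_4$—together with the matching verification that the sign conditions $(\Gamma_j\cdot\Delta_i)_S\ge0$ hold in the reducible cases—to be the main technical obstacle; once it is in place, parts (1), (2) and the non‑maximality of $\Gamma$ all follow.
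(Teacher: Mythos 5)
Your proposal is correct and follows essentially the same route as the paper: reduce to the numerical assertions via the curve-exclusion criterion, run through the three cases of Lemma \ref{admCdeg1G6} counting transverse intersection points of $\Gamma$ with $\Delta$ (or its components $\Delta_1,\Delta_2$ of degrees $1$ and $1/2$) away from $\msp_4$, and in the one degenerate subcase where $\Gamma\cap\Delta=\{\msp_4\}$ (case (3) with $\lambda=\beta$) compute $(\Gamma^2)_S$ via the extended dual graph $G(S,\msp_4,\Gamma)$ (of type $A_{3,2}$) and Lemma \ref{lem:compselfint}, then recover $(\Gamma\cdot\Delta)_S$ from $\Gamma+\Delta\sim_{\mbQ}A|_S$. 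The numerical targets ($\deg\Delta=3/2$, $(\Gamma^2)_S\le-1/2$) and the role of Condition \ref{addcond} are all identified as in the paper.
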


\begin{proof}
Suppose that we are in case (1) of Lemma \ref{admCdeg1G6}.
We see that $\Delta$ is clearly reduced and it is irreducible if and only if $\nu \ne 0$.
If $\Delta$ is irreducible, then it intersects $\Gamma$ at two nonsingular points so that $(\Gamma \cdot \Delta)_S \ge 2 > \deg \Delta$.
If $\Delta$ is reducible, then $\Delta = \Delta_1 + \Delta_2$, where $\Delta_1 = (x_1 = x_2 = y = 0)$ and $\Delta_2 = (x_1 = x_2 = w = 0)$.
Both $\Delta_1$ and $\Delta_2$ intersect $\Gamma$ at a nonsingular point so that $(\Gamma \cdot \Delta_i)_S \ge 1 \ge \deg \Delta_i$ for $i = 1,2$.
The computation of intersection numbers in case (2) of Lemma \ref{admCdeg1G6} can be done in the same way and we omit it.

Suppose that we are in case (3) of Lemma \ref{admCdeg1G6}.
Then $\Delta$ is irreducible and reduced since $\nu \ne 0$.
If $\lambda \ne \beta$, then $\Delta$ intersects $\Gamma$ at two points away from $\msp_4$, so that $(\Gamma \cdot \Delta)_S \ge 2 > \deg \Delta$.
In the following, we assume $\lambda = \beta$.
Note that $\beta \ne 0$ in this case since $(\beta,\lambda) \ne (0,0)$.
In this case, $\msp_4$ is the unique intersection point of $\Gamma$ and $\Delta$ and we cannot compute $(\Gamma \cdot \Delta)_S$ directly.
We will compute $(\Gamma^2)_S$ by determining the extended dual graph $G(S,\msp_4,\Gamma)$.

Let $\varphi \colon Y' \to X'$ be the weighted blowup of $X'$ at $\msp_4$ with weight $\wt (x_0,x_1,x_2,y) = (2,1,1,2)$ and with exceptional divisor $E$.
We define $\psi := \varphi|_{\tilde{S}} \colon \tilde{S} \to S$ and set $E_{\psi} = E|_{\tilde{S}}$.
We see that $S$ is cut out on $X'$ by the section
\[
s := x_0 \ell_0 + x_1 \ell_1 + \mu (y+\beta x_2^2),
\]
where $\ell_1,\ell_2$ are general linear forms in $x_0,x_1,x_2,w$ and $\mu \in \mbC$ is general.
Note that $K_{\tilde{S}} = \psi^*K_S$ by adjunction since $K_{Y'} = \varphi^*K_{X'} + E$ and $s$ vanishes along $E$ to order $1$.
Let $\xi_i \in \mbC$ be the coefficient of $w$ in $\ell_i$.
Let $G$ and $H$ be the $\varphi$-weight $= 4$ and $5$ parts of $F (x_0,x_1,x_2,y,1) = x_0 y + f_4 + g_5$, respectively.
We have $G = x_0 y + \bar{f}_4$.
Here, $\bar{f}_4 = f_4 (0,x_1,x_2,y)$.
Then, we have an isomorphism
\[
E_{\psi} \cong (x_0 y + \bar{f}_4 = x_1 = 0) \subset \mbP (2_{x_0},1_{x_1},1_{x_2},2_{y}),
\]
and
\[
J_{\psi} =
\begin{pmatrix}
y & \frac{\prt \bar{f}_4}{\prt x_1} & \frac{\prt \bar{f}_4}{\prt x_2} & \frac{\prt \bar{f}_4}{\prt y} & H \\
0 & \xi_1 & 0 & 0 & \xi_0 x_0 + x_1 \bar{\ell}_1 + \mu (y + \beta x_2^2) 
\end{pmatrix}.
\]
Set 
\[
\Sigma := \left( y = \frac{\prt \bar{f}_4}{\prt x_2} = \frac{\prt \bar{f}_4}{\prt y} = 0 \right) \cap E_{\psi}
= \left(x_1 = y = \bar{f}_4 = \frac{\prt \bar{f}_4}{\prt x_2} = \frac{\prt \bar{f}_4}{\prt y} = 0 \right).
\]
We see that $\Sigma = \emptyset$ if and only if $x_2^4 \in f_4$.
The latter holds true since
\[
F (0,0,x_2,y,w) = (y + \beta x_2^2)(w y + \beta w x_2^2 + \nu x_2^3)
\]
and $\beta \ne 0$.
This shows that $J_{\psi}$ is of rank $2$ at every point of $E_{\psi}$ and hence singular points of $\tilde{S}$ consists of two points $\msq_1 = (1 \!:\! 0 \!:\! 0 \!:\! 0)$ and $\msq_2 = (1 \!:\! 0 \!:\! 0 \!:\! -1)$ both of type $A_1$.
Note that $\tilde{\Gamma}$ intersects $E_{\psi}$ at $(0 \!:\! 0 \!:\! 1 \!:\! - \beta) \ne \msq_1, \msq_2$.
By considering the blowups of $\tilde{S}$ at $\msq_1$ and $\msq_2$, we see that $G(S,\msp_4,\Gamma)$ is of type $A_{3,2}$.
Thus, by Lemma \ref{lem:compselfint}, 
\[
(\Gamma^2)_S = -2 - \deg \Gamma + \frac{3}{2} = - \frac{3}{2}.
\]
By taking the intersection number of $\Gamma$ and $T|_S = \Gamma + \Delta$, we have
\[
1 = (\Gamma \cdot T|_S)_S = (\Gamma^2)_S + (\Gamma \cdot \Delta)_S = - \frac{3}{2} + (\Gamma \cdot \Delta)_S,
\]
and thus $(\Gamma \cdot \Delta)_S = \frac{5}{2} > \deg \Delta$.
This completes the proof.
\end{proof}

\subsection{Curves of degree $2$ on $X' \in \mcG'_6$.}

Let $X' = X'_5 \subset \mbP (1,1,1,2,1)$ be a member of $\mcG'_6$ and $\Gamma \subset X'$ an irreducible and reduced curve of degree $2$ that passes through $\msp_4$ but does not pass through the other singular points.
We see that $\Gamma$ is a WCI curve of type either $(1,1,4)$ or $(1,2,2)$.

\begin{Lem}
No curve of type $(1,1,4)$ is a maximal center.
\end{Lem}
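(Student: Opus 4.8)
The plan is to exclude such a curve by restricting a putative maximal linear system to a surface containing it, in the spirit of Lemmas \ref{lem:criexclC1} and \ref{lem:criexclC2}, after first pinning down the geometry of $\Gamma$ very explicitly.

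First I would normalize coordinates. Since $\Gamma$ has type $(1,1,4)$ and passes through $\msp_4 = (0 \!:\! 0 \!:\! 0 \!:\! 0 \!:\! 1)$ but no other singular point, the two linear forms cutting out $\Gamma$ span a $2$-dimensional subspace of $\langle x_0, x_1, x_2 \rangle$. Using Condition \ref{addcond} (via the point $(0 \!:\! 1 \!:\! 0)$ and the system $x_0 = b_3 = b_5 = 0$) one rules out $\Gamma \subset (x_0 = 0)$, so after an admissible coordinate change one may take $\Gamma = (x_1 = x_2 = q_4 = 0)_{X'}$ for a quartic $q_4$. Writing $F' = x_1 A_1 + x_2 A_2 + q_4 B$ with $\deg B = 1$ and restricting to $(x_1 = x_2 = 0) \cong \mbP (1_{x_0}, 2_y, 1_w)$, the surviving monomial $w y^2$ forces $\bar{B}$, up to scalar, to be $w$, and then $\bar{F}' = w \bar{q}_4$ forces $x_0^3 \notin g_5$ and $x_0^5 \notin g_5$. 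Hence $(x_1 = x_2 = 0)_{X'} = \Gamma + \Delta$ with $\Delta = (x_1 = x_2 = w = 0)$, a line of degree $1/2$ passing through $\msp_3$ but not through $\msp_4$; moreover $\bar{q}_4 = w x_0 y + y^2 + a_2 x_0^2 y + a_4 x_0^4$ is singular at $\msp_4$, so $\Gamma$ is a rational curve with a single node or cusp located exactly at $\msp_4$.

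Next I would set up the surface and the intersection theory. Let $\mcM = |\mcI_{\Gamma} (A)|$ be the pencil generated by $x_1$ and $x_2$, with base locus $\Gamma \cup \Delta$; let $S \in \mcM$ be a general member and $T = (x_1 = 0)_{X'}$, so that $T \sim_{\mbQ} A$ and $T|_S = \Gamma + \Delta$. By Lemma \ref{lem:qsmcri}, $S$ is quasismooth, hence normal, outside $\{ \msp_3, \msp_4 \}$, with an ordinary double point at $\msp_3$ and a Du Val point of type $A$ at $\msp_4$; since $K_S \sim_{\mbQ} (K_{X'} + S)|_S = 0$ one has $(K_S \cdot C)_S = 0$ for every curve $C \subset S$. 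On $S$ one has $(\Gamma + \Delta)^2 = (A^3) = 5/2$, $(A \cdot \Gamma)_S = 2$ and $(A \cdot \Delta)_S = 1/2$, so only one further intersection number is needed. To compute it I would pass to the weighted blowup $\varphi \colon Y' \to X'$ at $\msp_4$ with $\wt (x_0, x_1, x_2, y) = (2, 1, 1, 2)$ and its restriction to the proper transform $\tilde{S}$, together with the minimal resolution of the double point at $\msp_3$, determine the configuration of the exceptional curves against $\tilde{\Gamma}$ and $\tilde{\Delta}$ (an extended dual graph computation, run as in \S \ref{sec:CG6-1} but now also accounting for the singularity of $\Gamma$ at $\msp_4$), and read off $(\Gamma^2)_S$, $(\Delta^2)_S$ and $(\Gamma \cdot \Delta)_S$.

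Finally, since $\Delta$ passes through the quotient singular point $\msp_3$ it is automatically not a maximal center, so only $\Gamma$ must be excluded; I would feed the numerical data above into Lemma \ref{lem:criexclC2}, taking the divisor $T$, the linear system $\mcM$, $m = 1$, base curves $\Gamma_1 = \Gamma$, $\Gamma_2 = \Delta$, and suitable effective $1$-cycles $\Delta_1, \Delta_2$ on $S$ (chosen among $\Gamma$, $\Delta$ and their combinations so that $(\Gamma \cdot \Delta_i)_S \ge (A \cdot \Delta_i)_S > 0$ and the cross term with the other base curve is non-negative), and conclude that $\Gamma$ is not a maximal center. The main obstacle is the intersection-number computation on the singular $K3$-type surface $S$: the curve $\Gamma$ is singular precisely at the surface singularity $\msp_4$, so the usual adjunction and extended-dual-graph bookkeeping must be carried out on the weighted blowup with extra care; everything else is a routine, if lengthy, calculation parallel to the degree $1$ case already treated.
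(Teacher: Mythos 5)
Your skeleton coincides with the paper's: rule out $\Gamma \subset (x_0=0)$, normalize to $\Gamma = (x_1 = x_2 = q_4 = 0)$ with $q_4 = y^2 + ywx_0 + \alpha y x_0^2 + \beta x_0^4$, identify the residual curve $\Delta$ of $T|_S = \Gamma + \Delta$ in the pencil $\mcM = |\mcI_{\Gamma}(A)|$, and conclude via Lemma \ref{lem:criexclC2}. The preliminary reductions are essentially right (two small points: the relevant solution of $x_0 = b_3 = b_5 = 0$ would be $(0\!:\!0\!:\!1)$, not $(0\!:\!1\!:\!0)$; and Condition \ref{addcond} is not actually needed to exclude $\Gamma \subset (x_0=0)$, since there the degree-$4$ factor of $F'(0,0,x_2,y,w)$ is forced into $\mbC[x_2,y]$ and hence splits, contradicting irreducibility of $\Gamma$).

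The gap is that the decisive inequality is never established. You route the computation of $(\Gamma\cdot\Delta)_S$ through an extended-dual-graph analysis of $(S,\msp_4)$ on the weighted blowup and flag it as ``the main obstacle,'' but (a) Lemma \ref{lem:compselfint} is stated for a \emph{nonsingular} rational curve, whereas, as you yourself observe, $\Gamma$ is nodal at $\msp_4$ (tangent cone $y(y+x_0)$), so that machinery does not apply as written and you supply no replacement; and (b) the detour is unnecessary. Your own normalization gives $\Delta = (x_1 = x_2 = w = 0)$, which does \emph{not} pass through $\msp_4$, and $\Gamma \cap \Delta = (x_1 = x_2 = w = y^2 + \alpha y x_0^2 + \beta x_0^4 = 0)$ avoids $\msp_3$ as well (since $q_4(0,y,0) = y^2 \ne 0$), hence lies entirely in the smooth locus of $S$. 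Counting the roots of the quadratic in $y/x_0^2$ immediately gives $(\Gamma\cdot\Delta)_S \ge 1$ (generically $\ge 2$), which already beats $\deg\Delta = 1/2$, and Lemma \ref{lem:criexclC2} applies with $\Xi = \Delta$ as the test cycle for the unique residual base curve. No value of $(\Gamma^2)_S$ or $(\Delta^2)_S$, and no resolution of $(S,\msp_4)$, is required. As written, your proposal leaves the key step uncomputed and proposes to reach it by a method that would itself need justification, so the proof is incomplete.
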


\begin{proof} 
Let $\Gamma \subset X'$ be a curve of type $(1,1,4)$.
We claim that $\Gamma \not\subset (x_0 = 0)$.
Indeed, if $\Gamma \subset (x_0 = 0)$, then we may assume $\Gamma = (x_0 = x_1 = h_4 = 0)$ for some $h_4 \in \mbC [x_2,y,w]$ after replacing $x_0,x_1$.
We have
\[
G := F (0,0,x_2,y,w) = w (y^2 + \alpha y x_2^2 + \beta x_2^4) + \gamma y x_2^3 + \delta x_2^5
\]
for some $\alpha,\dots,\delta \in \mbC$.
Since $\Gamma \subset X'$, $G$ is reducible, which is equivalent to the condition $\gamma = \delta = 0$.
Then, we have $h_4 = y^2 + \alpha y x_2^2 + \beta x_2^4$, which  implies that $\Gamma$ is either reducible or non-reduced.
This is a contradiction.

Thus, $\Gamma \not \subset (x_0 = 0)$ and we may assume $\Gamma = (x_1 = x_2 = h_4 = 0)$ for some $h_4 \in \mbC [x_0,y,w]$ after replacing $x_0,x_1$.
Let $S$ and $T$ be general members of the pencil $|\mcI_{\Gamma} (A)|$ generated by $x_1,x_2$.
By an explicit computation, we have $h_4 = y^2 + y w x_0 + \alpha y x_0^2 + \beta x_0^4$ for some $\alpha,\beta \in \mbC$ and $T|_S = \Gamma + \Delta$, where $\Delta = (x_1 = x_2 = w + \gamma x_0 = 0)$ for some $\gamma \in \mbC$.
Note that $S$ is nonsingular along $\Gamma \setminus \{\msp_4\}$ by Lemma \ref{lem:qsmcri}.
We see that $\Gamma$ intersects $\Delta$ at two nonsingular points so that $(\Gamma \cdot \Delta) \ge 2 > (A \cdot \Delta) = 1/2$.
Therefore, $\Gamma$ is not a maximal center.
\end{proof}

In the following, we treat the case where $\Gamma$ is of type $(1,2,2)$.
We write the defining polynomial of $X'$ as $F' = w^2 x_0 y + w (y^2 + y a_2 + a_4) + y b_3 + b_5$, where $a_i, b_i \in \mbC [x_0,x_1,x_2]$.
For a polynomial $h = h (x_0,x_1,x_2)$, we set $\bar{h} = h (0,x_1,x_2)$ and $\tilde{h} = h (x_0,x_1,0)$.

\begin{Lem} \label{lem:G6Cdeg2classif}
Let $\Gamma \subset X'$ be a curve of type $(1,2,2)$ passing through $\msp_4$.
Then, by an admissible change of coordinates, $\Gamma$ is one of the following.
\begin{enumerate}
\item $(x_0 = y - c_2 = w x_1 + d_2 = 0)$ for some $c_2, d_2 \in \mbC [x_1,x_2]$ such that $x_1 \nmid d_2$.
\item $(x_2 = y - c_2 = w x_0 + d_2 = 0)$ for some $c_2,d_2 \in \mbC [x_0,x_1]$ such that $x_0 \nmid d_2$.
In this case, if $a_4 (0,1,0) = 0$, then either $c_2 (1,0) \ne 0$ or $a_2 (0,1,0) \ne d_2 (1,0)$.
\item $(x_2 = y - \lambda w x_0 - c_2 = w x_1 + d_2 = 0)$ for some $\lambda \in \mbC$,  $c_2, d_2 \in \mbC [x_0,x_1]$ such that $x_1 \nmid d_2$.
\end{enumerate}
\end{Lem}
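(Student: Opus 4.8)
The plan is to reduce an arbitrary such $\Gamma$ to one of the three listed normal forms by analysing its homogeneous ideal $I_\Gamma=(\ell,q_1,q_2)$ with $\deg\ell=1$, $\deg q_1=\deg q_2=2$, while keeping careful track of which coordinate changes are admissible. Since $\Gamma\ni\msp_4=(0\!:\!0\!:\!0\!:\!0\!:\!1)$, the linear form $\ell$ vanishes at $\msp_4$, hence $\ell\in\mbC[x_0,x_1,x_2]$ carries no $w$. The curve $\Gamma$ avoids the only other singular point $\msp_3=(0\!:\!0\!:\!0\!:\!1\!:\!0)$, and $\msp_3$ lies on $(q_i=0)$ precisely when the coefficient of $y$ in $q_i$ vanishes; so after rescaling we may take $q_1=y+P$ with $P$ a quadratic form in $x_0,x_1,x_2,w$, and after subtracting a multiple of $q_1$ from $q_2$ we may take $q_2=:Q$ to be a quadratic form in $x_0,x_1,x_2,w$ only. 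Using $\Gamma\ni\msp_4$ once more, $Q$ has no $w^2$-term. Eliminating $y=-P$ identifies $\Gamma$ with the plane conic $C=(\ell=Q=0)\subset\mbP^3_{x_0,x_1,x_2,w}$ passing through the $w$-vertex, and $C$ is irreducible because $\Gamma$ is.

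By the characterisation of admissible changes recalled just before the statement, such a change acts on coordinates by $x_0\mapsto(\text{scalar})\,x_0$, $y\mapsto(\text{scalar})\,y$, $w\mapsto\beta w+(\text{linear in }x_0,x_1,x_2)$, and sends $x_1,x_2$ to any linear forms in $x_0,x_1,x_2$ that complete these to an invertible change; in particular $x_1,x_2$ may be replaced by any two independent linear forms that are independent from $x_0$, whereas $x_0$ itself is rigid. Hence, if $\ell\propto x_0$ we normalise $\ell=x_0$, and otherwise we may replace one of $x_1,x_2$ by $\ell$ and then interchange to arrange $\ell=x_2$. This splits the argument into the two cases $\Gamma\subset(x_0=0)$ and $\Gamma\not\subset(x_0=0)$.

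In the case $\ell=x_0$, the conic lies in $(x_0=0)=\mbP^2_{x_1,x_2,w}$ and passes through the $w$-vertex, so $Q=wL(x_1,x_2)+R(x_1,x_2)$; irreducibility forces $L\ne0$ (a binary quadratic always factors over $\mbC$) and $L\nmid R$, and replacing $x_1$ by $L$ gives $Q=wx_1+d_2$ with $x_1\nmid d_2$. To pin down $q_1=y+P$ I will use $\Gamma\subset X'$: writing $F'=w^2x_0y+w(y^2+ya_2+a_4)+yb_3+b_5$ and imposing that $F'|_{x_0=0,\,y=-P}$ be divisible by $wx_1+d_2$ in $\mbC[x_1,x_2,w]$, a comparison of coefficients by powers of $w$ forces the $w^2$-part of $P$ to vanish and its $w$-linear part to be a scalar multiple of $x_1$; the latter is cleared by subtracting a constant multiple of $q_2$ from $q_1$, which yields $\Gamma=(x_0=y-c_2=wx_1+d_2=0)$ with $c_2,d_2\in\mbC[x_1,x_2]$, i.e.\ case~(1). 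In the case $\ell=x_2$, the conic lies in $(x_2=0)=\mbP^2_{x_0,x_1,w}$ through the $w$-vertex, so $Q=w(\alpha x_0+\beta x_1)+R(x_0,x_1)$ with $(\alpha,\beta)\ne(0,0)$ by irreducibility. If $\beta\ne0$, replacing $x_1$ by $\alpha x_0+\beta x_1$ gives $Q=wx_1+d_2$ with $x_1\nmid d_2$; here the $w^2x_0y$-term of $F'$ survives on $(x_2=0)$, so only the $wx_1$-part of $P$ can be absorbed, and the same coefficient comparison leaves $q_1=y-\lambda wx_0-c_2$ with $\lambda\in\mbC$ and $c_2,d_2\in\mbC[x_0,x_1]$, i.e.\ case~(3). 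If $\beta=0$ then $\alpha\ne0$; rescaling $Q$ gives $Q=wx_0+d_2$ with $x_0\nmid d_2$, and now the coefficient comparison forces the entire $w$-dependence of $P$ to be absorbable, giving $q_1=y-c_2$ with $c_2,d_2\in\mbC[x_0,x_1]$, i.e.\ case~(2); a finer reading of the lower-order coefficients of the same divisibility relation, together with irreducibility of $\Gamma$, produces the extra inequality relating $a_2,a_4$ to $c_2,d_2$ stated in case~(2).

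The main obstacle is the bookkeeping step common to all cases: deducing the exact shape of the $y$-generator $q_1=y+P$ from the membership $F'\in I_\Gamma$. Since $y$ is a rigid coordinate, the $w^2$-part of $P$ and (in cases~(1) and~(2)) its $w$-linear part cannot be removed by any admissible change, so they must instead be shown to vanish or to be divisible by the $w$-coefficient of $Q$ purely from the divisibility of $F'|_{\ell=0,\,y=-P}$ by $Q$; carrying out this coefficient comparison cleanly, and extracting from its low-order part the extra inequality of case~(2), is where the real work lies. The remaining steps — the identification of $\Gamma$ with the conic $C$ and the irreducibility bookkeeping — are routine linear algebra in $\mbP^3_{x_0,x_1,x_2,w}$.
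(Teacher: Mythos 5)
Your overall strategy coincides with the paper's: reduce to $\Gamma\subset(x_0=0)$ or $\Gamma\subset(x_2=0)$ by an admissible change, present the ideal with a $y$-generator $y+P$ and a $w$-linear generator $w\ell+d_2$ with $\ell\nmid d_2$, and then exploit divisibility of $F'$ restricted to the relevant hyperplane (after eliminating $y$) by $w\ell+d_2$, comparing the top $w$-coefficients to kill whatever part of the $w$-linear term of $P$ cannot be absorbed into the second generator. That part of the plan is sound and is exactly the paper's computation (e.g.\ in case (1) the $w^3$-coefficient gives $\ell_1^2=x_1e_1$, hence $x_1\mid\ell_1$, so the $w$-part of $P$ is a scalar multiple of $x_1$ and can be subtracted off).

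The genuine gap is the last sentence of your case (2). The extra inequality there --- that $a_4(0,1,0)=0$ forces $c_2(1,0)\ne0$ or $a_2(0,1,0)\ne d_2(1,0)$ --- is \emph{not} a consequence of the lower-order coefficients of the divisibility relation together with irreducibility of $\Gamma$. The curve $(x_2=y-c_2=wx_0+d_2=0)$ with $x_0\nmid d_2$ is irreducible in any case, and the coefficient identities coming from $F'(x_0,x_1,0,c_2,w)=(wx_0+d_2)(wc_2+e_3)$ are perfectly consistent with the degenerate configuration $a_4(0,1,0)=c_2(1,0)=a_2(0,1,0)-d_2(1,0)=0$. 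What the paper actually does is show that this configuration, fed into those identities, makes $(x_0,x_1,x_2)=(0,1,0)$ a solution of the system $x_0=a_4=b_5-a_2\,\prt a_4/\prt x_0=0$, which is precisely one of the systems excluded by the generality Condition \ref{addcond} on members of $\mcG'_6$. So the inequality is a genericity statement about $X'$, not a formal consequence of $\Gamma\subset X'$ plus irreducibility; carried out as you describe, the coefficient comparison would simply not produce it, and the argument would stall at exactly the step you identify as the real work. You need to invoke Condition \ref{addcond} (equivalently, quasismoothness of the birational counterpart) at that point.
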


\begin{proof}
Suppose $\Gamma \subset (x_0 = 0)$. 
We will show that we are in case (1).
We can write $\Gamma = (x_0 = y - w c_1 - c_2 = w d_1 + d_2 = 0)$ for some $c_1,c_2,d_1,d_2 \in \mbC [x_1,x_2]$.
Since $\Gamma$ is irreducible and reduced, we have $d_1 \ne 0$ and $d_2$ is not divisible by $d_1$.
Hence, we may assume $d_1 = x_1$ and $x_1 \nmid d_2$.
After replacing $y - w c_1 - c_2$ with $(y-w c_1 - c_2) - \eta (w x_1 + d_2)$ for some $\eta \in \mbC$, we may assume that either $c_1 = 0$ or $x_1 \nmid c_1$.
Since $\Gamma \subset X'$, there exists $h = h (x_1,x_2,w)$ such that $F' (0,x_1,x_2,w c_1 + c_2,w) = (w x_1 + d_2) h$.
We have
\[
F' (0,x_1,x_2,w c_1 + c_2,w) = w^3 c_1^2 + \cdots,
\]
where $\cdots$ consists of low degree terms with respect to $w$.
By writing $h = w^2 e_1 + w e_2 + e_3$ for some $e_i \in \mbC [x_1,x_2]$ and comparing terms divisible by $w^3$, we have $w^3 c_1^2 = w^3 x_1 e_1$.
This shows that $c_1$ is divisible by $x_1$ and thus $c_1 = 0$.
Thus we are in case (1).

In the following, we assume $\Gamma \not\subset (x_0 = 0)$.
Then, we may assume $\Gamma \subset (x_2 = 0)$ and $\Gamma = (x_2 = y - w c_1 - c_2 = w d_1 + d_2 = 0)$ for some $c_1,c_2,d_1,d_2 \in \mbC [x_0,x_1]$.
As in the above argument, we see that $d_1 \ne0$, $d_1 \nmid d_2$ and we may assume that either $c_1 = 0$ or $d_1 \nmid c_1$.
If $d_1$ is not proportional to $x_0$, then we may assume $d_1 = x_1$ and thus we are in case (3).
Suppose that $d_1$ is proportional to $x_0$.
Then we may assume $d_1 = x_0$.
Since $\Gamma \subset X'$, there exists $h = h (x_0,x_1,w)$ such that $F' (x_0,x_1,0,w c_1 + c_2,w) = (w x_0 + d_2)h$.
We have
\[
\begin{split}
F' (x_0,x_1, 0,w c_1 + c_2,w) =  w^3 c_1 & (x_0 + c_1) + w^2 (2 c_1 c_2 + c_1 \tilde{a}_2 + x_0 c_2) \\
& + w (c_2^2 + \tilde{a}_2 c_2 + \tilde{a}_4 + c_1 \tilde{b}_3) + \tilde{b}_3 c_2 + \tilde{b}_5.
\end{split}
\]
By writing $h = w^2 e_1 + w e_2 + e_3$, where $e_i \in \mbC [x_0,x_1]$, and comparing terms divisible by $w^3$, we have $c_1 (x_0 + c_1) = x_0 e_1$.
If $c_1 \ne 0$, then $e_1 \mid c_1$ since $x_0 \nmid c_1$ by our choice of $c_1$.
But then the equation $c_1 (x_0 + c_1) = x_0 e_1$ implies $x_0 \mid c_1$.
This is a contradiction and we have $c_1 = e_1 = 0$.
Thus we are in case (2).
It remains to show that if $a_4 (0,1,0) = 0$, then either $c_2 (1,0) \ne 0$ or $a_2 (0,1,0) \ne d_2 (1,0)$.
Since $c_1 = e_1 = 0$, we have $e_2 = c_2$ and the equations
\begin{equation} \label{eq:G6curvedeg2-1}
c_2^2 + \tilde{a}_2 c_2 + \tilde{a}_4 = x_0 e_3 + d_2 c_2 \text{ and } \tilde{b}_3 c_2 + \tilde{b}_5 = d_2 e_3.
\end{equation}
Let $\alpha_2$, $\alpha_3$ and $\beta_5$ be the coefficients of $x_1^2$, $x_0 x_1^3$ and $x_1^5$ in $\tilde{a}_2$, $\tilde{a}_4$ and $\tilde{b}_5$, respectively.
Now we assume $a_4 (0,1,0) = c_2 (1,0) = a_2 (0,1,0) - d_2 (1,0) = 0$.
This implies that the coefficients of $x_1^2$ in $d_2$ is $\alpha_2$.
Let $\varepsilon$ be the coefficient of $x_1^3$ in $e_3$.
By comparing the coefficients of $x_0 x_1^3$ in the first equation of \eqref{eq:G6curvedeg2-1}, we have $\alpha_2 \gamma + \alpha_2 = \varepsilon + \alpha_2 \gamma$ since $x_1 \mid c_2$.
By comparing the coefficients of $x_1^5$ in the second equation in \eqref{eq:G6curvedeg2-1}, we have $\beta_5 = \alpha_2 \varepsilon$.
This shows $\beta_5 = \alpha_2 \alpha_3$ and thus $(x_0,x_1.x_2) = (0,1,0)$ is a solution of $x_0 = a_4 = b_5 - a_2 \prt a_4/\prt x_0 = 0$.
This is impossible by Condition \ref{addcond} and the proof is completed.
\end{proof}

Let $\varphi \colon Y' \to X'$ be the weighted blowup of $X'$ at $\msp_4$ with $\wt (x_0,x_1,x_2,y) = (2,1,1,2)$.
Let $S \in |\mcI_{\Gamma} (2A)|$ be a general member.
We set $\psi = \varphi|_{\tilde{S}} \colon \tilde{S} \to S$ and $E_{\psi} = E|_{\tilde{S}}$.
Note that $K_{\tilde{S}} = \psi^* K_S$.
Let $G$ and $H$ be the $\varphi$-weight $= 4$ and $= 5$ parts of $F' (x_0,x_1,x_2,y,1)$, respectively.
We write $a_2 = \bar{a}_2 + x_0 a_1$ and $a_4 = \bar{a}_4 + x_0 a_3$, where $a_1, a_3 \in \mbC [x_1,x_2]$.
Then, we have $G = x_0 y + y^2 + y \bar{a}_2 + \bar{a}_4$ and $H = y x_0 \bar{a}_1 + x_0 \bar{a}_3 + y \bar{b}_3 + \bar{b}_5$. 
We see that $E$ is isomorphic to $(G = 0) \subset \mbP (2,1,1,2)$.

We compute $(\Gamma^2)_S$ by determining the extended resolution graph of $(S,\msp_4,\Gamma)$.

\begin{Lem}
The type of the extended resolution graph $G (S,\msp_4,\Gamma)$ is one of $A_{3,1}$, $A_{3,2}$ and $A_{4,2}$.
In particular, $(\Gamma^2) \le -14/5$ and $\Gamma$ is not a maximal center.
\end{Lem}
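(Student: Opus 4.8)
The plan is to apply Lemma~\ref{lem:criexclC1} to the movable system $\mcM := |\mcI_{\Gamma}(2A)|$. Since $\Gamma$ is a WCI curve of type $(1,2,2)$, say $\Gamma = (\ell = q_1 = q_2 = 0)_{X'}$ with $\deg\ell = 1$ and $\deg q_1 = \deg q_2 = 2$, the system $\mcM$ contains $q_1,q_2$ and $\ell x_j$ for all coordinates $x_j$, so $\Bs\mcM = (\ell = q_1 = q_2 = 0)_{X'} = \Gamma$ set-theoretically; in particular $\Gamma$ is the unique base curve. A general member $S\in\mcM$ is irreducible, is nonsingular along $\Gamma\setminus\{\msp_4\}$ by (the argument behind) Lemma~\ref{lem:qsmcri}, and being a hypersurface section with only isolated singularities it is normal. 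Thus conditions (1),(2) of Lemma~\ref{lem:criexclC1} hold, and everything reduces to bounding $(\Gamma^2)_S$. By adjunction $K_S \sim_{\mbQ}(K_{X'}+S)|_S = A|_S$, so $(K_S\cdot\Gamma)_S = (A\cdot\Gamma) = \deg\Gamma = 2$; hence if $G(S,\msp_4,\Gamma)$ has type $A_{n,k}$, Lemma~\ref{lem:compselfint} gives $(\Gamma^2)_S = -4 + \tfrac{k(n-k+1)}{n+1}$.

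The core of the proof is to show that $G(S,\msp_4,\Gamma)$ is of type $A_{3,1}$, $A_{3,2}$ or $A_{4,2}$, and I would do this by going through the three cases of $\Gamma$ listed in Lemma~\ref{lem:G6Cdeg2classif}. In each case I pass to the weighted blowup $\varphi\colon Y'\to X'$ at $\msp_4$ with $\wt(x_0,x_1,x_2,y) = (2,1,1,2)$, write the proper transforms $\tilde S = \varphi|_{\tilde S}^{-1}(S)$ and $\tilde\Gamma$ in the two orbifold charts of $Y'$ covering $E$, and compute the Jacobian of $\tilde S$ along $E_{\psi} = E|_{\tilde S}$ (as in Subsection~\ref{sec:CG6-1}): because $(X',\msp_4)$ is a $cA$ point, the singular locus of $\tilde S$ along $E_{\psi}$ consists of $A_m$-points, and together with the crepant curve $E_{\psi}$ they assemble the full chain of $(-2)$-curves resolving $(S,\msp_4)$, identifying it as an $A_n$ singularity with $n\le 4$. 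The point $\tilde\Gamma\cap E_{\psi}$ then determines which curve of the chain $\tilde\Gamma$ is attached to after the minimal resolution, hence the index $k$. The generality hypotheses collected in Condition~\ref{addcond} (the three systems of equations for No.~$6$ in Table~\ref{table:eq}, together with $\nu\ne 0$, $(\beta,\lambda)\ne(0,0)$ carried over from Lemma~\ref{lem:G6Cdeg2classif}) are precisely what forces the relevant non-vanishing of coefficients (e.g.\ $x_2^4\in a_4$ in the case $\Gamma\subset(x_0=0)$, and the analogue ruling out $\beta_5 = \alpha_2\alpha_3$ in the case $\Gamma\subset(x_2=0)$), which prevents longer chains or a more central attachment point; the worst configuration that survives is $A_{4,2}$.

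This is the step I expect to be the main obstacle: the three-case Jacobian and iterated-blowup bookkeeping is elementary but long, and one must check carefully that no generality condition is left unused, i.e.\ that every configuration not of type $A_{3,1}$, $A_{3,2}$, $A_{4,2}$ is actually excluded by Condition~\ref{addcond} or by irreducibility of $\Gamma$.

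Granting this, $(\Gamma^2)_S \in \{-\tfrac{13}{4},\,-3,\,-\tfrac{14}{5}\}$, so $(\Gamma^2)_S \le -\tfrac{14}{5} < 0$, which is the first inequality in condition~(3) of Lemma~\ref{lem:criexclC1}. For the second, using $(A^3) = \tfrac{5}{2}$ on $X' = X'_5\subset\mbP(1,1,1,2,1)$ we get $((-K_{X'})^2\cdot S) = 2(A^3) = 5$ and $(-K_{X'}\cdot\Gamma) = 2$, so
\[
((-K_{X'})^2\cdot S) - 2(-K_{X'}\cdot\Gamma) + (\Gamma^2)_S = 1 + (\Gamma^2)_S \le 1 - \tfrac{14}{5} = -\tfrac{9}{5} < 0.
\]
Hence all hypotheses of Lemma~\ref{lem:criexclC1} are satisfied and $\Gamma$ is not a maximal center, which completes the proof.
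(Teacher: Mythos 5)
Your strategy is exactly the paper's: reduce to Lemma \ref{lem:criexclC1}, get $(K_S\cdot\Gamma)_S=2$ by adjunction, and bound $(\Gamma^2)_S$ via Lemma \ref{lem:compselfint} after determining $G(S,\msp_4,\Gamma)$ by analysing the weighted blowup with $\wt(x_0,x_1,x_2,y)=(2,1,1,2)$ case by case along Lemma \ref{lem:G6Cdeg2classif}. All the surrounding numerics ($(\Gamma^2)_S\in\{-13/4,-3,-14/5\}$ and the final inequality $5-4-\tfrac{14}{5}<0$) are correct and agree with the paper.

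However, the single substantive claim of the lemma --- that the graph is one of $A_{3,1}$, $A_{3,2}$, $A_{4,2}$ --- is only announced, not proved: you explicitly defer the three-case Jacobian computation as ``the main obstacle''. That computation is the entire content of the proof. Concretely, in each case one must (i) write $E_{\psi}\cong(x_0y+y^2+y\bar a_2+\bar a_4=0)\cap(s=0)$ in $\mbP(2,1,1,2)$ and show $J_{\psi}$ has rank $2$ along $E_{\psi}$ away from the two $A_1$ points $(1\!:\!0\!:\!0\!:\!0)$ and $(1\!:\!0\!:\!0\!:\!-1)$, which when $E_{\psi}$ is irreducible yields type $A_{3,2}$, or $A_{3,1}$ when $\tilde\Gamma$ meets the chain at an end (case (3)); and (ii) in the degenerate subcases where $E_{\psi}$ splits as $E_1+E_2$ (i.e.\ $x_1\mid\bar a_4$ in case (1), $x_2\mid\bar a_4$ in case (2)), use the third system of equations in Condition \ref{addcond} to show the entry $H=-\bar a_2\bar a_3+\bar b_5$ does not vanish at $E_1\cap E_2$, so $\tilde S$ is nonsingular there and the chain has length exactly $4$, giving $A_{4,2}$ (one also needs $\tilde\Gamma$ to avoid $E_1\cap E_2$, which is where the extra clause in case (2) of Lemma \ref{lem:G6Cdeg2classif} enters). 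Your sketch gestures at the right generality conditions, but without carrying out (i)--(ii) the possibility of a longer chain, or of $\tilde\Gamma$ attaching at a more central vertex (either of which would break the bound $(\Gamma^2)_S\le-\tfrac{14}{5}$), is not actually excluded. A small inaccuracy: the non-vanishing you cite ($x_2^4\in f_4$) is the one used for the degree-$1$ curves in Section \ref{sec:CG6-1}; for the degree-$2$ curves the relevant dichotomies are divisibility of $\bar a_4$ by $x_1$ resp.\ $x_2$.
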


\begin{proof}
It is easy to compute $(\Gamma^2)_S$ once $G (S,\msp_4,\Gamma)$ is determined.
Indeed, if $G(S,\msp_4,\Gamma)$ is of type one of $A_{3,1}$, $A_{3,2}$ and $A_{4,2}$, then, by Lemma \ref{lem:compselfint}, we have
\[
(\Gamma^2)_S \le -2 - \deg \Gamma + \frac{6}{5} = - \frac{14}{5}.
\]
Thus
\[
(A^2 \cdot S) - 2 (A \cdot \Gamma) + (\Gamma^2)_S \le 5 - 4 - \frac{14}{5} < 0,
\]
and, by Lemma \ref{lem:criexclC1}, $\Gamma$ is not a maximal center.
The rest is devoted to the determination of $G (S,\msp_4,\Gamma)$.

Suppose that we are in case (1) of Lemma \ref{lem:G6Cdeg2classif}.
Then, the surface $S$ is cut out on $X'$ by the section
\[
s := x_0 (\alpha_0 x_0 + \alpha_1 x_1 + \alpha_2 x_2 + \beta w) + \gamma (y - c_2) + \delta (w x_1 + d_2),
\]
where $\alpha_i, \beta, \gamma,\delta \in \mbC$ are general, and we have an isomorphism
\[
E_{\psi} \cong (x_0 y + y^2 + y \bar{a}_2 + \bar{a}_4 = x_1 = 0) \subset \mbP (2_{x_0},1_{x_1},1_{x_2},2_y).
\]
We have
\[
J_{\psi} =
\begin{pmatrix}
\frac{\prt G}{\prt x_0} & \frac{\prt G}{\prt x_1} & \frac{\prt G}{\prt x_2} & \frac{\prt G}{\prt y} & H \\
0 & \delta & 0 & 0 & \beta x_0 + \gamma (y-c_2) + \delta d_2
\end{pmatrix}.
\]
It is clear that $J_{\psi}$ is of rank $2$ except possibly along 
\[
\Sigma := \left( \frac{\prt G}{\prt x_0} = \frac{\prt G}{\prt x_2} = \frac{\prt G}{\prt y} = 0 \right) \cap E_{\psi} 
= \left( x_1 = y = \bar{a}_4 = x_0 + \bar{a}_2 = \frac{\prt \bar{a}_4}{\prt x_1} = 0 \right).
\]
Note that $\Sigma \ne \emptyset$ if and only if $x_1 \mid \bar{a}_4$, and that $\Sigma = \{\msq_0\}$, where $\msq_0 = (- \bar{a}_2 (0,1) \!:\! 0 \!:\! 1 \!:\! 0)$,  if $\Sigma \ne \emptyset$.
We set $\msq_1 = (1 \!:\! 0 \!:\! 0 \!:\! 0)$ and $\msq_2 = (1 \!:\! 0 \!:\! 0 \!:\! -1)$.
Clearly $\msq_1$ and $\msq_2$ are $A_1$ points of $\tilde{S}$, and $\tilde{\Gamma}$ intersects $E_{\psi}$ at a single point $(0 \!:\! 0 \!:\! 1 \!:\! c_2 (0,1)) \in E_{\psi}$.

Assume that $E_{\psi}$ is irreducible.
This is equivalent to the condition $x_1 \nmid \bar{a}_4$.
Then $\Sigma = \emptyset$ and $\Sing (\tilde{S}) = \{\msq_1,\msq_2\}$.
Moreover, $\tilde{\Gamma}$ does not pass through $\msq_1, \msq_2$.
Hence, by considering the blowup of $\tilde{S}$ at $\msq_1$ and $\msq_2$, we see that $G (S,\msp_4,\Gamma)$ is of type $A_{3,2}$.

Assume that $E_{\psi}$ is reducible, that is, $x_1 \mid \bar{a}_4$.
We have $E_{\psi} = E_1 + E_2$, where $E_1 = (x_1 = y = 0)$ and $E_2 = (x_1 = x_0 + y + \bar{a}_2 = 0)$.
Note that $\msq_1 \in E_1$, $\msq_2 \in E_2$ and $\msq_0$ is the intersection point $E_1 \cap E_2$.
Note also that $x_1 \nmid \bar{a}_2$ because otherwise $x_0 = a_2 = a_4 = 0$ has a solution $(x_0,x_1,x_2) = (0,0,1)$.
This in particular implies that $\tilde{\Gamma}$ does not pass through $\msq_0$. 
We have
\[
J_{\psi} (\msq_0) =
\begin{pmatrix}
0 & \frac{\prt G}{\prt x_0} (\msq_0) & 0 & 0 & (-\bar{a}_2 \bar{a}_3 + \bar{b}_5) (\msq_0) \\
0 & \delta & 0 & 0 & (-\beta \bar{a}_2 - \gamma c_2 + \delta d_2)(\msq_0)
\end{pmatrix}.
\] 
If $(-\bar{a}_2 \bar{a}_3 + \bar{b}_5)(\msq_0) = 0$, then $x_0 = a_4 = b_5 - a_3 \prt a_4/\prt x_0 = 0$ has a solution $(x_0,x_1,x_2) = (0,0,1)$.
This is impossible by Condition \ref{addcond}. 
Hence $(-\bar{a}_2 \bar{a}_3 + \bar{b}_5)(\msq_0) \ne 0$ and this implies $\rank J_{\psi} (\msq_0) = 2$ since $\delta$ is general.
It follows $\Sing (\tilde{S}) = \{\msq_1,\msq_2\}$.
By considering blowups of $\tilde{S}$ at $\msq_1$ and $\msq_2$, we see that $G (S,\msp_4,\Gamma)$ is of type $A_{4,2}$.
 
 Suppose that we are in case (2) of Lemma \ref{lem:G6Cdeg2classif}.
The surface $S$ is cut out on $X'$ by the section
\[
s := x_2 (\alpha_0 x_0 + \alpha_1 x_1 + \alpha_2 x_2 + \beta w) + \gamma (y-c_2) + \delta (w x_0 + d_2),
\]
where $\alpha_i, \beta, \gamma,\delta \in \mbC$ are general, and we have an isomorphism
\[
E_{\psi} \cong (x_0 y + y^2 + y \bar{a}_2 + \bar{a}_4 = x_2 = 0) \subset \mbP (2_{x_0},1_{x_1},1_{x_2},2_y).
\]
We have
\[
J_{\psi} =
\begin{pmatrix}
\frac{\prt G}{\prt x_0} & \frac{\prt G}{\prt x_1} & \frac{\prt G}{\prt x_2} & \frac{\prt G}{\prt y} & H \\
0 & 0 & \beta & 0 & x_2 (\alpha_1 x_1 + \alpha_2 x_2) + \gamma (y-\bar{c}_2) + \delta (x_0 + \bar{d}_2)
\end{pmatrix}.
\]
It is clear that $J_{\psi}$ is of rank $2$ outside the set 
\[
\Sigma := \left( \frac{\prt G}{\prt x_0} = \frac{\prt G}{\prt x_1} = \frac{\prt G}{\prt y} = 0 \right) \cap E_{\psi} 
=  \left(y = \frac{\prt \bar{a}_4}{\prt x_1} = x_0 + \bar{a}_2 = \bar{a}_4 = x_2 = 0 \right).
\]
Note that $\Sigma \ne \emptyset$ if and only if $x_2 \mid \bar{a}_4$ and $\Sigma = \{\msq_0\}$, where $\msq_0 = (-\bar{a}_2 (1,0) \!:\! 1 \!:\! 0 \!:\! 0) \in \mbP (2,1,1,2)$, if $\Sigma \ne \emptyset$. 
Clearly $\msq_1 := (1 \!:\! 0 \!:\! 0 \!:\! 0)$ and $\msq_2 := (1 \!:\! 0 \!:\! 0 \!:\! -1)$ are $A_1$ points of $\tilde{S}$.
Note also that $\tilde{\Gamma}$ intersects $E_{\psi}$ at a single point $(- d_2 (1,0) \!:\! 1 \!:\! 0 \!:\! c_2 (1,0))$.

Assume that $E_{\psi}$ is irreducible, which is equivalent to the condition  $x_2 \nmid \bar{a}_4$.
Then $\Sigma = \emptyset$ and $\Sing (\tilde{S}) = \{\msq_1,\msq_2\}$. 
Hence $G (S,\msp_4,\Gamma)$ is of type $A_{3,2}$.

Assume that $E_{\psi}$ is reducible, that is, $x_2 \mid \bar{a}_4$.
Then $\Sigma = \{\msq\}$ and $E_{\psi} = E_1 + E_2$, where $E_1 = (y = x_2 = 0)$ and $E_2 = (x_0 + y + \bar{a}_2 = x_2 = 0)$.
Note that $\msq_1 \in E_1$, $\msq_2 \in E_2$ and $E_1 \cap E_2 = \{\msq_0\}$.
Note also that $\tilde{\Gamma}$ does not pass through $\msq_0$ since either $c_2 (1,0) \ne 0$ or $\bar{a}_2 (1,0) = a_2 (0,1,0) \ne d_2 (1,0)$ by Lemma \ref{lem:G6Cdeg2classif}.
By the same argument as in case (1), we see that $H (\msq_0) = (-\bar{a}_2 \bar{a}_3 + \bar{b}_5)(\msq_0) \ne 0$, and hence $\rank J_{\psi} (\msq_0) = 2$ since $\beta$ is general.
It follows that $\Sing (\msp_4) = \{\msq_1,\msq_2\}$ and $G (S,\msp_4,\Gamma)$ is of type $A_{4,2}$.

Suppose that we are in case (3) of Lemma \ref{lem:G6Cdeg2classif}.
The surface $S$ is cut out on $X'$ by the section
\[
s := x_2 (\alpha x_0 + \alpha_1 x_1 + \alpha_2 x_2 + \beta w) + \gamma (y- \lambda w x_0 - c_2) + \delta (w x_1 + d_2),
\]
where $\alpha_i, \beta, \gamma, \delta \in \mbC$ are general.
We have an isomorphism
\[
E_{\psi} \cong (x_0 y + y^2 + y \bar{a}_2 + \bar{a}_4 = \delta x_1 + \beta x_2 = 0) \subset \mbP (2_{x_0},1_{x_1},1_{x_2},2_y),
\]
which is irreducible since $\bar{a}_4 \ne 0$ and $\beta, \delta$ are general.
We have
\[
J_{\varphi} =
\begin{pmatrix}
\frac{\prt G}{\prt x_0} & \frac{\prt G}{\prt x_1} & \frac{\prt G}{\prt x_2} & \frac{\prt G}{\prt y} & H \\
0 & \delta & \beta & 0 & x_2 (\alpha_1 x_1 + \alpha_2 x_2) + \gamma (y-\lambda x_0 - \bar{c}_2) + \delta \bar{d}_2
\end{pmatrix}.
\]
We see that
\[
\left( \frac{\prt G}{\prt x_0} = \frac{\prt G}{\prt y} = 0 \right) \cap E_{\psi}
= (y = x_0 + \bar{a}_2 = \bar{a}_4 = \delta x_1 + \beta x_2 = 0) = \emptyset
\]
for a general $\beta, \delta$.
This shows that $J_{\varphi}$ is of rank $2$ at every point of $E_{\varphi}$ and thus $\tilde{S}$ has only two singular points $\msq_1 = (1 \!:\! 0 \!:\! 0 \!:\! 0)$ and $\msq_2 = (1 \!:\! 0 \!:\! 0 \!:\! -1)$ of type $A_1$.
We see that $\tilde{\Gamma}$ intersects $E_{\varphi}$ transversally at $\msq_1$.
Hence, by considering the blow-up of $\tilde{S}$ at $\msq_1$ and $\msq_2$, we see that $G (S,\msp,\Gamma)$ is of type $A_{3,1}$.
This completes the proof.
\end{proof}

\subsection{Curves of degree $1/2$ on $X' \in \mcG'_7$.}

Let $X' = X'_6 \subset \mbP (1,1,1,2,2)$ be a member of $\mcG'_7$ with defining polynomial $F' = w^2 x_0 x_1 + w f_4 + g_6$ and let $\Gamma \subset X'$ be an irreducible and reduced curve of degree $1/2$ that passes through $\msp_4$ but does not pass through the other singular points.
We see that $\Gamma$ is a WCI curve of type $(1,1,2)$ and it is contained in either $(x_0 = 0)$ or $(x_1 = 0)$.
By Condition \ref{addcond}, $F'$ can be written as $F' = w^2 x_0 x_1 + w (y^2 + y a_2 + a_4) + y b_4 + b_5$ for some $a_i, b_i \in \mbC [x_0,x_1,x_2]$.
Let $S,T$ be a general member of $|\mcI_{\Gamma} (A)|$. 

\begin{Lem} \label{lem:deltaCG7}
We have $T|_S = \Gamma + \Delta$, where $\Delta$ is an irreducible and reduced curve of degree $1$.
Moreover, one of the following hold.
\begin{enumerate}
\item $\Delta$ intersects $\Gamma$ away from $\msp_4$.
\item After an admissible coordinates change, $\Gamma = (x_0 = x_2 = y = 0)$ and $\Delta = (x_0 = x_2 = w y + \xi x_0^4 = 0)$ for some nonzero $\xi \in \mbC$.
\end{enumerate}
\end{Lem}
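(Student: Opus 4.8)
Throughout, the plan is to work with the standard polynomial $F' = w^2 x_0 x_1 + w(y^2 + y a_2 + a_4) + y b_4 + b_6$ and to read off every required (non)vanishing from Condition~\ref{addcond}. Since transposing $x_0$ and $x_1$ is an admissible coordinate change, I may assume $\Gamma \subset (x_0 = 0)$. As recalled just above, $\Gamma$ is a WCI curve of type $(1,1,2)$, so its homogeneous ideal in $\mbP$ is generated by two linear forms and a quadric; since $x_0$ lies in the linear span of the two linear forms, I may write $\Gamma = (x_0 = \ell = q = 0)$ with $\ell$ linear and not proportional to $x_0$ and $\deg q = 2$. Because $\msp_4 \in \Gamma$ while $x_0$ and $\ell$ vanish at $\msp_4$ automatically, $q$ must also vanish there, i.e.\ $q \in \mbC[x_0,x_1,x_2,y]$ has no $w$-term.

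The first real step is to normalise $\Gamma$. If $\ell$ involves $x_2$, then $x_2 \mapsto \ell$ is admissible and I may take $\ell = x_2$; reducing $q$ modulo $(x_0,x_2)$ gives $q = \mu y + \nu x_1^2$, irreducibility of $\Gamma$ forces $\mu \ne 0$, and the admissible change $y \mapsto y + (\nu/\mu)x_1^2$ puts $\Gamma$ in the form $(x_0 = x_2 = y = 0)$. If $\ell$ does not involve $x_2$, then $\Gamma \subset (x_0 = x_1 = 0)$ and the same two moves give $\Gamma = (x_0 = x_1 = y = 0)$. In either case, substituting $\Gamma$ into $F'$ shows that $\Gamma \subset X'$ forces the coefficient of $x_1^4$ in $a_4$ and of $x_1^6$ in $b_6$ to vanish (resp.\ of $x_2^4$ in $a_4$ and of $x_2^6$ in $b_6$). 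Feeding these into the hypotheses of Condition~\ref{addcond} — that $x_0 = b_4 = b_6 = 0$ and $x_0 = x_1 = 4 a_4 - a_2^2 = 0$ have no nontrivial solution — then forces the coefficient of $x_1^4$ in $b_4$ to be nonzero (resp.\ the coefficients of $x_2^4$ in $b_4$ and of $x_2^2$ in $a_2$ to be nonzero).

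Next I would compute $T|_S$. In each normal form $|\mcI_{\Gamma}(A)|$ is precisely the pencil spanned by the two linear forms cutting out $\Gamma$ (namely $x_0, x_2$, resp.\ $x_0, x_1$), so for general $S$ and $T$ those two forms are proportional on $S$, whence $T|_S = (x_0 = x_2 = 0)_{X'}$ (resp.\ $(x_0 = x_1 = 0)_{X'}$); this is a degree-$6$ hypersurface inside $\mbP(1,2,2)$, of $A$-degree $3/2 = (A^3)$, as it must be. Restricting $F'$ to this locus and using the vanishings just established, its degree-$6$ equation factors as $y\,h$ with $\deg h = 4$, so $T|_S = \Gamma + \Delta$ with $\Delta = (x_0 = x_2 = h = 0)$ (resp.\ $(x_0 = x_1 = h = 0)$), a curve of degree $1$. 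The nonvanishing of the leading coefficient of $b_4$ from the previous step makes $h = w(y + c\,x_1^2) + \xi\,x_1^4$ with $\xi \ne 0$ (resp.\ $h = w(y + c\,x_2^2) + \xi\,x_2^4$ with $\xi \ne 0$), where $c$ is the coefficient of $x_1^2$ in $a_2$ (resp.\ of $x_2^2$ in $a_2$), and a direct check in $\mbP(1,2,2)$ shows that such $h$ defines an irreducible and reduced curve.

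Finally, for the dichotomy I would intersect $\Delta$ with $\Gamma$ inside $T$: putting $y = 0$ in $h$ leaves $x_1^2(\xi\,x_1^2 + c\,w)$ (resp.\ $x_2^2(\xi\,x_2^2 + c\,w)$), whose zeros on $\Gamma$ are $\msp_4$ together with one further point exactly when $c \ne 0$. In the second normal form $c$ is forced nonzero by Condition~\ref{addcond} (via $4 a_4 - a_2^2 \ne 0$ at the vertex $\msp_2$), so that case always falls under~(1); in the first normal form, $c \ne 0$ gives~(1), while $c = 0$ gives $\Delta = (x_0 = x_2 = wy + \xi\,x_1^4 = 0)$ with $\xi \ne 0$, which is case~(2). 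I expect the main obstacle to be this middle step: keeping precise track of which admissible changes are still available once $\Gamma$ has been normalised, and matching the finitely many equations of Condition~\ref{addcond} to exactly the coefficients that control both the factorisation $F'|_{(x_0 = x_2 = 0)} = y\,h$ and the irreducibility of $\Delta$.
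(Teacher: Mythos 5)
Your proposal is correct and follows essentially the same route as the paper: restrict $F'$ to the plane $(x_0=x_2=0)$ (resp.\ $(x_0=x_1=0)$) containing $\Gamma$, factor off the equation of $\Gamma$, use the systems $x_0=b_4=b_6=0$, $x_1=b_4=b_6=0$ and $x_0=x_1=4a_4-a_2^2=0$ from Condition~\ref{addcond} to force the key coefficients to be nonzero, and then decide whether the residual intersection point of $\Gamma\cap\Delta$ escapes $\msp_4$. The only cosmetic difference is that you normalise $\Gamma$ to $y=0$ by an extra admissible change at the outset, whereas the paper keeps the parameter $\gamma$ and compares it with the cofactor's coefficient $\delta$; the dichotomy $c\ne 0$ versus $c=0$ is exactly the paper's $\gamma\ne\delta$ versus $\gamma=\delta$.
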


\begin{proof}
Suppose $\Gamma \subset (x_0 = x_1 = 0)$.
Then, $\Gamma = (x_0 = x_1 = y + \gamma x_2^2 = 0)$ for some $\alpha \in \mbC$.
We have
\[
\bar{F}' := F (0,0,x_2,y,w) = w (y^2 + \alpha_2 y x_2^2 + \alpha_4 x_2^4) + \beta_4 y x_2^4 + \beta_6 x_2^6,
\]
where $\alpha_i$ and $\beta_i$ are the coefficients of $x_2^i$ in $a_i$ and $b_i$, respectively.
Since $\Gamma \subset X'$, we have
\[
\bar{F}' = (y+\gamma x_2^2)(w y + \delta w x_2^2 + \varepsilon x_2^4)
\]
for some $\delta,\varepsilon \in \mbC$.
By comparing the coefficients of each term of $\bar{F}$, we have $\alpha_2 = \gamma + \delta$, $\alpha_4 = \gamma \delta$, $\beta_4 = \varepsilon$ and $\beta_6 = \gamma \varepsilon$.
Note that $\Delta = (x_0 = x_1 = w y + \delta w x_2^2 + \varepsilon x_2^4 = 0)$, which is irreducible if and only if $\varepsilon \ne 0$.
If $\varepsilon = 0$, then $\beta_4 = \beta_6 = 0$, which in particular implies that $x_0 = b_4 = b_6 = 0$ has a solution $(x_0,x_1,x_2) = (0,0,1)$.
This is impossible by Condition \ref{addcond} and hence $\varepsilon \ne 0$.
If $\gamma = \delta$, then $4 \alpha_4 - \alpha_2^2 = 0$, which implies that $x_0 = x_1 = 4 a_4 - a_2^2 = 0$ has a solution $(x_0,x_1,x_2) = (0,0,1)$.
This is again impossible.
Hence $\gamma \ne \delta$.
It follows that $\Delta$ intersects $\Gamma$ at a point other than $\msp_4$ and we are in case (1).

Suppose $\Gamma \not\subset (x_0 = x_1 = 0)$.
Then, since $\Gamma$ is contained in either $(x_0 = 0)$ or $(x_1 = 0)$, we may assume $\Gamma = (x_0 = x_2 = y + \gamma x_1^2 = 0)$ after possibly interchanging $x_0$ with $x_1$ and replacing $x_2$. 
As in the above argument, we have
\[
\bar{F}' := F (0,x_1,0,y,w) = w (y^2 + \alpha_2 y x_1^2 + \alpha_4 x_1^4) + \beta_4 y x_1^4 + \beta_6 x_1^6,
\]
where $\alpha_i$ and $\beta_i$ are the coefficients of $x_1^i$ in $a_i$ and $b_i$, respectively, and
\[
\bar{F}' = (y+\gamma x_1^2)(w y + \delta w x_1^2 + \varepsilon x_1^4)
\]
for some $\delta,\varepsilon \in \mbC$.
We see that $\varepsilon \ne 0$ because otherwise $\beta_4 = \beta_6 = 0$ and this contradicts to Condition \ref{addcond}.
It follows that $\Delta = (x_0 = x_2 = w y + \delta y x_1^2 + \varepsilon x_1^4 = 0)$ is irreducible and reduced.
If $\gamma \ne \delta$, then $\Delta$ intersects $\Gamma$ at a point other than $\msp_4$ and we are in case (1).
If $\beta = \gamma$, then, after replacing $y+\gamma x_1^2$ with $y$, we have $\Gamma = (x_0 = x_2 = y = 0)$ and $\Delta = x_0 = x_2 = w y + \varepsilon x_1^4 = 0)$, that is, we are in case (2).
This completes the proof.
\end{proof}

Note that $S$ is nonsingular along $\Gamma \setminus \{\msp_4\}$.

\begin{Lem} \label{lem:exclCG7}
Notation as in \emph{Lemma \ref{lem:deltaCG7}}.
Then, $(\Gamma \cdot \Delta)_S \ge \deg \Delta$.
In particular, $\Gamma$ is not a maximal center.
\end{Lem}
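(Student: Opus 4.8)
The plan is to go through the two cases of Lemma~\ref{lem:deltaCG7}, using throughout that $S$ is nonsingular along $\Gamma\setminus\{\msp_4\}$. In case~(1), $\Delta$ meets $\Gamma$ only at points different from $\msp_4$, all of which are nonsingular points of $S$; since $\Gamma$ and $\Delta$ are distinct effective curves on the connected surface $S$ they do meet, so the local intersection number at such a point is a positive integer, whence $(\Gamma\cdot\Delta)_S\ge 1=\deg\Delta$. This already gives the inequality in case~(1).

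The work is in case~(2), where $\Gamma=(x_0=x_2=y=0)$ and $\Delta=(x_0=x_2=wy+\xi x_1^4=0)$ with $\xi\ne 0$, and $\Gamma\cap\Delta=\{\msp_4\}$ set-theoretically, so the intersection number must be read off at the singular point $\msp_4$. I would take $S=(x_0-\lambda x_2=0)_{X'}$ and $T=(x_0-\mu x_2=0)_{X'}$ to be general members of the pencil $|\mcI_\Gamma(A)|$, so that $T|_S=\Gamma+\Delta$; since $S,T\in|A|=|{-K_{X'}}|$, taking intersection with $\Gamma$ yields $(\Gamma^2)_S+(\Gamma\cdot\Delta)_S=(A\cdot\Gamma)_{X'}=\deg\Gamma=\tfrac12$. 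Thus it suffices to show $(\Gamma^2)_S\le-\tfrac12$. Now $\Gamma\cong\mbP(1,2)\cong\mbP^1$ is a nonsingular rational curve, $(S,\msp_4)$ is a cyclic quotient singularity of type $A$, and $K_S\sim_\mbQ(K_{X'}+S)|_S\sim_\mbQ 0$, so by Lemma~\ref{lem:compselfint} the claim reduces to showing that the extended dual graph $G(S,\msp_4,\Gamma)$ is of type $A_{n,k}$ with $\tfrac{k(n-k+1)}{n+1}\le\tfrac32$.

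To pin down this graph I would realize the relevant part of the minimal resolution of $(S,\msp_4)$ by the weighted blowup $\varphi\colon Y'\to X'$ of $X'$ at $\msp_4$ adapted to the local equation $x_0x_1+f_4+g_6=0$ (recall $y^2\in f_4$, by Condition~\ref{addcond}), put $\psi:=\varphi|_{\tilde S}\colon\tilde S\to S$, $E_\psi=E|_{\tilde S}$, and use the Jacobian criterion (as in Lemma~\ref{lem:singwbl}) to check that $\tilde S$ has only $A_1$-points along $E_\psi$ — this is where Condition~\ref{addcond}, which forces $\xi\ne 0$ and excludes the degenerate configurations, enters to keep the singularities mild. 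Locating $\tilde\Gamma\cap E_\psi$ and, if necessary, blowing up once more at the $A_1$-points, one reads off that $G(S,\msp_4,\Gamma)$ is of type $A_{3,1}$ or $A_{3,2}$, i.e. $(\Gamma^2)_S=-\tfrac54$ or $-1$, and hence $(\Gamma\cdot\Delta)_S=\tfrac12-(\Gamma^2)_S\ge\tfrac32>1=\deg\Delta$. For the remaining assertion that $\Gamma$ is not a maximal center, the inequality $(\Gamma\cdot\Delta)_S\ge\deg\Delta$ just established, together with $T|_S=\Gamma+\Delta$ and normality of a general $S\in|\mcI_\Gamma(A)|$, supplies the hypotheses needed to invoke the exclusion criterion (Lemma~\ref{lem:criexclC2}, or Lemma~\ref{lem:criexclC1} after passing to $|\mcI_\Gamma(2A)|$).

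The step I expect to be the main obstacle is the explicit weighted-blowup and Jacobian computation in case~(2): the conclusion is sensitive to the precise type $A_{n,k}$ of the dual graph — the bound $\tfrac{k(n-k+1)}{n+1}\le\tfrac32$ only just holds when $(S,\msp_4)$ is a mild $A$-singularity — so one must use Condition~\ref{addcond} carefully to rule out the bad configurations, verify that $S$ is normal, and confirm that the singularities produced by $\psi$ are genuinely of type $A$ so that Lemma~\ref{lem:compselfint} applies.
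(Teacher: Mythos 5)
Your proposal matches the paper's proof in both structure and substance: case (1) is handled by counting intersection points of $\Gamma$ and $\Delta$ at nonsingular points of $S$, and case (2) by computing $(\Gamma^2)_S$ via the extended dual graph of the weighted blowup with $\wt (x_0,x_1,x_2,y) = \frac{1}{2}(3,1,1,2)$ together with Lemma \ref{lem:compselfint}, then reading $(\Gamma \cdot \Delta)_S$ off the relation $T|_S = \Gamma + \Delta$ and invoking Lemma \ref{lem:criexclC2}. The only discrepancy is in the computation you leave sketched: since $E_{\psi} \cong (x_0 x_1 + y^2 = x_2 = 0)$ is quasismooth, $\tilde{S}$ acquires a single $A_2$ point on $E_{\psi}$ (rather than $A_1$ points), lying away from $\tilde{\Gamma} \cap E_{\psi}$, so the graph is of type $A_{3,1}$ and $(\Gamma^2)_S = -5/4$, $(\Gamma \cdot \Delta)_S = 7/4$ --- within the range you predicted.
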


\begin{proof}
If we are in case (1) of Lemma \ref{lem:deltaCG7}, then $(\Gamma \cdot \Delta)_S \ge 1 > \deg \Delta$.
In the following, we assume that we are in case (2).
Let $\varphi \colon Y' \to X'$ be the weighted blowup of $X'$ at $\msp_4$ with $\wt (x_0,x_1,x_2,y) = \frac{1}{2} (3,1,1,2)$ with exceptional divisor $E$.
Let $\psi = \varphi|_{\tilde{S}} \colon \tilde{S} \to S$ and set $E_{\psi} = E|_{\tilde{S}}$.
Note that $a_2,a_4,b_6 \in (x_0,x_2)$ and $b_4 \notin (x_0,x_2)$.
We see that $S$ is cut out on $X'$ by the section $s := \lambda x_0 + \mu x_2$, where $\lambda,\mu \in \mbC$ are general.
We have
\[
E_{\psi} \cong (x_0 x_1 + y^2 + y \bar{a}_2 + \bar{a}_4 = x_2 = 0) \subset \mbP (3_{x_0},1_{x_1},1_{x_2},2_y).
\]
Since $\bar{a}_i$ is divisible by $x_2$, we have $E_{\psi} = (x_0 x_1 + y^2 = x_2 = 0)$.
In particular, $E_{\psi}$ is quasismooth in $\mbP (3,1,1,2)$.
This implies that $\rank J_{E_{\psi}} = 2$, and thus $\rank J_{\psi} = 2$, at every point of $E_{\psi}$.
It follows that $\tilde{S}$ has a singular point of type $A_2$ at $\msq = (1 \!:\! 0 \!:\! 0 \!:\! 0)$ and it is nonsingular along $E_{\psi} \setminus \{\msq\}$.
Note that $\tilde{\Gamma}$ intersects $E_{\psi}$ at $(0 \!:\! 1 \!:\! 0 \!:\! 0) \ne \msq$.
By considering the resolution of $\tilde{S}$ at $\msq$, we see that $G(S,\msp_4,\Gamma)$ is of type $A_{3,1}$.
Thus we have 
\[
(\Gamma^2)_S = -2 + \frac{3}{4} = - \frac{5}{4}.
\]
By taking the intersection number of $T|_S = \Gamma + \Delta$ and $\Gamma$, we have
\[
\frac{1}{2} = (\Gamma \cdot T|_S)_S = (\Gamma^2)_S + (\Gamma \cdot \Delta)_S,
\]
and then we have $(\Gamma \cdot \Delta)_S = 7/4 > \deg \Delta$.
Therefore, $\Gamma$ is not a maximal center by Lemma \ref{lem:criexclC2}.
\end{proof}

\begin{Rem} \label{rem:intCG7}
Let $\Gamma \subset X'$ be an irreducible and reduced curve of degree $1/2$ passing through $\msp_4$.
Then, Lemma \ref{lem:exclCG7} shows that $(\Gamma^2)_S \le 0$ for a general member $S \in |\mcI_{\Gamma} (A)|$.
This follows by considering $1/2 = (\Gamma \cdot T|_S)_S = (\Gamma^2)_S + (\Gamma \cdot \Delta)_S$ and $(\Gamma \cdot \Delta)_S \ge \deg \Delta = 1/2$.
This observation will be used in the next subsection.
\end{Rem}

\subsection{Curves of degree $1$ on $X' \in \mcG'_7$.}

Let $\Gamma$ be an irreducible and reduced curve on a member $X'$ of $\mcG'_7$ that passes through the $cA/2$ point but does not pass through the other singular points.
We see that $\Gamma$ is a WCI curve of type either $(1,2,2)$ or $(1,1,4)$.

We claim that $\Gamma$ cannot be of type $(1,2,2)$.
If $\Gamma$ is of type $(1,2,2)$, then $\Gamma = (\ell = y + c_2 = d_2 = 0)$ for some $\ell \in \mbC [x_0,x_1,x_2]$ with $\deg \ell = 1$ and $c_2, d_2 \in \mbC [x_0,x_1,x_2]$ since $\Gamma$ passes through $\msp_4$.
In this case $\Gamma$ is either reducible or non-reduced.
This shows that $\Gamma$ cannot be of type $(1,2,2)$. 

\begin{Lem}
An irreducible and reduced curve of degree $1$ on $X'$ passing through $\msp_4$ is not a maximal center.
\end{Lem}

\begin{proof}
Let $\Gamma \subset X'$ be an irreducible and reduced curve of degree $1$ passing through $\msp_4$.
Then, by the above argument, $\Gamma$ is of type $(1,1,4)$.
Let $S, T \in |\mcI_{\Gamma} (A)|$ be a general member and let $\ell_1, \ell_2 \in \mbC [x_0,x_1,x_2]$ be linear forms such that $S = (\ell_1 = 0)$ and $T = (\ell_2 = 0)$.
Then $T|_S = \Gamma + \Delta$, where $\Delta = (\ell_1 = \ell_2 = d_2 = 0)$ for some $d_2 \in \mbC [x_0,x_1,x_2,y,w]$ of degree $2$.
We claim that $d_2 \notin \mbC [x_0,x_1,x_2]$.
Indeed, if $d_2 \in \mbC [x_0,x_1,x_2]$, then the defining polynomial $F'$ of $X'$ is contained in the ideal $(\ell_1,\ell_2,d_2) \subset (x_0,x_1,x_2)$.
This is a contradiction since $w y^2 \in F'$.
Hence, either $w \in d_2$ or $y \in d_2$, and in particular $\Delta$ is irreducible and reduced.
Suppose that $w \in d_2$.
Then $\Delta$ intersects $\Gamma$ at a nonsingular point so that $(\Gamma \cdot \Delta)_S \ge 1 > 1/2 = \deg \Delta$.
Thus $\Gamma$ is not a maximal center.
Suppose that $w \notin d_2$.
Then $y \in d_2$ and $\Delta$ does not pass through $\msp_4$.
Since $S$ is a general member of $|\mcI_{\Gamma} (A)| = |\mcI_{\Delta} (A)|$, we have $(\Delta^2)_S \le 0$ by Remark \ref{rem:intCG7}.
Then, we compute
\[
1/2 = (\Delta \cdot T|_S) = (\Gamma \cdot \Delta)_S + (\Delta^2)_S \le (\Gamma \cdot \Delta)_S.
\]
This shows that $\Gamma$ is not a maximal center.
\end{proof}

\subsection{Curves of degree $1$ on $X' \in \mcG'_9$.}

Let $X' = X'_6 \subset \mbP (1,1,1,3,1)$ be a member of $\mcG'_9$ and $\Gamma \subset X'$ an irreducible and reduced curve of degree $1$ that passes through $\msp_4$ but does not pass through the other singular points.
The defining polynomial of $X'$ can be written as $F' = w^2 x_0 y + w (y a_2 + a_5) + y^2 + y b_3 + b_6$, where $a_j,b_j \in \mbC [x_0,x_1,x_2]$.

\begin{Lem}
An irreducible and reduced curve of degree $1$ on $X'$ passing through $\msp_4$ is not a maximal center.
\end{Lem}

\begin{proof}
We see that $\Gamma$ is a WCI curve of type $(1,1,3)$.
Let $S, T$ be general members of the pencil $|\mcI_{\Gamma} (A)|$.
We will show that $T|_S = \Gamma + \Delta$, where $\Delta \ne \Gamma$ is an irreducible and reduced curve of degree $1$ such that $(\Gamma \cdot \Delta)_S \ge \deg\Delta$.

Suppose $\Gamma \not\subset (x_0 = 0)$.
Then, after replacing $x_1$ and $x_2$, we may assume $\Gamma \subset (x_1 = x_2 = 0)$.
We have
\[
F (x_0,0,0,y,w) = w^2 x_0 y + w (\alpha_2 y x_0^2 + \alpha_5 x_0^5) + y^2 + \beta_3 x_0^3 y + \beta_6 x_0^6,
\]
where $\alpha_i$ and $\beta_i$ are coefficients of $x_0^i$ in $a_i$ and $b_i$, respectively.
Since $\Gamma \subset X'$, we have $F' (x_0,0,0,y,w) = c_3 d_3$ for some $c_3, d_3 \in \mbC [x_0,y,w]$ of degree $3$.
Then, by an explicit computation, we have $\alpha_5 = \beta_6 = 0$, $c_3 = y$ and $d_3 = w^2 x_0 + \alpha_2 w x_0 + y + \beta_3 x_0^3$.
Note that either $\Gamma = (x_1 = x_2 = c_3 = 0)$ or $(x_1 = x_2 = d_3 = 0)$.
In any case, $\Delta$ is an irreducible and reduced curve of degree $1$ and we see that $S$ is nonsingular along $\Gamma \setminus \{\msp_4\}$ by Lemma \ref{lem:qsmcri}.
Since two curves $(x_1 = x_2 = c_3 = 0)$ and $(x_1 = x_2 = d_3 = 0)$ have a intersection point away from $\msp_4$, we have $(\Gamma \cdot \Delta)_S \ge 1 = (A \cdot \Delta)$.
This completes the proof.

Suppose $\Gamma \subset (x_0 = 0)$.
After replacing $x_1, x_2$, we may assume $\Gamma = (x_0 = x_1 = h_3 = 0)$ for some $h_3 (x_2,y,w)$.
We have 
\[
G := F(0,0,x_2,y,w) = w (\alpha_2 y x_2^2 + \alpha_5 x_2^5) + y^2 + \beta_3 y x_2^3 + \beta_6 x_2^6 = 0,
\]
where $\alpha_i, \beta_i \in \mbC$.
Note that $\alpha_i$ is the coefficient of $x_2^i$ in $a_i$ so that $(\alpha_2,\alpha_5) \ne (0,0)$ by Condition \ref{addcond}.
Since $\Gamma \subset X'$, $G$ is divisible by $h_3$ and we can write
\[
G = (y + \gamma w x_2^2 + \delta x_2^3)(y + \varepsilon w x_2^2 + \zeta x_2^3),
\]
where $h_3 = y + \gamma w x_2^2 + \delta x_2^3$ and $\gamma,\dots,\zeta \in \mbC$.
Note that $T|_S = \Gamma + \Delta$, where $\Delta = (x_0 = x_1 = y + \varepsilon w x_2^2 + \zeta x_2^3)$.
By comparing the coefficients of $w^2 x_2^4$, we have $\gamma \varepsilon = 0$.
If $(\gamma,\varepsilon) \ne (0,0)$, then $\gamma \ne \varepsilon$ and $\Delta$ intesects $\Gamma$ at a nonsingular point so that $(\Gamma \cdot \Delta)_S \ge 1 = \deg \Delta$.
Now suppose $\gamma = 0$.
By comparing coefficients of $w y x_2^2$ and $w x_2^5$, we have $\alpha_2 = \varepsilon$ and $\alpha_5 = \delta \varepsilon$.
This shows $\varepsilon \ne 0$.
It follows that $(\gamma,\varepsilon) \ne (0,0)$ and the proof is completed.
\end{proof}

\subsection{Curves of degree $1$ on $X' \in \mcG'_{10}$.}

Let $X' = X'_6 \subset \mbP (1,1,2,2,1)$ be a member of $\mcG'_{10}$ and $\Gamma \subset X'$ an irreducible and reduced curve of degree $1$ that passes through $\msp_4$ but does not pass through the other singular points.
The defining polynomial of $X'$ is of the form $w^2 y_0 y_1 + w f_5 + g_6$.
Note that $y_0^3, y_1^3 \in F'$ and we assume that the coefficients of $y_0^3$ and $y_1^3$ in $F'$ are both $1$ after re-scaling $y_0,y_1$. 
We see that $\Gamma$ is a WCI curve of type either $(1,1,4)$ or $(1,2,2)$.

We claim that $\Gamma$ cannot be of type $(1,1,4)$.
Indeed, a curve of type $(1,1,4)$ passing through $\msp_4$ is contained in $(x_0 = x_1 = 0)$ and we have
\[
F' (0,0,y_0,y_1,w) = w^2 y_0 y_1 + y_0^3 + y_1^3,
\]
which is clearly irreducible.
Thus, $X'$ cannot contain a curve of type $(1,1,4)$ passing through $\msp_4$.

In the following, we treat the case where $\Gamma$ is of type $(1,2,2)$.

\begin{Lem} \label{lem:classifCG10}
After replacing $x_0$ and $x_1$, and interchanging $y_0$ with $y_1$, we are in one of the following cases.
\begin{enumerate}
\item $\Gamma = (x_0 = y_0 - \beta x_1^2 = y_1 - \gamma w x_1 - \delta x_1^2 = 0)$ for some $\beta, \gamma,\delta \in \mbC$ with $\gamma \ne 0$.
\item $\Gamma = (x_0 = y_0 - \beta x_1^2 = y_1 = 0)$ for some non-zero $\beta \in \mbC$.
In this case, if $f_5 (x_0,x_1,0,0)$ is divisible by $x_0^2$, then $\tau - \sigma_0 \sigma_1 \ne 0$, where $\sigma_i$ and $\tau$ are the coefficients of $w y_i x_1^3$ and $x_1^6$ in $F'$, respectively.
\end{enumerate} 
\end{Lem}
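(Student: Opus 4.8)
The plan is to describe the degree‑$2$ part of the homogeneous ideal of $\Gamma$ and then impose $\Gamma\subset X'$. Since $\Gamma$ is a WCI curve of type $(1,2,2)$ through $\msp_4=(0\!:\!0\!:\!0\!:\!0\!:\!1)$, its degree‑$1$ generator is a linear form in $x_0,x_1,w$ vanishing at $\msp_4$, hence a linear form in $x_0,x_1$; after a linear change of $x_0,x_1$ we may assume it is $x_0$, so $\Gamma\subset(x_0=0)$ and $\Gamma=(x_0=q_1=q_2=0)$ for degree‑$2$ forms $q_1,q_2$. Working modulo $x_0$ and using $\msp_4\in\Gamma$ to kill the $w^2$‑terms, the classes $\bar q_1,\bar q_2$ span a subspace $V\subset\langle x_1^2,x_1w,y_0,y_1\rangle$; if $\bar q_1,\bar q_2$ were dependent then $\Gamma\subset(x_0=q_1=0)$, a cycle of degree $\tfrac12\ne1$, which is impossible, so $\dim V=2$. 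First I would examine the image of $V$ under the projection onto $\langle y_0,y_1\rangle$. If it has rank $\le 1$, then $V$ contains a nonzero form $x_1(\beta x_1+\gamma w)$, and irreducibility of $\Gamma$ forces either $\Gamma\subset(x_0=x_1=0)$ — impossible, since $X'$ contains no type $(1,1,4)$ curve through $\msp_4$, as recalled before the statement — or, after the admissible change $w\mapsto\beta x_1+\gamma w$ (when $\gamma\ne0$), $\Gamma\subset(x_0=w=0)$, where every degree‑$2$ form cutting out a curve gives a cycle of degree $\tfrac12$, again a contradiction. Hence the projection has rank $2$.

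Thus we may take $q_1=y_0-a$, $q_2=y_1-b$ with $a,b$ homogeneous of degree $2$ in $x_1,w$ and with no $w^2$‑term, say $a=\beta_0x_1^2+\gamma_0x_1w$ and $b=\beta_1x_1^2+\gamma_1x_1w$; then $\Gamma$ is the image of $\mbP^1$ under $[x_1:w]\mapsto(0:x_1:a:b:w)$. Substituting $x_0=0$, $y_0=a$, $y_1=b$ into $F'=w^2y_0y_1+wf_5+g_6$ and using that $f_5,g_6$ do not involve $w$, the only monomial of $w$‑degree $4$ in $F'|_\Gamma$ comes from $w^2y_0y_1$, with coefficient $\gamma_0\gamma_1x_1^2$; hence $\gamma_0\gamma_1=0$. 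If exactly one of $\gamma_0,\gamma_1$ is nonzero, interchanging $y_0$ and $y_1$ puts $\Gamma$ into case (1) (with $\gamma\ne0$). If $\gamma_0=\gamma_1=0$, then $a,b$ carry no $w$, so the only remaining $w$‑dependence of $F'|_\Gamma$ is $\beta_0\beta_1x_1^4w^2$, forcing $\beta_0\beta_1=0$; after interchanging $y_0,y_1$ we get $b=0$ and $\Gamma=(x_0=y_0-\beta x_1^2=y_1=0)$. Here $\beta\ne0$: if $\beta=0$ then $\Gamma=(x_0=y_0=y_1=0)$, and $F'|_\Gamma\equiv0$ would force the coefficients of $x_1^5$ in $f_5$ and of $x_1^6$ in $g_6$ to vanish, so that $(0\!:\!1\!:\!0\!:\!0)$ solves $y_0=y_1=f_5=g_6=0$, contradicting Condition \ref{addcond}. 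This gives case (2).

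It remains to prove the last assertion of case (2), and this I expect to be the main obstacle. Write $\sigma_0,\sigma_1$ for the coefficients of $wy_0x_1^3$, $wy_1x_1^3$ in $F'$, $\tau$ for that of $x_1^6$, $p_0$ for that of $x_1^5$ in $f_5$, $p_3$ for that of $x_1y_0^2$ in $f_5$, and $q_2,q_3$ for those of $x_1^4y_0$, $x_1^2y_0^2$ in $g_6$. Reading $F'|_\Gamma\equiv0$ in case (2) off in $w$‑degrees $1$ and $0$, and using $y_0^3\in g_6$ with unit coefficient, gives
\[
p_0+\sigma_0\beta+p_3\beta^2=0,\qquad \tau+q_2\beta+q_3\beta^2+\beta^3=0.
\]
If $x_0^2\mid f_5(x_0,x_1,0,0)$ then $p_0=0$, hence $\sigma_0=-p_3\beta$. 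Assuming in addition $\tau=\sigma_0\sigma_1$, I would eliminate $\beta$ between the two relations to exhibit a common zero of the binary forms $f_5(x_0,x_1,0,0)$ and $g_6(x_0,x_1,0,0)$: if $\tau=0$ this is $x_0=0$, since already $x_0^2\mid f_5(x_0,x_1,0,0)$; if $\tau\ne0$, then $\sigma_0,\sigma_1\ne0$ and $\beta=-\sigma_0/p_3$, and one matches the residual cubic factor of $f_5(x_0,x_1,0,0)$ against $g_6(x_0,x_1,0,0)$ using the $\beta$‑relations. In either case one obtains a non‑trivial solution of $y_0=y_1=f_5=g_6=0$, contradicting Condition \ref{addcond}. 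The delicate part is this elimination, which (as for Lemma \ref{lem:G6Cdeg2classif}) must be carried out together with the monomial conditions recorded in the big table for family No.~$10$; note that $\tau-\sigma_0\sigma_1$ is precisely the obstruction to rewriting the relevant part of $F'$ as $(wy_0+\sigma_1x_1^3)(wy_1+\sigma_0x_1^3)+(\tau-\sigma_0\sigma_1)x_1^6+\cdots$, which explains its appearance.
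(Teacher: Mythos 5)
Your derivation of the normal forms (1) and (2) is correct and follows the same route as the paper: the paper likewise writes $\Gamma=(x_0=y_0-\alpha wx_1-\beta x_1^2=y_1-\gamma wx_1-\delta x_1^2=0)$, kills $\alpha\gamma$ via the coefficient of $w^4x_1^2$, kills the product of the $x_1^2$-coefficients via $w^2x_1^4$, and rules out $\beta=0$ exactly as you do, by producing the solution $(0,1,0,0)$ of $y_0=y_1=f_5=g_6=0$ against Condition \ref{addcond}. (Your preliminary discussion of why the quadric generators can be normalized to $y_0-a$, $y_1-b$ is more detailed than the paper's, which simply asserts the normal form.)

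The final assertion of case (2) is where your proposal has a genuine gap, and it is exactly the step you flagged as delicate. Your plan is to eliminate $\beta$ from the two relations coming from $F'|_\Gamma\equiv0$ and exhibit a non-trivial common zero of the binary forms $f_5(x_0,x_1,0,0)$ and $g_6(x_0,x_1,0,0)$, contradicting Condition \ref{addcond}. This works when $\tau=0$ (then $x_0$ divides both forms and $(0:1)$ is a common zero), but it cannot work when $\tau\ne0$: the relations $\sigma_0+p_3\beta=0$ and $\tau+q_2\beta+q_3\beta^2+\beta^3=0$ constrain only the coefficients of monomials in $x_1,y_0$ (together with $\sigma_1$), whereas the coefficients of $x_0^2x_1^3,\dots,x_0^5$ in $f_5(x_0,x_1,0,0)$ and of $x_0x_1^5,\dots,x_0^6$ in $g_6(x_0,x_1,0,0)$ are completely unconstrained by the hypothesis that $\Gamma\subset X'$. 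Since $g_6(0,1,0,0)=\tau\ne0$, the point $(0:1)$ is not a common zero, and for generic choices of the free coefficients the two binary forms have no common zero at all; so no contradiction with Condition \ref{addcond} can be extracted. The paper proves this assertion by a different mechanism: it passes to the birational counterpart $X=X_{5,6}\subset\mbP(1,1,2,2,3,3)$ with $F_1=z_1y_1+z_0y_0+f_5$, $F_2=z_1z_0-g_6$, and checks that if $x_0^2\mid f_5(x_0,x_1,0,0)$ and $\tau=\sigma_0\sigma_1$ then $\msq=(0\!:\!1\!:\!0\!:\!0\!:\!-\sigma_0\!:\!-\sigma_1)$ lies on $X$ and every partial derivative of $F_1$ vanishes there (the $y_i$-derivatives vanish because $\prt f_5/\prt y_i(0,1,0,0)=\sigma_i$ cancels $z_i=-\sigma_i$, and the $x_0$-derivative vanishes precisely because of the divisibility by $x_0^2$), contradicting quasismoothness of $X$ (Condition \ref{cond1}($\mathrm{C}_0$)). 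Note that the relevant degenerate point has non-zero $z_0,z_1$-coordinates, so it is invisible on $X'$ and on the locus $y_0=y_1=0$; this is why Condition \ref{addcond} is the wrong tool here. Your closing observation that $\tau-\sigma_0\sigma_1$ obstructs the factorization $(wy_0+\sigma_1x_1^3)(wy_1+\sigma_0x_1^3)+\cdots$ is in fact pointing at this quasismoothness failure, but you would need to develop it into the argument on $X$ rather than an elimination against Condition \ref{addcond}.
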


\begin{proof}
After replacing $x_0,x_1$, we can write
\[
\Gamma = (x_0 = y_0 - \alpha w x_1 - \beta x_1^2 = y_1 - \gamma w x_1 - \delta x_1^2 = 0)
\] 
for some $\alpha,\beta,\gamma,\delta \in \mbC$.
It follows that
\[
G_{\alpha,\beta,\gamma,\delta} := F (0,x_1,\alpha w x_1 + \beta x_1^2, \gamma w x_1 + \delta x_1^2) = 0
\]
as a polynomial.
The coefficient of $w^4 x_1^2$ in $G_{\alpha,\beta,\gamma,\delta}$ is $\alpha \gamma$.
Hence $\alpha \gamma = 0$.
After interchanging $y_0$ and $y_1$ if necessary, we may assume $\alpha = 0$.
If $\gamma \ne 0$, then we are in case (1).
Suppose $\alpha = \gamma = 0$.
Then the coefficient of $w^2 x_1^4$ in $G_{0,\beta,0,\delta}$ is $\beta \gamma$.
Hence $\beta \gamma = 0$.
After interchanging $y_0$ and $y_1$, we may assume $\delta = 0$.
Hence $\Gamma = (x_0 = y_0 - \beta x_1^2 = y_1 = 0)$.
If further $\beta = 0$, then $F (0,x_1,0,0,w) = w f_5 (0,x_1,0,0) + g_6 (0,x_1,0,0) = 0$ as a polynomial, which implies that $f_5 (x_0,x_1,0,0)$ and $g_6 (x_0,x_1,0,0)$ share a common component $x_0$.
This is impossible by Condition \ref{addcond}.
Hence $\beta \ne 0$.
It remains to show that $\tau - \sigma_0 \sigma_1 \ne 0$ if $f_5 (x_0,x_1,0,0)$ is divisible by $x_0^2$.
Let $X \in \mcG_{10}$ be the birational counterpart of $X'$ which is defined in $\mbP (1,1,2,2,3,3)$, by
\[
\begin{split}
F_1 &= z_1 y_1 + z_0 y_0 + f_5 (x_0,x_1,y_0,y_1), \\
F_2 &= z_1 z_0 - g_6 (x_0,x_1,y_0,y_1).
\end{split}
\]
Now we assume that $f_5 (x_0,x_1,0,0)$ is divisible by $x_0^2$ and $\tau - \sigma_0 \sigma_1 = 0$.
Then $X$ contains $\msp := (0 \!:\! 1 \!:\! 0 \!:\! 0 \!:\! - \sigma_0 \!:\!: -\sigma_1)$ and every partial derivative of $F_1$ vanishes at $\msp$.
This is a contradiction since $X$ is quasismooth.
Therefore $\tau - \sigma_0 \sigma_1 \ne 0$ if $f_5 (x_0,x_1,0,0)$ is divisible by $x_0^2$ and the proof is completed.
\end{proof}

\begin{Lem}
No curve of type $(1,2,2)$ on $X' \in \mcG'_{10}$ passing through $\msp_4$ is a maximal cetner.
\end{Lem}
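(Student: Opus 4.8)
The plan is to follow the strategy used for curves of type $(1,2,2)$ on members of $\mcG'_6$. By Lemma~\ref{lem:classifCG10}, after an admissible change of coordinates we may assume $\Gamma$ is as in case (1) or case (2) of that lemma; in both cases $\Gamma \subset (x_0 = 0)$ and $\Gamma$ is cut out on $X'$ by $x_0$ together with two quadrics $q_1, q_2$. Take $\mcM := |\mcI_\Gamma(2 A)|$ and a general member $S \in \mcM$. One checks that $\Bs \mcM = \Gamma$, so $\Gamma$ is the unique base curve of $\mcM$, and that $S$ is a normal surface. Let $\varphi \colon Y' \to X'$ be a suitable divisorial extraction $\varphi_{(r_1,r_2)}$ of $\msp_4$ (chosen so that the proper transform $\tilde\Gamma$ meets the exceptional divisor $E$ transversally at a single point), put $\psi := \varphi|_{\tilde S} \colon \tilde S \to S$ and $E_\psi := E|_{\tilde S}$; since $S$ vanishes along $E$ to order $1$, adjunction gives $K_{\tilde S} = \psi^* K_S$.

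First I would use Lemma~\ref{lem:qsmcri} to show that $S$ is nonsingular along $\Gamma \setminus \{\msp_4\}$. Then, writing down explicitly the section cutting out $S$ on $X'$, the isomorphism between $E_\psi$ and the corresponding weighted hypersurface in the exceptional $\mbP(r_1,r_2,1,1)$, and the Jacobian matrix $J_\psi$ along $E_\psi$, a direct computation shows that $\tilde S$ has only du Val points of type $A$ along $E_\psi$, at finitely many explicitly listed points, and that $\tilde\Gamma$ meets $E_\psi$ away from the worst of them. Resolving these, one determines the extended dual graph $G(S,\msp_4,\Gamma)$; it is of type $A_{n,k}$ for small $n,k$, so by Lemma~\ref{lem:compselfint} together with $(K_S \cdot \Gamma) = (A \cdot \Gamma) = 1$,
\[
(\Gamma^2)_S = -3 + \frac{k(n-k+1)}{n+1} \le -1 .
\]
Since $(A^2 \cdot S) = 2(A^3) = 3$ and $(A \cdot \Gamma) = 1$, this yields $(A^2 \cdot S) - 2(A\cdot\Gamma) + (\Gamma^2)_S \le 0$, and Lemma~\ref{lem:criexclC1} shows $\Gamma$ is not a maximal center.

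The main obstacle is the case analysis pinning down the type of $G(S,\msp_4,\Gamma)$, and in particular case (2) of Lemma~\ref{lem:classifCG10}. There, when $f_5(x_0,x_1,0,0)$ is divisible by $x_0^2$, a naive computation would allow $\tilde S$ to acquire an extra singular point lying on $\tilde\Gamma$ (equivalently, $E_\psi$ becomes reducible in a way that lengthens the graph), which would make $(\Gamma^2)_S$ too large for the estimate above. This is exactly the subcase for which Lemma~\ref{lem:classifCG10} records the inequality $\tau - \sigma_0\sigma_1 \ne 0$ — a consequence of quasismoothness of the birational counterpart $X \in \mcG_{10}$ — and I would use it to show that $\tilde\Gamma$ avoids the offending point, so that the extended dual graph remains of the required size. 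Once this is settled the intersection-number estimate is routine.
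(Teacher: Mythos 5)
Your proposal follows essentially the same route as the paper's proof: take a general $S \in |\mcI_{\Gamma}(2A)|$, pass to the weighted blowup of $\msp_4$ with $\wt(x_0,x_1,y_0,y_1)=(1,1,3,2)$, determine the extended dual graph $G(S,\msp_4,\Gamma)$ by an explicit Jacobian computation (it comes out of type $A_{4,1}$, $A_{4,2}$ or $A_{5,2}$), and conclude via Lemmas \ref{lem:compselfint} and \ref{lem:criexclC1}. The one small discrepancy is in the delicate subcase of case (2) of Lemma \ref{lem:classifCG10}: the paper uses $\tau-\sigma_0\sigma_1\ne 0$ to show that $\tilde{S}$ is \emph{nonsingular} at the node $E_1\cap E_2$ of the reducible exceptional curve (the proper transform $\tilde{\Gamma}$ avoids that node automatically because $\beta\ne 0$), rather than to show $\tilde{\Gamma}$ avoids an extra singular point, but this does not change the outcome.
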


\begin{proof}
Let $S \in |\mcI_{\Gamma} (2 A)|$ be a general member and let $\varphi \colon Y' \to X'$ be the weighted blowup of $X'$ at $\msp_4$ with $\wt (x_0,x_1,y_0,y_1) = (1,1,3,2)$ with exceptional divisor $E$.
We set $\psi = \varphi |_{\tilde{S}} \colon \tilde{S} \to S$ and $E_{\psi} = E|_{\tilde{S}}$.
We see that $S$ is nonsingular along $\Gamma \setminus \{\msp_4\}$ by \cite[Lemma 2.5]{Okada3}.

Suppose that $\Gamma$ is as in (1) of Lemma \ref{lem:classifCG10}.
Then, $S$ is cut out by the section
\[
s := x_0 (\lambda_0 x_0 + \lambda_1 x_1 + \lambda w) + \mu (y_0 - \beta x_1^2) + \nu (y_1 - \gamma w x_1 - \delta x_1^2),
\]
where $\lambda_i, \lambda,\mu,\nu \in \mbC$ are general.
Note that $K_{\tilde{S}} = \psi^*K_S$.
We have
\[
E_{\psi} = (y_0 y_1 + \bar{f}_5 = \lambda x_0 - \nu \gamma x_1 = 0) \subset \mbP (1,1,2,3)
\]
and
\[
J_{\psi} =
\begin{pmatrix}
\frac{\prt \bar{f}_5}{\prt x_0} & \frac{\prt \bar{f}_5}{\prt x_1} & y_1 + \frac{\prt \bar{f}_5}{\prt y_0} & y_0 & H \\
\lambda & - \nu \gamma & 0 & 0 & x_0 (\lambda_0 x_0 + \lambda_1 x_1) + \mu (y_0-\beta x_1^2) - \mu \delta x_1^2
\end{pmatrix}.
\]
Since $f_5 (x_0,x_1,0,0) \ne 0$ as a polynomial and $\lambda, \nu$ are general, we may assume that $f_5 (x_0,x_1,0,0)$ is not divisible by $\lambda x_0 - \nu \gamma x_1$.
Then, we have
\[ 
\left( y_1 + \frac{\prt \bar{f}_6}{\prt y_0} = y_0 = 0 \right) \cap E_{\psi} = \emptyset,
\]
which implies that $J_{\psi}$ is of rank $2$ at every point of $E_{\psi}$.
Thus, singular points of $\tilde{S}$ consist of two points $\msq_1 := (0 \!:\! 0 \!:\! 1 \!:\! 0)$ and $\msq_2 := (0 \!:\! 0 \!:\! 0 \!:\! 1)$ that are of type $A_1$ and $A_2$, respectively. 
Let $\mbA^4$ be the orbifold chart of $Y'$ with affine coordinates $\tilde{x}_0,\tilde{x}_1,\tilde{y}_0,\tilde{y}_1$ such that $x_i = \tilde{x}_i \tilde{y}_1$, $y_0 = \tilde{y}_0 \tilde{y}_1^2$ and $y_0 = \tilde{y}_0^3$.
We see that $\mbZ_3$ acts on $\mbA^4$ as $\frac{1}{3} (1_{\tilde{x}_0},1_{\tilde{x}_1}, 2_{\tilde{y}_0}, 2_{\tilde{y}_1})$,
and the quotient $U := \mbA^4/\mbZ_3$ is an open subset of $Y'$ whose origin is $\msq_2$.
We see that $\tilde{S}$ is defined by $\tilde{y}_0 + \cdots = \lambda \tilde{x}_0 - \nu \gamma \tilde{x}_1 + \cdots = 0$ on $U$.
By eliminating $\tilde{y}_0$ and $\tilde{x}_0$, the germ $(\tilde{S}, \msq_2)$ is analytically isomorphic to $(\mbA^2_{\tilde{x}_1, \tilde{y}_1}/\mbZ_3, o)$.
Under the above isomorphism, $E_{\psi}$ and $\tilde{\Gamma}$ corresponds to $(\tilde{y}_1 = 0)$ and $(\tilde{x}_1 = 0)$, respectively.
Let $\hat{S} \to \tilde{S}$ be the weighted blowup of $\tilde{S}$ at $\msq_2$ with $\wt (\tilde{x}_0,\tilde{y}_1) = \frac{1}{3} (1,2)$ and denote by $F \cong \mbP (1,2)$ its exceptional divisor.
We see that $\hat{S}$ has a singular point $\hat{\msq}$ of type $A_1$ along $F$ and it is nonsingular along $F \setminus \{\hat{\msq}\}$.
Let $\hat{E}_{\psi}$ and $\hat{\Gamma}$ be the proper transforms of $E_{\psi}$ and $\tilde{\Gamma}$ on $\hat{S}$, respectively.
Then, $\hat{E}_{\psi}$ intersects $F$ at a nonsingular point and $\hat{\Gamma}$ intersects $F$ at $\hat{\msq}$.
Thus, by considering the blowup of $\hat{S}$ at $A_1$ singuar points $\hat{\msq}$ and $\msq_1$, we see that $G (S,\msp_4,\Gamma)$ is of type $A_{4,1}$.

Suppose that $\Gamma$ is as in (2) of Lemma \ref{lem:classifCG10}.
Then, $S$ is cut out by the section
\[
s := x_0 (\lambda_0 x_0 + \lambda_1 x_1 + \lambda w) + \mu (y_0 - \beta x_1^2) + \nu y_1,
\]
where $\lambda_i,\lambda,\mu,\nu \in \mbC$ are general.
Note that $K_{\tilde{S}} = \psi^*K_S$.
We have an isomorphism
\[
E_{\psi} \cong (x_0 = y_0 y_1 + \bar{f}_5 = 0) \subset \mbP (1_{x_0},1_{x_1},2_{y_0},3_{y_1})
\]
and
\[
J_{\psi} =
\begin{pmatrix}
\frac{\prt \bar{f}_5}{\prt x_0} & \frac{\prt \bar{f}_5}{\prt x_1} & y_1 + \frac{\prt \bar{f}_5}{\prt y_0} & y_0 & H \\
\lambda & 0 & 0 & 0 & x_0 (\lambda_0 x_0 + \lambda_1 x_1) + \mu (y_0-\beta x_1^2)
\end{pmatrix}.
\]
Note that $\tilde{\Gamma}$ intersects $E_{\psi}$ at $\msq_0 := (0 \!:\! 1 \!:\! \beta \!:\! 0)$.
We define
\[
\Sigma :=
\left( \frac{\prt \bar{f}_5}{\prt x_1} = y_1 + \frac{\prt \bar{f}_5}{\prt y_0} = y_0 = 0 \right) \cap E_{\psi}.
\]
If $f_5 (x_0,x_1,0,0)$ is not divisible by $x_0$, then $\Sigma = \emptyset$ and thus singular points of $\tilde{S}$ consists of $\msq_1$ and $\msq_2$ that are of type $A_1$ and $A_2$, respectively.
In this case, by considering successive blowups of $\tilde{S}$ at its singular points, we see that $G (S,\msp_4)$ is of type $A_{4,2}$.
Suppose that $f_5 (x_0,x_1,0,0)$ is divisible by $x_0$.
Then, we can write $\bar{f}_5 = y_0 (y_0 a_1 + a_3)$, where $a_i \in \mbC [x_0,x_1]$ and thus $E_{\psi} = E_1 + E_2$, where $E_1 = (x_0 = y_0 = 0)$ and $E_2 = (x_0 = y_1 +  y_0 a_1 + a_3 = 0)$.
Note that $\Sigma = \{\msq_3\}$, where $\msq_3 = (0 \!:\! 1 \!:\! 0 \!:\! - \sigma_0)$ is the intersection point $E_1 \cap E_2$.
We have
\[
J_{\psi} (\msq_3) = 
\begin{pmatrix}
\rho & 0 & 0 & 0 & - \sigma_0 \sigma_1 + \tau \\
\lambda & 0 & 0 & 0 & - \mu \beta
\end{pmatrix},
\]
where $\rho$ is the coefficient of $x_0 x_1^4$ in $f_5 (x_0,x_1,0,0)$ and $\sigma_i,\tau$ are as in Lemma \ref{lem:classifCG10}.
We claim that $\rank J_{\psi} (\msq_3) = 2$.
Since $\beta \ne 0$ and $\lambda,\mu$ are general, $\rank J_{\psi} (\msq_3) < 2$ if and only if $\rho = \tau - \sigma_0 \sigma_1 = 0$.
Note that $\rho = 0$ if and only if $f_5 (x_0,x_1,0,0)$ is divisible by $x_0^2$.
Hence, the case $\rho = \tau - \sigma_0 \sigma_1 = 0$ does not happen and we have $\rank J_{\psi} (\msq_3) = 2$.
It follows that $\tilde{S}$ is nonsingular at $\msq_3$ and thus singular points of $\tilde{S}$ consist of two points $\msq_1$ and $\msq_2$ that are of type $A_1$ and $A_2$.
Note that $\tilde{\Gamma}$ intersects $E_1$ at a point other than $\msq_1$ and does not intersect $E_2$.
This shows that, $G (S,\msp_4)$ is of type $A_{5,2}$.

By the above argument, the type of $G (S,\msp_4)$ is one of $A_{4,1}$, $A_{4,2}$ and $A_{5,2}$.
By Lemma \ref{lem:compselfint}, we have
\[
(\Gamma^2)_S \le -2 - (K_S \cdot \Gamma) + \frac{4}{3} = - \frac{5}{3}.
\]
Finally, we have
\[
(A^2 \cdot S) - 2 (A \cdot \Gamma) + (\Gamma^2)_S \le 3 -2 - \frac{5}{3} < 0.
\]
This shows that $\Gamma$ is not a maximal center.
\end{proof}

\subsection{Curves of degree $1$ on $X' \in \mcG'_{16}$.}

Let $X' = X'_7 \subset \mbP (1,1,2,3,1)$ be a member of $\mcG'_{16}$ and $\Gamma \subset X'$ an irreducible and reduced curve of degree $1$ that passes through $\msp_4$ but does not pass through the other singular points.
We see that $\Gamma$ is a WCI curve of type either $(1,1,6)$ or $(1,2,3)$.
Let $F' = w^2 y z + w f_6 + g_7$ be the defining polynomial of $X'$.
Since $X' \in \mcG'_{16}$, we have $z^2, y^3 \in f_6$ and $z y^2 \in g_7$.
After rescaling $y,z,w$, we assume that the coefficient of $z^2$ and $y^3$ in $f_6$ are both $1$.
Moreover, we assume that there is no monomial divisible by $y^3$ or $z^2$ in $g_7$ after replacing $y$ and $z$.

We claim that $\Gamma$ cannot be of type $(1,1,6)$.
Indeed, if $\Gamma$ of type $(1,1,6)$ passing through $\msp_4$, then $\Gamma = (x_0 = x_1 = h_6 = 0)$, where $h_6$ is a component of
\[
F' (0,0,y,z,w) = w^2 y z + w (z^2 + y^3) + \alpha y^2 z.
\]
Note that $\alpha \ne 0$.
This shows that $F' (0,0,y,z,w)$ is irreducible so that $h_6$ cannot be its component.
This is a contradiction.

In the following, we treat curves of type $(1,2,3)$.

\begin{Lem}
No curve of type $(1,2,3)$ on $X'$ is a maximal center. 
\end{Lem}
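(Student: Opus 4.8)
The plan is to show, for any such $\Gamma$, that $\Gamma$ is not a maximal center by exhibiting a movable linear system $\mcM$ whose unique base curve is $\Gamma$ and a general member $S$ of which is a normal surface with $(\Gamma^2)_S$ negative enough to apply Lemma \ref{lem:criexclC1}. Here $-K_{X'}\sim A$ and $(A^3)=7/6$, and besides the $cA$ point $\msp:=\msp_4$ the variety $X'$ has quotient singularities of types $\frac12(1,1,1)$ at $\msp_2$ and $\frac13(1,1,2)$ at $\msp_3$, neither of which lies on $\Gamma$; thus $X'$ is smooth along $\Gamma\setminus\{\msp\}$.

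First I would describe $\Gamma$. Its unique degree-$1$ equation lies in $\langle x_0,x_1\rangle$, so after a linear change of $x_0,x_1$ it is $x_1$ and $\Gamma=(x_1=q_2=q_3=0)$ with $q_2=y-c_2$, $q_3=z-c_3$; since $\msp\in\Gamma$, the form $c_2\in\mbC[x_0,w]_2$ has no $w^2$-term and $c_3\in\mbC[x_0,w]_3$ no $w^3$-term, so both are divisible by $x_0$. Let $\mcM\subset|3A|$ be the linear system generated by $x_1\cdot(\text{degree-}2\text{ monomials})$, $q_2\cdot(\text{degree-}1\text{ monomials})$ and $q_3$. A direct check gives $\Bs\mcM=\Gamma\cup\{\msp_2\}$, so $\Gamma$ is the unique base curve of $\mcM$. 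A general $S\in\mcM$ contains $\Gamma$, and I would verify by a Bertini-type argument (the differential of the cutting section along $\Gamma$ being a general combination of $dx_1,dq_2,dq_3$) that $S$ is smooth along $\Gamma\setminus\{\msp\}$; a short local computation using $z^2,y^3\in f_6$, $zy^2\in g_7$ shows that the order-$2$ part of a local equation of $S$ at $\msp$ is a product of two distinct linear forms, so $(S,\msp)$ is a du Val point of type $A_n$ with $n\le 5$ (the $cA$ point being $cA_5$), and in particular $S$ is normal.

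The heart of the argument is the computation of $(\Gamma^2)_S$. Since $\Gamma\cong\mbP^1$ and $S$ is smooth along $\Gamma\setminus\{\msp\}$, I would resolve $(S,\msp)$ and read off the extended dual graph $G(S,\msp,\Gamma)$. For this I take a suitable weighted blowup $\varphi=\varphi_{(r_1,r_2)}\colon Y'\to X'$ of $X'$ at the $cA$ point $\msp$, adapted to the tangent cone of $\Gamma$ there, with weights chosen so that $S$ has multiplicity $1$ along the exceptional divisor $E$ of $\varphi$, whence $K_{\tilde S}=\psi^*K_S$ for $\psi:=\varphi|_{\tilde S}$; I describe $E_\psi\subset E$ explicitly and locate the singular points of $\tilde S$ along $E_\psi$ by the Jacobian criterion. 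This identifies $G(S,\msp,\Gamma)$ as of type $A_{n,k}$ with $1\le k\le n\le 5$, and since $K_S=(K_{X'}+S)|_S=2A|_S$ and $(A\cdot\Gamma)=1$, Lemma \ref{lem:compselfint} yields
\[
(\Gamma^2)_S=-4+\frac{k(n-k+1)}{n+1}\le-\frac52,
\]
because $k(n-k+1)/(n+1)\le 3/2$ for $1\le k\le n\le 5$. As $(A^2\cdot S)-2(A\cdot\Gamma)+(\Gamma^2)_S\le\frac72-2-\frac52<0$ and $(\Gamma^2)_S\le 0$, Lemma \ref{lem:criexclC1} shows $\Gamma$ is not a maximal center.

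The main obstacle is exactly this local study at the $cA$ point $\msp$: the tangent cone of $\Gamma$ there, and with it the shape of $E_\psi$, of $\Sing(\tilde S)$ and of the resolution of $(S,\msp)$, depend on which monomials of $c_2,c_3$ and of $f_6,g_7$ survive, so the adapted weight $(r_1,r_2)$ and hence the dual graph change from case to case. One must check in every case that the generality conditions (Condition \ref{addcond}, together with $z^2,y^3\in f_6$ and $zy^2\in g_7$) rule out the degenerate configurations — those in which $E_\psi$ would fail to be reduced, or a type-$A$ singular point of $\tilde S$ would sit on $\tilde\Gamma$, or $G(S,\msp,\Gamma)$ would fail to be of type $A_{n,k}$ with $n\le5$. (Should the degree-$3$ system prove awkward in a given case, the alternative is to work instead with the degree-$2$ system $\langle x_0x_1,x_1^2,x_1w,q_2\rangle\subset|2A|$ and the divisor $T=(x_1=0)_{X'}\sim A$, writing $T|_S=\Gamma+\Delta$ with $\deg\Delta=4/3$, and to conclude via Lemma \ref{lem:criexclC2} after a case analysis on the irreducibility of $\Delta$; the same self-intersection computation $(\Gamma^2)_S\le-1/3$ underlies this variant as well.)
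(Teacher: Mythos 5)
Your strategy is sound in outline and your arithmetic checks out ($K_S=2A|_S$, $(A^2\cdot S)=7/2$, and the formula of Lemma \ref{lem:compselfint}), but the proof has a genuine gap exactly where you locate "the main obstacle": you never establish that for a general $S\in\mcM$ the germ $(S,\msp_4)$ is a normal du Val point of type $A_n$ with the extended dual graph of type $A_{n,k}$ and $k(n-k+1)/(n+1)$ small enough. The justification you offer --- "the $cA$ point being $cA_5$" --- only controls a \emph{general} hyperplane section. Your surface $S$ is constrained to contain $\Gamma$, and sections of a $cA_5$ germ $(yz+h(x_0,x_1)=0)$ adapted to a curve through the origin can degenerate badly: the quadratic part being a product of two distinct linear forms only guarantees a normal form $uv+p(t)$, and without further argument $p$ could vanish identically (non-normal $S$) or have large multiplicity, and $\hat{\Gamma}$ could meet the exceptional chain non-transversally or at a node of two components, in which case Lemma \ref{lem:compselfint} does not apply as stated. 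Ruling these out requires precisely the case-by-case exploitation of the coefficient relations forced by $\Gamma\subset X'$ (via the factorization of $F'$ restricted to the plane containing $\Gamma$) together with Condition \ref{addcond}; you defer all of this with "I would verify" and "one must check in every case," so the heart of the proof is missing rather than carried out.

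For comparison, the paper takes your parenthetical "alternative" as the main route: it works with a general $S\in|\mcI_{\Gamma}(2A)|$ and $T=(x_0=0)_{X'}\sim A$, factorizes $\bar F'$ on the plane containing $\Gamma$ to write $T|_S=\Gamma+\Delta$ with $\deg\Delta=4/3$, and uses the resulting relations among the coefficients to show in almost all cases that $\Delta$ (or each of its components) meets $\Gamma$ away from $\msp_4$, concluding by Lemma \ref{lem:criexclC2}. Only in the single residual case (where all the relevant coefficients vanish, forcing $\alpha=\beta=0$) does it pass to $S'\in|\mcI_\Gamma(3A)|$, perform the weighted blowup with $\wt(x_0,x_1,y,z)=(1,1,3,3)$, verify via the Jacobian criterion and the condition that $f_6(x_0,x_1,0,0)$ is not divisible by $x_0$ (a consequence of Condition \ref{addcond}) that the graph is exactly $A_{5,3}$, and obtain $(\Gamma^2)_{S'}=-5/2$. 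If you want to run your uniform dual-graph argument instead, you must supply the analogue of that verification in every configuration of $(c_2,c_3,f_6,g_7)$, which is where all the content of the lemma lies.
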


\begin{proof}
We can write
\[
\Gamma = (x_0 = y - \alpha w x_1 - \beta x_1^2 = z + \gamma w^2 x_1 + \delta w x_1^2 + \varepsilon x_1^3 = 0)
\]
for some $\alpha,\dots,\varepsilon \in \mbC$.
Let $S \in |\mcI_{\Gamma} (2A)|$ be a general member, which is nonsingular along $\Gamma \setminus \{\msp_4\}$ by \cite[Lemma 2.5]{Okada3}, and set $T := (x_0 = 0)_{X'}$.
We have
\[
\bar{F}'_{\alpha,\beta} := F' (0,x_1,\alpha w x_1 + \beta x_1^2,z,w) 
= \alpha^3 w^4 x_1^3 + \alpha w^3 z x_1 + w z^2 + \cdots,
\]
where the omitted part is a linear combination of monomials $w^3 x_1^4$ and $\{w^i z^j x_1^k \mid i+3j+k=7, i \le 2, j \le 1\}$.
Since $\Gamma \subset X'$, we have
\[
\bar{F}'_{\alpha,\beta} = (z + \gamma w^2 x_1 + \delta w x_1^2 + \varepsilon x_1^3) h_4 (x_1,z,w),
\]
for some $h_4$.
We can write down $h_4$ as $h_4 = w z + \lambda_1 w^3 x_1 + \lambda_2 w^2 x_1^2 + \lambda_3 w x_1^3 + \lambda_4 x_1^4$ for some $\lambda_1,\dots,\lambda_4 \in \mbC$.
We set $\Delta := (x_0 = y - \alpha w x_1 - \beta x_1^2 = h_4 = 0) \subset X'$.
Then $T|_S = \Gamma + \Delta$ and $\deg \Delta = 4/3$.
By comparing the coefficients of $w^5 x_1^2$, $w^4 x_1^3$ and $w^3 z x_1$, we have
\begin{equation} \label{eq:Cdge1G16}
\gamma \lambda_1 = 0, \gamma \lambda_2 + \delta \lambda_1 = \alpha^2, \gamma + \lambda_1 = \alpha.
\end{equation}

Suppose $(\gamma,\lambda_1) \ne (0,0)$.
Note that $\Delta$ intersects $\Gamma$ at $2$ points if $\lambda_1 - \gamma \ne 0$.
Since $\gamma \lambda_1 = 0$ and $(\gamma,\lambda_1) \ne (0,0)$, we have $\lambda_1 - \gamma \ne 0$ and hence $(\Gamma \cdot \Delta)_S \ge 2 > \deg \Delta$.
Thus, $\Gamma$ is not a maximal center if $\Delta$ is irreducible.
Suppose that $\Delta$ is reducible, that is, $\lambda_4 = 0$.
Then $\Delta = \Delta_1 + \Delta_2$, where $\Delta_1 = (x_0 = y - \alpha w x_1 - \beta x_1^2 = z + \lambda_1 w^2 x_1 + \lambda_2 w x_1^2 + \lambda_3 x_1^3 = 0)$ and $\Delta_2 = (x_0 = y - \alpha w x_1 - \beta x_1^2 = w = 0)$ are irreducible and reduced curves of degree $1$ and $1/3$, respectively. 
We see that both $\Delta_1$ and $\Delta_2$ intersect $\Gamma$ at a point other than $\msp_4$, which implies $(\Gamma \cdot \Delta_i) \ge 1 \ge \deg \Delta_i$ for $i = 1,2$.
Thus, $\Gamma$ is not a maximal center.

Suppose $\gamma = \lambda_1 = 0$.
Then, by the equations \eqref{eq:Cdge1G16}, we have $\alpha = 0$.
In this case, we have
\[
\bar{F}'_{0,\beta} = (z-\delta w x_1^2 - \varepsilon x_1^3)(w z + \lambda_2 w^2 x_1^2 + \lambda_3 w x_1^3 + \lambda_4 x_1^4).
\]
By comparing the coefficients of $w^3 x_1^4$, $w^2 z x_1^2$ and $w^2 x_1^5$, we have
\[
\delta \lambda_2 = 0, \lambda_2 = \beta, \varepsilon \lambda_2 + \delta \lambda_3 = 0.
\]
If $\lambda_2 \ne 0$ or $\lambda_2 = 0$ and $\delta \ne 0$, then, by the same argument as above, either $\Delta$ is irreducible and $(\Gamma \cdot \Delta)_S \ge 2 > \deg \Delta$ or $\Delta = \Delta_1 + \Delta_2$ splits as a sum of irreducible and reduced curves of degree $1$ and $1/3$ such that $(\Gamma \cdot \Delta)_S \ge 1 \ge \Delta_i$ for $i = 1,2$.
Thus, $\Gamma$ is not a maximal center.

In the following, we assume $\lambda_2 = \delta = 0$.
Note that $\beta = 0$.
Let $S' \in |\mcI_{\Gamma} (3 A)|$ be a general member, which is nonsingular along $\Gamma \setminus \{\msp_4\}$ by \cite[Lemma 2.5]{Okada3}.
We compute $(\Gamma^2)_{S'}$.
Let $\varphi \colon Y' \to X'$ be the weighted blowup at $\msp_4$ with $\wt (x_0,x_1,y,z) = (1,1,3,3)$ and with exceptional divisor $E$.
We set $\psi = \varphi|_{\tilde{S}'} \colon \tilde{S}' \to S'$ and $E_{\psi} = E|_{\tilde{S}'}$. 
Let $G$ and $H$ be the $\varphi$-weight $= 6$ and $7$ parts of $F (x_0,x_1,y,z,1)$, respectively.
We write $f_6 (x_0,x_1,0,z) = z^2 + z a_3 + a_6$, where $a_i \in \mbC [x_0,x_1]$.
Note that $G = y z + z^2 + z a_3 + a_6$.
The surface $S'$ is cut out by the section
\[
s := d_2 x_0 + e_1 y + \mu (z - \varepsilon x_1^3),
\]
where $d_2 = d_2 (x_0,x_1,y,w)$, $e_1 = e_1 (x_0,x_1,w)$ and $\mu \in \mbC$. 
Note that $K_{\tilde{S}'} = \psi^* K_{S'}$.
Let $\lambda \in \mbC$ be the coefficient of $w^2$ in $d_2$ so that $\lambda x_0$ is the $\varphi$-weight $=1$ part of $s (x_0,x_1,y,z,1)$.
Note that $a_6 = f_6 (x_0,x_1,0,0)$ is not divisible by $x_0$ because otherwise the system of equations $y = f_6 = g_7 = f_6 (x_0,x_1,0,0) = 0$ has a solution $(x_0,x_1,y,z) = (0,1,0,\varepsilon)$ and this is impossible by Condition \ref{addcond}. 
It follows that 
\[
E_{\psi} \cong (y z + z^2 + z a_3 + a_6 = x_0 = 0) \subset \mbP (1,1,3,3)
\]
is irreducible and reduced.
We have
\[
J_{\psi} =
\begin{pmatrix}
\frac{\prt G}{\prt x_0} & \frac{\prt G}{\prt x_1} & \frac{\prt G}{\prt y} & \frac{\prt G}{\prt z} & H \\
\lambda & 0 & 0 & 0 & t
\end{pmatrix},
\]
where $t = t (x_0,x_1,y,z)$ is the $\varphi$-weight $= 2$ part of $s (x_0,x_1,y,z,1)$. 
Since $x_0 \nmid a_6$, we have
\[
\left(\frac{\prt G}{\prt y} = \frac{\prt G}{\prt z} = 0 \right) \cap E_{\psi} = (z = y + a_3 = a_6 = x_0 = 0) = \emptyset.
\]
It follows that the singular points of $\tilde{S}'$ along $E_{\psi}$ consists of two $A_2$ points $\msq_1 := (0 \!:\! 0 \!:\! 1 \!:\! 0)$ and $\msq_2 := (0 \!:\! 0 \!:\! 1 \!:\! -1)$.
Moreover, $\tilde{\Gamma}$ intersects $E_{\psi}$ at $(0 \!:\! 1 \!:\! 0 \!:\! \varepsilon) \ne \msq_1, \msq_2$.
By considering successive blowups at $A_2$ points $\msq_1$ and $\msq_2$, we see that $G (S,\msp_4,\Gamma)$ is of type $A_{5,3}$.
By Lemma \ref{lem:compselfint}, $(\Gamma^2)_S = -2 - 2 \deg \Gamma + 3/2 = -5/2$.
It follows that 
\[
(A^2 \cdot S) -  2(A \cdot \Gamma) + (\Gamma^2)_S = \frac{7}{2} - 2 - \frac{5}{2} < 0,
\]
which implies that $\Gamma$ is not a maximal center. 
\end{proof}

\subsection{Curves of degree $1/2$ on $X' \in \mcG'_{18}$.}

Let $X' = X'_8 \subset \mbP (1,1,2,3,2)$ be a member of $\mcG'_{18}$ with defining polynomial $w^2 x_0 z + w f_6 + g_8 = 0$ and $\Gamma \subset X'$ an irreducible and reduced curve of degree $1/2$ that passes through $\msp_4$ but does not pass through the other singular points.
We see that $\Gamma$ is a WCI curve of type either $(1,1,6)$ or $(1,2,3)$.

\begin{Lem}
No curve of type $(1,1,6)$ on $X'$ is a maximal center.
\end{Lem}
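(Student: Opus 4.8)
The plan is to show that a curve $\Gamma$ of type $(1,1,6)$ through $\msp_4$ cannot exist on a member $X'$ of $\mcG'_{18}$, which immediately makes the statement vacuously true. First I would note that a WCI curve of type $(1,1,6)$ through $\msp_4 = (0\!:\!0\!:\!0\!:\!0\!:\!1)$ must be contained in the locus where the two degree-$1$ coordinates vanish, i.e.\ $\Gamma \subset (x_0 = x_1 = 0)$; more precisely, after replacing $x_0, x_1$ by a suitable basis of the linear system, we may write $\Gamma = (x_0 = x_1 = h_6 = 0)$ where $h_6 \in \mbC[y,z,w]$ is homogeneous of degree $6$. Since $\Gamma \subset X'$, the polynomial $h_6$ must be a factor of the restriction $\bar F' := F'(0,0,y,z,w)$, a polynomial of degree $8$ in $\mbP(2_y,3_z,2_w)$.

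The key step is to write $\bar F'$ explicitly and argue it is irreducible, or at least has no degree-$6$ factor that would cut out an irreducible reduced curve. Recall (cf.\ Remark~\ref{remcond} and the discussion of family No.~$18$) that for $X' \in \mcG'_{18}$ the standard defining polynomial has the form $F' = w^2 x_0 z + w f_6 + g_8$ with $z^2 \in f_6$ and $y^3 \in g_8$ forced by the generality conditions; indeed the monomials appearing (the analogues of those used in Table~\ref{table:eq} and Example~\ref{excondX'}) guarantee that $\bar F'$ contains $w z^2$ and $y^3 w$ (and also a $z y^2$-type term). After re-scaling so the coefficients of $w z^2$ and $w y^3$ are both $1$, we have $\bar F' = w z^2 + w y^3 + (\text{terms of the form } w \cdot (\text{stuff}) + z y^2 \cdot(\text{stuff}) + \dots)$. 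The crucial observation is that $\bar F'$ is divisible by $w$ only if all non-$w$ terms vanish, which is excluded by $zy^2 \in g_8$ (or the corresponding condition), so $w \nmid \bar F'$; and then a short case analysis on possible factorizations $\bar F' = h_6 \cdot h_2$ in the graded ring of $\mbP(2,3,2)$ shows no such factorization exists, because the degree-$2$ factor $h_2$ would have to be (up to scalar) $w$ or $y$, and neither divides $\bar F'$. Hence $\bar F'$ has no degree-$6$ component, so $X'$ contains no curve of type $(1,1,6)$ through $\msp_4$.

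The main obstacle is pinning down exactly which monomials are guaranteed to appear in $\bar F'$ by the generality conditions imposed on $\mcG'_{18}$ — i.e.\ invoking Condition~\ref{addcond} (the system $x_0 = f_6 = z b_5 + b_8 = \partial f_6/\partial z = 0$ having no nontrivial solution, and $x_0 = b_5 = b_8 = 0$ having none) and translating it into nonvanishing of the relevant coefficients of $\bar F'$. Once that bookkeeping is done, the irreducibility/non-factorization of $\bar F'$ is routine. I would phrase the proof as: assume $\Gamma$ is of type $(1,1,6)$ through $\msp_4$, deduce $\Gamma = (x_0 = x_1 = h_6 = 0)$ with $h_6 \mid \bar F'$, then show $\bar F' = F'(0,0,y,z,w)$ is irreducible (using the forced monomials $wz^2$, $wy^3$, $zy^2$ and the conditions above), obtaining a contradiction, exactly parallel to the arguments already given for $\mcG'_6$, $\mcG'_{10}$ and $\mcG'_{16}$ in the preceding subsections.

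\begin{proof}
Suppose $\Gamma \subset X'$ is an irreducible and reduced curve of type $(1,1,6)$ passing through $\msp_4$. Then $\Gamma$ is contained in $(x_0 = x_1 = 0)$, so after replacing $x_0, x_1$ we may write $\Gamma = (x_0 = x_1 = h_6 = 0)$ for some $h_6 \in \mbC[y,z,w]$ of degree $6$. Since $\Gamma \subset X'$, the polynomial $h_6$ divides
\[
\bar{F}' := F'(0,0,y,z,w) \in \mbC[y,z,w].
\]
By the generality conditions on $\mcG'_{18}$, we have $z^2 \in f_6$ and, after re-scaling, $w z^2 \in \bar{F}'$ and $w y^3 \in \bar{F}'$ with both coefficients equal to $1$; moreover $z y^2 \in g_8$ so that $z y^2 \in \bar{F}'$. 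In particular $w \nmid \bar{F}'$, since otherwise the term $z y^2$ would be absent.

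Now $\bar{F}'$ is a homogeneous element of degree $8$ in $\mbP(2_y, 3_z, 2_w)$. If $\bar{F}' = h_6 \cdot h_2$ with $h_2$ of degree $2$, then $h_2$ is, up to a nonzero scalar, either $y$ or $w$. The case $h_2 = w$ is impossible since $w \nmid \bar{F}'$. If $h_2 = y$, then $y \mid \bar{F}'$, contradicting $w z^2 \in \bar{F}'$. Hence $\bar{F}'$ admits no factorization of this shape, so it has no degree-$6$ component. This contradicts $h_6 \mid \bar{F}'$. Therefore $X'$ contains no curve of type $(1,1,6)$ through $\msp_4$, and a fortiori no such curve is a maximal center.
\end{proof}
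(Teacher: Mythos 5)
Your proposal has genuine gaps, the most important being that the non-existence claim it rests on is not established and is not what the paper proves. First, the degree bookkeeping is off: in $\mbP(1,1,2,3,2)$ the monomial $zy^2$ has degree $7$ and so cannot appear in $g_8$, and $y^3$ has degree $6$ and cannot appear in $g_8$ either (the forced monomials you quote belong to family No.~$16$, where $X'_7\subset\mbP(1,1,2,3,1)$ and $y^2z\in g_7$; for family No.~$18$ the only monomial condition in the big table is $z^2\in f_6$). Consequently your argument that $w\nmid\bar F'$ has no basis. Second, and independently, your factorization analysis in $\mbP(2_y,3_z,2_w)$ is incomplete: a degree-$2$ form is a general linear combination $ay+bw$, not just $y$ or $w$ up to scalar. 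Writing $\bar F'=w(z^2+\alpha y^3)+\delta yz^2+\beta y^4$, one checks that $\bar F'$ is divisible by $ay+w$ exactly when $a=\delta$ and $\beta=\delta\alpha$; this is the case you omit, and it is precisely the case in which a curve of type $(1,1,6)$ through $\msp_4$ does lie on $X'$, namely $\Gamma=(x_0=x_1=z^2+\alpha y^3=0)$.

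The paper's proof therefore takes the opposite route: after normalizing $w$ so that $\bar F'=w(z^2+\alpha y^3)+\gamma y^4$, it deduces from the existence of $\Gamma$ that $\gamma=0$ and $\Gamma=(x_0=x_1=z^2+\alpha y^3=0)$ with $\alpha\ne 0$ (by reducedness), and then \emph{excludes} this curve as a maximal center by intersection theory: for general $S,T\in|\mcI_\Gamma(A)|$ one has $T|_S=\Gamma+\Delta$ with $\Delta=(x_0=x_1=w=0)$, $S$ is nonsingular along $\Gamma\setminus\{\msp_4\}$, and $(\Gamma\cdot\Delta)_S\ge 1>(A\cdot\Delta)$, which triggers the exclusion criterion of Lemma \ref{lem:criexclC2}. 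This entire step is absent from your proposal. If you wish to salvage the non-existence strategy, you would have to show that Condition \ref{addcond} for family No.~$18$ actually forces the $y^4$-coefficient $\gamma$ to be nonzero; the paper does not rely on this, and your write-up neither states nor verifies it, so as it stands the proof does not go through.
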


\begin{proof}
Suppose that $\Gamma$ is of type $(1,1,6)$.
We have $\Gamma = (x_0 = x_1 = h_6 = 0)$ for some $h_6 \in \mbC [y,z,w]$.
Since $z^2 \in f_6$, we may write $f_6 (0,0,y,z) = z^2 + \alpha y^3$ and $h_8 (0,0,y,z) = \beta y^4 = 0$ for some $\alpha,\beta \in \mbC$ after replacing $w$.
Hence,
\[
(x_0 = x_1 = 0)_{X'} = (x_0 = x_1 = w (z^2 + \alpha y^3) + \gamma y^4 = 0).
\]
It follows that $\gamma = 0$ and $\Gamma = (x_0 = x_1 = z^2 + \alpha y^3 = 0)$.
Note that $\alpha \ne 0$ since $\Gamma$ is reduced.
Let $S$ and $T$ be general members of the pencil $|\mcI_{\Gamma} (A)|$.
We have $T|_S = \Gamma + \Delta$, where $\Delta = (x_0 = x_1 = w = 0)$ is of degree $1$.
We see that $S$ is nonsingular along $\Gamma \setminus \{\msp_4\}$ by Lemma \ref{lem:qsmcri} and $\Gamma$ intersects $\Delta$ in a nonsingular point.
Thus $(\Gamma \cdot \Delta) \ge 1 > (A \cdot \Delta)$, which shows that $\Gamma$ is not a maximal center.
\end{proof}

\begin{Lem}
No curve of type $(1,2,3)$ on $X'$ is a maximal center.
\end{Lem}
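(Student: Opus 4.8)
The plan is to follow the pattern of the earlier curve-exclusion lemmas, in particular the treatment of degree-$\tfrac12$ curves on $X'\in\mcG'_7$ (Lemma \ref{lem:exclCG7}) and of degree-$2$ curves on $X'\in\mcG'_6$. Here $\deg\Gamma=\tfrac12$ and $(A^3)=\tfrac23$. First I would classify such $\Gamma$ up to admissible coordinate changes. Write $\Gamma=(\ell_1=q_2=c_3=0)$ with $\deg\ell_1=1$, $\deg q_2=2$, $\deg c_3=3$. Since the admissible changes act on the degree-$1$ coordinates only by $x_0\mapsto\lambda x_0$ and $x_1\mapsto(\text{linear in }x_0,x_1)$, the classification splits into the two cases $\Gamma\subset(x_1=0)$ and $\Gamma\subset(x_0=0)$. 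In each case $\msp_4\in\Gamma$ forces $q_2$ to carry no $w$-monomial, so after a shift one may take $q_2=y$ and $c_3=z-\gamma\,\xi w-\delta\,\xi^3$, where $\xi$ is the surviving degree-$1$ coordinate; imposing $\Gamma\subset X'$ then yields a short list of polynomial identities among the coefficients of $f_6$ and $g_8$, and the conditions on family No.~$18$ recorded in Table \ref{table:eq} (that $x_0=b_5=b_8=0$ and $x_0=f_6=zb_5+b_8=\prt f_6/\prt z=0$ have no nontrivial solution) are used to discard the degenerate shapes, just as in Example \ref{excondX'}. In the case $\Gamma\subset(x_0=0)$ the term $w^2x_0z$ of $F'$ vanishes identically on $\Gamma$, but the subsequent analysis is the same.

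For the exclusion itself, in the generic sub-cases I would set $T:=(\ell_1=0)_{X'}\sim A$ and take $S$ a general member of $\mcM:=|\mcI_\Gamma(2A)|$, which is spanned by $\ell_1x_0$, $\ell_1x_1$ and $q_2$; by Lemma \ref{lem:qsmcri} (or \cite[Lemma 2.5]{Okada3}), $S$ is quasismooth, hence normal, along $\Gamma\setminus\{\msp_4\}$. Then $T|_S=\Gamma+\Delta$ with $\deg\Delta=2(A^3)-\tfrac12=\tfrac56$, and inspection of the defining equations shows that $\Delta$ — or, when $\Delta$ is reducible, each of its irreducible components — meets $\Gamma$ at a point of $S$ other than $\msp_4$, which is a nonsingular point of $S$; hence $(\Gamma\cdot\Delta)_S\ge1>\deg\Delta$ (respectively $(\Gamma\cdot\Delta_i)_S\ge1\ge\deg\Delta_i$ componentwise). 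Verifying the remaining hypotheses of Lemma \ref{lem:criexclC2} as in the analysis of degree-$1$ curves on $X'\in\mcG'_9$ and in Lemma \ref{lem:exclCG7} then gives that $\Gamma$ is not a maximal center.

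The main obstacle is the one remaining configuration, in which $\Gamma$ and the residual curve meet only at $\msp_4$; there I would compute $(\Gamma^2)_S$ directly. Take $S$ general in $\mcM:=|\mcI_\Gamma(3A)|$ — for which $\Gamma$ is now the \emph{unique} base curve, since $c_3\in\mcI_\Gamma(3)$ — and let $\varphi\colon Y'\to X'$ be a suitable weighted blowup of $X'$ at $\msp_4$, chosen so that $K_{\tilde S}=\psi^*K_S$ for $\psi:=\varphi|_{\tilde S}$; then $(K_S\cdot\Gamma)_S=2\deg\Gamma=1$, since $K_S\sim 2A|_S$. The singularities of $\tilde S$ along $E_\psi:=E|_{\tilde S}$ are all of type $A_k$, and the extended dual graph $G(S,\msp_4,\Gamma)$ — which will be of type $A_{n,k}$ for a small $n$ — depends on whether the relevant restriction of $f_6$ is divisible by $\xi$ or by $\xi^2$; here again the Table \ref{table:eq} conditions for No.~$18$, re-expressed through quasismoothness of the birational counterpart $X$ of $X'$, exclude the worst degenerations, just as in Lemmas \ref{excltqsingNo21_2} and \ref{lem:G6Cdeg2classif}. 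By Lemma \ref{lem:compselfint}, $(\Gamma^2)_S=-2-1+\tfrac{k(n-k+1)}{n+1}=-3+\tfrac{k(n-k+1)}{n+1}\le-1$ in every occurring case, whence $(A^2\cdot S)-2(A\cdot\Gamma)+(\Gamma^2)_S=2-1+(\Gamma^2)_S\le0$; since also $(\Gamma^2)_S\le0$, Lemma \ref{lem:criexclC1} shows that $\Gamma$ is not a maximal center, completing the proof.
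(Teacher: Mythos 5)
Your first two paragraphs reproduce the paper's proof in outline: split into $\Gamma\not\subset(x_0=0)$ (so $\Gamma\subset(x_1=0)$ after a coordinate change) and $\Gamma\subset(x_0=0)$, take $S\in|\mcI_\Gamma(2A)|$ and $T\in|\mcI_\Gamma(A)|$ general, write $T|_S=\Gamma+\Delta$ with $\deg\Delta=5/6$, and check component by component that $\Delta$ meets $\Gamma$ at a nonsingular point of $S$ away from $\msp_4$, so that Lemma \ref{lem:criexclC2} applies; the factorization identities and Condition \ref{addcond} enter exactly where you say they do. The one substantive divergence is your third paragraph: the configuration you flag as ``the main obstacle,'' in which $\Gamma$ and the residual curve meet only at $\msp_4$, never actually occurs for family No.~18. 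In the case $\Gamma\not\subset(x_0=0)$ the factorization of $F'(x_0,0,0,z,w)$ forces the two factors (or, in the reducible case, each component of $\Delta$) to meet $\Gamma$ away from $\msp_4$ with no generality hypothesis needed; in the case $\Gamma\subset(x_0=0)$ one gets $\lambda=\delta=0$ automatically, and Condition \ref{addcond} (no nontrivial solution of $x_0=f_6=zb_5+b_8=\prt f_6/\prt z=0$ or of $x_0=b_5=b_8=0$) yields $\mu\ne\gamma$ and $\varepsilon\ne0$, i.e.\ $\Delta$ is irreducible and meets $\Gamma$ away from $\msp_4$. So the dual-graph computation is not needed. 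Had it been, your claim that $(\Gamma^2)_S=-3+\tfrac{k(n-k+1)}{n+1}\le-1$ ``in every occurring case'' would be a genuine gap: it requires $\tfrac{k(n-k+1)}{n+1}\le2$, which is false for, e.g., type $A_{9,5}$, so you would have to determine the actual type of $G(S,\msp_4,\Gamma)$ rather than appeal to $n$ being ``small.''
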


\begin{proof}
Let $\Gamma \subset X'$ be a curve of type $(1,2,3)$ passing through $\msp_4$.
Let $S \in |\mcI_{\Gamma} (2 A)|$ and $T \in |\mcI_{\Gamma} (A)|$ be general members.
We will show that $T|_S = \Gamma + \Delta$, where $\Delta$ is an effective divisor such that $(\Gamma \cdot \Delta_i)_S \ge \deg \Delta_i$ for every irreducible component $\Delta_i$ of $\Delta$.
This shows that $\Gamma$ is not a maximal center.
Note that $S$ is nonsingular along $\Gamma \setminus (\Gamma \cap (x_0 = x_1 = 0)) \cup \{\msp_4\}$ by \cite[Lemma 2.5]{Okada3}.
Since $\Gamma \cap (x_0 = x_1 = 0) = \{\msp_4\}$ (see the descriptions of $\Gamma$ below), $S$ is nonsingular along $\Gamma \setminus \{\msp_4\}$.

Suppose $\Gamma \not\subset (x_0 = 0)$.
Then, after replacing $x_1$ and $y$, we have $\Gamma = (x_1 = y = z + \lambda w x_0 + \mu x_0^3 = 0)$ for some $\lambda,\mu \in \mbC$.
We have
\[
\bar{F}' := F' (x_0,0,0,z) = w^2 x_0 z + w (z^2 + \alpha_6 x_0^6) + \beta_5 z x_0^5 + \beta_8 x_0^8,
\]
where $\alpha_6, \beta_5$ and $\beta_8$ are the coefficients of $w x_0^6$, $z x_0^5$ and $x_0^8$ in $F$. 
Since $\Gamma \subset X'$, we have
\[
\bar{F}' = (z-\lambda w x_0 + \lambda w x_0 + \mu x_0^3)(\gamma w^2 x_0 + w z + \varepsilon w x_0^2 + \zeta z x_0^2 + \eta x_0^5)
\]
for some $\gamma,\dots,\eta \in \mbC$. 
By comparing the coefficients of $w^3 x_0^2$, $w^2 z x_0$, $w^2 x_0^4$ and $z^2 x_0$, 
\[
\lambda \gamma = 0, \gamma + \lambda = 1, \lambda \varepsilon + \mu \gamma = 0, \zeta = 0.
\]
Solving these equations, we have either $\lambda = \mu = \zeta = 0$ and $\gamma = 1$ or $\gamma = \varepsilon = \zeta = 0$ and $\lambda = 1$.
Note that $T|_S = \Gamma + \Delta$, where 
\[
\Delta = (x_1 = y = \gamma w^2 x_0 + w z + \varepsilon w x_0^3 + \eta x_0^5 = 0).
\]
If $\Delta$ is irreducible, then it is reduced and it intersects $\Gamma$ at two points other than $\msp_4$, which implies $(\Gamma \cdot \Delta)_S \ge 2 > \deg \Delta = 5/6$.
Suppose that $\Delta$ is reducible.
Then, $\Delta = \Delta_1 + \Delta_2$, where $\Delta_1 = (x_1 = y = w + \zeta x_0^2 = 0)$ and $\Delta_2 = (x_1 = y = z + \gamma w x_0 + \theta x_0^3 = 0)$ for some $\theta \in \mbC$.
Note that $\Delta_i \ne \Gamma$ for $i = 1,2$ and hence $\Delta$ is reduced.
We see that both $\Delta_1$ and $\Delta_2$ intersect $\Gamma$ at a point other than $\msp_4$ so that $(\Gamma \cdot \Delta_i)_S \ge 1 > \deg \Delta_i$ for $i = 1,2$.

Suppose $\Gamma \subset (x_0 = 0)$.
Then, after replacing $y$, we have $\Gamma = (x_0 = y = z - \lambda w x_1 - \mu x_1^3 = 0)$ for some $\lambda,\mu \in \mbC$.
We have
\[
\bar{F} := F (0,x_1,0,z,w) = w (z^2 + \alpha_3 z x_1^3 + \alpha_6 x_1^6) + \beta_5 z x_1^5 + \beta_8 x_1^8,
\]
where $\alpha_5,\alpha_8,\beta_5$ and $\beta_8$ are the coefficients of $w z x_1^3$, $w x_1^6$, $z x_1^5$ and $x_1^8$, respectively.
Since $\Gamma \subset X'$, we have
\[
\bar{F} = (z + \lambda w x_1 + \mu x_1^3)(w z + \gamma w x_1^3 + \delta z x_1^2 + \varepsilon x_1^5),
\]
for some $\gamma,\dots,\varepsilon \in \mbC$.
By comparing the coefficients of $w^2 z x_1$ and $z^2 x_1$, we have $\lambda = \delta = 0$.
Moreover, by comparing the other terms, we have $\alpha_3 = \mu + \gamma$, $\alpha_6 = \mu \gamma$, $\beta_5 = \varepsilon$ and $\beta_8 = \mu \varepsilon$.
If $\gamma = \mu$, then $x_0 = f_6 = g_8 = \prt f_6/\prt z = 0$ has a solution $(x_0,x_1,y,z) = (0,1,0,-\mu)$.
If $\varepsilon = 0$, then $\beta_5 = \beta_8 = 0$ and hence $x_0 = b_5 = b_8 = 0$ has a solution $(x_0,x_1,y) = (0,1,0)$.
These are impossible by Condition \ref{addcond} and we have $\mu \ne \gamma$ and $\varepsilon \ne 0$. 
We see that $\Delta = (x_0 = y = w z + \gamma w x_1^3 + \varepsilon x_1^5 = 0)$ is irreducible and reduced since $\varepsilon \ne 0$, and $\Delta$ intersects $\Gamma$ at a point other than $\msp_4$ since $\mu \ne \gamma$.
It follows that $(\Gamma \cdot \Delta)_S \ge 1 = \deg \Delta = 5/6$.
This completes the proof.
\end{proof}

\subsection{Curves of degree $1/3$ on $X' \in \mcG'_{21}$.} \label{sec:CG21}

Let $X' = X'_9 \subset \mbP (1,1,2,3,3)$ be a member of $\mcG'_{21}$ and $\Gamma \subset X'$ an irreducible and reduced curve of degree $1/3$ that passes through $\msp_4$ but does not pass through the other singular points. 
Let $F' = w^2 x_0 y + w f_6 + g_9$ be the defining polynomial of $X'$.
We have $z^2, y^3 \in f_6$.
After replacing $w$, we assume that $z^3 \notin g_9$.
Then, we have $z y^3 \in g_9$.

We see that $\Gamma$ is a WCI curve of type either $(1,1,6)$ or $(1,2,3)$.
We claim that $\Gamma$ cannot be of type $(1,1,6)$.
Indeed, if $\Gamma$ is of type $(1,1,6)$, then $\Gamma = (x_0 = x_1 = h_6 = 0)$ for some $h_6 \in \mbC [y,z,w]$ of degree $6$.
On the other hand, we have
\[
F (0,0,y,z,w) = w (\alpha z^2 + \beta y^3) + \gamma z y^3,
\]
where $\alpha,\beta,\gamma \in \mbC$ are non-zro.
Hence, $F (0,0,y,z,w)$ is irreducible and $X'$ cannot contain $\Gamma$.

\begin{Lem}
No curve of type $(1,2,3)$ on $X'$ that passes through $\msp_4$ is a maximal center.
\end{Lem}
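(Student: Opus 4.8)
The plan is to adapt the argument given above for curves of degree $1$ on members of $\mcG'_{16}$, which is the structurally closest case. Write the standard defining polynomial of $X'$ as $F' = w^2 x_0 y + w f_6 + g_9$, with $z^2, y^3 \in f_6$ and, after replacing $w$, $z^3 \notin g_9$ so that $z y^3 \in g_9$. Since $\Gamma$ is a WCI curve of type $(1,2,3)$ passing through $\msp_4$, it is cut out on $X'$ by forms $\ell, q, c$ of degrees $1, 2, 3$ all vanishing at $\msp_4$; in particular $\ell$ is a linear form in $x_0, x_1$ and $c$ involves no monomial $w$, and after an admissible change of coordinates I may assume either $\Gamma \subset (x_0 = 0)$ or $\Gamma \subset (x_1 = 0)$. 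First I would substitute a parametrization of $\Gamma$ into $F'$, use the resulting identity $F'|_\Gamma \equiv 0$, and compare coefficients (exactly as in Lemma \ref{lem:classifCG10} and in the $\mcG'_{16}$ computation) to reduce to a short list of normal forms for $\Gamma$; the generality hypotheses $z^2, y^3 \in f_6$, $z y^3 \in g_9$ together with Condition \ref{addcond} are precisely what rule out the degenerate shapes that would make $\Gamma$ reducible or non-reduced, or would contradict quasismoothness of $X'$ or of its birational counterpart $X$.

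For each normal form I would then take the hyperplane section $T := (\ell = 0)_{X'} \sim_\mbQ A$ containing $\Gamma$ and a general member $S$ of $\mcM := |\mcI_\Gamma(2 A)|$, and decompose $T|_S = \Gamma + \Delta$ into effective $1$-cycles. A general such $S$ is normal and, by \cite[Lemma 2.5]{Okada3} (or Lemma \ref{lem:qsmcri}), is nonsingular along $\Gamma \setminus \{\msp_4\}$. In the ``generic'' normal forms every component $\Delta_i$ of $\Delta$ meets $\Gamma$ along the smooth locus of $S$ at points other than $\msp_4$, so that counting these points gives $(\Gamma \cdot \Delta_i)_S \ge \deg \Delta_i = (A \cdot \Delta_i)_S$; after verifying the remaining hypotheses (normality of $S$, reducedness of $T|_S$ along $\Gamma$), Lemma \ref{lem:criexclC2}, applied with $m = 1$ and with the $\Delta_i$ as auxiliary cycles, shows that $\Gamma$ is not a maximal center.

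The genuinely delicate situation is the one remaining normal form in which $\Delta$ meets $\Gamma$ only at $\msp_4$, so the intersection count is unavailable. Here I would pass to a general member $S \in |\mcI_\Gamma(3 A)|$ and compute $(\Gamma^2)_S$ by determining the extended dual graph $G(S, \msp_4, \Gamma)$. Let $\varphi \colon Y' \to X'$ be a suitable divisorial extraction centered at $\msp_4$ (one of the weighted blowups $\varphi_{(r_1, r_2)}$ of Section \ref{sec:divextr}, chosen as in the $\mcG'_{16}$ argument), put $\psi := \varphi|_{\tilde{S}} \colon \tilde{S} \to S$ and $E_\psi := E|_{\tilde{S}}$, and choose the section cutting out $S$ so that it vanishes along $E$ to the order making $K_{\tilde{S}} = \psi^* K_S$ by adjunction. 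I would then describe $E_\psi$ explicitly as a hypersurface in a weighted projective plane, compute the Jacobian matrix $J_\psi$ along $E_\psi$, and use Condition \ref{addcond} and quasismoothness of $X'$ and $X$ to show that the singularities of $\tilde{S}$ along $E_\psi$ are a small number of cyclic quotient (type $A_n$) points all lying away from the point $\tilde{\Gamma} \cap E_\psi$; hence $G(S, \msp_4, \Gamma)$ has type $A_{n,k}$ for explicit small $n, k$. Lemma \ref{lem:compselfint} then bounds $(\Gamma^2)_S$ from above, and since $(A^3) = 1/2$ the resulting value gives $(A^2 \cdot S) - 2 (A \cdot \Gamma) + (\Gamma^2)_S = \frac{3}{2} - \frac{2}{3} + (\Gamma^2)_S < 0$, so Lemma \ref{lem:criexclC1} applies and $\Gamma$ is not a maximal center. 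The main obstacle is exactly this last step: one must carry out a careful case analysis to exclude spurious singular points of $\tilde{S}$ on $E_\psi$ and to pin down the precise type $A_{n,k}$, and although the bookkeeping is entirely parallel to the $\mcG'_{16}$ computation already carried out, it is intricate.
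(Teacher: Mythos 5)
Your setup and your treatment of the generic configurations agree with the paper: one puts $\Gamma$ into a normal form by substituting a parametrization into $F'=w^2x_0y+wf_6+g_9$ and comparing coefficients, uses Condition \ref{addcond} (via quasismoothness of the birational counterpart) to kill the degenerate shapes, takes a general $S\in|\mcI_{\Gamma}(2A)|$ and $T\in|\mcI_{\Gamma}(A)|$ with $T|_S=\Gamma+\Delta$, and applies Lemma \ref{lem:criexclC2} when every component of $\Delta$ meets $\Gamma$ on the smooth locus of $S$ away from $\msp_4$. The divergence is in the bad case, which in the paper is the reducible case $\Delta=\Delta_1+\Delta_2$ with $\Delta_2=(w=0)$-type and $\Delta_1$ meeting $\Gamma$ only at $\msp_4$. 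There the paper does \emph{not} compute $(\Gamma^2)_S$ at all: it stays inside Lemma \ref{lem:criexclC2} and takes the auxiliary cycles $\Xi_2=\Delta_2$ and $\Xi_1=\Delta_1+\varepsilon\Delta_2$ with $\tfrac12\le\varepsilon\le\tfrac35$, using only $(\Gamma\cdot\Delta_1)_S\ge 0$, $(\Gamma\cdot\Delta_2)_S=(\Delta_1\cdot\Delta_2)_S=1$ and $(\Delta_2^2)_S=-\tfrac53$ (the last from $(T|_S\cdot\Delta_2)_S=\tfrac13$). This window of $\varepsilon$ simultaneously gives $(\Gamma\cdot\Xi_1)_S\ge\deg\Xi_1$ and $(\Xi_1\cdot\Delta_2)_S\ge0$, so no resolution of the $cA/3$ point is needed.

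Your alternative for that case --- pass to $S\in|\mcI_{\Gamma}(3A)|$, determine $G(S,\msp_4,\Gamma)$, and invoke Lemma \ref{lem:criexclC1} --- has a genuine gap as written: the decisive inequality is left as an assertion. You need $(\Gamma^2)_S\le -(A^2\cdot S)+2(A\cdot\Gamma)=-\tfrac32+\tfrac23=-\tfrac56$, and Lemma \ref{lem:compselfint} gives $(\Gamma^2)_S=-2-(K_S\cdot\Gamma)+\tfrac{k(n-k+1)}{n+1}=-\tfrac83+\tfrac{k(n-k+1)}{n+1}$ for a graph of type $A_{n,k}$; this fails as soon as $\tfrac{k(n-k+1)}{n+1}>\tfrac{11}{6}$, which happens for quite moderate chains when $\tilde{\Gamma}$ meets a middle component (e.g.\ already for $k=2$, $n\ge 24$, or $k=3$, $n\ge 7$). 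Since $\msp_4$ is a $cA/3$ point, the surface germ $(S,\msp_4)$ can a priori have a long $A_n$ chain, so ``the bookkeeping is parallel to the $\mcG'_{16}$ computation'' is not enough --- you must actually pin down $n$ and $k$ and check they fall in the good range, and nothing in your proposal does this. Either carry out that dual-graph computation in full, or replace this step by the paper's $\varepsilon$-perturbation of the auxiliary cycles, which closes the case with elementary intersection numbers.
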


\begin{proof}
Let $\Gamma$ be a curve of type $(1,2,3)$ on $X'$ passing through $\msp_4$.
Let $S \in |\mcI_{\Gamma} (2 A)|$ be a general member and $T \in |\mcI_{\Gamma} (A)|$.
We will show that $T|_S = \Gamma + \Delta$, where $\Delta$ is an effective divisor on $S$ such that, for each component $\Delta_i$ of $\Delta$, there exists an effective divisor $\Xi_i$ on $S$ such that $(\Gamma \cdot \Xi_i)_S \ge \deg \Xi_i$ and $(\Xi_i \cdot \Delta_j)_S \ge 0$ for $j \ne i$.
Note that $S$ is nonsingular along $\Gamma \setminus \{\msp_4\}$ by \cite[Lemma 2.5]{Okada3} since $\Gamma \cap (x_0 = x_1 = 0) = \{\msp_4\}$.

Suppose $\Gamma \subset (x_0 = 0)$.
Then, after replacing $z$, we may assume $\Gamma = (x_0 = y - \lambda x_1^2 = z = 0)$ for some $\lambda \in \mbC$.
We have
\[
F' (0,x_1,\lambda x_1^2,z,w) = w (z^2 + \alpha_3 z x_1^3 + \alpha_6 x_1^6) + \beta_6 z x_1^6 + \beta_9 x_1^9.
\]
Since $\Gamma \subset X'$, we have $\alpha_6 = \beta_9 = 0$ and we have 
\[
\Delta = (x_0 = y - \lambda x_1^2 = w (z + \alpha_3 x_1^3) + \beta_6 x_1^6 = 0).
\]
If $\alpha_3 = \lambda$, then $x_0 = f_6 = g_9 = \prt f_6/\prt z = 0$ has a solution $(x_0,x_1,y,z) = (0,1,\lambda,0)$.
This is impossible by Condition \ref{addcond} and thus $\alpha_3 \ne \lambda$.
Suppose $\beta_6 \ne 0$.
Then $\Delta$ is irreducible and reduced and it intersects $\Gamma$ at a nonsingular point other than $\msp_4$ since $\alpha_3 \ne \lambda$.
Thus $(\Gamma \cdot \Delta)_S \ge 1 > \deg \Delta = 2/3$.
Suppose $\beta_6 = 0$.
Then $\Delta = \Delta_1 + \Delta_2$, where $\Delta_1 = (x_0 = y -\lambda x_1^2 = z + \alpha_3 x_1^3 = 0)$ and $\Delta_2 = (x_0 = y = w = 0)$.
We have $(\Gamma \cdot \Delta_2)_S = (\Delta_1 \cdot \Delta_2)_S = 1$.
Set $\Xi_2 = \Delta_2$.
Then we have $(\Gamma \cdot \Xi_2)_S \ge 1 > \deg \Xi_2 = 1/3$ and $(\Xi_2 \cdot \Delta_1) \ge 0$. 
By taking the intersection number of $T|_S = \Gamma + \Delta_1 + \Delta_2$ with $\Delta_2$, we have $(\Delta_2^2)_S = - 5/3$ since $(T|_S \cdot \Delta_2)_S = \deg \Delta_2 = 1/3$.
Let $\varepsilon$ be a rational number such that $1/2 \le \varepsilon \le 3/5$ and set $\Xi_1 = \Delta_1 + \varepsilon \Delta_2$.
Then,
\[
(\Gamma \cdot \Xi_1)_S - \deg \Xi_1 = (\Gamma \cdot \Delta_1) + \frac{2}{3} \varepsilon - \frac{1}{3} \ge 0,
\]
and
\[
(\Xi_1 \cdot \Delta_2)_S = 1 - \frac{5}{3} \varepsilon \ge 0,
\]
as desired.

Suppose $\Gamma \not\subset (x_0 = 0)$.
Then, after replacing $x_1$ and $z$, we may assume $\Gamma = (x_1 = y + \lambda x_0^2 = z = 0)$ for some $\lambda \in \mbC$.
We have
\[
F' (x_0,0,\lambda x_1^2,z,w) = \lambda w^2 x_0^3 + w (z^2 + \alpha_3 z x_0^3 + \alpha_6 x_0^6) + \beta_6 z x_0^6 + \beta_9 x_0^9.
\]
Since $\Gamma \subset X'$, we have $\lambda = \alpha_6 = \beta_9 = 0$ and
\[
\Delta = (x_1 = y = w (z + \alpha_3) + \beta_6 x_0^6 = 0).
\]
If $\alpha_3 = 0$, then $y = f_6 = g_9 = \prt f_6/\prt z = 0$ has a solution $(x_0,x_1,y,z) = (1,0,0,0)$.
This is impossible by Condition \ref{addcond} and thus $\alpha_3 \ne 0$.
Since $\Gamma \subset X'$, $F' (x_0,0,y,0,w)$ is divisible by $y - \eta x_0^2$, which happens if and only if $\eta = 0$ and both $f_6 (x_0,0,y,0)$ and $g_9 (x_0,0,y,w)$ are divisible by $y$.
Thus, $\Gamma = (x_1 = y = z = 0)$.
Suppose $\beta_6 \ne 0$.
Then, $\Delta$ is irreducible and reduced and $(\Gamma \cdot \Delta)_S \ge 1$ since $\alpha_3 \ne 0$.
Suppose $\beta_6 = 0$.
Then, $\Delta = \Delta_1 + \Delta_2$, where $\Delta_1 = (x_1 = y = z + \alpha_3 x_0^3 = 0)$ and $\Delta_2 = (x_1 = y = w = 0)$.
We have $(\Gamma \cdot \Delta_2)_S = (\Gamma \cdot \Delta_2)_S = 1$.
As in the avobe argument, $\Xi_1 = \Delta_2 + \varepsilon \Delta_2$, where $1/2 \le \varepsilon \le 3/5$, and $\Xi_2 = \Delta_2$ are the desired effective divisors. 
This completes the proof.
\end{proof}

The following is the conclusion of this section.

\begin{Thm} \label{thm:curve}
Let $X'$ be a member of $\mcG'_i$ with $i \in I^*_{cA/n} \cup I_{cD/3}$.
Then, no curve on $X'$ is a maximal center.
\end{Thm}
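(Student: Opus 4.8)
The statement is the conclusion of this section, so the plan is simply to collect the results established in its subsections. First I would reduce to the case of an irreducible and reduced curve $\Gamma \subset X'$: a divisorial extraction centered along a reducible or non-reduced curve factors through one centered along an irreducible reduced component, so it suffices to treat such $\Gamma$. By Lemma \ref{exclmostcurves}, $\Gamma$ is not a maximal center unless it is one of the finitely many exceptional curves listed there, namely a curve of degree $1$ or $2$ on $X' \in \mcG'_6$, of degree $\tfrac12$ or $1$ on $X' \in \mcG'_7$, of degree $1$ on $X' \in \mcG'_9$, $\mcG'_{10}$ or $\mcG'_{16}$, of degree $\tfrac12$ on $X' \in \mcG'_{18}$, or of degree $\tfrac13$ on $X' \in \mcG'_{21}$; moreover in every exceptional case $\Gamma$ passes through the $cA/n$ point $\msp_4$ and through no other singular point.

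Second, I would dispose of each exceptional family in turn by quoting the corresponding lemma of this section: the subsections starting with Section~\ref{sec:CG6-1} treat exactly these curves and conclude that none of them is a maximal center. Concatenating these statements with Lemma \ref{exclmostcurves} yields the theorem, and this is all the proof itself should contain.

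The substance of the argument is therefore entirely in the case analysis already carried out above, and that is where the difficulty lies. For each exceptional $\Gamma$ one first performs an admissible coordinate change to bring the standard defining polynomial into a normal form, then recognizes $\Gamma$ as a weighted complete intersection curve of a specific type and writes $T|_S = \Gamma + \Delta$ for a general $S \in |\mcI_\Gamma(mA)|$ and a suitable divisor $T$; when $\Delta$, or each of its components (possibly after forming an auxiliary combination $\Xi_i$ of them), meets $\Gamma$ in enough points one applies Lemma \ref{lem:criexclC2}, and otherwise --- the delicate case, which is precisely where Condition \ref{addcond} is needed to prevent $\Delta$ from touching $\Gamma$ only at $\msp_4$ --- one computes $(\Gamma^2)_S$ from the extended dual graph $G(S,\msp_4,\Gamma)$ via Lemma \ref{lem:compselfint} and applies Lemma \ref{lem:criexclC1}. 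The main obstacle is the bookkeeping: identifying the type $A_{n,k}$ of $G(S,\msp_4,\Gamma)$ requires controlling the singular locus of the strict transform $\tilde S$ along $E_\psi$ by a Jacobian-rank computation on the relevant weighted blowup of $X'$ at $\msp_4$, and this has to be redone, with the generality input of Condition \ref{addcond}, separately for each of the seven families.
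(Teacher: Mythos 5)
Your proposal is correct and follows essentially the same route as the paper: the proof of Theorem \ref{thm:curve} is exactly the concatenation of Lemma \ref{exclmostcurves} with the case-by-case exclusions of Sections \ref{sec:CG6-1}--\ref{sec:CG21}, and your summary of what those subsections do (decomposing $T|_S = \Gamma + \Delta$, applying Lemma \ref{lem:criexclC2} or, in the delicate cases, computing $(\Gamma^2)_S$ via the extended dual graph and Lemma \ref{lem:criexclC1} using Condition \ref{addcond}) accurately reflects the paper's arguments.
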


\begin{proof}
This follows from Lemma \ref{exclmostcurves} and the conclusions of Sections \ref{sec:CG6-1}--\ref{sec:CG21}.
\end{proof}

\section{The big table} \label{sec:bigtable}

\newlength{\myheight}
\setlength{\myheight}{0.65cm}

\begin{table}[h]
\begin{center}
\caption{Families $\mcG_i$}
\label{table:familyG}
\begin{tabular}{cccc}
\hline
\parbox[c][\myheight][c]{0cm}{} No. & $X_{d_1,d_2} \subset \mbP (a_0,\dots,a_5)$ & No. & $X_{d_1,d_2} \subset \mbP (a_0,\dots,a_5)$ \\
\hline
\parbox[c][\myheight][c]{0cm}{} 6 & $X_{4,5} \subset \mbP (1,1,1,2,2,3)$ & 33 & $X_{9,10} \subset \mbP (1,1,3,4,5,6)$ \\
\parbox[c][\myheight][c]{0cm}{} 7 & $X_{4,6} \subset \mbP (1,1,1,2,3,3)$ & 36 & $X_{8,12} \subset \mbP (1,1,3,4,5,7)$  \\ 
\parbox[c][\myheight][c]{0cm}{} 9 & $X_{5,6} \subset \mbP (1,1,1,2,3,4)$ & 38 & $X_{9,12} \subset \mbP (1,2,3,4,5,7)$ \\
\parbox[c][\myheight][c]{0cm}{} 10 & $X_{5,6} \subset \mbP (1,1,2,2,3,3)$ & 44 & $X_{10,12} \subset \mbP (1,2,3,5,5,7)$ \\
\parbox[c][\myheight][c]{0cm}{} 16 & $X_{6,7} \subset \mbP (1,1,2,3,3,4)$ & 48 & $X_{11,12} \subset \mbP (1,1,4,5,6,7)$ \\
\parbox[c][\myheight][c]{0cm}{} 18 & $X_{6,8} \subset \mbP (1,1,2,3,3,5)$ & 52 & $X_{10,15} \subset \mbP (1,2,3,5,7,8)$ \\
\parbox[c][\myheight][c]{0cm}{} 21 & $X_{6,9} \subset \mbP (1,1,2,3,4,5)$ & 57 & $X_{12,14} \subset \mbP (1,2,3,5,7,9)$ \\
\parbox[c][\myheight][c]{0cm}{} 22 & $X_{7,8} \subset \mbP (1,1,2,3,4,5)$ & 61 & $X_{12,15} \subset \mbP (1,1,4,5,6,11)$ \\
\parbox[c][\myheight][c]{0cm}{} 26 & $X_{8,9} \subset \mbP (1,1,3,4,4,5)$ & 62 & $X_{12,15} \subset \mbP (1,3,4,5,6,9)$ \\
\parbox[c][\myheight][c]{0cm}{} 28 & $X_{8,10} \subset \mbP (1,1,2,3,5,7)$ & 63 & $X_{12,15} \subset \mbP (1,3,4,5,7,8)$
\end{tabular}
\end{center}
\end{table}

The list of the families $\mcG_i$ with $i \in I^*_{cA/n} \cup I_{cD/3}$ is given in Table \ref{table:familyG} and we list the families $\mcG'_i$ with $i \in I^*_{cA/n} \cup I_{cD/3}$ below.
In each family $\mcG'_i$, a standard defining equation is described.
The monomials right after the equation is a condition imposed on the family (see Remark \ref{remcond}).
The table of each family is divided into two parts: terminal quotient parts and $\msp_4$ parts.

We first explain terminal quotient parts.
The first column indicates the number and type of the singular points.
The second column indicates how to exclude them if it is non-empty.
If a set of polynomials and a divisor of the form $b B + e E$ are given, then it is excluded in Proposition \ref{excltqsing}.
If the inequality on $B^3$ is given and no other information is given in the second column, then the point is excluded in one of Propositions \ref{excltqsingNo21_2}--\ref{excltqsingNo33_3}.
The third column indicates the existence of birational involution that is a Sarkisov link centered at the corresponding point (see Section \ref{sec:birinvtqpt}).

We next explain $\msp_4$ parts of the table.
We include in this table the equation of the singularity, blowup weights of divisorial extraction.
For family No.~$i$ with $i \in I^*_{cA/n}$, we also indicates what happens for each divisorial extraction.
The mark ``none" indicates that the corresponding divisorial extraction is not a maximal extraction (see Section \ref{sec:exclcA})
The mark ``$X' \ratmap X \ni \frac{1}{r} (\alpha,\beta,\gamma)$" (resp.\ ``B.I.") indicates that there is a Sarkisov link starting with the corresponding divisorial extraction that is a link to $X$ ending with the Kawamata blowup centered at a $\frac{1}{r} (\alpha,\beta,\gamma)$ point of $X$ (resp.\ that is a birational involution) (see Section \ref{sec:Slink} and \ref{sec:birinvcA}, respectively).

\begin{center}
\begin{flushleft}
No. 6: $X_5 \subset \mbP (1,1,1,2,1)$, $(A^3) = 5/2$. \nopagebreak \\
Eq: $w^2 x_0 y + w f_4 + g_5$, $y^2 \in f_4$.
\end{flushleft} \nopagebreak
\begin{tabular}{|p{105pt}|p{140pt}|p{110pt}|}
\hline
\parbox[c][\myheight][c]{0cm}{} $\msp_3 = \frac{1}{2} (1,1,1)$ & & Q.I. \\
\hline
\parbox[c][\myheight][c]{0cm}{} $\msp_4 = cA$ & $x_0 y + h_4 (x_1,x_2)$ & $\wt = (a,b,1,1)$ \\
\hline
\end{tabular}
\begin{tabular}{|p{67pt}|p{104.5pt}||p{67pt}|p{104.5pt}|}
\hline
\parbox[c][\myheight][c]{0cm}{} $(1,3), (3,1)$ & $X' \ratmap X \ni \frac{1}{2} (1,1,1)$ & $(2,2)$ & $X' \ratmap X \ni \frac{1}{3} (1,1,2)$ \\
\hline
\end{tabular}
\end{center}

\begin{center}
\begin{flushleft}
No. 7: $X_6 \subset \mbP (1,1,1,2,2)$, $(A^3) = 3/2$. \nopagebreak \\
Eq: $w^2 x_0 x_1 + w f_4 + g_6$.
\end{flushleft} \nopagebreak
\begin{tabular}{|p{105pt}|p{140pt}|p{110pt}|}
\hline
\parbox[c][\myheight][c]{0cm}{} $\msp_3 \msp_4 = \frac{1}{2} (1,1,1)$ & & Q.I. \\
\hline
\parbox[c][\myheight][c]{0cm}{} $\msp_4 = cA/2$ & $x_0 x_1 + h_4 (x_1,y)/\mbZ_2 (1,1,1,0)$ & $\wt = \frac{1}{2} (a,b,1,2)$ \\
\hline
\end{tabular}
\begin{tabular}{|p{67pt}|p{104.5pt}||p{67pt}|p{104.5pt}|}
\hline
\parbox[c][\myheight][c]{0cm}{} $(1,3), (3,1)$ & $X' \ratmap X \ni \frac{1}{3} (1,1,2)$ & & \\
\hline
\end{tabular}
\end{center}

\begin{center}
\begin{flushleft}
No. 9: $X_6 \subset \mbP (1,1,1,3,1)$, $(A^3) = 2$. \nopagebreak \\ 
Eq: $w^2 x_0 y + w f_5 + g_6$.
\end{flushleft} \nopagebreak
\begin{tabular}{|p{105pt}|p{140pt}|p{110pt}|}
\hline
\parbox[c][\myheight][c]{0cm}{} $\msp_4 = cA$ & $x_0 y + h_5 (x_1,x_2)$ & $\wt = (a,b,1,1)$ \\
\hline
\end{tabular}
\begin{tabular}{|p{67pt}|p{104.5pt}||p{67pt}|p{104.5pt}|}
\hline
\parbox[c][\myheight][c]{0cm}{} $(1,4), (4,1)$ & $X' \ratmap X \ni \frac{1}{2} (1,1,1)$ & $(2,3)$, $(3,2)$ & $X' \ratmap X \ni \frac{1}{4} (1,1,3)$ \\
\hline
\end{tabular}
\end{center}

\begin{center}
\begin{flushleft}
No. 10: $X_6 \subset \mbP (1,1,2,2,1)$, $(A^3) = 3/2$. \nopagebreak \\
Eq: $w^2 y_0 y_1 + w f_5 + g_6$, $y_0^3 \in g_6$ and $y_1^3 \in g_6$.
\end{flushleft} \nopagebreak
\begin{tabular}{|p{105pt}|p{140pt}|p{110pt}|}
\hline
\parbox[c][\myheight][c]{0cm}{} $\msp_2 \msp_3 = 3 \times \frac{1}{2} (1,1,1)$ & & Q.I. \\
\hline
\parbox[c][\myheight][c]{0cm}{} $\msp_4 = cA$ & $y_0 y_1 + h_5 (x_0,x_1)$ & $\wt = (a,b,1,1)$ \\
\hline
\end{tabular}
\begin{tabular}{|p{67pt}|p{104.5pt}||p{67pt}|p{104.5pt}|}
\hline
\parbox[c][\myheight][c]{0cm}{} $(1,4), (4,1)$ & B.I. & $(2,3), (3,2)$ & $X' \ratmap X \ni \frac{1}{3} (1,1,2)$ \\
\hline
\end{tabular}
\end{center}

\begin{center}
\begin{flushleft}
No. 16: $X_7 \subset \mbP (1,1,2,3,1)$, $(A^3) = 7/6$. \nopagebreak \\
Eq: $w^2 y z + w f_6 + g_7$, $z^2, y^3 \in f_6, y^2 z \in g_7$.
\end{flushleft} \nopagebreak
\begin{tabular}{|p{105pt}|p{140pt}|p{110pt}|}
\hline
\parbox[c][\myheight][c]{0cm}{} $\msp_2 = \frac{1}{2} (1,1,1)$ &  & Q.I. \\
\hline
\parbox[c][\myheight][c]{0cm}{} $\msp_3 = \frac{1}{3} (1,1,2)$ &  & Q.I. \\
\hline
\parbox[c][\myheight][c]{0cm}{} $\msp_4 = cA$ & $y z + h_6 (x_0,x_1)$ & $\wt = (a,b,1,1)$ \\
\hline
\end{tabular}
\begin{tabular}{|p{67pt}|p{104.5pt}||p{67pt}|p{104.5pt}|}
\hline
\parbox[c][\myheight][c]{0cm}{} $(1,5), (5,1)$ & none & $(2,4), (4,2)$ & $X' \ratmap X \ni \frac{1}{3} (1,1,2)$ \\
\hline
\parbox[c][\myheight][c]{0cm}{} $(3,3)$ & $X' \ratmap X \ni \frac{1}{4} (1,1,3)$  & &\\
\hline
\end{tabular}
\end{center}

\begin{center}
\begin{flushleft}
No. 18: $X_8 \subset \mbP (1,1,2,3,2)$, $(A^3) = 2/3$. \nopagebreak \\
Eq: $w^2 x_0 z + w f_6 + g_8$, $z^2 \in f_6$.
\end{flushleft} \nopagebreak
\begin{tabular}{|p{105pt}|p{140pt}|p{110pt}|}
\hline
\parbox[c][\myheight][c]{0cm}{} $\msp_2 \msp_4 = \frac{1}{2} (1,1,1)$ & & E.I. $*z^2 y$ \\
\hline
\parbox[c][\myheight][c]{0cm}{} $\msp_3 = \frac{1}{3} (1,1,2)$ & & Q.I. \\
\hline
\parbox[c][\myheight][c]{0cm}{} $\msp_4 = cA/2$ & $x_0 z + h_6 (x_1,y)/ \mbZ_2 (1,1,1,0)$ & $\wt = \frac{1}{2} (a,b,1,2)$ \\
\hline
\end{tabular}
\begin{tabular}{|p{67pt}|p{104.5pt}||p{67pt}|p{104.5pt}|}
\hline
\parbox[c][\myheight][c]{0cm}{} $(1,5), (5,1)$ & $X' \ratmap X \ni \frac{1}{3} (1,1,1)$ & $(3,3)$ & $X' \ratmap X \ni \frac{1}{5} (1,2,3)$ \\
\hline
\end{tabular}
\end{center}

\begin{center}
\begin{flushleft}
No. 21: $X_9 \subset \mbP (1,1,2,3,3)$, $(A^3) = 1/2$. \nopagebreak \\
Eq: $w^2 x_0 y + w f_6 + g_9$, $z^2, y^3 \in f_6$, and $z y^3 \in g_9$ if $z^3 \notin g_9$.
\end{flushleft} \nopagebreak
\begin{tabular}{|p{105pt}|p{140pt}|p{110pt}|}
\hline
\parbox[c][\myheight][c]{0cm}{} $\msp_2 = \frac{1}{2} (1,1,1)$ & $B^3 = 0$  & \\
\hline
\parbox[c][\myheight][c]{0cm}{} $\msp_3 \msp_4 = \frac{1}{3} (1,1,2)$ & & Q.I. \\
\hline
\parbox[c][\myheight][c]{0cm}{} $\msp_4 = cA/3$ & $x_0 y + h_6 (x_1,z)/\mbZ_3 (1,2,1,0)$ & $\wt = \frac{1}{3} (a,b,1,3)$ \\
\hline
\end{tabular}
\begin{tabular}{|p{67pt}|p{104.5pt}||p{67pt}|p{104.5pt}|}
\hline
\parbox[c][\myheight][c]{0cm}{} $(1,5)$ & $X' \ratmap X \ni \frac{1}{4} (1,1,3)$ & $(4,2)$ & $X' \ratmap X \ni \frac{1}{5} (1,2,3)$ \\
\hline
\end{tabular}
\end{center}

\begin{center}
\begin{flushleft}
No. 22: $X_8 \subset \mbP (1,1,2,4,1)$, $(A^3) = 1$. \nopagebreak \\
Eq: $w^2 y z + w f_7 + g_8$, $y^4 \in g_8$.
\end{flushleft} \nopagebreak
\begin{tabular}{|p{105pt}|p{140pt}|p{110pt}|}
\hline
\parbox[c][\myheight][c]{0cm}{} $\msp_2 \msp_3 = 2 \times \frac{1}{2} (1,1,1)$ & & Q.I. \\
\hline
\parbox[c][\myheight][c]{0cm}{} $\msp_4 = cA$ & $y z + h_7 (x_0,x_1)$ & $\wt = (a,b,1,1)$ \\
\hline
\end{tabular}
\nopagebreak
\begin{tabular}{|p{67pt}|p{104.5pt}||p{67pt}|p{104.5pt}|}
\hline
\parbox[c][\myheight][c]{0cm}{} $(1,6), (6,1)$ & none & $(2,5), (5,2)$ & $X' \ratmap X \ni \frac{1}{3} (1,1,2)$ \\
\hline
\parbox[c][\myheight][c]{0cm}{} $(3,4), (4,3)$ & $X' \ratmap X \ni \frac{1}{5} (1,1,4)$ & & \\
\hline
\end{tabular}
\end{center}

\begin{center}
\begin{flushleft}
No. 26: $X_9 \subset \mbP (1,1,3,4,1)$, $(A^3) = 3/4$. \nopagebreak \\
Eq: $w^2 y z + w f_8 + g_9$, $z^2 \in f_8$.
\end{flushleft} \nopagebreak
\begin{tabular}{|p{105pt}|p{140pt}|p{110pt}|}
\hline
\parbox[c][\myheight][c]{0cm}{} $\msp_3 = \frac{1}{4} (1,1,3)$ & & Q.I. \\
\hline
\parbox[c][\myheight][c]{0cm}{} $\msp_4 = cA$ & $y z + h_8 (x_0,x_1)$ & $\wt = (a,b,1,1)$ \\
\hline
\end{tabular}
\nopagebreak
\begin{tabular}{|p{67pt}|p{104.5pt}||p{67pt}|p{104.5pt}|}
\hline
\parbox[c][\myheight][c]{0cm}{} $(1,7), (7,1)$ & none & $(2,6), (6,2)$ & B.I. \\
\hline
\parbox[c][\myheight][c]{0cm}{} $(3,5), (5,3)$ & $X' \ratmap X \ni \frac{1}{4} (1,1,3)$ & $(4,4)$ & $X' \ratmap X \ni \frac{1}{5} (1,1,4)$ \\
\hline
\end{tabular}
\end{center}

\begin{center}
\begin{flushleft}
No. 28: $X_{10} \subset \mbP (1,1,2,5,2)$, $(A^3) = 1/2$.\nopagebreak \\
Eq: $w^2 x_0 z + w f_8 + g_{10}$.
\end{flushleft} \nopagebreak
\begin{tabular}{|p{105pt}|p{140pt}|p{110pt}|}
\hline
\parbox[c][\myheight][c]{0cm}{} $\msp_2 \msp_4 = \frac{1}{2} (1,1,1)$ & $B^3 = 0$, $\{x_0,x_1,w'\}$, $B$ & \\
\hline
\parbox[c][\myheight][c]{0cm}{} $\msp_4 = cA/2$ & $x_0 z + h_8 (x_1,y)/ \mbZ_2 (1,1,1,0)$ & $\wt = \frac{1}{2} (a,b,1,2)$ \\
\hline
\end{tabular}
\nopagebreak
\begin{tabular}{|p{67pt}|p{104.5pt}||p{67pt}|p{104.5pt}|}
\hline
\parbox[c][\myheight][c]{0cm}{} $(1,7), (7,1)$ & $X' \ratmap X \ni \frac{1}{3} (1,1,2)$ & $(3,5), (5,3)$ & $X' \ratmap X \ni \frac{1}{7} (1,2,5)$ \\
\hline
\end{tabular}
\end{center}

\begin{center}
\begin{flushleft}
No. 33: $X_{10} \subset \mbP (1,1,3,5,1)$, $(A^3) = 2/3$. \nopagebreak \\
Eq: $w^2 y z + w f_9 + g_{10}, y^3 \in f_9$.
\end{flushleft} \nopagebreak
\begin{tabular}{|p{105pt}|p{140pt}|p{110pt}|}
\hline
\parbox[c][\myheight][c]{0cm}{} $\msp_2 = \frac{1}{3} (1,1,2)$ & $B^3 > 0$ & \\
\hline
\parbox[c][\myheight][c]{0cm}{} $\msp_4 = cA$ & $y z + h_9 (x_0,x_1)$ & $\wt = (a,b,1,1)$ \\
\hline
\end{tabular}
\begin{tabular}{|p{67pt}|p{104.5pt}||p{67pt}|p{104.5pt}|}
\hline
\parbox[c][\myheight][c]{0cm}{} $(1,8), (8,1)$ & none & $(2,7)$, $(7,2)$ & B.I. \\
\hline
\parbox[c][\myheight][c]{0cm}{} $(3,6), (6,3)$ & $X' \ratmap X \ni \frac{1}{4} (1,1,3)$ & $(4,5), (5,4)$ & $X' \ratmap X \ni \frac{1}{6} (1,1,5)$ \\
\hline
\end{tabular}
\end{center}

\begin{center}
\begin{flushleft}
No. 36: $X_{12} \subset \mbP (1,1,3,4,4)$, $(A^3) = 1/4$. \nopagebreak \\
Eq: $w^2 x_0 y + w f_8 + g_{12}$.
\end{flushleft} \nopagebreak
\begin{tabular}{|p{105pt}|p{140pt}|p{110pt}|}
\hline
\parbox[c][\myheight][c]{0cm}{} $\msp_3 \msp_4 = \frac{1}{4} (1,1,3)$ & & Q.I. \\
\hline
\parbox[c][\myheight][c]{0cm}{} $\msp_4 = cA/4$ & $x_0 y + h_8 (x_1,z)/\mbZ_4 (1,3,1,0)$ & $\wt = \frac{1}{4} (a,b,1,4)$ \\
\hline
\end{tabular}
\nopagebreak
\begin{tabular}{|p{67pt}|p{104.5pt}||p{67pt}|p{104.5pt}|}
\hline
\parbox[c][\myheight][c]{0cm}{} $(1,7)$ & $X' \ratmap X \ni \frac{1}{5} (1,1,4)$ & $(5,3)$ & $X' \ratmap X \ni \frac{1}{7} (1,3,4)$ \\
\hline
\end{tabular}
\end{center}

\begin{center}
\begin{flushleft}
No. 38: $X_{12} \subset \mbP (1,2,3,4,3)$, $(A^3) = 1/6$. \nopagebreak \\
Eq: $w^2 y t + w f_9 + g_{12}$, $y^6 \in g_{12}$.
\end{flushleft} \nopagebreak
\begin{tabular}{|p{105pt}|p{140pt}|p{110pt}|}
\hline
\parbox[c][\myheight][c]{0cm}{} $\msp_1 \msp_3 = 3 \times \frac{1}{2} (1,1,1)$ & $B^3 < 0$, $\{x,z,w\}$, $3B+E$ & \\
\hline
\parbox[c][\myheight][c]{0cm}{} $\msp_2 \msp_4 = \frac{1}{3} (1,1,2)$ & $B^3 = 0$, $\{x,y,w'\}$, $B$ & \\
\hline
\parbox[c][\myheight][c]{0cm}{} $\msp_4 = cA/3$ & $y t + h_9 (x,z)/\mbZ_3 (2,1,1,0)$ & $\wt = \frac{1}{3} (a,b,1,3)$ \\
\hline
\end{tabular}
\nopagebreak
\begin{tabular}{|p{67pt}|p{104.5pt}||p{67pt}|p{104.5pt}|}
\hline
\parbox[c][\myheight][c]{0cm}{} $(2,7)$ & $X' \ratmap X \ni \frac{1}{5} (1,2,3)$ & $(5,4)$ & $X' \ratmap X \ni \frac{1}{7} (1,3,4)$ \\
\hline
\parbox[c][\myheight][c]{0cm}{} $(8,1)$ & B.I. & & \\
\hline
\end{tabular}
\end{center}

\begin{center}
\begin{flushleft}
No. 44: $X_{12} \subset \mbP (1,2,3,5,2)$, $(A^3) = 1/5$. \nopagebreak \\
Eq: $w^2 z t + w f_{10} + g_{12}$, $t^2 \in f_{10}$.
\end{flushleft} \nopagebreak
\begin{tabular}{|p{105pt}|p{140pt}|p{110pt}|}
\hline
\parbox[c][\myheight][c]{0cm}{} $\msp_1 \msp_4 = \frac{1}{2} (1,1,1)$ & $B^3 < 0$ & \\
\hline
\parbox[c][\myheight][c]{0cm}{} $\msp_3 = \frac{1}{5} (1,2,3)$ & & Q.I. \\
\hline
\parbox[c][\myheight][c]{0cm}{} $\msp_4 = cA/2$ & $z t + h_{10} (x,y)/ \mbZ_2 (1,1,1,0)$ & $\wt = \frac{1}{2} (a,b,1,2)$ \\
\hline
\end{tabular}
\nopagebreak
\begin{tabular}{|p{67pt}|p{104.5pt}||p{67pt}|p{104.5pt}|}
\hline
\parbox[c][\myheight][c]{0cm}{} $(1,9), (9,1)$ & none & $(3,7), (7,3)$ & $X' \ratmap X \ni \frac{1}{5} (1,2,3)$ \\
\hline
\parbox[c][\myheight][c]{0cm}{} $(5,5)$ & $X' \ratmap X \ni \frac{1}{7} (1,2,5)$ & & \\
\hline
\end{tabular}
\end{center}

\begin{center}
\begin{flushleft}
No. 48: $X_{12} \subset \mbP (1,1,4,6,1)$, $(A^3) = 1/2$. \nopagebreak \\
Eq: $w^2 y z + w f_{11} + g_{12}$.
\end{flushleft} \nopagebreak
\begin{tabular}{|p{105pt}|p{140pt}|p{110pt}|}
\hline
\parbox[c][\myheight][c]{0cm}{} $\msp_2 \msp_3 = \frac{1}{2} (1,1,1)$ & $B^3 = 0$, $\{x_0,x_1,w\}$, $B$ & \\
\hline
\parbox[c][\myheight][c]{0cm}{} $\msp_4 = cA$ & $y z + h_{11} (x_0,x_1)$ & $\wt = (a,b,1,1)$ \\
\hline
\end{tabular}
\nopagebreak
\begin{tabular}{|p{67pt}|p{104.5pt}||p{67pt}|p{104.5pt}|}
\hline
\parbox[c][\myheight][c]{0cm}{} $(1,10), (10,1)$ & none & $(2,9), (9,2)$ & none \\
\hline
\parbox[c][\myheight][c]{0cm}{} $(3,8), (8,3)$ & B.I. & $(4,7), (7,4)$ & $X' \ratmap X \ni \frac{1}{3} (1,1,2)$ \\
\hline
\parbox[c][\myheight][c]{0cm}{} $(5,6), (6,5)$ & $X' \ratmap X \ni \frac{1}{7} (1,1,6) $ & & \\
\hline
\end{tabular}
\end{center}

\begin{center}
\begin{flushleft}
No. 52: $X_{15} \subset \mbP (1,2,3,5,5)$, $(A^3) = 1/10$. \nopagebreak \\
Eq: $w^2 y z + w f_{10} + g_{15}$, $t^2 \in f_{10}$.
\end{flushleft} \nopagebreak
\begin{tabular}{|p{105pt}|p{140pt}|p{110pt}|}
\hline
\parbox[c][\myheight][c]{0cm}{} $\msp_1 = \frac{1}{2} (1,1,1)$ & $B^3 < 0$, $\{x,z,t,w\}$, $5B+2E$ & \\
\hline
\parbox[c][\myheight][c]{0cm}{} $\msp_3 \msp_4 = \frac{1}{5} (1,2,3)$ & & Q.I. \\
\hline
\parbox[c][\myheight][c]{0cm}{} $\msp_4 = cA/5$ & $y z + h_{10} (x,t)/ \mbZ_5 (2,3,1,0)$ & $\wt = \frac{1}{5} (a,b,1,5)$ \\
\hline
\end{tabular}
\nopagebreak
\begin{tabular}{|p{67pt}|p{104.5pt}||p{67pt}|p{104.5pt}|}
\hline
\parbox[c][\myheight][c]{0cm}{} $(2,8)$ & $X' \ratmap X \ni \frac{1}{7} (1,2,5)$ & $(7,3)$ & $X' \ratmap X \ni \frac{1}{8} (1,3,5)$ \\
\hline
\end{tabular}
\end{center}

\begin{center}
\begin{flushleft}
No. 57: $X_{14} \subset \mbP (1,2,3,7,2)$, $(A^3) = 1/6$. \nopagebreak \\
Eq: $w^2 z t + w f_{12} + g_{14}, z^4 \in f_{12}$.
\end{flushleft} \nopagebreak
\begin{tabular}{|p{105pt}|p{140pt}|p{110pt}|}
\hline
\parbox[c][\myheight][c]{0cm}{} $\msp_1 \msp_4 = \frac{1}{2} (1,1,1)$ & $B^3 < 0$, $\{x,z,w'\}$, $3B+E$ & \\
\hline
\parbox[c][\myheight][c]{0cm}{} $\msp_2 = \frac{1}{3} (1,1,2)$ & $B^3 = 0$, $\{x,y,w\}$, $B$ & \\
\hline
\parbox[c][\myheight][c]{0cm}{} $\msp_4 = cA/2$ & $z t + h_{12} (x,y)/ \mbZ_2 (1,1,1,0)$ & $\wt = \frac{1}{2} (a,b,1,2)$ \\
\hline
\end{tabular}
\nopagebreak
\begin{tabular}{|p{67pt}|p{104.5pt}||p{67pt}|p{104.5pt}|}
\hline
\parbox[c][\myheight][c]{0cm}{} $(1,11), (11,1)$ & none & $(3,9), (9,3)$ & $X' \ratmap X \ni \frac{1}{5} (1,2,3)$ \\
\hline
\parbox[c][\myheight][c]{0cm}{} $(5,7), (7,5)$ & $X' \ratmap X \ni \frac{1}{9} (1,2,7)$ & & \\
\hline
\end{tabular}
\end{center}

\begin{center}
\begin{flushleft}
No. 61: $X_{15} \subset \mbP (1,1,5,6,3)$, $(A^3) = 1/6$. \nopagebreak \\
Eq: $w^4 x_0^3 + w^3 x_0^2 f_4 + w^2 x_0 f_8 + w f_{12} + g_{15}$.
\end{flushleft} \nopagebreak
\begin{tabular}{|p{105pt}|p{140pt}|p{110pt}|}
\hline
\parbox[c][\myheight][c]{0cm}{} $\msp_3 = \frac{1}{6} (1,1,5)$ & & Q.I. \\
\hline
\parbox[c][\myheight][c]{0cm}{} $\msp_4 = cD/3$ & & $X' \ratmap X \ni \frac{1}{11} (1,5,6)$ \\
\hline
\end{tabular}
\begin{flushleft}
\end{flushleft}
\end{center}

\begin{center}
\begin{flushleft}
No. 62: $X_{15} \subset \mbP (1,3,4,5,3)$, $(A^3) = 1/12$. \nopagebreak \\
Eq: $w^3 y^2 + w^2 y f_6 + w f_{12} + g_{15}$.
\end{flushleft} \nopagebreak
\begin{tabular}{|p{105pt}|p{140pt}|p{110pt}|}
\hline
\parbox[c][\myheight][c]{0cm}{} $\msp_1 \msp_4 = 3 \times \frac{1}{3} (1,1,2)$ & $B^3 < 0$ & \\
\hline
\parbox[c][\myheight][c]{0cm}{} $\msp_2 = \frac{1}{4} (1,1,3)$ & $B^3 = 0$, $\{x,y,w\}$, $B$ & \\
\hline
\parbox[c][\myheight][c]{0cm}{} $\msp_4 = cD/3$ & & $X' \ratmap X \ni \frac{1}{9} (1,4,5)$ \\
\hline
\end{tabular}
\begin{flushleft}
\end{flushleft}
\end{center}

\begin{center}
\begin{flushleft}
No. 63: $X_{15} \subset \mbP (1,3,4,5,3)$, $(A^3) = 1/12$. \nopagebreak \\
Eq: $w^2 z t + w f_{12} + g_{15}, z^3 \in f_{12}$.
\end{flushleft} \nopagebreak
\begin{tabular}{|p{105pt}|p{140pt}|p{110pt}|}
\hline
\parbox[c][\myheight][c]{0cm}{} $\msp_1 \msp_4 = \frac{1}{3} (1,1,2)$ & $B^3 < 0$ & \\
\hline
\parbox[c][\myheight][c]{0cm}{} $\msp_2 = \frac{1}{4} (1,1,3)$ & $B^3 = 0$, $\{x,y,w\}$, $B$ & \\
\hline
\parbox[c][\myheight][c]{0cm}{} $\msp_4 = cA/3$ & $z t + h_{12} (x,y) / \mbZ_3 (1,2,1,0)$ & $\wt =  \frac{1}{3} (a,b,1,3)$ \\
\hline
\end{tabular}
\nopagebreak
\begin{tabular}{|p{67pt}|p{104.5pt}||p{67pt}|p{104.5pt}|}
\hline
\parbox[c][\myheight][c]{0cm}{} $(1,11)$ & none & $(4,8)$ & $X' \ratmap X \ni \frac{1}{7} (1,1,2)$ \\
\hline
\parbox[c][\myheight][c]{0cm}{} $(7,5)$ & $X' \ratmap X \ni \frac{1}{5} (1,1,3)$ & $(10,2)$ & B.I. \\
\hline
\end{tabular}
\end{center}


\end{document}